\newtheorem{theorem}{Theorem}[section]
\newtheorem{main}{Main Theorem}
\newtheorem{lemma}[theorem]{Lemma}
\newtheorem{assumption}{Assumption}
\newtheorem{proposition}[theorem]{Proposition}
\theoremstyle{definition}
\newtheorem{definition}[theorem]{Definition}
\newtheorem{remark}[theorem]{Remark}
\numberwithin{equation}{section}
\newcommand{\be}{\begin{equation}\begin{aligned}}
\newcommand{\ee}{\end{aligned}\end{equation}}
\newcommand{\ben}{\begin{equation}\nonumber\begin{aligned}}
\newcommand{\dist}{{\rm dist}}
\newcommand{\R}{\mathbb{R}}
\newcommand{\N}{\mathbb{N}}
\newcommand{\D}{\mathcal{D}}
\newcommand{\A}{\mathcal{A}}
\newcommand{\B}{\mathfrak{B}}
\newcommand{\TT}{\mathbb{T}}
\newcommand{\LL}{\mathbb{L}_{per}}
\newcommand{\HH}{\mathbb{H}_{per}}
\renewcommand{\d}{{\rm{d}}}
\begin{document}

\begin{frontmatter}

\title{   $(H,H^3)$-smoothing effect and convergence of solutions of  stochastic two-dimensional anisotropic Navier-Stokes equations driven by colored noise
}

\author[lh]{Hui Liu}

\ead{ss\_liuhui@ujn.edu.cn}

\author[sd]{Dong Su}
\ead{dsu224466@163.com}

\author[scf]{Chengfeng Sun\corref{mycorrespondingauthor}}
\cortext[mycorrespondingauthor]{Corresponding author}
\ead{sch200130@163.com}

\author[xj]{Jie Xin}

\ead{fdxinjie@sina.com}

\address[lh]{School of Mathematical Sciences, University of Jinan, Jinan 250022, PR China}
\address[sd]{School of Mathematics, Nanjing Audit University, Nanjing 211815, PR China}
\address[scf]{School of Applied Mathematics, Nanjing University of Finance and Economics, Nanjing 210023, PR China}
\address[xj]{School of Information Engineering, Shandong Youth University of Political Science, Jinan 250103, PR China}

\begin{abstract}

This paper is devoted to the higher regularity and convergence of solutions of anisotropic Navier-Stokes (NS) equations  with  additive colored noise and  white noise  on two-dimensional torus $\mathbb T^2$.  Under the conditions that the external force $f(\textbf{x})$ belongs to the phase space $ H$ and the noise intensity function $h(\textbf{x})$  satisfies $\|\nabla h\|_{L^\infty} \leq \sqrt{\pi\delta}  \frac{\nu \lambda_1}{2}$,  it was proved that the random anisotropic NS equations possess a tempered  $(H,H^2)$-random attractor  whose (box-counting) fractal  dimension in $H^2$ is finite. This was achieved by establishing, first, an $H^2$ bounded   absorbing set and, second,  an $(H,H^2)$-smoothing effect of the system which lifts the compactness and finite-dimensionality  of the attractor in $H$ to that in $H^2$.   Since the force $f$ belongs only to $H$, the $H^2$-regularity of solutions as well as the $H^2$-bounded absorbing set was constructed by an indirect approach of estimating the $H^2$-distance     between the solution  of the random anisotropic NS equations and that of the corresponding deterministic anisotropic NS equations. When the external force $f(\textbf{x})$  belongs to $H^2$ and the noise intensity function $h(\textbf{x})$ satisfies the Assumption \ref{assum1}, it was proved that the random anisotropic NS equations possess a tempered  $(H,H^3)$-random attractor  whose (box-counting) fractal  dimension in $H^3$ is finite. Finally, we prove the  upper semi-continuity of random attractors  and the convergence of solutions of \eqref{8.2}  as $\delta\rightarrow0$ in the spaces $(H,H)$, $(H,H^1)$, $(H^1,H^2)$ and $(H^2,H^3)$, respectively.
  \end{abstract}

\begin{keyword}
\texttt{Navier-Stokes equations;
   $H^3$ regularity;  Upper semi-continuity; Fractal dimension; Convergence.}
\MSC[2020] 35B40, 35B41, 60H15
\end{keyword}
\end{frontmatter}

\tableofcontents

\section{Introduction}

In this  paper we study the asymptotic dynamics of the following stochastic two-dimensional anisotropic Navier-Stokes (NS) equations driven by colored noise  on  $\mathbb{T}^2=[0,L]^2$, $L>0$:
 \begin{align} \label{1}
 \left \{
  \begin{aligned}
 &  \frac{\partial \textbf{u}_h}{\partial t}-  \nu \left(
\begin{array}{ccc}
\partial_{yy}u_h\\ \partial_{xx}v_h
\end{array}\right)+(\textbf{u}_h\cdot\nabla)\textbf{u}_h+\nabla p=  f(\textbf{x}) +h(\textbf{x})\zeta_{\delta}(\theta_t\omega),\\
  & \nabla\cdot \textbf{u}_h=0,\\
  &\textbf{ u}_h(0)=\textbf{u}_{h,0},
  \end{aligned} \right.
\end{align}
 endowed with periodic boundary conditions and $t>0$. Here $\textbf{u}_h=(u_h,v_h)$ is the velocity field of fluid and $\textbf{x}=(x,y)$, $ \nu >0$ is the kinematic viscosity, $p$ is the scalar pressure and $f \in H $  represents volume forces that are applied to the fluid, where $H= \left \{u\in \LL ^2:
  \int_{\mathbb T^2}u \, \d x=0,\,  {\rm div}\, u=0 \right \}$ is the phase space of the system.  $\zeta_{\delta}(\theta_t\omega)$ is the colored noise with correlation time $\delta>0$. $(\Omega,\mathcal{F},\mathbb{P})$ with  $\mathcal{F}$  the Borel $\sigma$-algebra induced by the compact open topology of $\Omega$, and $\mathbb{P}$  the corresponding Wiener measure on $(\Omega,\mathcal{F})$. The subscript $``_{h}"$ indicates the dependence on the noise intensity $h(\textbf{x})$.
When $h\equiv0$, we introduce the deterministic anisotropic  NS equations on  $\mathbb{T}^2$:
 \begin{align}\label{2}
 \left \{
  \begin{aligned}&
   \frac{\partial \textbf{u}}{\partial t}-  \nu \left(
\begin{array}{ccc}
\partial_{yy}u\\ \partial_{xx}v
\end{array}\right)+(\textbf{u}\cdot\nabla)\textbf{u}+\nabla p=f(\textbf{x}),\\
  & \nabla\cdot \textbf{u}=0,\\
   &\textbf{u}(0)=\textbf{u}_{0}.
  \end{aligned} \right.
\end{align}

The anisotropic  NS equations are fundamental mathematical models in fluid mechanics, see, e.g. \cite{bahouri,liang,paicu,temam}. Global well-posedness and long time behaviour of the two-dimensional or three-dimensional anisotropic   NS equations  have been investigated in \cite{bessaih,chemin,liujfa,liuarma,paicucmp,zhang}. In order to prove the asymptotic behavior of the  anisotropic  NS equations, we first introduce some results for the global attractor of NS equations, see, e.g. \cite{constantin,ju,liuvx1,rosa,sun,wang,zhao}. Meanwhile, asymptotic behavior of the non-autonomous NS equations has been investigated in \cite{cui24siads,cui24ma,hou,langa,lukaszewicz}. \cite{Brzezniak13,Brzezniak2018,Brzezniak,crauel97jdde,crauel,cui2025,dong,li,liu2018,mohammed,wang12,wang2023} studied the regularity of random attractors of the stochastic NS equations. Inspired by \cite{robinson01,robinson13} and the similar method, if $f\in H^s$, $s>0$, the regularity of the global attractor of the system \eqref{2} has been proved in $H^s$, and then the global attractor $\A_0$ of the system \eqref{2} is bounded in $H^{s+1}$. \cite{julia} proved the $H^2$-boundedness of the pullback attractors for non-autonomous  two-dimensional NS equations, and then we will prove the $H^2$-boundedness of the global attractor for the  two-dimensional anisotropic  NS equations (see Section \ref{sec3}).

Recently, the anisotropic  fluid equations have been investigated by many authors in \cite{chemin2000}. Under certain conditions, \cite{adhikari,adhikari2011,cao2013,danchin,larios,miao} studied the global existence and global regularity of  the two-dimensional or three-dimensional anisotropic Boussinesq equations. Meanwhile, \cite{cao,cao2014,dong2019,du,ren} proved the global regularity and decay for the two-dimensional anisotropic magnetohydrodynamics (MHD) equations.

By the Proposition 12.4 in \cite{robinson01}, we can deduce that the global attractor $\A_0$ of the two-dimensional anisotropic  NS equations \eqref{2} is a global $(H,H^1)-$attractor. That is to say, the global attractor $\A_0$ is a compact set in $H^1$ and attracts every bounded set in $H$ under the norm of $H^1$. The $H^2$-boundedness of the global attractor for the  two-dimensional anisotropic  NS equations implies that the strong solutions in $\A_0$ are smooth solutions, see, e.g. \cite{Ladyzenskaja,robinson01,temam}, up to now the attractor was not obtained to be $(H,H^2)$ and $(H,H^3)$.

Recently, under general settings that the $(H,H^1)$-attractor of the two-dimensional anisotropic  NS equations has been investigated. By using the asymptotic compactness, \cite{ju} proved the $(H,H^1)$-global attractor of the two-dimensional NS equations on some unbounded domains. For the non-autonomous and random cases, \cite{cui21jde} proved the finite fractal dimensional of $(H,H^1)$-uniform attractor of the two-dimensional NS equations, and \cite{zhao22} obtained the $(H,H^1)$-trajectory attractor of the three-dimensional modified NS equations, while \cite{li} proved the $(H,H^1)$-random uniform attractor of the non-autonomous stochastic two-dimensional NS equations. $(H,H^1)$-type attractors of the other fluids equations have been investigated in \cite{gu,wang2024,wang2023,xu}.

By proving the $H^1$ boundedness of the time derivative $u_t$ of system \eqref{2}, we prove that the global attractor $\A_0$ of the  two-dimensional anisotropic  NS equations is $(H,H^2)$-global attractor, see, e.g. \cite{song}. When $f\in H^2$, we also prove that the  two-dimensional anisotropic  NS equations in fact is $(H,H^2)$-global attractor.  In this paper, when $f\in H$, under certain conditions that the random dynamical system generated by \eqref{1} has  an $(H,H^2)$-random attractor, and, in addition, this random attractor  has finite (box-counting) fractal dimension in $H^2$.  Since the  idea of estimating $u_t$  does not apply to stochastic equations,  the $H^2$-random absorbing sets need be constructed differently here from \cite{song}. when $f\in H^2$, under certain conditions that the random dynamical system generated by \eqref{1} has  an $(H,H^3)$-random attractor, and, in addition, this random attractor  has finite (box-counting) fractal dimension in $H^3$. Moreover, for $f\in H^2$, we prove that the convergence of solutions of \eqref{8.2}  as $\delta\rightarrow0$ in the spaces $(H,H)$, $(H,H^1)$, $(H^1,H^2)$ and $(H^2,H^3)$, respectively.

\subsection*{Main results. Main techniques}

In this subsection, we introduce some main results and main techniques as follows. By virtue of the \cite{crauel}, we reformulated the  stochastic two-dimensional anisotropic  NS equations via a scalar Ornstein-Uhlenbeck process
as the following random equation
\begin{align}\label{3}
\frac{\d \textbf{v}}{\d t}+ \nu A_1\textbf{v}+B\big( \textbf{v}(t)+hy_\delta(\theta_t\omega)\big)=f(\textbf{x})-  \nu A_1hy_\delta(\theta_t\omega)+hy_\delta (\theta_t\omega),
\end{align}
where $A_1\textbf{u}=-P\left(
\begin{array}{ccc}
\partial_{yy}u\\ \partial_{xx}v
\end{array}\right)=-\left(
\begin{array}{ccc}
\partial_{yy}u\\ \partial_{xx}v
\end{array}\right)$, $B(u)=P((u\cdot \nabla)u)$ with $P:\LL^2\to H$ the Helmholtz-Leray orthogonal projection, and   $y_\delta$ is a tempered random variable.  The intensity  function of the noise $h(\textbf{x})$ is supposed to satisfy  the following assumption. In this paper, $C$ is a positive constant whose value may vary from line to line and even in the same line.

\begin{assumption} \label{assum}
Let    $ h \in H^3$, and $\nabla\cdot h=0$ and
 \begin{align*}
  \|\nabla h\|_{L^\infty} <  \sqrt {\pi\delta} \frac{1}{2}\nu \lambda_1 ,
 \end{align*}
where  $\lambda_1 = 4\pi^2 /L^2$ is the first eigenvalue of the Stokes operator $A$.
  \end{assumption}
In order to present the main results in the Section \ref{sec7}, we need to introduce the following assumption.

\begin{assumption} \label{assum1}
Let  $f\in H^2$,  $ h \in H^4$ and $\nabla\cdot h=0$ and
 \begin{align*}
  \|\nabla h\|_{L^\infty} <  \sqrt {\pi\delta} \frac{1}{2}\nu \lambda_1 ,
 \end{align*}
where  $\lambda_1 = 4\pi^2 /L^2$ is the first eigenvalue of the Stokes operator $A$.
  \end{assumption}

In  Theorem \ref{theorem4.1} of the Section \ref{sec4}, we  will introduce the following an $H^2$-random absorbing set.
\begin{main}\label{main}[$H^2$ absorbing set]
 Let Assumption \ref{assum} hold and $f\in H$. The  RDS $\phi$ generated by the random  anisotropic  NS equations \eqref{3} driven by colored noise has a  random absorbing set $\mathfrak{B}_{H^2}$, defined as a random $H^2$ neighborhood of the global attractor $\mathcal{A}_0$ of the deterministic anisotropic  NS equations \eqref{2}:
\begin{align}
\mathfrak{B}_{H^2}(\omega)=\left \{\textbf{v}\in H^2: \, {\rm dist}_{H^2}(\textbf{v},\mathcal{A}_{0}) \leq \sqrt{\rho(\omega)} \, \right\}, \quad \omega\in\Omega, \nonumber
\end{align}
where $\rho(\cdot)$ is the tempered random variable defined by \eqref{rho}.  As a consequence, the $\mathcal{D}_H$-random attractor $\mathcal{A}$ of \eqref{3} is a bounded and tempered random set in $H^2$.

\end{main}

Since the  idea of estimating $u_t$  does not apply to random equations \eqref{3},  the $H^2$-random absorbing sets need be constructed differently, see the Section \ref{sec4}. When $f$ belongs only to $H$, we can not prove the estimation the $H^2$ norm of $\textbf{v}$ of the  random equations \eqref{3}.

In order to prove the Main Theorem \ref{main}, we used a comparison approach to prove the difference $w:=\textbf{v}-\textbf{u}$ between the solutions of the random and the deterministic  anisotropic NS equations instead of estimating the solution $\textbf{v}$ itself, where $\textbf{u}$ is taken as a solution corresponding to an initial value lying  in the global attractor $\A_0$. By Lemma \ref{lemma4.4}, then we obtained that $w$ is asymptotically bounded in $H^2$. Inspired by the Proposition 12.4 in \cite{robinson01} and $ \cup_{t\geq 0} u(t) \subset \A_0$  bounded  in $ H^2$, the $H^2$ random absorbing set of $\textbf{v}$ is proved.

In Section \ref{sec6}, by Theorem \ref{theorem5.1},  we will show that   the random attractor $\A$
  is not only compact but also finite-dimensional in $H^2$, and the attraction  happens in $H^2$ as well. The second  Main Theorem is stated as the following.
\begin{main}\label{main1}
Let Assumption \ref{assum} hold and $f\in H$. Then the  RDS $\phi$ generated by the the random anisotropic  NS equations \eqref{3} driven by colored noise  has  a tempered $(H,H^2)$-random attractor $\mathcal A $. In addition, $\mathcal A$ has finite fractal dimension in $H^2$.
\end{main}
By the standard bi-spatial random attractor theory \cite{cui18jdde} and the $(H,H^2)$-smoothing  property of the system \eqref{3}, we proved the Main Theorem \ref{main1}. The smoothing property is essentially a local $(H,H^2)$-Lipschitz continuity in initial values. Moreover, we obtained the finite-dimensionality of the attractor in $H^2$.  Therefore,  in Theorem \ref{theorem5.6}, we obtained an $(H,H^2)$-smoothing property as stated below.
\begin{main}[$(H, H^2)$-smoothing]
 Let Assumption \ref{assum} hold and $f\in H$. For   any tempered set $\mathfrak D \in \D_H$, there
  are  random variables $T_{{\mathfrak D}} (\cdot)  $   and $ L_{\mathfrak D}(\cdot )$ such that  any two solutions $\textbf{v}_1$ and $\textbf{v}_2$ of the random anisotropic  NS equations \eqref{3} driven by colored noise corresponding to initial values   $\textbf{v}_{1,0},$ $ \textbf{v}_{2,0}$ in $\mathfrak D \left (\theta_{-T_{\mathfrak D}(\omega)}\omega \right)$, respectively, satisfy
  \begin{align}
  &
  \big\|\textbf{v}_1 \!  \big (T_{\mathfrak D}(\omega),\theta_{-T_{\mathfrak D}(\omega)}\omega,\textbf{v}_{1,0}\big)
  -\textbf{v}_2 \big (T_{\mathfrak D}(\omega),\theta_{-T_{\mathfrak D}(\omega)}\omega,\textbf{v}_{2,0} \big) \big \|{^2_{H^2}}  \notag \\[0.8ex]
&\quad \leq  L_{\mathfrak D} (\omega)\|\textbf{v}_{1,0}-\textbf{v}_{2,0}\|^2,\quad \omega\in \Omega.\label{sep5.1}
\end{align}

\end{main}

In Section \ref{sec7}, by Theorem \ref{theorem7.2},  we showed that   the random attractor $\A$
  is not only compact but also finite-dimensional in $H^3$, and the attraction  happens in $H^3$ as well. The fourth  Main Theorem is stated as the following.
\begin{main}\label{main2}
Let Assumption \ref{assum1} hold and $f\in H^2$. Then the  RDS $\phi$ generated by the the random anisotropic  NS equations \eqref{3} driven by colored noise  has  a tempered $(H,H^3)$-random attractor $\mathcal A $. In addition, $\mathcal A$ has finite fractal dimension in $H^3$.
\end{main}

By the $H^3$-absorbing set and the $(H,H^3)$-smoothing  property of the system \eqref{3}, we proved the Main Theorem \ref{main2}. The smoothing property is essentially a local $(H,H^3)$-Lipschitz continuity in initial values. Moreover, we obtained the finite-dimensionality of the attractor in $H^3$.  Therefore,  in Theorem \ref{theorem7.2}, we obtained an $(H,H^3)$-smoothing property as stated below.

\begin{main}[$(H, H^3)$-smoothing]
 Let Assumption \ref{assum1} hold and $f\in H^2$. For   any tempered set $\mathfrak D \in \D_H$, there
  are  random variables $T_{{\mathfrak D}}^* (\cdot)  $   and $ L_{\mathfrak D}^*(\cdot )$ such that  any two solutions $\textbf{v}_1$ and $\textbf{v}_2$ of the random anisotropic  NS equations \eqref{3} driven by colored noise corresponding to initial values   $\textbf{v}_{1,0},$ $ \textbf{v}_{2,0}$ in $\mathfrak D \left (\theta_{-T_{\mathfrak D}^*(\omega)}\omega \right)$, respectively, satisfy
  \begin{align}
  &
  \big\|\textbf{v}_1 \!  \big (T_{\mathfrak D}^*(\omega),\theta_{-T_{\mathfrak D}^*(\omega)}\omega,\textbf{v}_{1,0}\big)
  -\textbf{v}_2 \big (T_{\mathfrak D}^*(\omega),\theta_{-T_{\mathfrak D}^*(\omega)}\omega,\textbf{v}_{2,0} \big) \big \|{^2_{H^3}}  \notag \\[0.8ex]
&\quad \leq  L_{\mathfrak D}^* (\omega)\|\textbf{v}_{1,0}-\textbf{v}_{2,0}\|^2,\quad \omega\in \Omega.\label{sep5.1}
\end{align}
\end{main}

In Section \ref{sec8}, we firstly proved the upper semi-continuity of random attractors as $\delta\rightarrow0$. Finally, we proved the convergence of solutions of \eqref{8.2}  as $\delta\rightarrow0$ in the spaces $(H,H)$, $(H,H^1)$, $(H^1,H^2)$ and $(H^2,H^3)$, respectively. Then we introduce the main result as stated below.

\begin{main} Let Assumption \ref{assum1} hold and $f\in H^2$. There  exists a  random variable $\bar{T}_\omega^*$  such that   any two solutions $\textbf{v}_1$ and $\textbf{v}_2$ of the random anisotropic  NS equations \eqref{3} driven by colored noise corresponding to initial values  $\textbf{v}_{\delta,0},$ $\textbf{v}_{0}$ in $\mathfrak{B}(\theta_{-\bar{T}_\omega^*})$, respectively,   satisfy
\begin{align*}
 \lim\limits_{\delta\rightarrow0}\|\textbf{u}_\delta(\bar{T}_\omega^*,\theta_{-\bar{T}_\omega^*}\omega,\textbf{u}_{\delta,0})
 -\textbf{u}(\bar{T}_\omega^*,\theta_{-\bar{T}_\omega^*}\omega,\textbf{u}_{0}) \big \|^2_{H^3}=0, \quad \omega\in \Omega.
\end{align*}
\end{main}

\subsection*{Organization of the paper}

In Section \ref{sec2}, we introduce the  $(X,Y)$-random attractor theory and their fractal dimensions.  In Section \ref{sec3}, we first  introduce the  functional spaces that are necessary to study two-dimensional anisotropic  NS equations, and then associate an RDS to the random two-dimensional anisotropic  NS equations. Under  new   conditions, we prove an $(H,H^1)$-random attractor of finite  fractal dimension in $H$.    In Section \ref{sec4}, by an indirect approach of  estimating the $H^2$-distance between the random and the deterministic solution trajectories  we construct an  $H^2$ random absorbing set. In Section \ref{sec5}, we  show the crucial  $(H,H^2)$-smoothing  of the RDS from which it follows in Section \ref{sec6}  that the random attractor    in $H$ is in fact an $(H,H^2)$-random attractor of finite  fractal dimension in $H^2$.  In Section \ref{sec7}, we  first show the   $(H,H^3)$-smoothing  of the RDS,  and then the random attractor    in $H$ is in fact an $(H,H^3)$-random attractor of finite  fractal dimension in $H^3$. In Section \ref{sec8}, we prove the  upper semi-continuity of random attractors  and the convergence of solutions of \eqref{8.2}  as $\delta\rightarrow0$ in the spaces $(H,H)$, $(H,H^1)$, $(H^1,H^2)$ and $(H^2,H^3)$, respectively.

\section{Preliminaries:   $(X,Y)$-random attractors and  their fractal dimension}\label{sec2}

In this section, we first introduce the basic  theory of $(X,Y)$-random attractors and their fractal dimension needed in this paper.   The readers are referred to \cite{arnold,crauel,cui18jdde,gu2020,gu2019,langa2} and references therein.

Let $(\Omega,\mathcal{F},\mathbb{P})$ be the standard probability space, where $\Omega=\{\omega\in C(\mathbb{R},\mathbb{R}):\omega(0)=0\}$, with the open compact topology, $\mathcal{F}$ is  its Borel $\sigma$-algebra and $\mathbb{P}$ is  the Wiener measure on $(\Omega,\mathcal{F})$.  We consider  the Wiener shift $\{\theta_t\}_{t\in\mathbb{R}}$ defined on the probability space $(\Omega,\mathcal{F},\mathbb{P})$ as  stated below
\begin{align*}
\theta_t\omega(\cdot)=\omega(t+\cdot)-\omega(t)
\end{align*}
for any $\omega\in\Omega$ and $t\in\mathbb{R}$.

Let $(X,\|\cdot\|_X)$ denote a Banach space  and   be  a probability space with a group $\{\theta_t\}_{t\in\mathbb{R}}$ of measure-preserving self-transformations on $\Omega$.
Denote by $\mathcal{B}(\cdot)$ the Borel $\sigma$-algebra of a metric space, and let
   $\R^+:=[0,\infty)$.

\begin{definition}
A mapping $\phi:\mathbb{R}^+\times\Omega\times X\rightarrow X$ is defined  a {\em random dynamical system (RDS)}   in $X$, if
\begin{enumerate}[(i)]
\item $\phi$ is $(\mathcal{B}(\mathbb{R}^+)\times\mathcal{F}\times\mathcal{B}(X),\mathcal{B}(X))$-measurable;

\item  for all $\omega\in\Omega$, $\phi(0,\omega,\cdot)$ is the identity on $X$;

\item $\phi$ satisfies  the cocycle property as  stated below
\begin{align*}
\phi(t+s,\omega,x)=\phi(t,\theta_s\omega,\phi(s,\omega,x)), \quad \forall t,s\in \mathbb{R}^+,~\omega\in\Omega,~x\in X;
\end{align*}
\item the mapping $x\mapsto \phi(t,\omega, x)$ is continuous.
\end{enumerate}
\end{definition}

A  set-valued mapping $\mathfrak D$: $\Omega\mapsto 2^X\setminus \emptyset $, $ \omega\mapsto \mathfrak D(\omega) $, is called a \emph{random set}  in $X$ if it is   \emph{measurable} in the sense that the mapping $\omega\to \dist_X(x, \mathfrak D(\omega))$    is  $(\mathcal F,\mathcal B(\R))$-measurable for each $x\in X$. If each its image $\mathfrak D(\omega)$ is closed (or  bounded, compact, etc.) in $X$, then $\mathfrak D$ is called a  closed (or  bounded, compact, etc.) random set in $X$.

A random variable $\zeta:\Omega\mapsto X$ is called {\em tempered} if
\[
  \lim_{t\to \infty} e^{-\varepsilon t} |\zeta(\theta_{-t}\omega)| =0, \quad \varepsilon >0.
\]
A random set $\mathfrak D$ in $X$  is called {\emph{tempered}}  if there exists a   tempered random variable $\zeta$ such that $\|\mathfrak D(\omega)\|_X:=\sup_{x\in \mathfrak D(\omega)} \|x\| \leq \zeta(\omega)$ for any $\omega\in \Omega$.

\smallskip
Denote by $\mathcal{D}_X$  the collection of all the tempered random sets in $X$.

 \begin{definition}
A    random set $\mathfrak{B}$ in $X$ is called a {\emph{(random)  absorbing set}} for $\phi$ if  it pullback absorbs any tempered random set in $X$, i.e., if for every  $\mathfrak D\in\mathcal{D}_X$   there exists a random variable $T_{\mathfrak D} (\cdot)$ such that
\begin{align*}
\phi(t,\theta_{-t}\omega, \mathfrak D(\theta_{-t}\omega))\subset \mathfrak{B}(\omega),\quad t\geq T_{\mathfrak D}(\omega),\  \omega\in\Omega.
\end{align*}
\end{definition}

\begin{definition}
A random set $\mathcal{A}$  is called  the \emph{random attractor}   for  $\phi$ in $X$ if
\begin{enumerate}[(i)]
\item  $\mathcal{A}$ is a tempered and compact random set in $X$;

 \item  $\mathcal{A}$ is invariant under $\phi$, that is,
\begin{align*}
\mathcal{A}(\theta_t\omega)=\phi(t,\omega,\mathcal{A}(\omega)), \quad t\geq0,~\omega\in\Omega;
\end{align*}

 \item $\mathcal{A}$  pullback attracts any tempered set in $X$, that is,  for all $\mathfrak D\in  \mathcal{D}_X$,
 \[
\lim\limits_{t\rightarrow\infty} {\rm dist}_{X} \big(\phi(t,\theta_{-t}\omega,\mathfrak D(\theta_{-t}\omega)), \, \mathcal{A}(\omega) \big)=0,\quad \omega\in\Omega ,
 \]
where
$\dist _{X}(\cdot, \cdot) $  is  the Hausdorff semi-metric between   subsets  of  $ X$, that is,
\[
 \dist_{X}(A,B)= \sup_{a\in A} \inf_{b\in B} \|a-b\|_X, \quad \forall A,B\subset X.
\]
\end{enumerate}
\end{definition}

For the colored noise, given $\delta>0$, then we introduce one-dimensional stochastic differential equation
\begin{align}\label{4}
\d \zeta_{\delta}+\frac{1}{\delta}\zeta_{\delta}\d t= \frac{1}{\delta}dW,
\end{align}
where $W$ is a two-sided real valued Wiener process defined on $(\Omega,\mathcal{F},\mathbb{P})$. Let $\zeta_{\delta}:\Omega\rightarrow\mathbb{R}$ be a random variable defined by
\begin{align*}
\zeta_{\delta}(\omega)= \frac{1}{\delta}\int_{-\infty}^0e^{ \frac{s}{\delta}}dW,~~\forall \omega\in\Omega.
\end{align*}
By \cite{gu2020,gu2019}, we have that $\zeta_{\delta}(\theta_t\omega)$ is the unique stationary solution of \eqref{4}. Then the process $\zeta_{\delta}(\theta_t\omega)$ is defined a real valued colored noise which is a stationary Wiener process with mean zero and variance $\frac{1}{2\delta}$. Moreover, there exists a $\{\theta_t\}_{t\in\mathbb{R}}$-invariant subset $\Omega$ of full measure, such that for any $\omega\in \Omega$,
 \begin{align}\label{5}
\lim\limits_{t\rightarrow\pm\infty}\frac{1}{t}\int_0^ty_{\delta}(\theta_r\omega)\d r=\mathbb{E}[y_{\delta}]=0~~uniformly~~for~~0<\delta\leq1.
  \end{align}

Assume that $Y$ is a Banach space with continuous embedding $Y \hookrightarrow X$.
\begin{definition}
A random set $\mathcal{A}$   is called  the {\emph{$(X,Y)$-random attractor}}  for $\phi$ in $X$,  if
\begin{enumerate}[(i)]
\item  $\mathcal{A}$ is a tempered and compact random set in $Y$;

 \item  $\mathcal{A}$ is invariant under $\phi$, that is,
\begin{align*}
\mathcal{A}(\theta_t\omega)=\phi(t,\omega,\mathcal{A}(\omega)), \quad t\geq0,~\omega\in\Omega;
\end{align*}

 \item $\mathcal{A}$  pullback attracts any tempered set in $X$ in the metric of $Y$, that is,  for all $\mathfrak D\in  \mathcal{D}_X$,
 \[
\lim\limits_{t\rightarrow\infty} {\rm dist}_{Y} \big(\phi(t,\theta_{-t}\omega, \mathfrak D(\theta_{-t}\omega)), \, \mathcal{A}(\omega) \big)=0,\quad \omega\in\Omega .
 \]
\end{enumerate}
\end{definition}

Note that  in addition to the temperedness, compactness and pullback attraction in $Y$,  the measurability in $Y$ of an $(X,Y)$-random attractor was also  required by definition.

 Then we introduce  the following criterion  as  stated below, see, e.g. \cite[Theorem 19]{cui18jdde}.
  \begin{lemma}(\cite[Theorem 19]{cui18jdde}) \label{lem:cui18}
 Suppose that $\phi$ is  an RDS.  If
  \begin{enumerate}[(i)]
  \item  $\phi$ has a   random absorbing set $\mathfrak B$ which is a tempered and closed random set in $Y$;
  \item $\phi$  is $(X,Y)$-asymptotically compact, that is, for any $ \mathfrak D\in \D_X$ and $t_n\to \infty$,   any sequence
  $\{\phi(t_n, \theta_{-t_n} \omega, x_n)\}_{n\in \N}$ with $x_n \in \mathfrak D(\theta_{-t_n} \omega)$, $\omega\in \Omega$, has a convergent subsequence in $Y$,
  \end{enumerate}
    then  $\phi$ has a unique   $(X,Y)$-random attractor $\A$ defined by
    \[
    \A(\omega) = \bigcap_{s\geq 0} \overline{ \bigcup_{t\geq  s} \phi(t,\theta_{-t} \omega, \mathfrak  B(\theta_{-t}\omega)) }^Y ,
    \quad \omega \in \Omega.
    \]
  \end{lemma}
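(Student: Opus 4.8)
The plan is to verify that the set $\A$ written in the statement is exactly the $\Omega$-limit set of the absorbing family $\mathfrak B$ and that it meets all the requirements in the definition of an $(X,Y)$-random attractor. First I would record the pointwise characterization: $y\in\A(\omega)$ if and only if there exist sequences $t_n\to\infty$ and $x_n\in\mathfrak B(\theta_{-t_n}\omega)$ with $\phi(t_n,\theta_{-t_n}\omega,x_n)\to y$ in $Y$. Since $\mathfrak B$ is tempered and closed in $Y$ and $Y\hookrightarrow X$ continuously, $\mathfrak B\in\D_X$, so condition (ii) applies in particular to sequences issuing from $\mathfrak B$ itself. Because $\mathfrak B$ is absorbing, for $s\geq T_{\mathfrak B}(\omega)$ one has $\bigcup_{t\geq s}\phi(t,\theta_{-t}\omega,\mathfrak B(\theta_{-t}\omega))\subset\mathfrak B(\omega)$, and as $\mathfrak B(\omega)$ is closed in $Y$ this gives $\A(\omega)\subset\mathfrak B(\omega)$; temperedness of $\A$ in $Y$ follows at once.

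Next I would establish nonemptiness, compactness, and invariance. Applying condition (ii) to a sequence drawn from $\mathfrak B(\theta_{-t_n}\omega)$ with $t_n\to\infty$ produces a $Y$-limit point, so $\A(\omega)$ is nonempty. For compactness in $Y$, given $y_n\in\A(\omega)$ a diagonal argument yields points $\phi(s_k,\theta_{-s_k}\omega,z_k)$ with $z_k\in\mathfrak B(\theta_{-s_k}\omega)$, $s_k\to\infty$, lying within $1/k$ of suitable $y_{n_k}$ in $Y$; condition (ii) gives a $Y$-convergent subsequence, whence $\{y_n\}$ has a limit point in the closed set $\A(\omega)$. Invariance $\phi(t,\omega,\A(\omega))=\A(\theta_t\omega)$ is the standard two-inclusion argument: the inclusion $\subset$ uses the cocycle identity $\phi(t,\omega,\phi(t_n,\theta_{-t_n}\omega,x_n))=\phi(t+t_n,\theta_{-(t+t_n)}\theta_t\omega,x_n)$ together with the continuity of $x\mapsto\phi(t,\omega,x)$, while the reverse inclusion $\supset$ writes a $Y$-limit point $y\in\A(\theta_t\omega)$ as $\phi(t,\omega,\xi_n)$ with $\xi_n=\phi(s_n-t,\theta_{-(s_n-t)}\omega,x_n)$, extracts via (ii) a limit $\xi\in\A(\omega)$, and passes to the limit by continuity.

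The attraction property in the $Y$-metric I would prove by contradiction. If it failed for some $\mathfrak D\in\D_X$, there would be $\varepsilon_0>0$, $t_n\to\infty$ and $x_n\in\mathfrak D(\theta_{-t_n}\omega)$ with $\dist_Y(\phi(t_n,\theta_{-t_n}\omega,x_n),\A(\omega))\geq\varepsilon_0$. By (ii) a subsequence converges in $Y$ to some $y$; to see $y\in\A(\omega)$, fix $s\geq0$ and use absorption of $\mathfrak D$ at base point $\theta_{-s}\omega$, namely $\phi(t_n-s,\theta_{-t_n}\omega,x_n)\in\mathfrak B(\theta_{-s}\omega)$ once $t_n-s\geq T_{\mathfrak D}(\theta_{-s}\omega)$, so that by the cocycle property $\phi(t_n,\theta_{-t_n}\omega,x_n)\in\phi(s,\theta_{-s}\omega,\mathfrak B(\theta_{-s}\omega))$ for large $n$; letting $n\to\infty$ places $y$ in $\overline{\bigcup_{t\geq s}\phi(t,\theta_{-t}\omega,\mathfrak B(\theta_{-t}\omega))}^Y$ for every $s$, hence in $\A(\omega)$, contradicting the lower bound $\varepsilon_0$.

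The step I expect to be the main obstacle is the measurability of $\A$ as a random set in $Y$, which is not automatic in the bi-spatial setting because the cocycle is a priori controlled in the weaker space $X$. Here I would exploit that $\A(\omega)$ is a compact subset of the closed random set $\mathfrak B(\omega)$ and express $\dist_Y(y,\A(\omega))$ through a countable dense subset of $Y$ together with the joint measurability of $\phi$ and the measurability of $\mathfrak B$; concretely, one shows each map $\omega\mapsto\dist_Y(y,\A(\omega))$ is $(\mathcal F,\mathcal B(\R))$-measurable by combining the measurability of the sets $\overline{\bigcup_{t\geq s}\phi(t,\theta_{-t}\omega,\mathfrak B(\theta_{-t}\omega))}^Y$ over rational $s$ with a measurable-selection argument, as carried out in \cite[Theorem 19]{cui18jdde}. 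Uniqueness is then immediate: two $(X,Y)$-attractors each attract and are contained in the other, so mutual attraction together with invariance forces them to coincide.
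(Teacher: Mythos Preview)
The paper does not prove this lemma; it is quoted verbatim from \cite[Theorem~19]{cui18jdde} as a black-box criterion and used later in Theorem~\ref{theorem5.1}. So there is no ``paper's own proof'' to compare against. Your outline is the standard $\Omega$-limit-set argument that underlies that reference, and you have correctly flagged measurability in $Y$ as the delicate point; for the purposes of this paper it suffices to cite \cite{cui18jdde} rather than reproduce the argument.
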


 \medskip

 Let $E$ be a precompact  set in $X$, and suppose that $N(E, \varepsilon)$ is the minimum number of balls of radius $\varepsilon$ in $X$ required to cover $E$. Then the {\emph{(box-counting) fractal dimension of $E$}}  is given by
\[
 d_f^X(E)=\limsup_{\varepsilon\to 0} \frac{ \log N(E,\varepsilon)}{ -\log \varepsilon} ,
\]
where the superscript ``$\ ^X$''   indicates the space to which  the fractal dimension is referred.

\section{The anisotropic Navier-Stokes equations and the   attractors in $H$}\label{sec3}
In this section, we first introduce the basic of functional spaces. As $f$ belongs only to $H$, we can prove that the global attractor of the deterministic two-dimensional anisotropic  NS equations \eqref{2} is $(H,H^2)$-attractor. When $f\in H^2$, we also can prove that the global attractor of the deterministic two-dimensional anisotropic  NS equations \eqref{2} is $(H,H^3)$-attractor. We transform the stochastic anisotropic  NS equations \eqref{3.2} to a random  anisotropic  NS equations  which generate an RDS $\phi$ on $H$. Finally, under  new   conditions, we will prove an $(H,H^1)$-random attractor of finite  fractal dimension in $H$.

\subsection{Functional spaces}

In this subsection, we  first introduce Sobolev spaces of periodic functions to study the periodic boundary conditions. The following settings are standard, the readers are referred to, e.g., \cite{bahouri,robinson01,temam1,temam}. etc.

 Let $(C_{per}^\infty(\mathbb T^2))^2 $ be the space of two-component infinitely differentiable functions that are $L$-periodic in each direction, and by  $\LL^2$, $\HH^1$ and $\HH^2$ the completion of $(C_{per}^\infty(\mathbb T^2))^2 $  with respect to the $(L^2(\TT^2))^2$, $(H^1(\TT^2))^2$ and $(H^2(\TT^2))^2$ norm, respectively.  $\LL^2$ and $\HH^1$ are endowed with the inner products as follows
 \begin{align*}
 (u,v)&=\int_{\mathbb{T}^2}u\cdot v\d x,~~u,v\in\LL^2,\\
  ((u,v))&=\int_{\mathbb{T}^2}\sum\limits_{j=1}^2\nabla u_j\cdot \nabla v_j\d x,~~u,v\in\HH^1.
 \end{align*}

 The phase space $(H,\|\cdot\|)$ denotes a subspace of  $\LL^2$ of functions with zero mean and free divergence,  that is,
\begin{align*}
H= \left \{u\in \LL ^2:
\, \int_{\mathbb T^2}u \ \d x=0,\  {\rm div}\, u=0 \right \},
\end{align*}
endowed with the norm of $\LL^2$, i.e.,
  $\|\cdot\|=\|\cdot\|_{\LL^2}$.   For $p>2$,  we write $(L^p(\TT^2))^2$  simply as $L^p$ and its norm as $\|\cdot\|_{L^p}$.

Note that any $u\in \LL^2 $ has an expression $u=\sum_{j\in \mathbb{Z}^2} \hat u_j e^{{\rm i}j\cdot x}$, where  $ {\rm i}= \sqrt{-1}$ and $\hat u_j$ are Fourier coefficients.  Hence, $\int_{\mathbb T^2}    u \,  \d x=0$ is equivalent to $\hat u_0=0$. For any  $u\in H $, we have
\[
 u =\sum_{j\in \mathbb Z^2\setminus\{0\}} \hat u_j e^{{\rm i}j\cdot x} .
\]
Suppose that $P: \LL ^2\rightarrow H$ is the Helmholtz-Leray orthogonal projection operator. In  this periodic space, we define the Stokes operator $A\textbf{u}=-P\Delta\textbf{u}==-\Delta\textbf{u}$ for any $\textbf{u}\in D(A) $. Then we set  $A_1\textbf{u}=-P\left(
\begin{array}{ccc}
\partial_{yy}u\\ \partial_{xx}v
\end{array}\right)=-\left(
\begin{array}{ccc}
\partial_{yy}u\\ \partial_{xx}v
\end{array}\right)$. In  this bounded domain, the projector $P$ does not commute with derivatives. The operator $A^{-1}$  is a self-adjoint positive-definite compact operator from $H$  to $H$.

   For $s >0 $,    the Sobolev space  $H^s:=D(A^{s/2})$  is given  by as stated below
 \[
H^s =\left \{u\in H:\|u\|^2_{H^s}=\sum\limits_{j\in \mathbb{Z}^{2}\backslash \{0\}}|j|^{2s}|\hat{u}_j|^2<\infty \right \},
 \]
endowed with the norm $\|\cdot\|_{H^s}=\|A^{s/2}\cdot \|$. Then the dual space of $H^s$ is defined by $H^{-s}$.

For any $u,v\in H^1$,  we define the bilinear form
\[
B(u,v)=P((u\cdot\nabla)v) ,
\]
and, in particular, $B(u):=B(u,u)$. For any  $u,v,w\in H^1$, we set
\begin{align*}
\langle B(u,v),w\rangle=b(u,v,w)=\sum\limits_{i,j=1}^2\int_{\mathbb{T}^2}u_i\frac{\partial v_j}{\partial x_i}w_j\d x.
\end{align*}
 Then we introduce the Poincar\'e's inequality as stated below, see, e.g., \cite[lemma 5.40]{robinson01}.

\begin{lemma}  If $u\in H^1$, then we have
\begin{equation} \label{poin}
 \|u\| \leq \left( \frac L{2\pi} \right) \|    u\|_{H^1}.
\end{equation}
\end{lemma}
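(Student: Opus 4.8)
The plan is to prove this standard Poincar\'e (spectral gap) inequality on the torus by diagonalizing the Stokes operator in the Fourier basis and exploiting the fact that the zero-mean constraint eliminates the lowest mode. First, I would recall that every $u\in H$ has the Fourier expansion $u=\sum_{k}\hat u_k e^{\mathrm{i}k\cdot x}$ with wave vectors $k$ in the dual lattice $\frac{2\pi}{L}\mathbb{Z}^2$, and that the zero-mean condition $\int_{\TT^2}u\,\d x=0$ forces $\hat u_0=0$, so the expansion runs only over $k\neq 0$. By Parseval's identity, $\|u\|^2=\|u\|_{\LL^2}^2$ equals, up to the fixed normalization $|\TT^2|$, the quantity $\sum_{k\neq 0}|\hat u_k|^2$.

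Second, since the Stokes operator $A=-\Delta$ acts diagonally on this basis, $Au=\sum_{k}|k|^2\hat u_k e^{\mathrm{i}k\cdot x}$, and the $H^1$ norm $\|u\|_{H^1}=\|A^{1/2}u\|$ therefore satisfies $\|u\|_{H^1}^2=\sum_{k\neq 0}|k|^2|\hat u_k|^2$ with the same normalization, so that the normalizing constant cancels when the two norms are compared. The crux is then an elementary lower bound on the frequencies: every nonzero wave vector $k=\frac{2\pi}{L}j$ with $j\in\mathbb{Z}^2\setminus\{0\}$ satisfies $|k|^2\geq(2\pi/L)^2=\lambda_1$, this being precisely the smallest nonzero eigenvalue of $A$.

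Third, combining these observations I would estimate term by term
\[
\|u\|_{H^1}^2=\sum_{k\neq 0}|k|^2|\hat u_k|^2\geq \lambda_1\sum_{k\neq 0}|\hat u_k|^2=\lambda_1\|u\|^2,
\]
and then conclude by dividing and taking square roots, using $\lambda_1=4\pi^2/L^2$ so that $1/\sqrt{\lambda_1}=L/(2\pi)$; this yields $\|u\|\leq(L/2\pi)\|u\|_{H^1}$ with exactly the stated constant.

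The argument is routine and presents no serious obstacle; the only points requiring care are bookkeeping ones. One must keep the normalization of the Fourier coefficients consistent between the two norms so that it cancels, and one must correctly identify $\lambda_1=(2\pi/L)^2$ as the infimum of $|k|^2$ over the nonzero modes. This last point is exactly where the zero-mean hypothesis is essential: without it the $k=0$ term would contribute to $\|u\|^2$ but not to $\|u\|_{H^1}^2$, and no such inequality could hold. Since Parseval's identity and the diagonal action of $A$ are valid directly on $\LL^2\supset H^1$ and the manipulation involves only sums of nonnegative terms, no density argument is strictly needed, though one may pass through smooth $u$ first if one prefers to justify the term-by-term bound on partial sums before taking limits.
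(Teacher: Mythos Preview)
Your Fourier-series argument is correct and is the standard proof of the Poincar\'e inequality on the periodic torus; the identification of $\lambda_1=(2\pi/L)^2$ as the smallest nonzero eigenvalue of $A$ and the use of the zero-mean condition to discard the $k=0$ mode are exactly the right ingredients. The paper itself does not give a proof of this lemma but simply cites \cite[Lemma 5.40]{robinson01}, so there is nothing to compare; your proposal supplies precisely the argument one finds in that reference.
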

Thus, let
\[
 \lambda_1 :=   \frac{4\pi^2 } {L^2}  ,
\]
we can deduce that $\| u\|_{H^1}^2 \geq\lambda_1 \|u\|^2$. For later purpose we  let $\lambda:= \alpha \nu \lambda_1/4$.

Assume that  the noise  intensity $h$  satisfy Assumption  \ref{assum}, that is,     $ h \in H^3$ and
\begin{align}\label{2.3}
  \|\nabla h\|_{L^\infty} <   \sqrt{\pi\delta}\frac{1}{2} \nu \lambda_1 .
\end{align}
 (In particular, $\|\nabla h\|_{L^\infty} < \sqrt{\pi\delta} \frac{\nu}{2} $  for the scale $L=2\pi$).
Clearly, the  tolerance of the noise  intensity     increases as the kinematic viscosity $\nu$ increases.

Let $f\in H$, $\textbf{u}_{h}(0)\in H$ and $\omega\in\Omega$, we say that a mapping $\textbf{u}(\cdot,\omega,\textbf{u}_h(0)):[0,+\infty)\rightarrow H$ is defined  a solution of system \eqref{1} if for any $T>0$,
\begin{align*}
\textbf{u}(\cdot,\omega,\textbf{u}_h(0))\in C([0,+\infty);H)\cap L^2(0,T;H^1)
\end{align*}
and $u$ satisfies
\begin{align}\label{3.1}
(\textbf{u}_h(t),\xi)&+\nu\int_0^t(\partial_yu_h,\partial_y\xi_1)\d s+\nu\int_0^t(\partial_xv_h,\partial_x\xi_2)\d s+\int_0^tb(\textbf{u}_h,\textbf{u}_h,\xi)\d s\nonumber\\
&=(\textbf{u}_0,\xi)+\int_0^t( f(\textbf{x}),\xi)\d s+\int_0^t(h(\textbf{x})\zeta_{\delta}(\theta_s\omega),\xi)\d s
\end{align}
for any $t>0$ and $\xi=(\xi_1,\xi_2)\in H^1$. Then the system \eqref{1} is rewritten as follows
\begin{align}\label{3.2}
\frac{\d \textbf{u}_h}{\d t}+\nu
A_1\textbf{u}_h+B(\textbf{u}_h,\textbf{u}_h)=f+h\zeta_{\delta}(\theta_t\omega),
\end{align}
in $H^{-1}$.In \cite{temam1,temam}, applying the Galerkin method, we say that for any $\textbf{u}_h(0)\in H$ and $\omega\in\Omega$, the system \eqref{1} has a unique solution $\textbf{u}_h$ in the sense of \eqref{3.1}.

\subsection{Global attractor of the deterministic anisotropic Navier-Stokes equations}

In this subsection, we recall the global attractor of the deterministic two-dimensional anisotropic  NS equations \eqref{2}, i.e. for $h(\textbf{x})\equiv 0$,  which will be crucial for our analysis later. Under projection $P$, the      deterministic two-dimensional anisotropic  NS equations \eqref{2}  is rewritten as
 \begin{align}\label{2.1}
   \partial_t\textbf{u}+ \nu A_1\textbf{u}+B(\textbf{u})  =f,\quad u(0)=u_{0}.
\end{align}
The system \eqref{2.1}  generates an autonomous dynamical system $S$ in phase space $H$ with a global attractor $\A_0$. Inspired by the Proposition 12.4 in \cite{robinson01},    the following regularity result of the attractor $\A_0$ seems  optimal for $f\in H$  in the literature.

\begin{lemma}
For $f\in H$, the deterministic two-dimensional anisotropic  NS equations \eqref{2.1}  generate a semigroup $S$ which possesses a global attractor $\A_0$ in $H$ and has an
  absorbing set which is bounded in $H^2$. In particular,    the global attractor $\A_0$   is   bounded in $H^2$.
\end{lemma}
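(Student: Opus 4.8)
The plan is to run the classical three-tier absorbing-set argument for two-dimensional Navier--Stokes, adapted to the partial (anisotropic) dissipation, and then to bootstrap the $H^1$-absorbing set up to $H^2$ along the lines of Proposition~12.4 in \cite{robinson01}. First I would record that, by the Galerkin construction already recalled after \eqref{3.2}, for every $\textbf{u}_0\in H$ equation \eqref{2.1} has a unique solution $\textbf{u}\in C([0,\infty);H)\cap L^2(0,T;H^1)$ for every $T>0$, depending continuously on $\textbf{u}_0$ in $H$; this gives the semigroup $S(t)$ on $H$. The one structural point to keep in mind throughout is that $(A_1\textbf{u},\textbf{u})=\|\partial_y u\|^2+\|\partial_x v\|^2$ only, so coercivity of $A_1$ on $H^1$ must be recovered: on $\mathbb{T}^2$ a Fourier computation using the divergence-free constraint gives $(A_1\textbf{u},\textbf{u})\ge\alpha\|\textbf{u}\|_{H^1}^2$ for a universal $\alpha\in(0,1]$, which is the constant entering $\lambda=\alpha\nu\lambda_1/4$ above.

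Next I would derive the three absorbing sets. Testing \eqref{2.1} with $\textbf{u}$, using $b(\textbf{u},\textbf{u},\textbf{u})=0$, the coercivity just mentioned and the Poincar\'e inequality \eqref{poin}, yields $\frac{\d}{\d t}\|\textbf{u}\|^2+\alpha\nu\lambda_1\|\textbf{u}\|^2\le C\|f\|^2$, hence by Gronwall a bounded absorbing ball $\mathfrak{B}_0$ in $H$ and, after integration, the uniform bound $\int_t^{t+1}\|\textbf{u}(s)\|_{H^1}^2\,\d s\le C(\|f\|,\nu)$ for trajectories that have entered $\mathfrak{B}_0$. Testing \eqref{2.1} with $A_1\textbf{u}$ gives $\frac12\frac{\d}{\d t}\|\textbf{u}\|_{H^1}^2+\nu\|A_1\textbf{u}\|^2=(f,A_1\textbf{u})-b(\textbf{u},\textbf{u},A_1\textbf{u})$; I would estimate the nonlinear term by a two-dimensional anisotropic interpolation inequality of Ladyzhenskaya/Agmon type (see \cite{bahouri,paicu,temam}), absorb a fraction of $\nu\|A_1\textbf{u}\|^2$ through Young's inequality, and feed the resulting differential inequality together with the $L^2_t(H^1)$ bound into the uniform Gronwall lemma to get a bounded absorbing set $\mathfrak{B}_1$ in $H^1$. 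Since $\mathfrak{B}_1$ is precompact in $H$ by the compact embedding $H^1\hookrightarrow H$, $S(t)$ is asymptotically compact in $H$ and the standard theorem produces the global attractor $\A_0\subset\mathfrak{B}_1$ in $H$.

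For the $H^2$ bootstrap I would follow Proposition~12.4 of \cite{robinson01}. As $f$ is time-independent I would differentiate \eqref{2.1} in $t$, test the equation for $\textbf{u}_t$ with $\textbf{u}_t$, control $b(\textbf{u}_t,\textbf{u},\textbf{u}_t)$ by the $H^1$-bound on $\textbf{u}$ from the previous step and an anisotropic 2D estimate, and conclude, using that $\int_t^{t+1}\|\textbf{u}\|_{H^1}^2$ is bounded (which gives an instantaneous bound on $\|\textbf{u}_t\|$ by testing the $\textbf{u}_t$-equation) and Gronwall, that $\textbf{u}_t$ enters a bounded set in $H$. Rewriting \eqref{2.1} as the stationary anisotropic Stokes problem $\nu A_1\textbf{u}=f-\textbf{u}_t-B(\textbf{u})$ with right-hand side now bounded in $H$ on absorbed trajectories, I would use $\|B(\textbf{u})\|\le C\|\textbf{u}\|^{1/2}\|\textbf{u}\|_{H^1}\|A_1\textbf{u}\|^{1/2}$, absorb $\tfrac{\nu}{2}\|A_1\textbf{u}\|$ by Young, and invoke the elliptic regularity $\|\textbf{u}\|_{H^2}\le C\|A_1\textbf{u}\|$ (valid on $\mathbb{T}^2$ for divergence-free fields) to obtain an $H^2$-bounded absorbing set $\mathfrak{B}_2$. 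Since $\A_0$ is invariant and attracted by $\mathfrak{B}_2$, it follows that $\A_0\subset\mathfrak{B}_2$ is bounded in $H^2$.

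The main obstacle is the nonlinear estimates in the last two steps: with only the directional dissipation $\|\partial_y u\|^2+\|\partial_x v\|^2$ available, the terms $b(\textbf{u},\textbf{u},A_1\textbf{u})$ and $b(\textbf{u}_t,\textbf{u},\textbf{u}_t)$ cannot be treated by the usual isotropic Ladyzhenskaya inequality, and one must use sharp direction-by-direction anisotropic inequalities on $\mathbb{T}^2$ combined with the incompressibility constraint both to close the estimates and to recover the full $H^1$- and $H^2$-norms. Everything else is the routine two-dimensional Navier--Stokes machinery.
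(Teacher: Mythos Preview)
Your strategy is correct and coincides with what the paper invokes: the lemma is stated without proof, the text points to Proposition~12.4 of \cite{robinson01}, and Remark~\ref{rmk1} confirms that the $H^2$ step goes via the time-derivative $\partial_t\textbf{u}$ exactly as you outline. So the three-tier absorbing-set argument followed by the $\textbf{u}_t$ bootstrap is precisely the intended route.

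One comment on what you flag as the ``main obstacle''. It largely evaporates if you test with $A\textbf{u}$ rather than $A_1\textbf{u}$. First, on $\mathbb{T}^2$ with divergence-free fields the paper's computation \eqref{3.10}--\eqref{3.12} shows $\nu(A_1\textbf{u},A\textbf{u})\ge\tfrac{\nu}{2}\|A\textbf{u}\|^2$, so the anisotropic dissipation controls the full $H^2$ seminorm without any anisotropic interpolation. Second, in 2D periodic one has the orthogonality $b(\textbf{u},\textbf{u},A\textbf{u})=0$, so the nonlinear term disappears in the $H^1$ step and no Ladyzhenskaya-type estimate is needed there at all (this is exactly how the paper handles the nonlinearity in \eqref{4.4} for the random equation, rewriting $A\textbf{v}$ so as to exploit $(B(w),Aw)=0$). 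Your route via $A_1\textbf{u}$ also closes, since a Fourier check using $\nabla\cdot\textbf{u}=0$ gives $\|A_1\textbf{u}\|\ge c\|A\textbf{u}\|$ and then standard isotropic Ladyzhenskaya suffices; but testing with $A\textbf{u}$ is both simpler and what the paper's later estimates build on.
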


\begin{lemma}\label{lem:det}
For $f\in H^2$, the deterministic two-dimensional anisotropic  NS equations \eqref{2.1}  generate a semigroup $S$ which possesses a global attractor $\A_0$ in $H$ and has an
  absorbing set which is bounded in $H^3$. In particular,    the global attractor $\A_0$   is   bounded in $H^3$.
\end{lemma}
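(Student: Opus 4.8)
The plan is to follow the classical bootstrap/regularity scheme for the two‑dimensional Navier--Stokes equations, adapted to the anisotropic dissipation $\nu A_1$, but starting one derivative higher because now $f\in H^2$. We already know from the preceding lemma (the $f\in H$ case) that \eqref{2.1} generates a semigroup $S$ on $H$ with a global attractor $\A_0$ that is bounded in $H^2$; in particular there is an absorbing set bounded in $H^2$, and along trajectories entering it we have uniform bounds $\|\textbf{u}(t)\|_{H^2}\le R_2$ and $\int_t^{t+1}\|\textbf{u}(s)\|_{H^3}^2\,\d s\le R_2'$ for $t$ large. The goal is to upgrade these to an absorbing set bounded in $H^3$. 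First I would take the inner product of \eqref{2.1} with $A^3\textbf{u}$ (equivalently, differentiate the equation enough times and pair with the appropriate power of $A$), obtaining
\begin{align*}
\frac12\frac{\d}{\d t}\|\textbf{u}\|_{H^3}^2 + \nu\,(A_1\textbf{u},A^3\textbf{u}) = -\,(B(\textbf{u}),A^3\textbf{u}) + (f,A^3\textbf{u}).
\end{align*}
The anisotropic term is coercive: since $A_1$ and $A$ are both Fourier multipliers that are diagonal in the basis $e^{\mathrm i j\cdot x}$ and $A_1$ has symbol $(j_2^2,j_1^2)$ componentwise, one checks $(A_1\textbf{u},A^3\textbf{u})\ge c_\nu\|\textbf{u}\|_{H^4}^2$ for a structural constant (at worst $(A_1u,A^3u)\gtrsim \|A^{3/2}A_1^{1/2}u\|^2$, which controls a full $H^4$ seminorm up to the $\hat u_0=0$ normalization); the forcing term is handled by $(f,A^3\textbf{u})\le \|f\|_{H^2}\|\textbf{u}\|_{H^4}\le \tfrac{c_\nu}{4}\|\textbf{u}\|_{H^4}^2 + C\|f\|_{H^2}^2$, using $f\in H^2$.

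The crux is the trilinear term $|(B(\textbf{u}),A^3\textbf{u})| = |b(\textbf{u},\textbf{u},A^3\textbf{u})|$. The hard part will be absorbing this into the dissipation without losing the already‑established $H^2$ bound. In two dimensions one estimates, via Hölder, Ladyzhenskaya/Agmon and interpolation inequalities, something like
\begin{align*}
|b(\textbf{u},\textbf{u},A^3\textbf{u})| \le C\|\textbf{u}\|_{L^\infty}\|\textbf{u}\|_{H^3}\|\textbf{u}\|_{H^4} + C\|\nabla\textbf{u}\|_{L^4}^2\|\textbf{u}\|_{H^4}\,\cdots \le \tfrac{c_\nu}{4}\|\textbf{u}\|_{H^4}^2 + C\big(\|\textbf{u}\|_{H^2}\big)\,\|\textbf{u}\|_{H^3}^2,
\end{align*}
where the coefficient $C(\|\textbf{u}\|_{H^2})$ depends only on the already‑controlled $H^2$ norm (here is where boundedness in $H^2$, rather than merely in $H$, is essential — in two dimensions $H^2\hookrightarrow W^{1,\infty}$, so $\|\textbf{u}\|_{L^\infty}$ and $\|\nabla\textbf{u}\|_{L^4}$ are under control). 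This yields a differential inequality
\begin{align*}
\frac{\d}{\d t}\|\textbf{u}\|_{H^3}^2 + c_\nu\|\textbf{u}\|_{H^4}^2 \le g(t)\,\|\textbf{u}\|_{H^3}^2 + C\|f\|_{H^2}^2,
\end{align*}
with $g(t):=C(\|\textbf{u}(t)\|_{H^2})$ bounded for large $t$ and, using $\|\textbf{u}\|_{H^4}^2\ge\lambda_1\|\textbf{u}\|_{H^3}^2$, one can instead arrange a term $-\tfrac{c_\nu\lambda_1}{2}\|\textbf{u}\|_{H^3}^2$ on the left after splitting the dissipation.

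Finally I would invoke the uniform Gronwall lemma: combining the differential inequality with the integrability $\int_t^{t+1}\|\textbf{u}(s)\|_{H^3}^2\,\d s\le R_2'$ (which follows from pairing \eqref{2.1} with $A^2\textbf{u}$ and using the $H^2$ absorbing set) and the boundedness of $\int_t^{t+1} g(s)\,\d s$ for $t$ large, one concludes $\|\textbf{u}(t)\|_{H^3}^2\le R_3$ for all $t\ge t_0(\|\textbf{u}_0\|)$. This produces an $H^3$‑bounded absorbing set for $S$; since $\A_0=\bigcap_{s\ge 0}\overline{\bigcup_{t\ge s}S(t)\B}$ with $\B$ the absorbing set, invariance $S(t)\A_0=\A_0$ together with the $H^3$ bound on forward trajectories from $\B$ forces $\A_0$ itself to be bounded in $H^3$. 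The main obstacle, as noted, is the top‑order nonlinear estimate — making the trilinear term's coefficient depend only on the $H^2$ norm (which is already absorbed into a lower‑order, controlled quantity) rather than on the $H^3$ norm being estimated; the anisotropy requires a little care here because $A_1$ does not control $\|\textbf{u}\|_{H^4}^2$ on the nose but only a mixed seminorm, so one should carry out the interpolation using the actual $(A_1\textbf{u},A^3\textbf{u})$ form or note that on the divergence‑free, mean‑zero space this seminorm is equivalent to $\|\textbf{u}\|_{H^4}^2$.
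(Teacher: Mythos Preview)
Your proposal is correct and follows essentially the same approach the paper indicates (the paper does not give a full proof of this lemma, but Remark~\ref{rmk1} states that for $f\in H^2$ ``we can deduce an energy equation of $\|A^{3/2}u\|$ directly,'' and the detailed computations appear for the random analogue in Lemma~\ref{lemma7.2}). In particular, the paper's computation \eqref{6.11} confirms your anisotropic coercivity claim: on divergence-free fields $(A_1\textbf{u},A^3\textbf{u})\ge\tfrac12\|A^2\textbf{u}\|^2$, so the ``little care'' you flag is exactly what is worked out there, and the rest (trilinear estimate depending only on the $H^2$ bound, uniform Gronwall from the $L^2_tH^3$ control) matches the structure of the paper's argument.
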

For $f\in H$, we will show that the global attractor $\mathcal A$ is not only bounded in $H^2$, but also has   finite fractal dimension (and thus compact) in $H^2$. Moreover, For $f\in H^2$, we also will show that the global attractor $\mathcal A$ is not only bounded in $H^3$, but also has   finite fractal dimension in $H^3$.

\begin{remark}  \label{rmk1}
 When $f$ belongs only to $H$, we cannot deduce an energy equation of  $\|Au\|$ directly and thus Lemma \ref{lem:det} was proved by estimating the time-derivatives $ \partial_t \textbf{u} $  of solutions    rather than estimating the solutions  $\textbf{u}$ themselves.       However, this   method  does not apply to the  random two-dimensional anisotropic  NS equations \eqref{2.2}  since the colored noise is not derivable in time.  Therefore, in Section \ref{sec4} we will employ a comparison approach  to prove $H^2$ random absorbing sets for \eqref{2.2}.  When $f\in H^2$, we can deduce an energy equation of  $\|A^\frac32u\|$ directly. And then this   method apply to the  random two-dimensional anisotropic  NS equations \eqref{2.2} to prove random absorbing set in $H^3$.
\end{remark}

\subsection{Generation of an RDS}\label{sec2.1}

For the colored noise $\zeta_{\delta}(\theta_t\omega)$ and the Wiener process $W(t,\omega)=\omega(t)$, we recall some main results as follows,  see, e.g., \cite{gu2020,gu2018}
\begin{lemma} \label{lemma3.4}
 Assume the correlation time $\delta\in (0,1]$. Then there is a $\{\theta_t\}_{t\in \mathbb{R}}$ invariant subset $\Omega$ of full measure,such that for $\omega\in\Omega$,

(i)\begin{align*}
\lim\limits_{t\rightarrow\pm\infty}\frac{\omega(t)}{t}=0;
\end{align*}
(ii) the mapping
\begin{align*}
(t,\omega)\mapsto\zeta_{\delta}(\theta_t\omega)=-\frac{1}{\delta^2}\int^0_{-\infty}e^{\frac{s}{\delta}}\theta_t\omega(s)\d s
\end{align*}
denotes a stationary solution (also called  a colored noise) of the one-dimensional stochastic differential equation
\begin{align*}
\d \zeta_{\delta}+\frac{1}{\delta}\zeta_{\delta}\d t= \frac{1}{\delta}dW
\end{align*}
with continuous trajectories satisfying
\begin{align}\label{3.3}
\lim\limits_{t\rightarrow\pm\infty}\frac{|\zeta_{\delta}(\theta_t\omega)|}{t}=0,~~for~~any~~0<\delta\leq1.
\end{align}
\end{lemma}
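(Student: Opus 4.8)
The plan is to treat the two assertions separately, both reducing to classical properties of two–sided Brownian motion transported onto the canonical space $(\Omega,\mathcal F,\mathbb P)$ by the Wiener shift. For part (i), I would invoke the strong law of large numbers for Brownian motion, which yields $\omega(t)/t\to 0$ as $t\to\pm\infty$ for $\mathbb P$–almost every $\omega$ (the two directions following from the time–reversal symmetry of the two–sided process). To convert this almost–sure statement into one holding on a genuinely $\{\theta_t\}$–invariant set, I would let $\tilde\Omega$ be the full–measure set on which the limit holds and then replace $\Omega$ by $\bigcap_{s\in\mathbb R}\theta_s\tilde\Omega$. Since $\mathbb P$ is $\{\theta_t\}$–invariant and $\tilde\Omega$ has full measure, this intersection is $\{\theta_t\}$–invariant of full measure, and the sublinear–growth property survives every shift because $\theta_s\omega(t)/t=(\omega(t+s)-\omega(s))/t\to 0$ whenever $\omega(\cdot)/(\cdot)\to 0$. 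On this redefined $\Omega$ one gets, for each fixed $\omega$, a pathwise bound of the form $|\omega(r)|\le C_\omega(1+|r|)$.

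For part (ii), I would first check that the improper integral $-\tfrac{1}{\delta^2}\int_{-\infty}^0 e^{s/\delta}\theta_t\omega(s)\,\d s$ converges for every $\omega\in\Omega$, which is precisely where (i) is used: for fixed $t$ the bound above gives $|\theta_t\omega(s)|=|\omega(t+s)-\omega(t)|\le C'_\omega(1+|s|)$, so the integrand is dominated by $C'_\omega(1+|s|)e^{s/\delta}$, integrable on $(-\infty,0]$ since $\delta>0$. Next I would record that this representation is the pathwise form of the stochastic integral: an integration by parts — legitimate because the weight $e^{s/\delta}$ is deterministic, so this is a Wiener integral — converts $\tfrac1\delta\int_{-\infty}^0 e^{s/\delta}\,\d W(s)$ into $-\tfrac1{\delta^2}\int_{-\infty}^0 e^{s/\delta}W(s)\,\d s$, using $W(0)=\omega(0)=0$ and $e^{s/\delta}W(s)\to 0$ as $s\to-\infty$ on $\Omega$. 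Substituting $\theta_t\omega$ and changing variables $r=t+s$ then identifies $\zeta_\delta(\theta_t\omega)$ with $\tfrac1\delta\int_{-\infty}^t e^{(r-t)/\delta}\,\d W(r)$, the stationary Ornstein–Uhlenbeck process.

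That this process solves $\d\zeta_\delta+\tfrac1\delta\zeta_\delta\,\d t=\tfrac1\delta\,\d W$ and is stationary is then the standard Duhamel/Ornstein–Uhlenbeck computation and can be quoted from \cite{gu2020,gu2018}; continuity of $t\mapsto\zeta_\delta(\theta_t\omega)$ on $\Omega$ follows from continuity of $t\mapsto\theta_t\omega$ in the compact–open topology together with dominated convergence applied to the integral representation, the dominating function again coming from (i). For the sublinear bound \eqref{3.3} I would estimate $|\zeta_\delta(\theta_t\omega)|\le\tfrac1{\delta^2}\int_{-\infty}^0 e^{s/\delta}\big(|\omega(t+s)|+|\omega(t)|\big)\,\d s$ and divide by $t$: the $|\omega(t)|$ term contributes $|\omega(t)|/(\delta t)\to 0$ by (i), while for the remaining term I would split the $s$–integration into a bounded region $s\ge -M$, where $|\omega(t+s)|/t=(|\omega(t+s)|/(t+s))\cdot((t+s)/t)\to 0$ pointwise, and an exponentially small tail $s<-M$, whose contribution is bounded uniformly in $t$ by $\delta e^{-M/\delta}$ and hence made arbitrarily small by taking $M$ large.

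I expect the genuine obstacle to be the first step: producing a single fixed $\{\theta_t\}$–invariant full–measure set $\Omega$ on which every subsequent pathwise argument is valid, rather than merely almost–sure statements for each $t$. The delicate point is that the exceptional null sets depend a priori on the direction of the limit and must be intersected over all shifts without destroying full measure, and the resulting growth control $|\omega(r)|\le C_\omega(1+|r|)$ must be uniform enough, for each fixed $\omega\in\Omega$, to justify the dominated–convergence, integration–by–parts and split–and–tail arguments underlying the well–definedness, the continuity, and the temperedness \eqref{3.3}. Once this invariant set is fixed, all the SDE and stationarity content is classical Ornstein–Uhlenbeck theory.
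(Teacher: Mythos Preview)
Your outline is sound, but you should know that the paper does not actually prove Lemma~\ref{lemma3.4}: it simply states the result and cites \cite{gu2020,gu2018} for it, treating it as a known fact from the literature on Ornstein--Uhlenbeck processes and colored noise. So there is no ``paper's own proof'' to compare against here.

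That said, what you have written is essentially the standard argument one finds in those references. The invariant--set construction via $\bigcap_{s\in\mathbb R}\theta_s\tilde\Omega$, the pathwise integration by parts identifying the Wiener integral with the Lebesgue integral against $\omega$, and the split--and--tail estimate for \eqref{3.3} are all correct and are the expected ingredients. One small point: the intersection $\bigcap_{s\in\mathbb R}\theta_s\tilde\Omega$ is over uncountably many sets, so to conclude it has full measure you should first reduce to a countable intersection (e.g.\ over rational $s$) and use continuity of paths, or else argue directly that the sublinear--growth set is already $\theta_t$--invariant so that $\tilde\Omega=\theta_s\tilde\Omega$ for every $s$. Once that is handled, your proof stands on its own and supplies the details the paper omits.
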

 In order to transform \eqref{3.2},   we introduce the following random equation driven by colored noise,
  \begin{align}\label{3.8}
\frac{\d y_{\delta}}{\d t}=- y_{\delta}+\zeta_{\delta}(\theta_t\omega).
 \end{align}
By virtue of \eqref{3.3}, we can get that for any $\omega\in\Omega$, the integral
  \begin{align}\label{3.7}
y_{\delta}(\omega)=\int_{-\infty}^0e^{ s}\zeta_{\delta}(\theta_s\omega)\d s
  \end{align}
is convergent. Meanwhile, $y_{\delta}(\omega)$ denotes a well defined random variable. By \cite{crauel97jdde,crauel}, we have that $z(\theta_t\omega)$ is the stationary solution of the one-dimensional Ornstein-Uhlenbeck. Then there exists a $\theta_t $-invariant  subset $\tilde \Omega\subset \Omega$ of full measure such that $z(\theta_t\omega)$ is continuous in $t$  for any $\omega \in \tilde\Omega$ and the random variable $|z(\cdot )|$ is tempered,

By virtue of the Lemma 3.2 in \cite{gu2020} and \eqref{5}, we introduce the following main proposition.
\begin{proposition}\label{proposition3.5}
Assume that $y_{\delta}$ is the random variable as \eqref{3.7}, we have that the mapping
 \begin{align}
(t,\omega)\mapsto y_{\delta}(\theta_t\omega)=e^{-t}\int_{-\infty}^te^{ s}\zeta_{\delta}(\theta_s\omega)\d s
  \end{align}
is a stationary solution of \eqref{3.8} with continuous trajectories. Moreover, we have $\mathbb{E}(y_{\delta})=0$ and for any $\omega\in\Omega$ and $T>0$,
 \begin{align}
 \lim\limits_{\delta\rightarrow0}y_{\delta}(\theta_t\omega)=z(\theta_t\omega)~~uniformly~~on~~[0,T],\label{2.4}\\
\lim\limits_{t\rightarrow\infty}\frac{|y_{\delta}(\theta_t\omega)|}{|t|}=0~~uniformly~~for~~0<\delta\leq\frac{1}{2},\\
\lim\limits_{t\rightarrow\infty}\frac{1}{t}\int_0^ty_{\delta}(\theta_r\omega)\d r=0~~uniformly~~for~~0<\delta\leq\frac{1}{2},\\
\lim\limits_{\delta\rightarrow0}\mathbb{E}(|y_{\delta}(\omega)|)=\mathbb{E}(|z(\omega)|)\label{2.5}.
  \end{align}

\end{proposition}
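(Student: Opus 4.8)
The plan is to read every assertion of the proposition directly off the integral representation \eqref{3.7}, treating the convergence $y_\delta\to z$ as $\delta\to0$ as the analytic core and deducing the growth, Ces\`aro and expectation limits from it together with Lemma \ref{lemma3.4} and the uniform limit \eqref{5}. First I would fix $\omega$ in the $\{\theta_t\}$-invariant full-measure set on which Lemma \ref{lemma3.4} holds, so that $s\mapsto\zeta_\delta(\theta_s\omega)$ is continuous and grows at most sublinearly by \eqref{3.3}. Since $e^{s}\to0$ exponentially as $s\to-\infty$ while $|\zeta_\delta(\theta_s\omega)|$ grows sublinearly, the integral $\int_{-\infty}^t e^{s}\zeta_\delta(\theta_s\omega)\,\d s$ converges absolutely, so $Y(t):=e^{-t}\int_{-\infty}^t e^{s}\zeta_\delta(\theta_s\omega)\,\d s$ is well defined; differentiating and using the fundamental theorem of calculus with continuity of the integrand gives $\dot Y=-Y+\zeta_\delta(\theta_t\omega)$, i.e.\ \eqref{3.8}, and the $C^1$ regularity yields continuous trajectories. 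A change of variable $s\mapsto s+t$ in \eqref{3.7} identifies $Y(t)$ with $y_\delta(\theta_t\omega)$, establishing stationarity. For $\mathbb E(y_\delta)=0$ I would apply Fubini's theorem, legitimate because $\int_{-\infty}^0 e^{s}\,\mathbb E|\zeta_\delta(\theta_s\omega)|\,\d s=\mathbb E|\zeta_\delta|<\infty$ by stationarity, and use that $\zeta_\delta$ has mean zero.

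The heart of the argument is the convergence $y_\delta(\theta_t\omega)\to z(\theta_t\omega)$ uniformly on $[0,T]$. Here I would pass to a purely pathwise representation: substituting the kernel formula $\zeta_\delta(\theta_s\omega)=-\delta^{-2}\int_{-\infty}^0 e^{r/\delta}\theta_s\omega(r)\,\d r$ from Lemma \ref{lemma3.4} into $y_\delta(\theta_t\omega)=\int_{-\infty}^t e^{-(t-s)}\zeta_\delta(\theta_s\omega)\,\d s$ and interchanging the order of integration produces an expression of the form $-\int_{-\infty}^0 K_\delta(r)\,\theta_t\omega(r)\,\d r$, while the Ornstein--Uhlenbeck process admits the companion representation $z(\theta_t\omega)=-\int_{-\infty}^0 e^{r}\theta_t\omega(r)\,\d r$. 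As $\delta\to0$ the kernel $K_\delta$ collapses to $e^{r}$, and the difference is controlled by dominated convergence using the sublinear growth of $r\mapsto\theta_t\omega(r)$ furnished by Lemma \ref{lemma3.4}(i). The main obstacle is to make this convergence \emph{uniform} in $t\in[0,T]$ and to dominate the singular kernel limit: one must produce a $\delta$-independent integrable majorant for $K_\delta(r)\,|\theta_t\omega(r)|$ valid simultaneously for all $t\in[0,T]$, which I expect to require a uniform bound $\sup_{t\in[0,T]}|\theta_t\omega(r)|\le C_T(1+|r|)$ on the compact time set together with the uniform boundedness of $\int_{-\infty}^0(1+|r|)K_\delta(r)\,\d r$ in $\delta$. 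Alternatively one may integrate the defining relation \eqref{4} to obtain $\int_0^t\zeta_\delta(\theta_s\omega)\,\d s=\omega(t)-\delta[\zeta_\delta(\theta_t\omega)-\zeta_\delta(\omega)]$ and feed this into a variation-of-constants comparison of $y_\delta$ with $z$; this is the route of \cite{gu2020,gu2018}, whose Lemma 3.2 I would invoke and adapt to the present kernel.

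Finally, the remaining three limits follow from what precedes. For the sublinear growth $|y_\delta(\theta_t\omega)|/|t|\to0$ uniform in $\delta\in(0,\tfrac12]$, I would write $y_\delta(\theta_t\omega)=\int_0^\infty e^{-\tau}\zeta_\delta(\theta_{t-\tau}\omega)\,\d\tau$, so that $y_\delta$ is an exponentially weighted moving average of $\zeta_\delta$; the uniform temperedness of $\zeta_\delta$ recorded in \eqref{3.3} and \eqref{5} then transfers to $y_\delta$. The Ces\`aro limit is obtained by integrating \eqref{3.8} over $[0,t]$, which gives
\begin{align*}
\frac1t\int_0^t y_\delta(\theta_r\omega)\,\d r=\frac1t\int_0^t\zeta_\delta(\theta_r\omega)\,\d r-\frac{y_\delta(\theta_t\omega)-y_\delta(\omega)}{t},
\end{align*}
whence the right-hand side tends to $0$ uniformly in $\delta$ by \eqref{5} and the growth bound just established. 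For $\mathbb E|y_\delta|\to\mathbb E|z|$ I would combine the pointwise convergence $y_\delta(\omega)\to z(\omega)$ (the case $t=0$ of the core step) with uniform integrability of $\{|y_\delta|\}_{0<\delta\le1/2}$: since each $y_\delta$ is Gaussian and its variance $\int_{-\infty}^0\!\int_{-\infty}^0 e^{s+s'}\tfrac1{2\delta}e^{-|s-s'|/\delta}\,\d s\,\d s'$ stays bounded as $\delta\to0$ (the kernel $\tfrac1{2\delta}e^{-|\cdot|/\delta}$ being an approximate identity), Vitali's theorem yields the convergence of the first moments, i.e.\ \eqref{2.5}.
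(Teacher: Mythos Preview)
The paper does not actually give a proof of this proposition: immediately preceding the statement it writes ``By virtue of the Lemma 3.2 in \cite{gu2020} and \eqref{5}, we introduce the following main proposition,'' and nothing further is supplied. Your proposal therefore goes considerably beyond what the paper does. The detailed sketch you give --- verifying stationarity from the explicit integral, establishing $y_\delta\to z$ via the kernel representation with a dominated-convergence argument, deducing the Ces\`aro limit by integrating \eqref{3.8}, and obtaining \eqref{2.5} from uniform integrability of the Gaussian family --- is a reasonable outline of how one would actually prove the result from scratch, and you yourself note that the route of \cite{gu2020,gu2018} (precisely what the paper cites) is an alternative. In short: your approach is not wrong, it is simply far more explicit than the paper, which treats the proposition as a quotation from the literature rather than something to be proved.
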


In  \cite{arnold,zhu}, for any given $\delta>0$, then we get
\begin{align} \label{erg}
\lim\limits_{t\rightarrow \pm \infty}\frac{1}{t}\int_0^t|y_{\delta}(\theta_s\omega)|^m\, \d s=\mathbb{E}  \big (|y_{\delta}(\theta_t\omega)|^m
\big) =\frac{\Gamma  (\frac{1+m}{2} )}{\sqrt{\pi\delta^m}}
,\quad \omega\in \tilde \Omega,
\end{align}
for  all $m\geq 1$, where $\Gamma$ is a Gamma function.  Hence, we will not distinguish $\tilde \Omega$ and $\Omega$.

Then we introduce the following a new variable
 \begin{align}\label{3.4}
\textbf{u}_h(t)=\textbf{v}(t)+hy_\delta(\theta_t\omega),\quad t>0,\ \omega\in \Omega,
\end{align}
here, $\textbf{v}(t)=(v_1,v_2)$ and $0=\nabla\cdot\textbf{u}_h=\nabla\cdot(\textbf{v}+hy_\delta(\theta_t\omega))=\nabla\cdot\textbf{v}$. Then we have
\begin{align}\label{3.9}
\|\nabla \textbf{v}\|^2=\|\nabla\times \textbf{v}\|^2\leq 2\|(\partial_yv_1,\partial_xv_2)\|^2.
\end{align}
 By \eqref{3.2} and \eqref{3.4}, we have the following   abstract evolution equations
\begin{align}\label{2.2}
\left\{
\begin{aligned}
&
\frac{\d \textbf{v}}{\d t}+ \nu A_1\textbf{v}+B\big( \textbf{v}(t)+hy_\delta(\theta_t\omega)\big)=f(\textbf{x})-  \nu A_1hy_\delta(\theta_t\omega)+hy_\delta (\theta_t\omega), \\
& \textbf{v}(0)=\textbf{u}_h(0)-hy_\delta (\omega).
\end{aligned}
\right.
\end{align}

By the classic Galerkin method in \cite{arnold,temam1}, the system \eqref{2.2} has a unique weak solution as stated below.
\begin{lemma}
Let $f\in H$ and Assumption \ref{assum} hold.  For any $\textbf{v}(0)\in H$ and $\omega\in\Omega$, then there exists a unique weak solution
\begin{align*}
\textbf{v} (t) \in C_{loc}([0,\infty);H)\cap L^2_{loc} (0,\infty;H^1 )
\end{align*}
satisfying \eqref{2.2} in distribution sense with $\textbf{v}|_{t=0}=\textbf{v}(0)$.  In addition, this solution is continuous in initial data $\textbf{v}(0)$ in $H$.
\end{lemma}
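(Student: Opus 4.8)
The plan is to construct the solution by the classical Faedo--Galerkin scheme, exploiting the fact that for each fixed $\omega$ the map $t\mapsto y_\delta(\theta_t\omega)$ is continuous (Lemma \ref{lemma3.4}, Proposition \ref{proposition3.5}) and hence bounded on every compact interval $[0,T]$, so that \eqref{2.2} may be treated as a deterministic evolution equation with continuous, locally bounded time-dependent data. Let $\{e_k\}_{k\ge1}$ be the Fourier basis of $H$ that simultaneously diagonalizes $A$ and $A_1$, let $P_n$ be the orthogonal projection onto $\mathrm{span}\{e_1,\dots,e_n\}$, and seek $\textbf{v}^{(n)}(t)=\sum_{k=1}^n g^{(n)}_k(t)e_k$ solving the projected system $\frac{\d}{\d t}\textbf{v}^{(n)}+\nu P_nA_1\textbf{v}^{(n)}+P_nB(\textbf{v}^{(n)}+hy_\delta(\theta_t\omega))=P_n(f-\nu A_1 h y_\delta(\theta_t\omega)+h y_\delta(\theta_t\omega))$ with $\textbf{v}^{(n)}(0)=P_n\textbf{v}(0)$. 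This is an ODE system whose right-hand side is continuous in $t$ and locally Lipschitz in the unknown, so a unique maximal solution exists by Carath\'eodory/Picard theory; the a priori bounds below show it is global.

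For the uniform estimates I test the projected equation with $\textbf{v}^{(n)}$, obtaining $\frac12\frac{\d}{\d t}\|\textbf{v}^{(n)}\|^2+\nu(\|\partial_y v_1^{(n)}\|^2+\|\partial_x v_2^{(n)}\|^2)+\langle B(\textbf{v}^{(n)}+hy_\delta),\textbf{v}^{(n)}\rangle=(f-\nu A_1 hy_\delta+hy_\delta,\textbf{v}^{(n)})$. Writing $\textbf{w}^{(n)}=\textbf{v}^{(n)}+hy_\delta$, the divergence-free property gives $b(\textbf{w}^{(n)},\textbf{v}^{(n)},\textbf{v}^{(n)})=0$, so the trilinear term reduces to $y_\delta\, b(\textbf{w}^{(n)},h,\textbf{v}^{(n)})$, bounded by $|y_\delta|\,\|\nabla h\|_{L^\infty}\|\textbf{w}^{(n)}\|\,\|\textbf{v}^{(n)}\|$ and, via Young, absorbed partly into the dissipation and partly into a term linear in $\|\textbf{v}^{(n)}\|^2$. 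The essential structural fact is \eqref{3.9}: for divergence-free fields the anisotropic dissipation dominates the full gradient, $\nu(\|\partial_y v_1^{(n)}\|^2+\|\partial_x v_2^{(n)}\|^2)\ge\frac{\nu}2\|\nabla\textbf{v}^{(n)}\|^2\ge\frac{\nu\lambda_1}2\|\textbf{v}^{(n)}\|^2$, which is precisely what lets the energy identity close. Bounding the forcing terms by Cauchy--Schwarz and Young and invoking Gronwall's inequality (with the $L^\infty([0,T])$ bound on $|y_\delta(\theta_t\omega)|$) yields, uniformly in $n$, bounds for $\textbf{v}^{(n)}$ in $L^\infty(0,T;H)\cap L^2(0,T;H^1)$.

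To pass to the limit I next bound the time derivative: using the two-dimensional estimate $\|B(\textbf{u})\|_{H^{-1}}\le C\|\textbf{u}\|\,\|\textbf{u}\|_{H^1}$ together with the previous bounds, $\frac{\d}{\d t}\textbf{v}^{(n)}$ is bounded in $L^2(0,T;H^{-1})$. By Banach--Alaoglu I extract a subsequence converging weakly-$\ast$ in $L^\infty(0,T;H)$ and weakly in $L^2(0,T;H^1)$, and by the Aubin--Lions--Simon lemma (using $H^1\hookrightarrow\hookrightarrow H\hookrightarrow H^{-1}$) strongly in $L^2(0,T;H)$. This strong convergence is exactly what is needed to identify the weak limit of the nonlinear term $B(\textbf{v}^{(n)}+hy_\delta)$ with $B(\textbf{v}+hy_\delta)$ after testing against smooth time-dependent test functions, all linear terms passing by weak convergence. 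This produces a weak solution on $[0,T]$, and since $T$ is arbitrary, on $[0,\infty)$; the regularity $\textbf{v}\in L^2_{loc}(0,\infty;H^1)$ with $\frac{\d}{\d t}\textbf{v}\in L^2_{loc}(0,\infty;H^{-1})$ then yields $\textbf{v}\in C_{loc}([0,\infty);H)$ and the energy equality via the Lions--Magenes lemma.

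Finally, for uniqueness and continuous dependence, let $\textbf{v}_1,\textbf{v}_2$ be two solutions with data $\textbf{v}_{1,0},\textbf{v}_{2,0}$; the difference $\textbf{z}=\textbf{v}_1-\textbf{v}_2$ satisfies $\frac{\d}{\d t}\textbf{z}+\nu A_1\textbf{z}+B(\textbf{w}_1)-B(\textbf{w}_2)=0$ with $\textbf{w}_i=\textbf{v}_i+hy_\delta$ and $\textbf{w}_1-\textbf{w}_2=\textbf{z}$. Testing with $\textbf{z}$ and using $b(\textbf{w}_2,\textbf{z},\textbf{z})=0$ leaves only $b(\textbf{z},\textbf{w}_1,\textbf{z})$, which by the Ladyzhenskaya inequality $\|\textbf{z}\|_{L^4}^2\le C\|\textbf{z}\|\,\|\textbf{z}\|_{H^1}$ and Young's inequality is bounded by $\frac{\nu}2\|\nabla\textbf{z}\|^2+C\|\textbf{w}_1\|_{H^1}^2\|\textbf{z}\|^2$; with \eqref{3.9} this gives $\frac{\d}{\d t}\|\textbf{z}\|^2\le C\|\textbf{w}_1\|_{H^1}^2\|\textbf{z}\|^2$. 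Since $\|\textbf{w}_1\|_{H^1}^2\in L^1(0,T)$, Gronwall's inequality yields $\|\textbf{z}(t)\|^2\le\|\textbf{z}(0)\|^2\exp(C\int_0^t\|\textbf{w}_1\|_{H^1}^2\,\d s)$, giving both uniqueness (when $\textbf{z}(0)=0$) and the asserted continuity in the initial data in $H$. The step I expect to require the most care is the passage to the limit in the nonlinear term, where the interplay of the anisotropic structure \eqref{3.9} (needed to obtain the $L^2(0,T;H^1)$ bound at all) and the Aubin--Lions compactness (needed to upgrade weak to strong convergence) is indispensable; once these are in place the remaining estimates are standard two-dimensional Navier--Stokes computations.
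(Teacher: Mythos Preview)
Your proposal is correct and follows exactly the classical Faedo--Galerkin scheme that the paper invokes (the paper itself gives no proof, merely citing \cite{arnold,temam1} for ``the classic Galerkin method''); in particular you correctly isolate the key structural input \eqref{3.9}, which converts the anisotropic dissipation into control of the full $H^1$ norm and is what makes the otherwise standard two-dimensional argument go through.
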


Therefore, by
\[
 \phi(t,\omega, v_0) =\textbf{v}(t,\omega,v_0),\quad  \forall t\geq 0,\, \omega\in \Omega, \, v_0\in H,
\]
where $\textbf{v}$ is the solution of \eqref{2.2},
we associated an RDS $\phi$ in $H$ to \eqref{2.2}.

\subsection{The random attractor in $H$}

 In this subsection, we first will introduce the following two    basic lemmas,  see, e.g., \cite{temam1,temam}.

\begin{lemma}[{Gronwall's lemma}]
Assume that $x(t)$ is  a function from $\R$ to $\R^+$ such that
\[
\dot x +a(t)x\leq b(t) .
\]
 Then for any $t\geq s $,
\[
 x(t)\leq e^{ -\int^t_s a(\tau) \, \d \tau} x(s)+\int_s^t e^{-\int^t_\eta a(\tau) \, \d \tau } b(\eta) \, \d \eta.
\]
\end{lemma}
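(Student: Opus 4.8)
The plan is the classical integrating-factor argument, adapted to the differential inequality rather than an equation. Define the integrating factor
\[
 E(t) := \exp\left( \int_s^t a(\tau) \, \d\tau \right),
\]
which is positive and satisfies $\dot E(t) = a(t) E(t)$ with $E(s)=1$. The first step is to multiply the hypothesis $\dot x + a(t) x \leq b(t)$ by $E(t)>0$, preserving the inequality, to obtain $E(t)\dot x(t) + a(t) E(t) x(t) \leq E(t) b(t)$. The point of this choice is that the left-hand side is exactly a total derivative: by the product rule and $\dot E = aE$,
\[
 \frac{\d}{\d t}\big( E(t) x(t) \big) = E(t)\dot x(t) + a(t) E(t) x(t) \leq E(t) b(t).
\]

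The second step is to integrate this scalar inequality from $s$ to $t$ (legitimate for $t\geq s$ since both sides are integrable), giving
\[
 E(t) x(t) - E(s) x(s) \leq \int_s^t E(\eta) b(\eta) \, \d\eta.
\]
Using $E(s)=1$ and dividing through by $E(t)>0$ yields
\[
 x(t) \leq E(t)^{-1} x(s) + \int_s^t \frac{E(\eta)}{E(t)} \, b(\eta) \, \d\eta.
\]
The final step is purely to rewrite the exponential factors: $E(t)^{-1} = \exp\big(-\int_s^t a(\tau)\,\d\tau\big)$, and by additivity of the integral in the exponent,
\[
 \frac{E(\eta)}{E(t)} = \exp\left( \int_s^\eta a - \int_s^t a \right) = \exp\left( -\int_\eta^t a(\tau) \, \d\tau \right),
\]
which substitutes directly to give the claimed bound. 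This reduces the whole argument to the product rule, one integration, and one division.

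There is no serious obstacle here; the only point that deserves a word is the background regularity under which the manipulations are valid. It suffices that $a$ and $b$ be locally integrable on the relevant interval and that $x$ be absolutely continuous (so that $\dot x$ exists a.e.\ and the fundamental theorem of calculus applies to $E x$); in the applications of this paper $x$ is a norm of a solution and $a,b$ are continuous, so all of this holds. The essential mechanism is simply that the integrating factor $E$ turns the one-sided differential inequality into a monotonicity statement for $Ex$ that can be integrated termwise, and the sign condition $x\geq 0$ together with positivity of $E$ guarantees that no inequality is reversed along the way.
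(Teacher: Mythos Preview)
Your proof is correct and is exactly the standard integrating-factor argument. The paper does not actually give a proof of this lemma; it merely states it as one of two ``basic lemmas'' and refers the reader to \cite{temam1,temam}, so there is nothing to compare against beyond confirming that your argument is the expected one. One small remark: the hypothesis $x\geq 0$ is in fact not needed anywhere in your argument (multiplication and division by the positive quantity $E$ preserve the inequality regardless of the sign of $x$), so your closing sentence slightly overstates its role.
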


\begin{lemma}[Gagliardo-Nirenberg inequality]  If $u\in L^q(\mathbb{T}^2)$, $D^{m}u\in L^r(\mathbb{T}^2)$, $1\leq q,r\leq\infty$, then there exists a positive constant $C$ such that
\[
 \|D^ju\|_{L^p} \leq  C \|D^mu\|_{L^r}^a\|u\|^{1-a}_{L^q},
\]
where
\begin{equation*}
\frac{1}{p}-\frac{j}{2}=a \left (\frac{1}{r}-\frac{m}{2} \right)+ \frac{1-a}{q}, \quad 1\leq p\leq\infty,~0\leq j\leq m,~\frac{j}{m}\leq a\leq1,
\end{equation*}  and
$C$ depends only on $\{m, \, j, \, a, \, q, \,r \}$.
\end{lemma}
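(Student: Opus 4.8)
The plan is to deduce the general inequality from three standard building blocks — the Gagliardo–Nirenberg–Sobolev embedding, the interpolation inequality for intermediate derivatives in a single $L^p$, and Hölder interpolation of Lebesgue norms — together with a scaling argument that pins down the exponent $a$ and the differential relation between $p,q,r,j,m$. Throughout one works on $\mathbb{R}^2$ for smooth compactly supported $u$ (the dimension being $n=2$), proves the estimate there, and then transfers to $\TT^2$ by density and periodicity; the final statement follows by a routine density argument from $(C_{per}^\infty(\TT^2))^2$.

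First I would establish the base Sobolev inequality $\|u\|_{L^{r^*}}\le C\|\nabla u\|_{L^r}$ for $1\le r<2$, where $\tfrac{1}{r^*}=\tfrac1r-\tfrac12$. For $r=1$ this is the classical Gagliardo slicing argument: write $|u(x)|\le\int_{\mathbb{R}}|\partial_i u|\,\d t$ along each coordinate line, bound $|u|^{2}$ (here $n/(n-1)=2$) by the product of the two resulting one-variable integrals, and integrate successively in each variable using the Cauchy–Schwarz (generalized Hölder) inequality; AM–GM then yields $\|u\|_{L^{2}}\le C\|\nabla u\|_{L^1}$. The case $1<r<2$ follows by applying the $r=1$ estimate to $|u|^{\gamma}$ with $\gamma=\tfrac{r(n-1)}{n-r}$ and Hölder. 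The borderline and super-critical cases $r\ge 2$ (and the target $p=\infty$) are handled separately through the embedding into continuous functions, which replaces $L^{r^*}$ when $\tfrac1r-\tfrac12\le 0$.

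Next I would prove the intermediate-derivative interpolation $\|D^ju\|_{L^p}\le C\|D^mu\|_{L^p}^{j/m}\|u\|_{L^p}^{1-j/m}$ in a fixed $L^p$. By iteration this reduces to the single Landau–Kolmogorov estimate $\|Du\|_{L^p}^2\le C\|u\|_{L^p}\|D^2u\|_{L^p}$, which I would establish first in one dimension (by Taylor's formula with integral remainder, or integration by parts) and then lift to $\mathbb{R}^2$ by applying it along each coordinate direction and integrating via Fubini. Combining this with the base Sobolev inequality and the Hölder interpolation $\|w\|_{L^p}\le\|w\|_{L^{p_0}}^{a}\|w\|_{L^{p_1}}^{1-a}$ produces the inequality for the full admissible range of $p$; the constraint $\tfrac jm\le a\le 1$ is exactly what guarantees that enough derivatives are available on the right-hand side for the interpolation to close.

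Finally, the exponent $a$ and the differential relation are forced by \emph{dimensional analysis}: applying the candidate inequality to the rescaled function $u_\lambda(x)=u(\lambda x)$, one has $\|D^ju_\lambda\|_{L^p}=\lambda^{\,j-2/p}\|D^ju\|_{L^p}$ and similarly for the right-hand factors, so equality of the powers of $\lambda$ for all $\lambda>0$ yields precisely $\tfrac1p-\tfrac j2=a\!\left(\tfrac1r-\tfrac m2\right)+\tfrac{1-a}{q}$. \textbf{The main obstacle} is the systematic bookkeeping needed to cover every admissible sextuple $(p,q,r,j,m,a)$ uniformly — in particular the borderline Sobolev exponents ($r=2$, $p=\infty$) and the verification that $a\ge j/m$ — and the transfer from $\mathbb{R}^2$ to the compact torus, where scaling is no longer available: there the homogeneous form can only hold on the zero-mean subspace (constants would violate it when $j=0$), so one invokes the Poincaré inequality \eqref{poin} to absorb the lower-order term $C\|u\|_{L^q}$ that would otherwise appear. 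Since the velocity fields considered here lie in $H$ and hence have zero mean, this is exactly the setting in which the stated homogeneous inequality is applied.
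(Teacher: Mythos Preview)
The paper does not prove this lemma; it states the Gagliardo--Nirenberg inequality as a standard reference result (with an implicit appeal to the literature such as \cite{temam1,temam}) and immediately moves on to the $H$-bound estimate. Your outline is a perfectly reasonable sketch of the classical proof --- Sobolev embedding via the Gagliardo slicing argument, Landau--Kolmogorov interpolation of intermediate derivatives, H\"older interpolation, and scaling to identify the exponent relation --- and your remark about the zero-mean condition on $\TT^2$ is well taken, since the paper indeed applies the inequality only to functions in $H$. But since the paper offers no proof to compare against, there is nothing further to match; the authors simply take the inequality as given.
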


\begin{lemma}[$H$ bound]\label{lemma4.1}
Let  $f\in H$ and Assumption \ref{assum} hold.
There exist random variables  $T_1(\omega)$ and $\zeta_1(\omega)$  such that any
  solution $\textbf{v}$  of the system \eqref{2.2} corresponding to initial  values  $\textbf{v}(0) \in H $ satisfies
  \begin{align}
 \|\textbf{v}(t,\theta_{-t} \omega, \textbf{v}(0))\|^2
 \leq e^{-\lambda t}\|\textbf{v}(0)\|^2
 + \zeta_1(\omega) ,\quad t\geq T_1(\omega).  \nonumber
\end{align}
\end{lemma}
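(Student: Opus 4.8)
The plan is to derive the standard energy estimate for the random equation \eqref{2.2} and then close it via Gronwall's lemma, using the smallness of $\|\nabla h\|_{L^\infty}$ to absorb the problematic nonlinear cross-terms. First I would take the inner product of \eqref{2.2} with $\textbf{v}$ in $H$. The viscous term produces $\nu(A_1\textbf{v},\textbf{v}) = \nu\big(\|\partial_y v_1\|^2 + \|\partial_x v_2\|^2\big)$, which by \eqref{3.9} controls $\frac{\nu}{2}\|\nabla \textbf{v}\|^2$ and hence, via the Poincaré inequality $\|\textbf{v}\|^2_{H^1}\geq \lambda_1\|\textbf{v}\|^2$, gives a coercive term of the form $\tfrac{\nu\lambda_1}{2}\|\textbf{v}\|^2$ (up to constants; one should keep enough of the dissipation to later subtract off the bad terms). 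The genuinely nonlinear term is $b(\textbf{v}+hy_\delta, \textbf{v}+hy_\delta, \textbf{v})$; expanding and using the orthogonality $b(\textbf{z},\textbf{v},\textbf{v})=0$ for divergence-free $\textbf{z}$, the surviving terms are those involving $hy_\delta(\theta_t\omega)$, namely $|y_\delta(\theta_t\omega)|$ times terms like $b(h,\textbf{v},\textbf{v})$ and $b(\textbf{v},h,\textbf{v})$ and a term quadratic in $hy_\delta$. The first of the two is again zero by the cancellation property, while $b(\textbf{v},h,\textbf{v})$ is bounded by $\|\nabla h\|_{L^\infty}\|\textbf{v}\|^2$, and this is exactly where Assumption \ref{assum} enters: since $|y_\delta(\theta_t\omega)|$ is not bounded pointwise in time, one uses instead the ergodic average \eqref{erg} with $m=1$, $\mathbb{E}|y_\delta| = 1/\sqrt{\pi\delta}$, to control the time integral $\int_s^t |y_\delta(\theta_\tau\omega)|\,\d\tau$ by roughly $\tfrac{1}{\sqrt{\pi\delta}}(t-s)$ plus a sublinear (tempered) remainder; then $\|\nabla h\|_{L^\infty}\cdot\tfrac{1}{\sqrt{\pi\delta}} < \tfrac{\nu\lambda_1}{2}$, leaving a strictly positive net dissipation rate.

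Next I would collect the forcing-type terms on the right: $(f,\textbf{v})$, $-\nu(A_1 h y_\delta,\textbf{v})$, $(h y_\delta,\textbf{v})$, and the quadratic-in-$h$ remainder $b(hy_\delta, hy_\delta, \textbf{v})$ — all of which are estimated by Young's inequality, absorbing a small multiple of $\|\textbf{v}\|^2$ (or $\|\nabla\textbf{v}\|^2$) into the dissipation and leaving a right-hand side of the form $C\big(\|f\|^2 + (\|h\|_{H^2}^2 + \|h\|^2 + \|h\|_{H^1}^2\|h\|^2 |y_\delta|^2 + \dots)(1+|y_\delta(\theta_t\omega)|^2)\big)$, which is a tempered, locally integrable function of $t$ since $|y_\delta(\theta_t\omega)|$ grows sublinearly by Proposition \ref{proposition3.5}. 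This yields a differential inequality $\dot g + a(t) g \leq b(t)$ with $g(t)=\|\textbf{v}(t)\|^2$, where $\int_s^t a(\tau)\,\d\tau \geq \lambda(t-s) - (\text{sublinear})$ and $b$ is tempered. Applying Gronwall's lemma on $[0,t]$, replacing $\omega$ by $\theta_{-t}\omega$, and using the standard fact that $\int_{-\infty}^0 e^{\lambda s} b(\theta_s\omega)\,\d s$ converges to a tempered random variable (because $b(\theta_s\omega)$ grows sub-exponentially), I would obtain
\begin{align*}
\|\textbf{v}(t,\theta_{-t}\omega,\textbf{v}(0))\|^2 \leq e^{-\lambda t}\|\textbf{v}(0)\|^2 + \zeta_1(\omega)
\end{align*}
for all $t\geq T_1(\omega)$, where $T_1$ is chosen so that the sublinear corrections in the exponent are dominated by $\tfrac{\lambda}{2}t$ and the prefactor $e^{-\lambda t}$ is genuine; $\zeta_1(\omega)$ is the tempered random variable built from the convergent integral above.

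The main obstacle, and the only place where real care is needed, is handling the fact that $|y_\delta(\theta_t\omega)|$ is neither bounded nor small pointwise — so the coercivity that makes the estimate close is not pointwise but only in time-average. Concretely, one must not try to dominate $\|\nabla h\|_{L^\infty}|y_\delta(\theta_t\omega)|$ by the dissipation at each instant; instead one keeps $\tfrac{\nu\lambda_1}{2}\|\textbf{v}\|^2 - \|\nabla h\|_{L^\infty}|y_\delta(\theta_t\omega)|\,\|\textbf{v}\|^2$ together as the coefficient $a(t)$ in the Gronwall inequality and invokes \eqref{erg} (equivalently \eqref{5}) to see that $\frac1{t-s}\int_s^t a(\tau)\,\d\tau \to \tfrac{\nu\lambda_1}{2} - \tfrac{\|\nabla h\|_{L^\infty}}{\sqrt{\pi\delta}} > 0$, with the strict positivity guaranteed precisely by the inequality $\|\nabla h\|_{L^\infty} < \sqrt{\pi\delta}\,\tfrac12\nu\lambda_1$ in Assumption \ref{assum}. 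The rest is bookkeeping: choosing the various Young's-inequality constants so that enough of $\nu\|\nabla\textbf{v}\|^2$ survives to absorb the lower-order terms, and verifying temperedness of $\zeta_1$ and measurability — all routine given Proposition \ref{proposition3.5} and Lemma \ref{lemma3.4}.
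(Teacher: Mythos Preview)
Your proposal is correct and follows essentially the same approach as the paper's proof: energy estimate via the inner product with $\textbf{v}$, cancellation $b(\textbf{z},\textbf{v},\textbf{v})=0$ to isolate the term $|y_\delta(\theta_t\omega)|\,\|\nabla h\|_{L^\infty}\|\textbf{v}\|^2$, Gronwall with the time-dependent coefficient kept intact, and then the ergodic limit \eqref{erg} with $m=1$ combined with Assumption~\ref{assum} to secure a strictly positive net decay rate. The paper is merely more explicit about bookkeeping, introducing constants $\alpha,\beta$ via \eqref{c1}--\eqref{c2} so that the precise rate $\lambda=\alpha\nu\lambda_1/4$ emerges, and writing out $\zeta_1(\omega)$ as the convergent integral; your sketch already contains all the substantive ideas.
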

\begin{proof}
By \eqref{2.3}, $  {\|\nabla h\|_{L^\infty} }/ \sqrt{\pi\delta} < \frac{\nu  \lambda_1}{2}$, then there exist  $  \alpha \in  (0,\frac{1}{2}]$ and $\beta>0$  such that
\begin{gather}
  \frac{\|\nabla h\|_{L^\infty} }{\sqrt {\pi\delta}}  =  (\frac{1}{2}- \alpha) \nu  \lambda_1,  \label{c1} \\
   \frac{  \|\nabla h\|_{L^\infty}  }{ \sqrt{\pi\delta}}  \left( 1 +  \beta \right )  =   (\frac{1}{2}-\frac  1 2 \alpha)\nu \lambda_1  .  \label{c2}
  \end{gather}
Taking the inner product of the first equation   \eqref{2.2} with $\textbf{v}$ in $H$, by integration by parts yields
\begin{align}
\frac{\d}{\d t}\|\textbf{v}\|^2+ 2\nu  \|\partial_yv_1\|^2+2\nu  \|\partial_xv_2\|^2
 &=-2(B(\textbf{v}+hy_\delta(\theta_t\omega)) ,\textbf{v})\nonumber\\
 &+ 2\big (f- \nu A_1hy_\delta(\theta_t\omega) +hy_\delta(\theta_t\omega),\textbf{v}\big),
\end{align}
by \eqref{3.9} and Poincar\'e's inequality \eqref{poin}, it yields
\begin{align}
 & \frac{\d}{\d t}\|\textbf{v}\|^2+ \left(1-\frac \alpha 2\right)  \nu  \lambda_1  \| \textbf{v}\|^2  + \frac {\alpha \nu } 2   \|A^{\frac12}\textbf{v}\|^2 \nonumber  \\
 &\quad \leq \frac{\d}{\d t}\|\textbf{v}\|^2  + \nu  \|A^{\frac12}\textbf{v}\|^2 \nonumber\\
&\quad  \leq 2 \big |\big(B(\textbf{v}+hy_\delta(\theta_t\omega)) ,\textbf{v }\big) \big|
+2\big| \big (f- \nu A_1hy_\delta(\theta_t\omega) +hy_\delta(\theta_t\omega),\textbf{v} \big)\big|  . \label{4.3}
\end{align}
Applying  the divergence free conditions and the H\"{o}lder inequality, Young's inequality and Poincar\'{e}'s inequality, we can deduce that
\begin{align}\label{3.5}
   2\big | \big(B(\textbf{v}+hy_\delta(\theta_t\omega)) ,\textbf{v} \big) \big|
  &= 2\big | \big(B(\textbf{v}+hy_\delta(\theta_t\omega),hy_\delta(\theta_t\omega)) ,\textbf{v} \big) \big| \nonumber\\
   & \leq  2 \big | \big(B(\textbf{v},hy_\delta(\theta_t\omega)),\textbf{v} \big) \big| +2 \big| \big (B(hy_\delta(\theta_t\omega),hy_\delta(\theta_t\omega)),\textbf{v} \big) \big| \nonumber\\
&\leq 2 |y_\delta(\theta_t\omega)|\int_{\mathbb{T}^2}|\textbf{v}|^2|\nabla h|\ \d x+ 2|y_\delta(\theta_t\omega)|^2\int_{\mathbb{T}^2}|\textbf{v}||h||\nabla h|\ \d x\nonumber\\
&\leq 2 |y_\delta(\theta_t\omega)|\|\nabla h\|_{L^\infty}\|\textbf{v}\|^2+ 2|y_\delta(\theta_t\omega)|^2\|\nabla h\|_{L^\infty}\|\textbf{v}\|\|h\|\nonumber\\
&\leq 2 |y_\delta(\theta_t\omega)|\|\nabla h\|_{L^\infty}\|\textbf{v}\|^2+  \frac{ \alpha\nu \lambda_1} {8} \|\textbf{v}\|^2+ C|y_\delta(\theta_t\omega)|^4\|\nabla h\|_{L^\infty}^2\|h\|^2 ,
\end{align}
and
\begin{align}\label{3.6}
 2\big| \big (f-\nu A_1hy_\delta(\theta_t\omega) +hy_\delta(\theta_t\omega),\textbf{v} \big)\big|  \leq \frac{ \alpha\nu \lambda_1}{8}\|\textbf{v}\|^2+C\|f\|^2
 + C|y_\delta(\theta_t\omega)|^2 \left(\|Ah\|^2 + \|h\|^2\right) .
\end{align}
Inserting \eqref{3.5}  and \eqref{3.6} into \eqref{4.3} yields
\ben
&
\frac{\d}{\d t}\|\textbf{v}\|^2+ \left(1-\frac {3 \alpha} 4\right) \nu \lambda_1  \| \textbf{v}\|^2  + \frac {\alpha \nu }  2 \|A^{\frac12}\textbf{v}\|^2  \\
 &\quad \leq 2|y_\delta(\theta_t\omega)|\|\nabla h\|_{L^\infty}\|\textbf{v}\|^2+ C |y_\delta(\theta_t\omega)|^4\|\nabla h\|_{L^\infty}^2\|h\|^2\nonumber\\
&\qquad + C\|f\|^2+ C |y_\delta(\theta_t\omega)|^2 \left (\|h\|^2+\|Ah\|^2 \right)\nonumber\\
&\quad \leq 2|y_\delta(\theta_t\omega)|\|\nabla h\|_{L^\infty}\|\textbf{v}\|^2+C \left (1+|y_\delta(\theta_t\omega)|^4 \right),
\ee
and then,
\begin{align*}
\frac{\d}{\d t}\|\textbf{v}\|^2+ \left[ \left(1-\frac{3\alpha} 4\right ) \nu \lambda_1 -2\|\nabla h\|_{L^\infty}|y_\delta(\theta_t\omega)| \right ]
\|\textbf{v}\|^2+\frac{ \alpha \nu } 2\|A^{\frac12}\textbf{v}\|^2
\leq C  \left (1+|y_\delta(\theta_t\omega)|^4 \right)  .
\end{align*}
By  Gronwall's lemma, we can deduce that
\begin{align}
&\|\textbf{v}(t)\|^2+\frac {\alpha \nu } 2\int_0^te^{ \left(1-\frac{3\alpha} 4\right )\nu  \lambda_1(s-t)+2\|\nabla h\|_{L^\infty}\int_{s}^t|z(\theta_\tau\omega)|\, \d \tau}\|A^{\frac12}\textbf{v}(s)\|^2\, \d s\nonumber\\
&\quad \leq e^{-\left(1-\frac{3\alpha} 4\right ) \nu  \lambda_1t+2\|\nabla h\|_{L^\infty}\int_{0}^t|z(\theta_\tau\omega)|\, \d \tau}\|\textbf{v}(0)\|^2  \nonumber \\
& \qquad
+C\int_0^te^{ \left(1-\frac{3\alpha} 4\right ) \nu \lambda_1(s-t)+2\|\nabla h\|_{L^\infty}\int_{s}^t|z(\theta_\tau\omega)|\, \d \tau}  \left (1+|z(\theta_s\omega)|^4 \right) \d s, \nonumber
\end{align}
and then
\begin{align}
&\|\textbf{v}(t)\|^2+\frac {\alpha \nu} 2\int_0^te^{ \left(1-\frac{3\alpha} 4\right ) \nu \lambda_1(s-t) }\|A^{\frac12}\textbf{v}(s)\|^2\, \d s\nonumber\\
&\quad \leq e^{-\left(1-\frac{3\alpha} 4\right ) \nu \lambda_1t+2\|\nabla h\|_{L^\infty}\int_{0}^t|z(\theta_\tau\omega)|\, \d \tau}\|\textbf{v}(0)\|^2  \nonumber \\
& \qquad
+C\int_0^te^{ \left(1-\frac{3\alpha} 4\right ) \nu \lambda_1(s-t)+2\|\nabla h\|_{L^\infty}\int_{s}^t|z(\theta_\tau\omega)|\, \d \tau}  \left (1+|z(\theta_s\omega)|^4 \right)  \d s. \label{sep6.5}
\end{align}

Applying  \eqref{erg} for $m=1$, we can deduce that
\begin{align*}
\lim\limits_{t\rightarrow \pm \infty}\frac{1}{t}\int_0^t|y_{\delta}(\theta_s\omega)| \, \d s=\mathbb{E}(|y_{\delta}(\theta_t\omega)| )=\frac{ 1}{\sqrt{\pi\delta}}
, \quad \omega\in \Omega,
\end{align*}
so    there exists a  random variable  $ T_1(\omega)\geq1$ such that, for every $t\geq T_1(\omega)$,
\begin{align*}
 \frac 1t \int_{0}^t|y_{\delta}(\theta_s\omega)|\, \d s
 \leq  \frac{  1 }{\sqrt{\pi\delta}} + \frac{\beta}{\sqrt{\pi\delta}}     ,
\end{align*}
and then
\begin{align}
2\|\nabla h\|_{L^\infty}\int_{0}^t|y_{\delta}(\theta_s\omega)|\, \d s &\leq
   \frac{2\|\nabla h\|_{L^\infty}  }{ \sqrt{\pi\delta}}  \left( 1 +  \beta  \right ) t  \nonumber \\
   &= \left (1 -  \alpha\right) \nu \lambda_1t \quad \text{(by \eqref{c2})} .\label{erg1}
\end{align}
Then by  \eqref{sep6.5} and \eqref{erg1}, it is easy to deduce that
\begin{align}
&\|\textbf{v}(t)\|^2+\frac {\alpha \nu }2\int_0^te^{ \left(1-\frac{3\alpha} 4\right )\nu  \lambda_1(s-t) }\|A^{\frac12}\textbf{v}(s)\|^2\, \d s\nonumber\\
&\quad \leq e^{ - \frac{\alpha} 4  \nu  \lambda_1t }\|\textbf{v}(0)\|^2
+C\int_0^te^{ \left(1-\frac{3\alpha} 4\right )\nu  \lambda_1(s-t)+2\|\nabla h\|_{L^\infty}\int_{s}^t|y_{\delta}(\theta_\tau\omega)|\, \d \tau}  \left (1+|y_{\delta}(\theta_s\omega)|^4 \right)   \d s. \nonumber
\end{align}
Let  $\lambda:= \alpha \nu    \lambda_1 /4, $
 this estimate is then rewritten as
\begin{align*}
&\|\textbf{v}(t)\|^2+\frac {\alpha \nu } 2\int_0^te^{ \left( \frac 4 \alpha -3  \right) \lambda (s-t)}\|A^{\frac12}\textbf{v}(s)\|^2\, \d s\nonumber\\
 &\quad\leq e^{-\lambda t}\|\textbf{v}(0)\|^2
 +C\int_0^te^{ \left( \frac 4 \alpha -3  \right) \lambda (s-t)+2\|\nabla h\|_{L^\infty}\int_{s}^t|y_{\delta}(\theta_\tau\omega)|\, \d \tau}  \left (1+|y_{\delta}(\theta_s\omega)|^4 \right)  \d s. \nonumber
\end{align*}
Replacing $\omega$ with $\theta_{-t}\omega$, we can deduce that
 \begin{align*}
&\|\textbf{v}(t, \theta_{-t}\omega, \textbf{v}(0))\|^2+\frac {\alpha  \nu }2\int_0^te^{ \left( \frac 4 \alpha -3  \right) \lambda (s-t)}\|A^{\frac12}\textbf{v}(s, \theta_{-t}\omega, \textbf{v}(0))\|^2\, \d s\nonumber\\
 &\quad\leq e^{-\lambda t}\|\textbf{v}(0)\|^2
 +C\int^0_{-t} e^{ \left( \frac 4 \alpha -3  \right) \lambda  s +2\|\nabla h\|_{L^\infty}\int_{s}^0 |y_{\delta}(\theta_\tau\omega)|\, \d \tau}  \left (1+|y_{\delta}(\theta_s\omega)|^4 \right) \d s. \nonumber
\end{align*}
The random variable is given by
\begin{align*}
 \zeta_1(\omega) :=C\int_{-\infty}^0  e^{ \left( \frac 4 \alpha -3  \right) \lambda  s +2\|\nabla h\|_{L^\infty}\int_{s}^0 |y_{\delta}(\theta_\tau\omega)|\, \d \tau}  \left (1+|y_{\delta}(\theta_s\omega)|^4 \right)   \d s,\quad \omega\in \Omega .
\end{align*}
Then $\zeta_1(\cdot)$ is a tempered random variable such that  $\zeta_1(\omega)\geq 1$ and
\begin{align}
&\|\textbf{v}(t,\theta_{-t} \omega, \textbf{v}(0))\|^2+ \frac {\alpha \nu } 2\int_0^t e^{ \left ( \frac{4}{\alpha} -3  \right) \lambda (s-t)}\|A^{\frac12}\textbf{v}(s, \theta_{-t}\omega, \textbf{v}(0))\|^2\, \d s \nonumber \\
 &\quad \leq e^{-\lambda t}\|\textbf{v}(0)\|^2
 + \zeta_1(\omega) ,\quad t\geq T_1(\omega).  \label{4.29}
\end{align}
This completes the proof of Lemma \ref{lemma4.1}.
\end{proof}

\begin{lemma}[$H^1$ bound] \label{lem:H1bound}
Let  $f\in H$ and Assumption \ref{assum} hold.
Then for any bounded set $ B$ in $ H$ there exists a random variable $ T_B(\omega)>T_1(\omega)$ such that,  for any $t\geq T_B(\omega)$,
\begin{align*}
\sup_{\textbf{v}(0)\in B} \left( \big\|A^{\frac12}\textbf{v}(t,\theta_{-t}\omega,\textbf{v}(0))  \big\|^2 + \int_{t-\frac 12 }^t \|A\textbf{v}(s,\theta_{-t}\omega,\textbf{v}(0))\|^2  \ \d s\right)
\leq  \zeta_2(\omega),
\end{align*}
where $\zeta_2(\omega)$  is a tempered random variable given by \eqref{mar8.8} such that $\zeta_2(\omega) > \zeta_1(\omega)\geq 1$, $ \omega\in\Omega$.
\end{lemma}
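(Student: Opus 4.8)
The plan is to obtain a differential inequality for $\|A^{\frac12}\textbf{v}\|^2$ by testing the first equation in \eqref{2.2} against $A_1\textbf{v}$ in $H$, and then to close it by a uniform Gronwall argument built on the $H$-bound and the dissipation integral of Lemma \ref{lemma4.1}. First I would record the elementary spectral facts that hold because $\nabla\cdot\textbf{v}=0$ on $\mathbb{T}^2$: expressing the Fourier coefficients of $\textbf{v}$ through a single scalar coefficient one gets $\tfrac14\|A\textbf{v}\|^2\le\|A_1\textbf{v}\|^2\le\|A\textbf{v}\|^2$ and $\tfrac12\|A^{\frac12}\textbf{v}\|^2\le\|(\partial_y v_1,\partial_x v_2)\|^2$ (the second being \eqref{3.9}), so that the dissipative term $(A_1\textbf{v},A_1\textbf{v})=\|A_1\textbf{v}\|^2$ controls both $\|A\textbf{v}\|^2$ and, via \eqref{poin}, $\lambda_1\|A^{\frac12}\textbf{v}\|^2$. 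Taking the $H$-inner product of \eqref{2.2} with $A_1\textbf{v}$ and integrating the time-derivative term by parts yields
\[
\frac{\d}{\d t}\big\|(\partial_y v_1,\partial_x v_2)\big\|^2+2\nu\|A_1\textbf{v}\|^2=-2\big(B(\textbf{v}+hy_\delta(\theta_t\omega)),A_1\textbf{v}\big)+2\big(f-\nu A_1hy_\delta(\theta_t\omega)+hy_\delta(\theta_t\omega),A_1\textbf{v}\big).
\]

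Next I would estimate the right-hand side. After expanding $B(\textbf{v}+hy_\delta)=B(\textbf{v})+y_\delta B(\textbf{v},h)+y_\delta B(h,\textbf{v})+y_\delta^2B(h)$, the leading term is treated with the two-dimensional Ladyzhenskaya and Agmon inequalities, $|b(\textbf{v},\textbf{v},A_1\textbf{v})|\le C\|\textbf{v}\|_{L^4}\|\nabla\textbf{v}\|_{L^4}\|A_1\textbf{v}\|\le C\|\textbf{v}\|^{\frac12}\|A^{\frac12}\textbf{v}\|\,\|A\textbf{v}\|^{\frac32}$, so that Young's inequality gives $\le\tfrac\nu4\|A_1\textbf{v}\|^2+C\|\textbf{v}\|^2\|A^{\frac12}\textbf{v}\|^4$; the $h$-terms are lower order since $h\in H^3\hookrightarrow W^{1,\infty}$ and, after Young's inequality and \eqref{poin}, they contribute another $\tfrac\nu4\|A_1\textbf{v}\|^2$ plus terms of the form $C|y_\delta(\theta_t\omega)|^2\|h\|_{L^\infty}^2\|A^{\frac12}\textbf{v}\|^2$ and $C|y_\delta(\theta_t\omega)|^4\big(\|h\|_{H^2}^4\|\textbf{v}\|^2+\|h\|_{H^2}^4\|h\|^2\big)$; the forcing term is bounded, using $\|A_1h\|\le\|h\|_{H^2}$, by $\tfrac\nu4\|A_1\textbf{v}\|^2+C\|f\|^2+C|y_\delta(\theta_t\omega)|^2(\|h\|_{H^2}^2+\|h\|^2)$. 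Absorbing all $\|A_1\textbf{v}\|^2$-terms on the left and using the coercivity from the first step, one arrives at
\[
\frac{\d}{\d t}\|A^{\frac12}\textbf{v}\|^2+c\nu\|A\textbf{v}\|^2\le g(t)\,\|A^{\frac12}\textbf{v}\|^2+k(t),
\]
with $g(t)=C\|\textbf{v}(t)\|^2\|A^{\frac12}\textbf{v}(t)\|^2+C|y_\delta(\theta_t\omega)|^2\|h\|_{L^\infty}^2$ and $k(t)=C\big(1+|y_\delta(\theta_t\omega)|^4+\|f\|^2\big)$, where Lemma \ref{lemma4.1} has been used to replace the $\|\textbf{v}\|^2$-factors still present in $k$ by a tempered bound on the relevant time window.

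Finally I would close the estimate. Replacing $\omega$ by $\theta_{-t}\omega$ and invoking inequality \eqref{4.29} of Lemma \ref{lemma4.1}, one obtains, uniformly over $\textbf{v}(0)$ in the bounded set $B\subset H$ once $t$ exceeds a random time $T_B(\omega)\ge T_1(\omega)$ depending on the radius of $B$, a tempered bound on $\sup_{s\in[t-1,t]}\|\textbf{v}(s,\theta_{-t}\omega,\textbf{v}(0))\|^2$ and on $\int_{t-1}^t\|A^{\frac12}\textbf{v}(s,\theta_{-t}\omega,\textbf{v}(0))\|^2\,\d s$; combined with the ergodic limit \eqref{erg} for $m=1,2,4$ this makes $\int_{t-1}^tg(s)\,\d s$ and $\int_{t-1}^tk(s)\,\d s$ bounded by a tempered random variable. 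A uniform Gronwall argument on the window $[t-\tfrac12,t]$ then yields $\|A^{\frac12}\textbf{v}(t,\theta_{-t}\omega,\textbf{v}(0))\|^2\le\zeta_2(\omega)$ for all $t\ge T_B(\omega)$, and integrating the displayed differential inequality over $[t-\tfrac12,t]$ while discarding the good term $c\nu\|A\textbf{v}\|^2$ gives the companion bound on $\int_{t-1/2}^t\|A\textbf{v}(s,\theta_{-t}\omega,\textbf{v}(0))\|^2\,\d s$; one then defines $\zeta_2(\omega)$ as the resulting explicit stationary-process expression and checks that it is tempered and satisfies $\zeta_2(\omega)>\zeta_1(\omega)\ge1$. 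The main obstacle is the superlinearity created by the inertial term: the factor $\|\textbf{v}\|^2\|A^{\frac12}\textbf{v}\|^4$ rules out a plain Gronwall estimate and forces one to exploit the time-integrability of $\|A^{\frac12}\textbf{v}\|^2$ from Lemma \ref{lemma4.1} through the uniform Gronwall lemma; a secondary technical point is upgrading the pullback estimates of Lemma \ref{lemma4.1} to versions that are uniform on a short time window and over the initial set $B$.
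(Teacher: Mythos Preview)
Your approach is correct but takes a genuinely different route from the paper's. The paper tests \eqref{2.2} against $A\textbf{v}$ rather than $A_1\textbf{v}$: it uses the explicit identities \eqref{3.10}--\eqref{3.12} to show the anisotropic dissipative pairing dominates $\tfrac12\nu\|A\textbf{v}\|^2$, and then exploits the 2D periodic orthogonality $(B(u),Au)=0$ with $u=\textbf{v}+hy_\delta$ to rewrite $(B(\textbf{v}+hy_\delta),A\textbf{v})=-(B(\textbf{v}+hy_\delta),Ahy_\delta(\theta_t\omega))$; estimating the latter directly yields the linear inequality \eqref{4.27} whose Gronwall coefficient $C(1+|y_\delta(\theta_t\omega)|^2)$ depends \emph{only on the noise}, not on the solution. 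By contrast, since $(B(u),A_1u)\ne0$ in general, you are forced to estimate $b(\textbf{v},\textbf{v},A_1\textbf{v})$ head-on, which produces the solution-dependent coefficient $g(t)\sim\|\textbf{v}\|^2\|A^{1/2}\textbf{v}\|^2$; closing then requires the extra input $\sup_{s\in[t-1,t]}\|\textbf{v}(s,\theta_{-t}\omega,\textbf{v}(0))\|^2\le C(\omega)$, which you correctly flag as a ``secondary technical point'' but which does \emph{not} follow from \eqref{4.29} as stated (that gives only the bound at the terminal time $t$) and must be proved separately along the lines of the later argument \eqref{mar8.1}--\eqref{mar8.3}. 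Both routes work, but the paper's is shorter, avoids any auxiliary time-window estimate at this stage, and produces the clean inequality \eqref{4.27} that is reused verbatim in Sections~\ref{sec4}--\ref{sec7}; your version would require those later proofs to be reorganized around a different differential inequality.
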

\begin{proof}
Taking the inner product of the first equation   \eqref{2.2} with $A\textbf{v}$ in $H$, by integration by parts we can deduce that
\begin{align}\label{4.6}
&\frac12\frac{\d}{\d t}\|A^{\frac12}\textbf{v}\|^2+ \nu \int_{\mathbb{T}^2}\partial_{yy}v_1\Delta v_1dx+\nu \int_{\mathbb{T}^2}\partial_{xx}v_2\Delta v_2dx\nonumber\\
&=-\big(B(\textbf{v}+hy_{\delta}(\theta_t\omega),\textbf{v}+hy_{\delta}(\theta_t\omega)),\, A\textbf{v} \big)\nonumber\\
&\quad \ +\big(f-\nu A_1hy_{\delta}(\theta_t\omega) +hy_{\delta}(\theta_t\omega),A\textbf{v} \big),
\end{align}
here by direct computations, it yields
\begin{align}\label{3.10}
\nu \int_{\mathbb{T}^2}\partial_{yy}v_1\Delta v_1dx&=\nu \int_{\mathbb{T}^2}(\partial_{yy}v_1)^2dx
+\nu \int_{\mathbb{T}^2}\partial_{xx}v_1\partial_{yy}v_1dx\nonumber\\
&=\nu \int_{\mathbb{T}^2}(\partial_{yy}v_1)^2dx
+\nu \int_{\mathbb{T}^2}\partial_{xy}v_1\partial_{xy}v_1dx\nonumber\\
&=\nu \int_{\mathbb{T}^2}(\partial_{yy}v_1)^2dx
+\nu \int_{\mathbb{T}^2}(\partial_{yy}v_2)^2dx.
\end{align}
Similarly, it yields
\begin{align}\label{3.11}
\nu \int_{\mathbb{T}^2}\partial_{xx}v_2\Delta v_2dx&=\nu \int_{\mathbb{T}^2}(\partial_{xx}v_2)^2dx
+\nu \int_{\mathbb{T}^2}\partial_{xx}v_2\partial_{yy}v_2dx\nonumber\\
&=\nu \int_{\mathbb{T}^2}(\partial_{xx}v_2)^2dx
+\nu \int_{\mathbb{T}^2}\partial_{xy}v_2\partial_{xy}v_2dx\nonumber\\
&=\nu \int_{\mathbb{T}^2}(\partial_{xx}v_2)^2dx
+\nu \int_{\mathbb{T}^2}(\partial_{xx}v_1)^2dx.
\end{align}
Combining \eqref{3.10} and \eqref{3.11} yields
\begin{align}\label{3.12}
&\nu \int_{\mathbb{T}^2}\partial_{yy}v_1\Delta v_1dx+\nu \int_{\mathbb{T}^2}\partial_{xx}v_2\Delta v_2dx\nonumber\\
&=\nu \int_{\mathbb{T}^2}(\partial_{xx}v_1)^2dx+\nu \int_{\mathbb{T}^2}(\partial_{yy}v_1)^2dx
+\nu \int_{\mathbb{T}^2}(\partial_{xx}v_2)^2dx+\nu \int_{\mathbb{T}^2}(\partial_{yy}v_2)^2dx\nonumber\\
&\geq \frac12\nu \int_{\mathbb{T}^2}(\partial_{xx}v_1+\partial_{yy}v_1)^2dx
+\frac12\nu \int_{\mathbb{T}^2}(\partial_{xx}v_2+\partial_{yy}v_2)^2dx
\nonumber\\
&=\frac12\nu\|\Delta \textbf{v}\|^2=\frac12\nu\|A \textbf{v}\|^2.
\end{align}
By the H\"{o}lder inequality,  the Gagliardo-Nirenberg inequality, Poincar\'{e}'s inequality and the Young inequality, we can deduce that
\begin{align}\label{4.4}
&\big| \big(B(\textbf{v}+hy_{\delta}(\theta_t\omega),\textbf{v}+hy_{\delta}(\theta_t\omega)),\, A\textbf{v }\big)\big|
= \big| \big(B(\textbf{v}+hy_{\delta}(\theta_t\omega),\textbf{v}+hy_{\delta}(\theta_t\omega)),Ahy_{\delta}(\theta_t\omega) \big) \big| \nonumber\\
& \quad \leq \|Ahy_{\delta}(\theta_t\omega)\|\|\textbf{v}+hy_{\delta}(\theta_t\omega)\|_{L^4}\|A^{\frac12}(\textbf{v}+hy_{\delta}(\theta_t\omega))
\|_{L^4}\nonumber\\
& \quad \leq C\|Ahy_{\delta}(\theta_t\omega)\|\|\textbf{v}+hy_{\delta}(\theta_t\omega)\|^{\frac12}
\|A^{\frac12}(\textbf{v}+hy_{\delta}(\theta_t\omega))\|
\|A(\textbf{v}+hy_{\delta}(\theta_t\omega))\|^{\frac12}\nonumber\\
& \quad \leq C\|Ahy_{\delta}(\theta_t\omega)\|\|A^{\frac12}(\textbf{v}+hy_{\delta}(\theta_t\omega))\|^{\frac32}
\|A(\textbf{v}+hy_{\delta}(\theta_t\omega))\|^{\frac12}\nonumber\\
& \quad \leq  C\|Ahy_{\delta}(\theta_t\omega)\| \left (\|A^{\frac12}\textbf{v}\|^{\frac32}+\|A^{\frac12}hy_{\delta}(\theta_t\omega)\|^{\frac32}\right) \left
(\|A\textbf{v}\|^{\frac12}+\|Ahy_{\delta}(\theta_t\omega)\|^{\frac12} \right)\nonumber\\
& \quad \leq \frac  \nu 8\|A\textbf{v}\|^2+C\left (1+\|Ah\|^2|y_{\delta}(\theta_t\omega)|^2 \right )\|A^{\frac12}\textbf{v}\|^2+C\left (1+|y_{\delta}(\theta_t\omega)|^6\right),
\end{align}
and
\begin{align}
\big (f-\nu A_1hy_{\delta}(\theta_t\omega) +hy_{\delta}(\theta_t\omega),A\textbf{v} \big)
& \leq \frac  \nu 8 \|A\textbf{v}\|^2+C\|f\|^2+ C  |y_{\delta}(\theta_t\omega)|^2   .\label{4.5}
\end{align}
Inserting \eqref{3.12}, \eqref{4.4} and \eqref{4.5} into \eqref{4.6}, it is easy to get
\begin{align}\label{4.27}
\frac{\d}{\d t}\|A^{\frac12}\textbf{v}\|^2 +  \frac{\nu }{2} \|A\textbf{v}\|^2
&\leq  C\left ( 1 + \|Ah\|^2|y_{\delta}(\theta_t\omega)|^2 \right)\|A^{\frac12}\textbf{v}\|^2  +
C \left(1 +|y_{\delta}(\theta_t\omega)|^6+\|f\|^2 \right)\nonumber \\
&\leq  C\left ( 1 +  |y_{\delta}(\theta_t\omega)|^2 \right)  \|A^{\frac12}\textbf{v}\|^2  +
C \left(1 +|y_{\delta}(\theta_t\omega)|^6  \right).
\end{align}
Applying Gronwall's lemma on $(\eta, t)$ with $\eta\in  (t-1,t )$, $t >1$, it yields
\begin{align*}
&\|A^{\frac12}\textbf{v}(t)\|^2
+ \frac\nu 2 \int_\eta ^te^{ \int_s^tC(1+|y_{\delta}(\theta_\tau\omega)|^2)\, \d \tau}\|A\textbf{v}(s)\|^2 \, \d s \nonumber\\
&\quad \leq e^{ \int_\eta^tC(1+|y_{\delta}(\theta_\tau\omega)|^2)\, \d \tau}\|A^{\frac12}\textbf{v}( \eta)\|^2 +C\int_\eta^te^{ \int_s^t C(1+ |y_{\delta} (\theta_\tau\omega)|^2)\, \d \tau} \left (1 +|y_{\delta}(\theta_s\omega)|^6 \right)\d s,
\end{align*}
and then integrating over $\eta\in (t-1,t-\frac 12) $, we have
\begin{align*}
& \frac12  \|A^{\frac12}\textbf{v}(t )\|^2
+\frac  \nu 4\int_{t-\frac 12 }^t e^{ \int_s^tC(1+|y_{\delta}(\theta_\tau\omega)|^2)\, \d \tau}\|A\textbf{v}(s)\|^2\, \d s \nonumber\\
&\quad \leq \int_{t-1}^{t-\frac 12}  e^{ \int_\eta^tC(1+|y_{\delta}(\theta_\tau\omega)|^2)\, \d \tau} \|A^{\frac12}\textbf{v}( \eta)\|^2 \, \d \eta
 \nonumber\\
&\qquad +C\int_{t-1}^te^{ \int_s^t C(1+ |y_{\delta}(\theta_\tau\omega)|^2)\, \d \tau} \left (1 +|y_{\delta}(\theta_s\omega)|^6 \right) \d s \nonumber\\
&\quad \leq Ce^{   C \int_{ t-1}^t (1+|y_{\delta}(\theta_\tau\omega)|^2) \, \d \tau}  \left[  \int_{t-1}^{t  }  \|A^{\frac12}\textbf{v}( \eta)\|^2 \, \d \eta  + \int_{t-1}^t \left (1 +|y_{\delta}(\theta_s\omega)|^6 \right)\d s\right]  .
\end{align*}
Replacing $\omega$ with $\theta_{-t}\omega$ we can deduce that
\begin{align*}
&   \|A^{\frac12}\textbf{v}(t,\theta_{-t}\omega, \textbf{v}(0) )\|^2
+   \frac{\nu}{2} \int_{t-\frac 12 }^te^{ C(t-s) + \int_{s-t}^0C |y_{\delta}(\theta_\tau\omega)|^2 \, \d \tau}\|A\textbf{v}(s ,\theta_{-t}\omega, \textbf{v}(0))\|^2\, \d s\nonumber\\
&\quad  \leq  C e^{ C \int_{-1}^0 |y_{\delta}(\theta_\tau\omega)|^2\, \d \tau}
\left [ \int_{t-1}^t   \|A^{\frac12}\textbf{v}(s,\theta_{-t}\omega, \textbf{v}(0) )\|^2\, \d s+\int_{-1}^0 \left(1+ |y_{\delta}(\theta_{s} \omega)|^6  \right)  \d s\right].
\end{align*}
By     \eqref{4.29},
\begin{align*}
 \nu  \int_{t-1}^t  \|A^{\frac12}\textbf{v}(s,\theta_{-t}\omega, \textbf{v}(0) )\|^2  \, \d s
 & \leq   \nu e^{\left( \frac 4\alpha -3 \right)  \lambda }   \int_{t-1}^t  e^{\left( \frac 4\alpha -3 \right)  \lambda (s-t) } \|A^{\frac12}\textbf{v}(s,\theta_{-t}\omega, \textbf{v}(0) )\|^2 \, \d s
 \\
 & \leq  \frac{2}{\alpha    }   e^{\left( \frac 4\alpha -3 \right)  \lambda }   \left( e^{-\lambda t}\|\textbf{v}(0)\|^2
 +   \zeta_1(\omega)\right) , \quad    t\geq T_1(\omega) , \nonumber
\end{align*}
so
we can deduce that
\begin{align*}
&   \|A^{\frac12}\textbf{v}(t,\theta_{-t}\omega, \textbf{v}(0) )\|^2
 +  \frac{\nu}{2} \int_{t-\frac 12 }^t  \|A\textbf{v}(s ,\theta_{-t}\omega, \textbf{v}(0))\|^2\, \d s \nonumber\\
 &\quad  \leq  \|A^{\frac12}\textbf{v}(t,\theta_{-t}\omega, \textbf{v}(0) )\|^2
+  \frac{\nu}{2} \int_{t-\frac 12 }^te^{ C(t-s) + \int_{s-t}^0C |y_{\delta}(\theta_\tau\omega)|^2 \, \d \tau}\|A\textbf{v}(s ,\theta_{-t}\omega, \textbf{v}(0))\|^2\, \d s\nonumber\\
&\quad \leq   C e^{ C \int_{-1}^0 |y_{\delta} (\theta_\tau\omega)|^2\, \d \tau} \left(   e^{-\lambda t}\|\textbf{v}(0)\|^2
 +   \zeta_1(\omega)  +
\int_{-1}^0 |y_{\delta}(\theta_{s} \omega)|^6  \, \d s \right) , \quad    t\geq T_1(\omega) ,
\end{align*}
where  we have used the fact  that $\zeta_1(\omega)\geq 1$.
Hence, let
\begin{align}\label{mar8.8}
 \zeta_2(\omega) :=   C e^{ C \int_{-1}^0 |y_{\delta}(\theta_\tau\omega)|^2\, \d \tau} \left(      \zeta_1(\omega)  +
\int_{-1}^0 |y_{\delta}(\theta_{s} \omega)|^6   \ \d s \right) ,
\quad \omega\in \Omega,
\end{align}
here, $\zeta_2(\omega)$ is a tempered random variable.  This completes the proof of Lemma \ref{lem:H1bound}.

\end{proof}

Let a random set
$\mathfrak{B}=\{\mathfrak{B}(\omega)\}_{\omega\in\Omega}$ in $H^1$ is defined by
\begin{align}\label{4.19}
\mathfrak{B}(\omega):= \left \{ \textbf{v} \in H^1:\|A^{\frac12}\textbf{v}\|^2\leq \zeta_2(\omega)
\right \}, \quad \omega\in\Omega,
\end{align}
where $\zeta_2(\cdot)$ is the tempered random variable given by \eqref{mar8.8}. By Lemma \ref{lem:H1bound}, we have that  $\mathfrak{B}$ is a  random absorbing set of the RDS $\phi$ generated by the system  \eqref{2.2}.  In addition,  the compact embedding $H^1 \hookrightarrow H$ gives the compactness of $\mathfrak B(\omega)$ in $H$, hence the RDS $\phi$ has a    random attractor $\A$ in $H$. Moreover, by the Langa \& Robinson \cite{langa2}, this random attractor has finite fractal dimension in $H$.  Then we introduce the following main result.

\begin{theorem}\label{theorem4.6}  Let Assumption \ref{assum} hold and $f\in H$. The  RDS $\phi$ generated by  the random  anisotropic NS equations  \eqref{2.2} driven by colored noise has a  random absorbing set $\mathfrak{B}$ which is a tempered and bounded random set in $H^1 $, and has also a   random attractor $\mathcal A $ in $H$. In addition, $\mathcal A$ has finite fractal dimension in $H$:
\[
d_f^H(\A(\omega))  \leq d,\quad \omega \in \Omega,
\]
for some positive constant $d$.
\end{theorem}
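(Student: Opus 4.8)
The plan is to apply the bi-spatial attractor criterion Lemma \ref{lem:cui18} with $X=Y=H$ — the compactness coming from the fact that the absorbing set lives in $H^1$ — and then to invoke the abstract finite-dimensionality theorem of Langa and Robinson \cite{langa2}.

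First I would check that the set $\mathfrak B$ in \eqref{4.19} is a tempered random absorbing set. Although Lemma \ref{lem:H1bound} is phrased for a deterministic bounded set $B\subset H$, its proof uses the boundedness of $B$ only to ensure that $e^{-\lambda t}\|\textbf{v}(0)\|^2$ is eventually below $\zeta_1(\omega)$; hence, for any $\mathfrak D\in\D_H$, temperedness of $\mathfrak D$ gives $e^{-\lambda t}\|\mathfrak D(\theta_{-t}\omega)\|_H^2\to0$ and the same argument (via Lemma \ref{lemma4.1} and \eqref{4.29}) yields a random time $T_{\mathfrak D}(\omega)$ with $\phi(t,\theta_{-t}\omega,\mathfrak D(\theta_{-t}\omega))\subset\mathfrak B(\omega)$ for $t\ge T_{\mathfrak D}(\omega)$. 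Since $\zeta_2(\cdot)$ from \eqref{mar8.8} is a tempered random variable, $\mathfrak B$ is a tempered, bounded, closed random set in $H^1$; moreover a closed $H^1$-ball is, by the Rellich compact embedding $H^1\hookrightarrow H$, compact (a fortiori closed) in $H$, so $\mathfrak B$ is closed in $H$ as well. This is precisely hypothesis (i) of Lemma \ref{lem:cui18}.

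Next I would verify the $(H,H)$-asymptotic compactness required in hypothesis (ii): given $\mathfrak D\in\D_H$, $t_n\to\infty$ and $x_n\in\mathfrak D(\theta_{-t_n}\omega)$, the previous step gives $\phi(t_n,\theta_{-t_n}\omega,x_n)\in\mathfrak B(\omega)$ for all large $n$, and $\mathfrak B(\omega)$ is bounded in $H^1$, hence precompact in $H$; so a subsequence converges in $H$. Lemma \ref{lem:cui18} then produces the unique random attractor
\[
\A(\omega)=\bigcap_{s\ge0}\overline{\bigcup_{t\ge s}\phi(t,\theta_{-t}\omega,\mathfrak B(\theta_{-t}\omega))}^{\,H},\qquad\omega\in\Omega,
\]
which is a tempered compact random set in $H$, invariant under $\phi$, and pullback-attracting in $H$.

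Finally, for the dimension bound I would apply \cite{langa2}: a compact invariant random set contained in a tempered $H^1$-bounded absorbing set has a \emph{deterministic} finite fractal dimension in $H$ provided the time-$\tau$ map of the RDS is uniformly (in $\omega$) Fr\'echet differentiable on that set with the trace of the linearization controlled, on average in time, by an $\omega$-independent quantity. The linearization of \eqref{2.2} along a trajectory $\textbf{v}$ is $\partial_t W+\nu A_1W+B(\textbf{v}+hy_\delta(\theta_t\omega),W)+B(W,\textbf{v}+hy_\delta(\theta_t\omega))=0$; testing with $AW$ and using the anisotropic coercivity $(\nu A_1W,AW)\ge\frac12\nu\|AW\|^2$ from \eqref{3.12}, the Gagliardo-Nirenberg inequality and the uniform $H^1$-bound $\zeta_2(\omega)$ on $\mathfrak B$, one obtains over a fixed interval a bound $\|W(\tau)\|_{H^1}^2\le L(\omega)\|W(0)\|^2$ together with trace estimates whose growth rates are governed by time integrals of powers of $|y_\delta(\theta_s\omega)|$. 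By the ergodic identity \eqref{erg} these time averages converge to the explicit deterministic constants $\Gamma(\tfrac{1+m}{2})/\sqrt{\pi\delta^m}$, so the number of expanding directions — and hence $d_f^H(\A(\omega))$ — is bounded by an $\omega$-independent $d$. The hard part is exactly this last step: converting the $\omega$-dependent energy estimates into a single deterministic dimension bound, which works because the noise enters \eqref{2.2} additively (so the variational equation is that of a deterministic, randomly forced anisotropic NS system) and because \eqref{erg} supplies the required uniform time averages.
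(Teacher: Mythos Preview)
Your proposal is correct and follows essentially the same route as the paper: use the $H^1$-bounded absorbing set $\mathfrak B$ from Lemma~\ref{lem:H1bound} together with the compact embedding $H^1\hookrightarrow H$ to obtain the random attractor, and then cite Langa--Robinson \cite{langa2} for the finite fractal dimension in $H$. The paper's own argument is in fact much terser than yours---it is the short paragraph preceding the theorem statement and simply invokes \cite{langa2} without the linearization/trace sketch you supply---so your account is, if anything, more complete.
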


 In the Section \ref{sec4}, Section \ref{sec5} and Section \ref{sec6}, we will prove that this random attractor $\A$ is in fact an $(H,H^2)$-random attractor of $\phi$.

\section{Construction of  an $H^2$ random absorbing set}\label{sec4}

In this section, we will prove an  $H^2$ random absorbing set of system  \eqref{2.2} as $f\in H$.   This will be done by estimating  the difference between the   solutions of the random anisotropic NS equations \eqref{2.2} and that of the  deterministic anisotropic NS equations \eqref{2.1} within the global attractor $\A_0$, see Remark \ref{rmk1}.  This comparison approach seems first employed in Cui \& Li \cite{cui}.

\subsection{$H^2$-distance between random and deterministic trajectories}

By the Theorem \ref{theorem4.6},  we have that the random anisotropic NS equations  \eqref{2.2} driven by colored noise  has an $H^1$ random absorbing set $\mathfrak B$. Meanwhile, by the Lemma \ref{lem:det},  we have that  the deterministic anisotropic NS equations  \eqref{2.1} has a global attractor $\mathcal A_0$ bounded in $H^2$, it suffices to restrict ourselves to the random absorbing set  $\mathfrak B$ and the  global attractor $\mathcal A_0$. Note that    the $\omega$-dependence  of each  random time in the following estimates will be crucial for later analysis.

\begin{lemma}[$H^1$-distance]
\label{lemma4.3}
 Let Assumption \ref{assum} hold and $f\in H$. There   exist   random variables $T_{\mathfrak B}(\cdot)$ and    $\zeta_4(\cdot)$, where  $\zeta_4(\cdot)$ is tempered,  such that the solutions $\textbf{v}$  of the random  anisotropic NS equations \eqref{2.2} driven by colored noise and $\textbf{u}$  of the deterministic anisotropic NS equations  \eqref{2.1} satisfy
\begin{align}
 \big \|A^{\frac 12} \textbf{v}(t,\theta_{-t}\omega,\textbf{v}(0))-A^{\frac 12} \textbf{u}(t,\textbf{u}(0)) \big\| ^2 \leq \zeta_4(\omega),\quad \forall t\geq T_{\mathfrak B}(\omega), \nonumber
\end{align}
for any $\textbf{v}(0)\in\mathfrak{B}(\theta_{-t}\omega)$
 and $\textbf{u}(0)\in\mathcal{A}_0$ for $\omega\in \Omega$.
\end{lemma}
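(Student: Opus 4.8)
The plan is to derive an evolution inequality for $w := \textbf{v} - \textbf{u}$ in the $H^1$-norm, i.e.\ for $\|A^{1/2}w\|^2$, and then combine a uniform Gronwall argument with the already established $H$- and $H^1$-bounds (Lemma \ref{lemma4.1} and Lemma \ref{lem:H1bound}) together with the $H^2$-boundedness of $\mathcal A_0$ (Lemma \ref{lem:det}). First I would subtract equation \eqref{2.1} from the first line of \eqref{2.2}. Writing $y:=y_\delta(\theta_t\omega)$ for brevity, $w$ satisfies
\begin{align*}
\frac{\d w}{\d t} + \nu A_1 w + B(\textbf{v}+hy) - B(\textbf{u}) = -\nu A_1 h\,y + h\,y .
\end{align*}
The nonlinear difference is then expanded as
\begin{align*}
B(\textbf{v}+hy) - B(\textbf{u}) = B(w,\textbf{u}) + B(\textbf{v}+hy,\textbf{v}+hy) - B(\textbf{u},\textbf{v}+hy) = B(w,\textbf{u}) + B(\textbf{v}+hy, w + hy),
\end{align*}
so that after testing with $Aw$ in $H$ and using the anisotropic coercivity identity \eqref{3.12} (which gives a lower bound $\tfrac12\nu\|Aw\|^2$ exactly as in Lemma \ref{lem:H1bound}), one is left to control the trilinear terms $(B(w,\textbf{u}),Aw)$, $(B(\textbf{v}+hy,w),Aw)$, $(B(\textbf{v}+hy,hy),Aw)$ and the linear forcing $(-\nu A_1 h\,y + h\,y, Aw)$.

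The key steps, in order, are: (1) estimate each trilinear term by H\"older, Gagliardo--Nirenberg ($\|\cdot\|_{L^4}\le C\|\cdot\|^{1/2}\|A^{1/2}\cdot\|^{1/2}$) and Young's inequality, absorbing all top-order $\|Aw\|$ factors into the viscous term $\tfrac14\nu\|Aw\|^2$; the crucial observation is that $\textbf{u}(t)\in\mathcal A_0$ is bounded in $H^2$ by Lemma \ref{lem:det}, so $\|A\textbf{u}\|$, $\|A^{1/2}\textbf{u}\|$, $\|\textbf{u}\|$ are all bounded by a deterministic constant, and that $\textbf{v}=w+\textbf{u}$, $\|A^{1/2}\textbf{v}\|$ is controlled by the tempered variable $\zeta_2(\omega)$ for $t\ge T_{\mathfrak B}(\omega)$ via Lemma \ref{lem:H1bound}; (2) arrive at a differential inequality of the form
\begin{align*}
\frac{\d}{\d t}\|A^{1/2}w\|^2 + \frac{\nu}{2}\|Aw\|^2 \le p(t)\,\|A^{1/2}w\|^2 + q(t),
\end{align*}
where $p$ and $q$ are polynomial expressions in $|y_\delta(\theta_t\omega)|$ (and in the $H^2$-bound of $\mathcal A_0$) whose time-averages are finite by the ergodic identity \eqref{erg}; (3) apply the uniform Gronwall lemma on a short window $(\eta,t)$ with $\eta\in(t-1,t)$, integrate in $\eta$, and use the $L^2$-in-time $H^1$ control of $w$ that follows from the $L^2(0,T;H^1)$ bounds on $\textbf{v}$ (Lemma \ref{lem:H1bound}, estimate \eqref{4.29}) and the boundedness of $\textbf{u}$, exactly as in the passage from \eqref{4.27} to \eqref{mar8.8}; (4) replace $\omega$ by $\theta_{-t}\omega$, push $t\to\infty$ and read off $\zeta_4(\omega)$ as a convergent integral of the resulting stationary process, which is tempered by temperedness of $|y_\delta(\cdot)|$ and $\zeta_2(\cdot)$.

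The main obstacle I expect is the treatment of the term $(B(w,\textbf{u}),Aw)$ together with $(B(\textbf{v}+hy,w),Aw)$: one must be careful to spend the full-order derivative budget correctly, since there is only one factor of $\|Aw\|^2$ available to absorb into viscosity. The term $(B(w,\textbf{u}),Aw)$ is harmless precisely because $\textbf{u}\in H^2$ (this is where Lemma \ref{lem:det} is indispensable --- if $\textbf{u}$ were only in $H^1$ the comparison argument would break, which is the whole point of Remark \ref{rmk1}), while $(B(\textbf{v}+hy,w),Aw)$ requires writing $\textbf{v}+hy = w + \textbf{u} + hy$ and handling $(B(w,w),Aw)$ by interpolation $\|w\|_{L^4}\|A^{1/2}w\|_{L^4}\le C\|A^{1/2}w\|^{3/2}\|Aw\|^{1/2}\cdot\|w\|^{1/2}$, then Young with exponents $(4,4/3)$ to leave $\|A^{1/2}w\|^6$ and $\tfrac14\nu\|Aw\|^2$; but since $\|A^{1/2}w\|$ itself is already known to be bounded by $2\zeta_2(\omega)+C$ for large $t$, this sixth power is tolerable and feeds only into $q(t)$. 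The remaining bookkeeping --- verifying that all the exponential weights from Gronwall are controlled by \eqref{erg} so that $\zeta_4$ is finite and tempered --- is routine and parallels Lemma \ref{lem:H1bound} verbatim.
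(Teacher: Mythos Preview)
Your plan is the paper's approach: take the difference equation for $w=\textbf{v}-\textbf{u}$, test with $Aw$, use the anisotropic coercivity \eqref{3.12}, estimate the trilinear pieces, and run a uniform Gronwall on a unit window. Two points need correction.

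First, a misconception: for \emph{this} lemma you do not need the $H^2$-bound on $\mathcal A_0$. The paper handles $(B(w+hy,\textbf{u}),Aw)$ via $\|w+hy\|_{L^\infty}\|A^{1/2}\textbf{u}\|\|Aw\|$, using only $\|A^{1/2}\textbf{u}\|\le C$. (Your decomposition also drops $B(hy,\textbf{u})$; the correct identity is $B(\textbf{v}+hy)-B(\textbf{u})=B(w+hy,\textbf{u})+B(\textbf{v}+hy,w+hy)$.) The $H^2$-bound on $\mathcal A_0$ becomes indispensable only in the \emph{next} lemma, the $H^2$-distance.

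Second, and this is the step you underestimate: after the trilinear estimates the differential inequality is not of your form with $p,q$ depending only on $|y_\delta|$. The term $(B(\textbf{v}+hy,w+hy),Aw)$ forces $\|A^{1/2}\textbf{v}\|^4$ into the Gronwall coefficient, so the paper's inequality reads $\frac{\d}{\d t}\|A^{1/2}w\|^2\le C(1+\|A^{1/2}\textbf{v}\|^4+|y_\delta|^4)(\|A^{1/2}w\|^2+1)$. After Gronwall on $(s,t)$ and integration in $s$, you must bound $\int_{t-1}^t\|A^{1/2}\textbf{v}(\tau,\theta_{-t}\omega,\textbf{v}(0))\|^4\,\d\tau$. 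Lemma \ref{lem:H1bound} as stated gives the pointwise $H^1$ bound only at the terminal time $t$, not uniformly on $[t-1,t]$, and \eqref{4.29} gives only $L^2$-in-time control, not $L^4$. The paper spends real effort here: starting from \eqref{4.27}, it integrates over shifted windows, replaces $\omega$ by $\theta_{-t-\varepsilon}\omega$ uniformly in $\varepsilon\in[0,2]$, and obtains the crucial uniform estimate \eqref{mar8.3}, namely $\sup_{\eta\in[t-2,t]}\|A^{1/2}\textbf{v}(\eta,\theta_{-t}\omega,\textbf{v}(0))\|^2\le\zeta_3(\omega)$ for $t\ge T_{\mathfrak B}(\omega)$; this is also where $T_{\mathfrak B}(\omega)$ is actually defined (see \eqref{timeB}). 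With that in hand, the $L^4$-in-time bound \eqref{mar8.2} is immediate and the rest of your steps (3)--(4) go through. Your sentence ``routine and parallels Lemma \ref{lem:H1bound} verbatim'' is where the work hides.
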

\begin{proof}
Let  $w=\textbf{v}-\textbf{u}$ denote the difference between the two solutions, here $w=(w_1,w_1)$. Then we introduce the following system
\begin{align}\label{4.8}
\frac{\d w}{\d t}+ \nu  A_1w+B( \textbf{v}+hz(\theta_t\omega))-B(\textbf{u}) = hy_\delta(\theta_t\omega) - \nu A_1hy_\delta(\theta_t\omega).
\end{align}
Taking the inner product of the system   \eqref{4.8} with $Aw$ in $H$,  by integration by parts we can deduce that
\begin{align}\label{4.11}
 &\frac12\frac{\d}{\d t}\|A^{\frac12}w\|^2+ \nu \int_{\mathbb{T}^2}\partial_{yy}w_1\Delta w_1dx+\nu \int_{\mathbb{T}^2}\partial_{xx}w_2\Delta w_2dx \nonumber \\
&= \big (hy_\delta(\theta_t\omega)-\nu A_1hy_\delta(\theta_t\omega) ,Aw \big )  - \big(B(\textbf{v}+hy_\delta(\theta_t\omega))-B(\textbf{u}),Aw\big) \nonumber \\
&= \big  (hy_\delta(\theta_t\omega)-\nu A_1hy_\delta(\theta_t\omega) ,Aw \big ) - \big (B(w+hy_\delta(\theta_t\omega),\textbf{u}),Aw \big ) \nonumber\\
&\quad -  \big (B(\textbf{v}+hy_\delta(\theta_t\omega),w+hy_\delta(\theta_t\omega)),Aw  \big ) \nonumber \\
&=: I_{1}(t) + I_{2}(t)+I_{3}(t) .
\end{align}
By direct computations and \eqref{3.10}, \eqref{3.11} and \eqref{3.12}, it yields
\begin{align}\label{4.1}
&\nu \int_{\mathbb{T}^2}\partial_{yy}w_1\Delta w_1dx+\nu \int_{\mathbb{T}^2}\partial_{xx}w_2\Delta w_2dx\nonumber\\
&\geq \frac12\nu \int_{\mathbb{T}^2}(\partial_{xx}w_1+\partial_{yy}w_1)^2dx
+\frac12\nu \int_{\mathbb{T}^2}(\partial_{xx}w_2+\partial_{yy}w_2)^2dx
\nonumber\\
&=\frac12\nu\|\Delta w\|^2=\frac12\nu\|A w\|^2.
\end{align}
For three terms of $I_{1}(t)$-$I_{3}(t)$, by H\"{o}lder's inequality, the Gagliardo-Nirenberg inequality,  Poincar\'{e}'s inequality and Young's inequality, we obtain the estimates
\begin{align}\label{4.10}
 I_{1}(t)   &\leq \nu\|A_1hy_\delta(\theta_t\omega)\|\|Aw\|+\|hy_\delta(\theta_t\omega)\|\|Aw\|\nonumber\\
&\leq \frac  \nu 8\|Aw\|^2+C|y_\delta(\theta_t\omega)|^2,
\end{align}
\begin{align}
I_{2}(t)&\leq \|w+hy_\delta(\theta_t\omega)\|_{L^\infty}\|A^{\frac12}\textbf{u}\|\|Aw\|\nonumber\\
&\leq C\|A^{\frac12}\textbf{u}\|\|Aw\| \left(\|Aw\|^{\frac12}+\|Ahy_\delta(\theta_t\omega)\|^{\frac12} \right) \left
(\|A^{\frac12}w\|^{\frac12}+\|A^{\frac12}hy_\delta(\theta_t\omega)\|^{\frac12} \right )\nonumber\\
&\leq C\|A^{\frac12}\textbf{u}\|\|Aw\|^{\frac32}\|A^{\frac12}w\|^{\frac12}
+C\|A^{\frac12}\textbf{u}\|\|Aw\|^{\frac32}|y_\delta(\theta_t\omega)|^{\frac12}\nonumber\\
&\quad +C\|A^{\frac12}\textbf{u}\|\|Aw\|\|A^{\frac12}w\|^{\frac12}|y_\delta(\theta_t\omega)|^{\frac12}+
C\|A^{\frac12}\textbf{u}\|\|Aw\||y_\delta(\theta_t\omega)|\nonumber\\
&\leq \frac{\nu}{16}\|Aw\|^2+C\|A^{\frac12}\textbf{u}\|^4\|A^{\frac12}w\|^2
+C \left (\|A^{\frac12}\textbf{u}\|^2+\|A^{\frac12}\textbf{u}\|^4 \right) |y_\delta(\theta_t\omega)|^2,
\end{align}
and
\begin{align}
I_{3}(t)&\leq \|\textbf{v}+hy_\delta(\theta_t\omega)\|_{L^4}\|A^{\frac12}(w+hy_\delta(\theta_t\omega))\|_{L^4}\|Aw\|\nonumber\\
&\leq C \left (\|\textbf{v}\|_{L^4}+\|hy_\delta(\theta_t\omega)\|_{L^4} \right ) \left(\|A^{\frac12}w\|_{L^4}+\|A^{\frac12}hy_\delta(\theta_t\omega)\|_{L^4} \right)\|Aw\|\nonumber\\
&\leq C\|A^{\frac12}\textbf{v}\|\|A^{\frac12}w\|^{\frac12}\|Aw\|^{\frac32}
+C\|A^{\frac12}hy_\delta(\theta_t\omega)\|\|A^{\frac12}w\|^{\frac12}\|Aw\|^{\frac32}\nonumber\\
&\quad +C\|A^{\frac12}\textbf{v}\|\|Ahy_\delta(\theta_t\omega)\|\|Aw\|+C\|A^{\frac12}hy_\delta(\theta_t\omega)\|
\|Ahy_\delta(\theta_t\omega)\|\|Aw\|
\nonumber\\
&\leq \frac{\nu}{16}\|Aw\|^2+C \! \left (\|A^{\frac12}\textbf{v}\|^4+|y_\delta(\theta_t\omega)|^4 \right )\|A^{\frac12}w\|^2
+C \! \left (\|A^{\frac12}\textbf{v}\|^4+|y_\delta(\theta_t\omega)|^4 \right).
\nonumber
\end{align}
 Since $\textbf{u}$ is a trajectory within the global attractor $\mathcal{A}_0$, we deduce $\|A^{\frac12}\textbf{u}(t)\|\leq C$ for any $t\geq0$, it yields
\begin{align}\label{4.9}
I_{2}(t)
\leq \frac{\nu}{16}\|Aw\|^2+C\|A^{\frac12}w\|^2+C|y_\delta(\theta_t\omega)|^2.
\end{align}
Inserting \eqref{4.10}-\eqref{4.9} into \eqref{4.11} it is easy to deduce that
\begin{align}
\frac{\d}{\d t}\|A^{\frac12}w\|^2+  \frac{\nu}{2} \|Aw\|^2
&\leq C \left (1+\|A^{\frac12}\textbf{v}\|^4+|y_\delta(\theta_t\omega)|^4 \right )\|A^{\frac12}w\|^2  \nonumber \\
&\quad
+C \left(1+\|A^{\frac12}\textbf{v}\|^4+|y_\delta(\theta_t\omega)|^4\right) .
\label{sep6.1}
\end{align}

For any $t >1$ and $s\in[t-1,t]$, we apply Gronwall's lemma to \eqref{sep6.1} yields
\begin{align}
\|A^{\frac12}w(t)\|^2&\leq e^{\int_s^tC(1+\|A^{\frac12}\textbf{v}(\eta) \|^4+|y_\delta(\theta_\eta\omega)|^4)\, \d \eta}\|A^{\frac12}w(s)\|^2\nonumber\\
&
 \quad +C\int_s^te^{\int_\tau^tC(1+\|A^{\frac12}\textbf{v}(\eta) \|^4+|y_\delta(\theta_\eta\omega)|^4)\, \d \eta} \left(1+\|A^{\frac12}\textbf{v}(\tau)\|^4+|y_\delta(\theta_\tau\omega)|^4 \right) \d \tau\nonumber\\
&\leq Ce^{\int_s^tC(\|A^{\frac12}\textbf{v}(\eta) \|^4+|y_\delta(\theta_\eta\omega)|^4)\, \d \eta} \nonumber  \\
&\quad \times
 \left[\|A^{\frac12}w(s)\|^2
+\int_{t-1}^t    \left(1+\|A^{\frac12}\textbf{v}\|^4+|y_\delta(\theta_\tau\omega)|^4\right) \d \tau \right]. \nonumber
\end{align}
Integrating w.r.t$.$  $s$ over $[t-1,t]$, we can deduce that
\begin{align*}
\|A^{\frac12}w(t)\|^2
&\leq Ce^{\int_{t-1}^tC(\|A^{\frac12}\textbf{v}(\eta)\|^4+|y_\delta(\theta_\eta\omega)|^4)\, \d \eta}\nonumber\\
&\quad \times \left[\int_{t-1}^t\|A^{\frac12}w(s)\|^2\, \d s
+\int_{t-1}^t  \left(1+\|A^{\frac12}\textbf{v}\|^4+|y_\delta(\theta_\tau \omega)|^4\right) \d \tau\right]. \nonumber
\end{align*}
   Since $\textbf{u}$ is a trajectory within the global attractor, $\|A^{\frac 12} \textbf{u}\|^2$ is uniformly bounded. Hence, we have
\begin{align*}
\|A^{\frac 12} w(s)\|^2
 =\|A^{\frac 12} (\textbf{v}-\textbf{u}) (s)\|^2
  \leq \|A^{\frac 12} \textbf{v} (s)\|^4 + C,
\end{align*}
and then
\begin{align*}
\|A^{\frac12}w(t)\|^2
 \leq Ce^{\int_{t-1}^tC(\|A^{\frac12}\textbf{v}(\eta)\|^4+|y_\delta(\theta_\eta\omega)|^4)\, \d \eta}  \int_{t-1}^t  \left(1+\|A^{\frac12}\textbf{v}(\tau)\|^4+|y_\delta(\theta_\tau\omega)|^4 \right)\d \tau .
\end{align*}
Replacing $\omega$ with $\theta_{-t}\omega$, for any $t>1$  it can deduce that
\begin{align}\label{4.14}
  \|A^{\frac12}w(t,\theta_{-t}\omega,w(0))\|^2
&\leq Ce^{\int_{t-1}^tC \|A^{\frac12}\textbf{v}(\eta,\theta_{-t}\omega,\textbf{v}(0))\|^4 \d \eta+  C\int_{-1}^0|y_\delta(\theta_{\eta}\omega)|^4 \, \d \eta}  \nonumber\\
 &\quad
 \times \left(\int_{t-1}^t  \|A^{\frac12}\textbf{v}(s,\theta_{-t}\omega,\textbf{v}(0))\|^4 \, \d s + \int_{-1}^0  |y_\delta(\theta_{s}\omega)|^4 \, \d s +1 \right) .
\end{align}

Recall from \eqref{4.27} that
\begin{align}\label{4.2}
\frac{\d}{\d t}\|A^{\frac12}\textbf{v}\|^2 + \frac{\nu}{2} \|A\textbf{v}\|^2
 \leq  C\left ( 1 +  |y_\delta(\theta_t\omega)|^2 \right)  \|A^{\frac12}\textbf{v}\|^2  +
C \left(1 +|y_\delta(\theta_t\omega)|^6  \right).
\end{align}
For any $t>3$,  integrating both sides of \eqref{4.2} over $(\tau, t)$ with $\tau\in (t-3,t-2)$, then we can deduce that
\begin{align*}
&
\|A^{\frac12}\textbf{v} (t) \|^2 -\|A^{\frac12}\textbf{v}(\tau) \|^2  + \frac{\nu }{2}\int^t_{t-2} \|A \textbf{v}(s)\|^2 \, \d s \\
&\quad
 \leq  C \int^t_{t-3} \left(1+  |y_\delta(\theta_s\omega)|^2 \right) \|A^{\frac12}\textbf{v}(s) \|^2\, \d s
+ C\int^t_{t-3} \left(1 +|y_\delta(\theta_s\omega)|^6 \right )  \d s,
\end{align*}
and then integrating over $\tau\in (t-3,t-2)$ yields
\begin{align*}
\|A^{\frac12}\textbf{v} (t) \|^2+  \frac{\nu}{2} \int^t_{t-2} \|A \textbf{v}(s)\|^2\, \d s
& \leq   C \int^t_{t-3} \left (|y_\delta(\theta_s\omega)|^2+1\right)  \|A^{\frac12}\textbf{v}(s) \|^2 \, \d s \\
&\quad + C\int^t_{t-3} \left (1 +|y_\delta(\theta_s\omega)|^6 \right)  \d s ,\quad t> 3.
\end{align*}
For $\varepsilon\in  [0,2]$, we replace $\omega$ with $\theta_{-t-\varepsilon} \omega$  to deduce
\begin{align}
 & \|A^{\frac12}\textbf{v} (t,\theta_{-t-\varepsilon} \omega, \textbf{v}(0)) \|^2 + \frac{ \nu}{2} \int_{t-2}^t  \|A\textbf{v} (s,\theta_{-t-\varepsilon} \omega, \textbf{v}(0)) \|^2 \, \d s \nonumber \\
& \quad \leq   C \int^t_{t-3} \! \left (|y_\delta(\theta_{s-t-\varepsilon} \omega)|^2+1\right)  \|A^{\frac12}\textbf{v}(s,\theta_{-t-\varepsilon} \omega, \textbf{v}(0)) \|^2 \, \d s + C\int^{-\varepsilon}_{-\varepsilon-3} \! \left(1 +|y_\delta(\theta_s\omega)|^6 \right )  \d s\nonumber\\
& \quad \leq   C\left (  \sup_{\tau\in (-5,0)}|y_\delta(\theta_\tau\omega)|^2+1\right) \int^{t+\varepsilon}_{t+\varepsilon-5}  \|A^{\frac12}\textbf{v}(s,\theta_{-t-\varepsilon} \omega, \textbf{v}(0)) \|^2 \, \d s \nonumber\\
&\qquad + C\int^{0}_{-5}    \left(1 +|y_\delta(\theta_s\omega)|^6 \right)   \d s ,\quad t>5 . \label{mar8.1}
\end{align}
By \eqref{4.29}, it yields
\begin{align*}
  \frac {\alpha  \nu }2\int_0^te^{ \left( \frac 4 \alpha -3  \right) \lambda (s-t)}\|A^{\frac12}\textbf{v}(s, \theta_{-t}\omega, \textbf{v}(0))\|^2\, \d s
 \leq e^{-\lambda t}\|\textbf{v}(0)\|^2
 + \zeta_1(\omega) ,\quad t\geq T_1(\omega) ,
 \end{align*}
where $\zeta_1(\omega)\geq 1$ is a tempered random variable,
so we can deduce that
\begin{align*}
  \int^{t+\varepsilon}_{t+\varepsilon-5}  \|A^{\frac12}\textbf{v}(s,\theta_{-(t+\varepsilon)} \omega, \textbf{v}(0)) \|^2 \, \d s
  &\leq  C \int^{t+\varepsilon}_{t+\varepsilon-5}  e^{ \left( \frac 4 \alpha -3  \right) \lambda (s-t)}\|A^{\frac12}\textbf{v}(s,\theta_{-t-\varepsilon} \omega, \textbf{v}(0)) \|^2 \, \d s \\[0.8ex]
  & \leq Ce^{-\lambda t}\|\textbf{v}(0)\|^2
 + C \zeta_1(\omega) ,\quad t\geq T_1(\omega)+5 ,
\end{align*}
uniformly for $\varepsilon \in [0,2]$.  Hence, by coming back to  \eqref{mar8.1} we can deduce that
\begin{align}
 &\sup_{\varepsilon \in [0,2]} \left(\|A^{\frac12}\textbf{v }(t,\theta_{-t-\varepsilon} \omega, \textbf{v}(0)) \|^2  +  \frac{\nu }{2} \int_{t-2}^t  \|A\textbf{v} (s,\theta_{-t-\varepsilon} \omega, \textbf{v}(0)) \|^2 \, \d s\right ) \nonumber \\
&\quad \leq   C\left (  \sup_{\tau\in (-5,0)}|y_\delta(\theta_\tau\omega)|^2+1\right)\left(e^{-\lambda t}\|v(0)\|^2
 +   \zeta_1(\omega)   \right) + C\int^{0}_{-5} \left (1 +|y_\delta(\theta_s\omega)|^6 \right )    \d s \nonumber \\
 & \quad \leq   C\left (  \sup_{\tau\in (-5,0)}|y_\delta(\theta_\tau\omega)|^6+1\right)\left(e^{-\lambda t}\|v(0)\|^2
 +   \zeta_1(\omega)   \right)  , \quad t\geq T_1(\omega)+5.  \nonumber
\end{align}
 In other words,  for any  $t\geq T_1(\omega)+7$ and $\eta\in [t-2,t ]$ $($so that $\eta\geq T_1(\omega)+5)$, it can obtain that
\begin{align*}
& \|A^{\frac12}\textbf{v} (\eta ,\theta_{-t } \omega, \textbf{v}(0)) \|^2 + \frac{\nu}{2} \int_{\eta-2}^\eta  \|A\textbf{v } (s,\theta_{-t} \omega, \textbf{v}(0)) \|^2 \, \d s\\
  & \quad =  \|A^{\frac12}\textbf{v} (\eta ,\theta_{-\eta-(t-\eta) } \omega, \textbf{v}(0)) \|^2
  + \frac{\nu}{2}  \int_{\eta-2}^\eta  \|A\textbf{v} (s,\theta_{-\eta-(t-\eta)} \omega, \textbf{v}(0)) \|^2 \, \d s \\
&\quad \leq    C\left (  \sup_{\tau\in (-5,0)}|y_\delta(\theta_\tau\omega)|^6+1\right)\left(e^{-\lambda  \eta}\|\textbf{v}(0)\|^2
 +   \zeta_1(\omega)   \right)
  \\
&\quad  \leq   C\left (  \sup_{\tau\in (-5,0)}|y_\delta(\theta_\tau\omega)|^6+1\right)\left(e^{-\lambda t}\|\textbf{v}(0)\|^2
 +   \zeta_1(\omega)   \right) ,
\end{align*}
where in the last inequality we have used $\eta\geq t-2$.
Since the initial value $\textbf{v}(0)$ comes from within the absorbing set $\mathfrak B$, then there exists a random variable $T_{\mathfrak B}(\omega) \geq T_1(\omega)+ 7$
such that
\begin{align} \label{timeB}
 \sup_{\textbf{v}(0)\in \mathfrak B(\theta_{-t} \omega)}  e^{-\lambda t} \|\textbf{v}(0)\|^2 \leq 1,\quad t\geq T_{\mathfrak B}(\omega).
\end{align}
Hence, by \eqref{timeB}, we can get   the crucial estimate as follows
\begin{align}
 &\sup_{\eta\in [t-2,t]}
  \left(
 \|A^{\frac12}\textbf{v} (\eta ,\theta_{-t } \omega, \textbf{v}(0)) \|^2
 + \frac{\nu}{2} \int_{\eta-2}^\eta  \|A\textbf{v} (s,\theta_{-t} \omega, \textbf{v}(0)) \|^2 \, \d s\right) \notag  \\
& \quad \leq  C\left (  \sup_{\tau\in (-5,0)}|y_\delta(\theta_\tau\omega)|^6+1\right)\big(1
 +   \zeta_1(\omega)   \big)
 =:  \zeta_3(\omega) ,\quad t\geq T_{\mathfrak B} (\omega) ,\label{mar8.3}
\end{align}
where    $ \zeta_3 $ is a tempered random variable. By \eqref{mar8.3}, we can deduce that
an immediate  consequence is the following estimate
\begin{align}\label{mar8.2}
  \int_{t-1}^t\|A^{\frac12}\textbf{v}(\eta,\theta_{-t}\omega,\textbf{v}(0))\|^4 \, \d \eta
&  \leq    |\zeta_3(\omega)|^2 ,  \quad t\geq T_{\mathfrak B} (\omega) .
\end{align}

 Inserting \eqref{mar8.2} into \eqref{4.14}, we can deduce that
\begin{align}  \label{mar8.4}
\|A^{\frac12}w(t,\theta_{-t}\omega,w(0))\|^2
&\leq Ce^{ |\zeta_3 (\omega)|^2+ C\int_{-1}^0|y_\delta(\theta_{\eta}\omega)|^4   \d \eta}  \left( |\zeta_3(\omega)|^2 + \int_{-1}^0 |y_\delta(\theta_{s}\omega)|^4 \,  \d s  \right) \nonumber \\
&\leq Ce^{ 2|\zeta_3 (\omega)|^2+ C\int_{-1}^0|y_\delta(\theta_{\eta}\omega)|^4   \d \eta}
=: \zeta_4(\omega) , \quad t\geq T_{\mathfrak B} (\omega) .
\nonumber
\end{align}
This completes the proof of Lemma \ref{lemma4.3}.
\end{proof}

\begin{lemma}[$H^2$-distance] \label{lem:H2bd}  Let Assumption \ref{assum} hold and $f\in H$.
There exists a tempered random variable $\zeta_5(\cdot)$ such that the solutions $\textbf{v}$ of the random anisotropic NS equations \eqref{2.2} and $\textbf{u}$  of the deterministic anisotropic NS equations  \eqref{2.1} satisfy
\[
 \|A\textbf{v}(t,\theta_{-t}\omega,\textbf{v}(0))-A\textbf{u}(t,\textbf{u}(0))\|^2\leq \zeta_5(\omega),
\quad t\geq T_{\mathfrak B}(\omega),
\]
for any $\textbf{v}(0)\in\mathfrak{B}(\theta_{-t}\omega)$
 and $\textbf{u}(0)\in\mathcal{A}_0$,
where $T_{\mathfrak B}$ denotes the random variable defined in  Lemma \ref{lemma4.3}.
\end{lemma}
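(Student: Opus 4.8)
The plan is to push the comparison argument of Lemma~\ref{lemma4.3} one derivative higher. Writing $w=\textbf{v}-\textbf{u}$, which solves \eqref{4.8}, I would take the inner product of \eqref{4.8} with $A^2w$ in $H$ and run a uniform Gronwall estimate for $\|Aw\|^2$, exactly in the spirit of the proofs of Lemmas~\ref{lem:H1bound} and \ref{lemma4.3}, now exploiting the $H^1$-distance bound of Lemma~\ref{lemma4.3} together with the $L^2_t$-control of $\|A\textbf{v}\|$ coming from \eqref{mar8.3}.

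For the dissipation term, repeating the computations \eqref{3.10}--\eqref{3.12} one order higher gives $\nu(A_1w,A^2w)=\nu\big(\|\partial_y\Delta w_1\|^2+\|\partial_x\Delta w_2\|^2\big)$, and since $\Delta w$ is divergence free, $\|\nabla\Delta w\|^2=\|\mathrm{curl}\,\Delta w\|^2\le 2\big(\|\partial_x\Delta w_2\|^2+\|\partial_y\Delta w_1\|^2\big)$ (cf.\ \eqref{3.9}), so the anisotropic dissipation dominates $\tfrac{\nu}{2}\|A^{3/2}w\|^2$. The forcing term is handled by $\big(hy_\delta(\theta_t\omega)-\nu A_1hy_\delta(\theta_t\omega),A^2w\big)=\big(A^{1/2}(hy_\delta(\theta_t\omega)-\nu A_1hy_\delta(\theta_t\omega)),A^{3/2}w\big)\le\tfrac{\nu}{16}\|A^{3/2}w\|^2+C\|h\|_{H^3}^2|y_\delta(\theta_t\omega)|^2$, which is precisely where $h\in H^3$ (Assumption~\ref{assum}) enters.

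The crucial point is the nonlinearity. As in \eqref{4.11} I would split $B(\textbf{v}+hy_\delta(\theta_t\omega))-B(\textbf{u})=B(w+hy_\delta(\theta_t\omega),\textbf{u})+B(\textbf{v}+hy_\delta(\theta_t\omega),w+hy_\delta(\theta_t\omega))$, and in each term integrate by parts \emph{exactly once} against $A^2w$, so that one factor becomes $\nabla\Delta w$ (to be absorbed into the dissipation) while the remaining derivatives fall on $\textbf{u}$, $\textbf{v}$, $h$ and on lower-order derivatives of $w$. In the first term only $\nabla^2\textbf{u}=A\textbf{u}$ appears, which is uniformly bounded because $\textbf{u}$ stays in the $H^2$-bounded attractor $\mathcal A_0$ (Lemma~\ref{lem:det}); in the second term one uses $\nabla\cdot(\textbf{v}+hy_\delta(\theta_t\omega))=0$ to keep the top derivative off $\textbf{v}$, so that $\|A\textbf{v}\|$ occurs only to the first power. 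Estimating these terms with the two-dimensional Ladyzhenskaya, Agmon and Gagliardo--Nirenberg inequalities, replacing $\|A^{1/2}w\|$ by $\zeta_4(\omega)$ (Lemma~\ref{lemma4.3}) and $\|A^{1/2}\textbf{v}\|$ by $\zeta_3(\omega)$ (from \eqref{mar8.3}) for $t\ge T_{\mathfrak B}(\omega)$, and absorbing the top-order factor by Young's inequality, yields a differential inequality
\[
\frac{\d}{\d t}\|Aw\|^2+\frac{\nu}{2}\|A^{3/2}w\|^2\le p(t)\|Aw\|^2+q(t),
\]
where $p(\cdot)$ and $q(\cdot)$ are built from $1$, $\|A\textbf{v}\|^2$, powers of $|y_\delta(\theta_t\omega)|$ and the random constants $\zeta_3(\omega),\zeta_4(\omega)$ — all integrable over unit time intervals for $t\ge T_{\mathfrak B}(\omega)$ by \eqref{mar8.3} and \eqref{erg}.

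To close the estimate I would proceed exactly as in Lemma~\ref{lem:H1bound}: apply Gronwall's lemma on $(\eta,t)$, then average over $\eta$ in a subinterval, using $\int_{t-1}^t\|Aw\|^2\,\d s\le C(\zeta_3(\omega)+1)$ (again \eqref{mar8.3}, together with $\|A\textbf{u}\|\le C$ on $\mathcal A_0$) to control the boundary term; finally replace $\omega$ by $\theta_{-t}\omega$ and define $\zeta_5(\omega)$ as the resulting integral, which is a tempered random variable, thereby obtaining $\|A\textbf{v}(t,\theta_{-t}\omega,\textbf{v}(0))-A\textbf{u}(t,\textbf{u}(0))\|^2\le\zeta_5(\omega)$ for $t\ge T_{\mathfrak B}(\omega)$. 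The main obstacle is the nonlinear step: I must make sure the integration by parts never forces more than two derivatives onto $\textbf{u}$ (only $H^2$ is available when $f\in H$) nor more than three onto $h$, which is exactly what dictates shifting a single derivative onto $\nabla\Delta w$ and then peeling it off via two-dimensional interpolation inequalities against the already-controlled lower-order quantities.
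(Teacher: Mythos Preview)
Your plan is correct and matches the paper's approach: inner product of \eqref{4.8} with $A^2w$, the anisotropic dissipation bound $\ge\tfrac{\nu}{2}\|A^{3/2}w\|^2$, the same splitting of the nonlinearity, a uniform Gronwall on a unit interval, the substitution $\omega\mapsto\theta_{-t}\omega$, and the use of \eqref{mar8.3} (in particular $\int_{t-1}^t\|Aw\|^2\le C(\zeta_3(\omega)+1)$ via \eqref{mar8.7}) to close.

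Two small points where the paper's execution differs from your sketch are worth noting. First, the paper never invokes the $H^1$-distance bound $\zeta_4(\omega)$ from Lemma~\ref{lemma4.3}; instead it uses the trivial observation $\|A^{1/2}w\|^2\le C\|A^{1/2}\textbf{v}\|^2+C$ (see \eqref{mar5.3}), which is safer because your proposed pointwise use of $\zeta_4(\omega)$ would require controlling $\|A^{1/2}w(s,\theta_{-t}\omega,\cdot)\|$ for $s\in[t-1,t]$, a time shift not directly covered by Lemma~\ref{lemma4.3}. Second, the paper arranges the nonlinear estimate so that $\|A\textbf{v}\|$ never enters the Gronwall coefficient: writing $\textbf{u}_h=w+\textbf{u}+hy_\delta$ and using $\|A\textbf{u}\|\le C$ on $\mathcal A_0$ gives $\|A\textbf{u}_h\|^2\le C\|Aw\|^2+C(1+|y_\delta|^2)$ (see \eqref{4.15}), so every occurrence of $\|A\textbf{u}_h\|$ is traded for $\|Aw\|$ and absorbed into the unknown, yielding the clean inequality \eqref{sep6.3} whose coefficients involve only $\|A^{1/2}\textbf{v}\|^4$ and $|y_\delta|^4$. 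Your alternative---keeping $\|A\textbf{v}\|^2$ in $p(t)$ and invoking its $L^2$-in-time bound from \eqref{mar8.3}---would also work, but the paper's route is slightly more self-contained.
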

\begin{proof}

Taking the inner product of the system   \eqref{4.8} with $A^2w$ in $H$,  by integration by parts, we can deduce that
\begin{align}\label{4.17}
 &\frac12\frac{\d}{\d t}\|Aw\|^2- \nu \int_{\mathbb{T}^2}\partial_{yy}w_1\Delta^2 w_1dx-\nu \int_{\mathbb{T}^2}\partial_{xx}w_2\Delta^2 w_2dx\nonumber \\
&= \left(-\nu A_1hy_\delta(\theta_t\omega) +hy_\delta(\theta_t\omega),A^2w \right) \nonumber \\
&\quad -\left(B(\textbf{v}+hy_\delta(\theta_t\omega)) -B(\textbf{u}),A^2w \right)\nonumber\\
&=  \left(-\nu A_1hy_\delta(\theta_t\omega) +hy_\delta(\theta_t\omega),A^2w \right)\nonumber\\
&\quad -\left(B(\textbf{v}+hy_\delta(\theta_t\omega),w+hy_\delta(\theta_t\omega)),A^2w \right )\nonumber\\
&\quad
-\left(B(w+hy_\delta(\theta_t\omega),\textbf{u}),A^2w \right )\nonumber\\
&=: I_{4}(t)+I_{5}(t)+I_{6}(t).
\end{align}
By direct computations and $\nabla\cdot w=0$ and \eqref{3.9}, it also yields
\begin{align}
&-\nu \int_{\mathbb{T}^2}\partial_{yy}\Delta w_1\Delta w_1dx-\nu \int_{\mathbb{T}^2}\partial_{xx}\Delta w_2\Delta w_2dx\nonumber\\
&= \nu \int_{\mathbb{T}^2}(\partial_{y}\Delta w_1)^2dx
+\nu \int_{\mathbb{T}^2}(\partial_{x}\Delta w_2)^2dx
\nonumber\\
&\geq\frac12\nu\|A^{\frac32} w\|^2.
\end{align}

In order to get \eqref{sep6.3}, we only to estimate each terms on the right hand side of \eqref{4.17}.
By using H\"{o}lder's inequality, Young's inequality and $h\in H^3$,  we can deduce that
\begin{align}
I_{4}(t)&\leq  \nu \|A^{\frac32}hy_\delta(\theta_t\omega)\|\|A^{\frac32}w\|+ \|A^{\frac12}hy_\delta(\theta_t\omega)\|\|A^{\frac32}w\|\nonumber\\
&\leq \frac{ \nu }{16}\|A^{\frac32}w\|^2+C  |y_\delta(\theta_t\omega)|^2. \label{4.18}
\end{align}
By using H\"{o}lder's inequality and the Gagliardo-Nirenberg inequality, it also yields
\begin{align}
I_{5}(t)&\leq\|A^\frac12\textbf{u}_h\|_{L^4}\|\nabla(w+hy_\delta(\theta_t\omega))\|_{L^4}\|A^\frac32w\|
+\|\textbf{u}_h\|_{L^4}\|A(w+hy_\delta(\theta_t\omega))\|_{L^4}\|A^\frac32w\|
\nonumber\\
&\leq C\|A^\frac12\textbf{u}_h\|^\frac12\|A\textbf{u}_h\|^\frac12
\|\nabla(w+hy_\delta(\theta_t\omega))\|^\frac12\|A(w+hy_\delta(\theta_t\omega))\|^\frac12
\|A^\frac32w\|\nonumber\\
& \quad +C\|\textbf{u}_h\|^\frac12\|A^{\frac12}\textbf{u}_h\|^\frac12
\|A(w+hy_\delta(\theta_t\omega))\|^\frac12\|A^{\frac32}(w+hy_\delta(\theta_t\omega))\|^\frac12\|A^\frac32w\|\nonumber\\
&\leq  C\|A^\frac12\textbf{u}_h\|^\frac12\|A\textbf{u}_h\|^\frac12
\left(\|A^{\frac12}w\|^\frac12+ |y_\delta(\theta_t\omega)|^\frac12 \right) \left (\|Aw\|^\frac12+ |y_\delta(\theta_t\omega) |^\frac12 \right)
\|A^\frac32w\|\nonumber\\
&\quad +C\|A^{\frac12}\textbf{u}_h\| \left (\|Aw\|^\frac12+ |y_\delta(\theta_t\omega)|^\frac12\right)\left(\|A^{\frac32}w\|^\frac12
+ | y_\delta(\theta_t\omega)|^\frac12 \right)\|A^\frac32w\|\nonumber\\
&=: J_1(t) +J_2(t) . \nonumber
\end{align}

Since $\mathcal{A}_0$ is bounded in $H^2$,  $\|A\textbf{u}(t) \|^2+\|A^{1/2}\textbf{u}(t) \|^2\leq C$ for each $t\geq 0$, hence we can deduce that
\begin{align}\label{4.15}
\|A\textbf{u}_h\|^2&\leq C\|Aw\|^2+C\|A\textbf{u}\|^2
 +  C\|Ahy_\delta(\theta_t\omega)\|^2 \nonumber \\
&
\leq C\|Aw\|^2+  C  +C \|Ahy_\delta(\theta_t\omega)\|^2
,
\end{align}
and similarly, we also have
\begin{align}  \label{mar5.3}
  \|A^{\frac 12}w\|^2
   &\leq  C\|A^{\frac 12} \textbf{v}\|^2 +C
   \nonumber  \\
   &\leq C\|A^{\frac 12} \textbf{u}_h\|^2 + C\|A^{\frac 12} hy_\delta(\theta_t\omega) \|^2 +C .
\end{align}
Hence,  for  the term $J_1(t)$, we introduce the following inequalities
\begin{align*}
  &C\|A^\frac12\textbf{u}_h\|^\frac12\|A\textbf{u}_h\|^\frac12\|A^\frac12w\|^\frac12
\|Aw\|^\frac12\|A^\frac32w\|
& \nonumber\\
&\quad  \leq    \frac \nu {64}\|A^\frac32w\|^2  + C \|A^\frac12\textbf{u}_h\|  \|A \textbf{u}_h\|   \|A^{\frac 12} w\|  \|Aw\|
\nonumber
\\
&\quad  \leq   \frac \nu {64}\|A^\frac32w\|^2
+ C \|A^\frac12\textbf{u}_h\|^2  \|A \textbf{u}_h\|^2+      \|A^{\frac 12} w\|^2 \|Aw\|^2
\nonumber
\\
&\quad  \leq    \frac \nu {64} \|A^\frac32w\|^2
+ C\left( \|A^\frac12\textbf{u}_h\|^2 +  \|A^\frac 12 w   \|^2  \right) \|A w\|^ 2
+ C  \|A^\frac12\textbf{u}_h\|^2   \left(  1+ |y_\delta(\theta_t\omega) |^2  \right) ,
\end{align*}
\begin{align*}
  &C\|A^\frac12\textbf{u}_h\|^\frac12\|A\textbf{u}_h\|^\frac12\|A^\frac12w\|^\frac12
 |y_\delta(\theta_t\omega)|^\frac12\|A^\frac32w\|
& \nonumber\\
&\quad  \leq   \frac \nu {64}\|A^\frac32w\|^2  + C \|A^\frac12\textbf{u}_h\|  \|A \textbf{u}_h\|   \|A^{\frac 12} w\| |y_\delta(\theta_t\omega) |
\nonumber
\\
& \quad  \leq    \frac \nu {64}\|A^\frac32w\|^2
+ C \|A^\frac12\textbf{u}_h\|^2  \|A \textbf{u}_h\|^2+   C   \|A^{\frac 12} w\|^2 |y_\delta(\theta_t\omega) |^2
\nonumber
\\
&\quad \leq      \frac \nu {64}\|A^\frac32w\|^2
+ C  \|A^\frac12\textbf{u}_h\|^2  \|A w\|^2  + C \big(  \|A^\frac12\textbf{u}_h\|^2 +|y_\delta(\theta_t\omega) |^2 +1\big) \big(  1+|y_\delta(\theta_t\omega) |^2  \big)  ,
\end{align*}

\begin{align*}
  &C\|A^\frac12\textbf{u}_h\|^\frac12\|A\textbf{u}_h\|^\frac12 |y_\delta(\theta_t\omega)|^\frac12
\|Aw\|^\frac12\|A^\frac32w\|
& \nonumber\\
&\quad  \leq    \frac \nu {64} \|A^\frac32w\|^2  + C \|A^\frac12\textbf{u}_h\|  \|A \textbf{u}_h\|  |y_\delta(\theta_t\omega) |  \|Aw\|
\nonumber
\\
&\quad \leq     \frac \nu {64}\|A^\frac32w\|^2
+ C \|A^\frac12\textbf{u}_h\|^2  \|A \textbf{u}_h\|^2+    C| y_\delta(\theta_t\omega)|^2 \|Aw\|^2
\nonumber
\\
&\quad \leq     \frac \nu {64}\|A^\frac32w\|^2
+ C\left( \|A^\frac12\textbf{u}_h\|^2 +  |y_\delta(\theta_t\omega)|^2  \right) \|A w\|^2
+ C  \|A^\frac12\textbf{u}_h\|^2   \left(  1+ |y_\delta(\theta_t\omega) |^2  \right) ,
\end{align*}
and
\begin{align*}
  &C\|A^\frac12\textbf{u}_h\|^\frac12\|A\textbf{u}_h\|^\frac12 | y_\delta(\theta_t\omega)|  \|A^\frac32w\|
& \nonumber\\
&\quad \leq     \frac \nu {64}\|A^\frac32w\|^2  + C \|A^\frac12\textbf{u}_h\| \|A \textbf{u}_h\|   |y_\delta(\theta_t\omega)|^2
\nonumber
\\
&\quad \leq    \frac \nu {64}\|A^\frac32w\|^2
+ C \|A^\frac12\textbf{u}_h\|^2  \|A \textbf{u}_h\|^2   + C|y_\delta(\theta_t\omega) |^4
\nonumber
\\
&\quad \leq   \frac \nu {64}\|A^\frac32w\|^2
+ C  \|A^\frac12\textbf{u}_h\|^2     \left(  \|A w\|^2  +1+|y_\delta(\theta_t\omega) |^2  \right)
+ C|y_\delta(\theta_t\omega) |^4 ,
\end{align*}
it follows that
\begin{align}
J_1(t)
&\leq \frac  \nu {16}\|A^\frac32w\|^2
+C\left(\|A^\frac12\textbf{u}_h\|^2+\|A^\frac12w\|^2+|y_\delta(\theta_t\omega)|^2 \right)
\|Aw\|^2\nonumber\\
&\quad +C\left(1+\|A^\frac12\textbf{u}_h\|^4 \right)+C   |y_\delta(\theta_t\omega)|^4 .
\nonumber
\end{align}

For   the term $J_2(t)$, applying \eqref{4.15} and the Gagliardo-Nirenberg inequality and the Young inequality we can deduce that
\begin{align}
J_2(t)
&\leq C\|A^{\frac12}\textbf{u}_h\|\|Aw\|^\frac12\|A^{\frac32}w\|^\frac 32 +C\|A^{\frac12}\textbf{u}_h\|\|Aw\|^\frac12|y_\delta(\theta_t\omega)|^\frac12\|A^\frac32w\|
 \nonumber\\
&\quad+C\|A^{\frac12}\textbf{u}_h\| |y_\delta(\theta_t\omega)|^\frac12\|A^{\frac32}w\|^\frac 32
+C\|A^{\frac12}\textbf{u}_h\| |y_\delta(\theta_t\omega)|  \|A^\frac32w\|\nonumber\\
&\leq \frac \nu {16}\|A^\frac32w\|^2+C\left (1+\|A^\frac12\textbf{u}_h\|^4\right )\|Aw\|^2+C\left (1+\|A^\frac12\textbf{u}_h\|^8 \right)  +C |y_\delta(\theta_t\omega)|^4, \nonumber
\end{align}
and then  for $I_{5} (t)=J_1(t)+ J_2(t)$ we can get the following estimate
\begin{align}
I_{5}(t) &\leq  \frac \nu 8\|A^\frac32w\|^2
+C\left(1+ \|A^\frac12\textbf{u}_h\|^4+\|A^\frac12w\|^2+ |y_\delta(\theta_t\omega)|^2 \right)
\|Aw\|^2 \nonumber  \\
&\quad +C\left(1+\|A^\frac12\textbf{u}_h\|^8 \right)
+C  |y_\delta(\theta_t\omega)|^4. \label{sep6.2}
\end{align}

For the term $I_{6}(t)$, applying the similar method, we also can deduce that
\begin{align}\label{4.16}
I_{6}(t)&\leq\|A^{\frac12}(w+hy_\delta(\theta_t\omega))\|_{L^4}\|\nabla \textbf{u}\|_{L^4}\|A^\frac32w\|+\|w+hy_\delta(\theta_t\omega)\|_{L^\infty}\|A\textbf{u}\|\|A^\frac32w\|\nonumber\\
&\leq C\|A^{\frac12}(w+hy_\delta(\theta_t\omega))\|^\frac12\|A(w+hy_\delta(\theta_t\omega))\|^\frac12\|A^\frac12\textbf{u}\|^\frac12
\|A\textbf{u}\|^\frac12\|A^\frac32w\|\nonumber\\
& \quad +C\|A^\frac12(w+hy_\delta(\theta_t\omega))\|^\frac12\|A(w+hy_\delta(\theta_t\omega))\|^\frac12\|A\textbf{u}\|\|A^\frac32w\|\nonumber\\
&\leq C\|A^{\frac12}(w+hy_\delta(\theta_t\omega))\|^\frac12\|A(w+hy_\delta(\theta_t\omega))\|^\frac12\|A^\frac32w\|\nonumber\\
&\leq \frac \nu {16}\|A^\frac32w\|^2+C\left(\|A^{\frac12}w\|+\|A^{\frac12}hy_\delta(\theta_t\omega)\| \right) \big (\|Aw\|
+\|Ahy_\delta(\theta_t\omega)\| \big )\nonumber\\
&\leq \frac \nu {16}\|A^\frac32w\|^2+C\|Aw\|^2 +C |y_\delta(\theta_t\omega)|^2.
\end{align}
Inserting \eqref{4.18}, \eqref{sep6.2} and \eqref{4.16} into \eqref{4.17}, by \eqref{mar5.3}  we can get that
\begin{align}
 \frac{\d}{\d t}\|Aw\|^2+  \frac{\nu}{2}  \|A^{\frac32}w\|^2
&\leq C\left (1+\|A^\frac12\textbf{u}_h\|^4+\|A^\frac12w\|^2+|y_\delta(\theta_t\omega)|^2 \right )
\|Aw\|^2\nonumber\\
&\quad +C \left (1+\|A^\frac12\textbf{u}_h\|^8+|y_\delta(\theta_t\omega)|^4 \right) \nonumber \\
 &\leq C\left (1+\|A^\frac12 \textbf{v}\|^4 +|y_\delta(\theta_t\omega)|^4 \right )
\|Aw\|^2  \nonumber \\
&\quad +C \left (1+\|A^\frac12 \textbf{v}\|^8 +|y_\delta(\theta_t\omega)|^8 \right) .  \label{sep6.3}
\end{align}

 For any $t >1$ and $s\in[t-1,t]$, applying Gronwall's lemma to \eqref{sep6.3}   it is easy to get that
\begin{align*}
\|Aw(t)\|^2&\leq e^{C\int_s^t( 1+\|A^\frac12 \textbf{v}(\eta)\|^4 +|y_\delta(\theta_\eta\omega)|^4)\, \d \eta}\|Aw(s)\|^2 \nonumber\\
&\quad +\int_s^t Ce^{C\int_\tau^t  (1+\|A^\frac12 \textbf{v}\|^4+|y_\delta(\theta_\eta\omega)|^4)\, \d \eta}
 \Big (1+\|A^\frac12 \textbf{v}\|^8  +|y_\delta(\theta_\tau\omega)|^8\Big ) \, \d \tau\nonumber\\
&\leq Ce^{C\int_{t-1}^t(1+\|A^\frac12 \textbf{v}\|^8 +|y_\delta(\theta_\eta\omega)|^8)\, \d \eta} \nonumber \\
&\quad \times \left [\|Aw(s)\|^2+\int_{t-1}^t
\left(1+\|A^\frac12 \textbf{v}\|^8  +|y_\delta(\theta_\tau\omega)|^8\right)\d \tau \right]\\
& \leq Ce^{C\int_{t-1}^t(1+\|A^\frac12 \textbf{v}\|^8 +|y_\delta(\theta_\eta\omega)|^8)\, \d \eta} \Big(\|Aw(s)\|^2+ 1 \Big)
, \nonumber
\end{align*}
where in the last inequality we have used the basic relation $x\leq e^x$ for $x\geq 0$.
Integrating w.r.t. $s$ over   $  [t-1,t] $, we can get that
 \begin{align*}
\|Aw(t)\|^2
&\leq Ce^{C\int_{t-1}^t  (1+\|A^\frac12 \textbf{v}(\eta)\|^8+|y_\delta(\theta_\eta\omega)|^8)\, \d \eta}
 \left (\int_{t-1}^t\|Aw(s)\|^2\, \d s + 1\right). \nonumber
\end{align*}
Then we replace $\omega$ with $\theta_{-t}\omega$ to deduce
\begin{align}\label{4.22}
\|Aw(t,\theta_{-t}\omega,w(0))\|^2
&\leq Ce^{C\int_{t-1}^t \|A^\frac12 \textbf{v}(\eta,\theta_{-t}\omega, \textbf{v}(0))\|^8 \d \eta + C\int^0_{-1} |y_\delta(\theta_{\eta}\omega) |^8
\, \d \eta}\nonumber\\
&\quad \times \left(\int_{t-1}^t\|Aw(s,\theta_{-t}\omega,w(0))\|^2\, \d s +1  \right) .
\end{align}
By \eqref{mar8.3}, it can get that
\begin{align} \label{4.23}
  \int_{t-1}^t  \|A^\frac12 \textbf{v}(s,\theta_{-t}\omega,\textbf{v}(0))\|^8\, \d s \leq  |\zeta_3(\omega)|^4   ,  \quad t\geq T_{\mathfrak B}(\omega).
\end{align}
Moreover, since $\textbf{u}$ is a trajectory within the global attractor $\mathcal A_0$ which is   bounded  in $H^2$,  by \eqref{mar8.3}  again we can deduce that
\begin{align}
\int_{t-1}^t\|Aw(s,\theta_{-t}\omega,w(0))\|^2\, \d s
&\leq2\int_{t-1}^t\|A\textbf{v}(s,\theta_{-t}\omega,\textbf{v}(0))\|^2\, \d s+2\int_{t-1}^t\|A\textbf{u}\|^2\, \d s\nonumber\\
&\leq  2\int_{t-1}^t \|A\textbf{v}(s,\theta_{-t}\omega,\textbf{v}(0))\|^2\, \d s  +  2\|\mathcal A \|_{H^2} \nonumber\\
&\leq \frac 4  \nu \, \zeta_3(\omega) +C ,  \quad t\geq T_{\mathfrak B}(\omega).  \label{mar8.7}
\end{align}
Hence, inserting \eqref{4.23} and \eqref{mar8.7} into \eqref{4.22} we can deduce that
\begin{align}
\|Aw(t,\theta_{-t}\omega,w(0))\|^2
&\leq C e^{C|\zeta_3(\omega) |^4 + C\int^0_{-1} |y_\delta(\theta_{\eta}\omega) |^8
\, \d \eta} \left(   \zeta_3(\omega) +1
\right)  \nonumber \\
&=:\zeta_5(\omega),  \quad t\geq T_{\mathfrak B}(\omega).  \nonumber
\end{align}
 This completes the proof of Lemma \ref{lem:H2bd}.
\end{proof}

\subsection{$H^2$ random absorbing sets}
By Lemma \ref{lem:H2bd} we can construct an $H^2$ random absorbing set of the  random anisotropic  (NS) equations \eqref{2.2} driven by colored noise. Nevertheless, for   later purpose we  make  the following  stronger estimate than Lemma \ref{lem:H2bd}.
It will be crucial in the next section in deriving the local $(H,H^2)$-Lipschitz continuity of the equations.
\begin{lemma}\label{lemma4.4}
 Let Assumption \ref{assum} hold and $f\in H$. There exists a tempered random variable $\rho(\cdot)$ such that  the solutions $\textbf{v}$ of the random anisotropic  NS equations \eqref{2.2} driven by colored noise and the solutions $\textbf{u}$ of the deterministic anisotropic  NS equations \eqref{2.1} satisfy
\[
  \sup_{\varepsilon\in [0,1]} \|A\textbf{v}(t,\theta_{-t-\varepsilon}\omega,  \textbf{v}(0))- A\textbf{u}(t,\textbf{u}_0)\|^2     \leq  \rho(\omega) ,\quad t\geq T_{\mathfrak B} (\omega),
\]
whenever $\textbf{v}(0)\in \mathfrak B(\theta_{-t-\varepsilon} \omega)$ and $\textbf{u}(0)\in \mathcal A_0$, where $T_{\mathfrak B} (\cdot)$ is given  in \eqref{timeB}.
\end{lemma}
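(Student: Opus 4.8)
The plan is to rerun the argument behind Lemma \ref{lem:H2bd}, keeping the extra shift $\theta_{-\varepsilon}\omega$ ($\varepsilon\in[0,1]$) present throughout, and to exploit that every window estimate used along the way was already proved, in the proofs of Lemma \ref{lemma4.3} and Lemma \ref{lem:H2bd}, uniformly over a shift range containing $[0,1]$. Concretely, one begins from the differential inequality \eqref{sep6.3} for $\|Aw\|^2$, where $w=\textbf{v}-\textbf{u}$ solves \eqref{4.8}, applies Gronwall's lemma on $(s,t)$ for $s\in[t-1,t]$, and integrates in $s$ over $[t-1,t]$ exactly as before, obtaining for $t>1$
\begin{align*}
\|Aw(t)\|^2\le C\,e^{C\int_{t-1}^t(1+\|A^{\frac12}\textbf{v}(\eta)\|^8+|y_\delta(\theta_\eta\omega)|^8)\,\d\eta}\Bigl(\int_{t-1}^t\|Aw(s)\|^2\,\d s+1\Bigr).
\end{align*}

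Next one substitutes $\omega\mapsto\theta_{-t-\varepsilon}\omega$ and takes $\sup_{\varepsilon\in[0,1]}$. The exponent then contains $\int_{t-1}^t\|A^{\frac12}\textbf{v}(\eta,\theta_{-t-\varepsilon}\omega,\textbf{v}(0))\|^8\,\d\eta$ and $\int_{-1}^0|y_\delta(\theta_\eta\omega)|^8\,\d\eta$, while the bracket contains $\int_{t-1}^t\|Aw(s,\theta_{-t-\varepsilon}\omega,w(0))\|^2\,\d s$. For $\eta\in[t-1,t]$ and $\varepsilon\in[0,1]$ one writes $\theta_{-t-\varepsilon}=\theta_{-\eta-(t+\varepsilon-\eta)}$ with $t+\varepsilon-\eta\in[0,2]$, so that the $\varepsilon\in[0,2]$-uniform bound on $\|A^{\frac12}\textbf{v}\|$ produced in the proof of Lemma \ref{lemma4.3} (valid once $t\ge T_{\mathfrak B}(\omega)$, after absorbing $e^{-\lambda t}\|\textbf{v}(0)\|^2\le1$ via \eqref{timeB}) gives $\int_{t-1}^t\|A^{\frac12}\textbf{v}(\eta,\theta_{-t-\varepsilon}\omega,\textbf{v}(0))\|^8\,\d\eta\le|\zeta_3(\omega)|^4$ uniformly in $\varepsilon\in[0,1]$, just as in \eqref{4.23}. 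For the bracket one splits $w=\textbf{v}-\textbf{u}$, bounds $\int_{t-1}^t\|A\textbf{u}\|^2\,\d s\le C$ since $\textbf{u}$ lies in the global attractor $\mathcal{A}_0$, which is bounded in $H^2$, and bounds $\int_{t-1}^t\|A\textbf{v}(s,\theta_{-t-\varepsilon}\omega,\textbf{v}(0))\|^2\,\d s$ uniformly in $\varepsilon\in[0,1]$ again from the $\varepsilon\in[0,2]$-uniform window estimate \eqref{mar8.3}, exactly as in \eqref{mar8.7}.

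Feeding these uniform bounds back yields
\begin{align*}
\sup_{\varepsilon\in[0,1]}\|Aw(t,\theta_{-t-\varepsilon}\omega,w(0))\|^2\le C\,e^{C|\zeta_3(\omega)|^4+C\int_{-1}^0|y_\delta(\theta_\eta\omega)|^8\,\d\eta}\bigl(\zeta_3(\omega)+1\bigr)=:\rho(\omega),\quad t\ge T_{\mathfrak B}(\omega),
\end{align*}
which is the claimed inequality; it is formally the bound $\zeta_5$ of Lemma \ref{lem:H2bd} with the additional $\sup_\varepsilon$. Since $\zeta_3(\cdot)$ is tempered and $\int_{-1}^0|y_\delta(\theta_\eta\cdot)|^8\,\d\eta$ is tempered by the sub-exponential growth of $y_\delta$ (Proposition \ref{proposition3.5} and \eqref{erg}), $\rho(\cdot)$ is a tempered random variable. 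The only genuinely new ingredient, and where I expect the main (though modest) effort to lie, is the bookkeeping of the time shifts: one must verify that each window estimate reused from the proofs of Lemma \ref{lemma4.3} and Lemma \ref{lem:H2bd} was in fact established over the shift range $[0,2]$ (see \eqref{mar8.1}--\eqref{mar8.3}), so that the extra shift $\theta_{-\varepsilon}\omega$ with $\varepsilon\in[0,1]$ is absorbed without enlarging any of the random times beyond $T_{\mathfrak B}(\omega)$.
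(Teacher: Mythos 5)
Your proposal is correct and follows essentially the same route as the paper: redo the $H^2$-distance estimate of Lemma \ref{lem:H2bd} with the shift $\theta_{-t-\varepsilon}\omega$ carried along, and absorb the extra shift $\varepsilon\in[0,1]$ into the $\varepsilon\in[0,2]$-uniform window estimates \eqref{mar8.1}--\eqref{mar8.3} and \eqref{mar8.7}, which apply at $t'=t+\varepsilon\ge T_{\mathfrak B}(\omega)$ with $\textbf{v}(0)\in\mathfrak B(\theta_{-t'}\omega)$ exactly as you indicate. The only real divergence is cosmetic but worth noting: the paper does not apply Gronwall's lemma here; it integrates the simplified inequality \eqref{4.0}, where the coefficient multiplies $\|Aw\|^2+1$, directly over the window, bounds the coefficient by its supremum via \eqref{mar8.3} and the integral of $\|Aw\|^2+1$ via \eqref{mar8.7}, and so ends up with $\rho(\omega)=C\zeta_6(\omega)(\zeta_3(\omega)+1)$ containing no exponential factor, which makes the temperedness of $\rho$ immediate; your $\rho$ keeps the Gronwall exponential $e^{C|\zeta_3|^4+\cdots}$, i.e.\ it is $\zeta_5$-shaped, whose temperedness is of the same (slightly more delicate) kind already invoked for $\zeta_4$ and $\zeta_5$ in Lemmas \ref{lemma4.3} and \ref{lem:H2bd}, so this is acceptable within the paper's conventions. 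One bookkeeping slip: after substituting $\omega\mapsto\theta_{-t-\varepsilon}\omega$, the noise integrals become $\int_{-1-\varepsilon}^{-\varepsilon}|y_\delta(\theta_\eta\omega)|^8\,\d\eta$, so in your final formula the window $[-1,0]$ should be enlarged to $[-2,0]$ (or replaced by a supremum over $[-2,0]$, as the paper does in $\zeta_6$) to be uniform in $\varepsilon\in[0,1]$; this does not affect the argument or the temperedness.
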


\begin{proof}
By \eqref{sep6.3}, we can deduce that
\begin{align}\label{4.0}
 \frac{\d}{\d t}\|Aw\|^2  &\leq C\left (1+\|A^\frac12 \textbf{v}\|^4 +|y_\delta(\theta_t\omega)|^4 \right )
\|Aw\|^2
+C \left (1+\|A^\frac12 \textbf{v}\|^8 +|y_\delta(\theta_t\omega)|^8 \right)\nonumber\\
&\leq     C\left (1+\|A^\frac12 \textbf{v}\|^8 +|y_\delta(\theta_t\omega)|^8 \right )\left( \|Aw\|^2+1\right)   .
\end{align}
For any $t \geq 1$, integrating the  inequality \eqref{4.0} from $(s,t)$ for $s\in (t-1,t)$  and then integrating over $s\in (t-1,t)$,  we can deduce that
\begin{align*}
 \|Aw(t)\|^2
& \leq    C\int_{t-1}^t   \left (1 +\|A^\frac12\textbf{v}(s)\|^8+|y_\delta(\theta_s\omega)|^8 \right ) \left( \|Aw(s)\|^2 +1\right)\d s.
\end{align*}
For any $\varepsilon\in [0,1]$, replacing $\omega$ with $\theta_{-t-\varepsilon}\omega$ yields
\begin{align*}
& \|Aw(t,\theta_{-t-\varepsilon}\omega, w(0))\|^2   \\
&\quad  \leq   C  \int_{t-1}^t   \left (1 +\|A^\frac12\textbf{v}(s,\theta_{-t-\varepsilon}\omega, \textbf{v}(0))\|^8+|y_\delta(\theta_{s-t-\varepsilon}\omega)|^8 \right ) \left( \|Aw(s)\|^2 +1\right) \d s   \\
&\quad  \leq   C  \int_{t+\varepsilon-2}^{t+\varepsilon}   \left (1 +\|A^\frac12\textbf{v}(s,\theta_{-t-\varepsilon}\omega, \textbf{v}(0))\|^8+|y_\delta(\theta_{s-t-\varepsilon}\omega)|^8 \right ) \left( \|Aw(s)\|^2 +1\right) \d s .
\end{align*}
Note that from  \eqref{mar8.3}  it can deduce that
\begin{align*}
\sup_{\eta\in (t-2,t)}
 \|A^{\frac12}\textbf{v} (\eta ,\theta_{-t } \omega, \textbf{v}(0)) \|^8
&\leq  |\zeta_3(\omega)|^4 ,\quad t\geq T_{\mathfrak B} (\omega) .
\end{align*}
Hence,  let
\begin{align*}
 \zeta_6(\omega):=   |\zeta_3(\omega)|^4 +\sup_{s\in(-2,0)} |z(\theta_s\omega)|^8+1
\end{align*}
we    given a tempered random variable $\zeta_6$ such that
\begin{align*}
  \|Aw(t,\theta_{-t-\varepsilon}\omega, w(0))\|^2     \leq   C \zeta_6(\omega)  \int_{t+\varepsilon-2}^{t+\varepsilon}   \! \big( \|Aw(s,\theta_{-t-\varepsilon } \omega, w(0))\|^2 +1\big)\, \d s
\end{align*}
for any $t\geq T_{\mathfrak B} (\omega)$ uniformly for $\varepsilon\in [0, 1]$.
By  \eqref{mar8.7}, we can have  that
\begin{align*}
 \int_{t+\varepsilon-2}^{t+\varepsilon}   \|Aw(s,\theta_{-t-\varepsilon } \omega, w(0))\|^2 \, \d s \leq \frac 4 \nu \, \zeta_3(\omega) +C
\end{align*}
for any $\varepsilon\in [0,1]$ and $t\geq  T_{\mathfrak B} (\omega) $, hence we can deduce that
\begin{align}\label{rho}
\sup_{\varepsilon\in [0,1]}
  \|Aw(t,\theta_{-t-\varepsilon}\omega, w(0))\|^2     \leq   C \zeta_6(\omega)\left(\zeta_3(\omega) +1\right)=: \rho(\omega)
\end{align}
for any $ t\geq T_{\mathfrak B} (\omega)$. This completes the proof of Lemma \ref{lemma4.4}.
\end{proof}

Let $\varepsilon =0$, then we  can construct an $H^2$ random absorbing set.
\begin{theorem}[$H^2$ absorbing set]
\label{theorem4.1}
 Let Assumption \ref{assum} hold and $f\in H$. The  RDS $\phi$ generated by the random  anisotropic  NS equations \eqref{2.2} driven by colored noise has a  random absorbing set $\mathfrak{B}_{H^2}$, defined as a random $H^2$ neighborhood of the global attractor $\mathcal{A}_0$ of the deterministic anisotropic  NS equations:
\begin{align}
\mathfrak{B}_{H^2}(\omega)=\left \{\textbf{v}\in H^2: \, {\rm dist}_{H^2}(\textbf{v},\mathcal{A}_{0}) \leq \sqrt{\rho(\omega)} \, \right\}, \quad \omega\in\Omega, \nonumber
\end{align}
where $\rho(\cdot)$ is the tempered random variable defined by \eqref{rho}.  As a consequence, the $\mathcal{D}_H$-random attractor $\mathcal{A}$ of \eqref{2.2} is a bounded and tempered random set in $H^2$.

\end{theorem}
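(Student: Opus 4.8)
The plan is to read the theorem off Lemma~\ref{lemma4.4} by a ``flow-in, then compare'' argument, using the cocycle property together with the $H^1$ random absorbing set $\mathfrak B$ from Theorem~\ref{theorem4.6}. First I would check that $\mathfrak B_{H^2}$ is an admissible random set in $H^2$: it is the closed $\sqrt{\rho(\omega)}$-neighbourhood (in $H^2$) of the fixed set $\mathcal A_0$, which is bounded in $H^2$; since $\rho(\cdot)$ is a tempered random variable, $\omega\mapsto\mathfrak B_{H^2}(\omega)$ is a closed measurable random set, and it is tempered because $\|\mathfrak B_{H^2}(\omega)\|_{H^2}\le\|\mathcal A_0\|_{H^2}+\sqrt{\rho(\omega)}$ with $\sqrt{\rho(\cdot)}$ tempered.

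Next I would prove that $\mathfrak B_{H^2}$ pullback-absorbs every $\mathfrak D\in\D_H$. Fix $\mathfrak D\in\D_H$ and $\omega\in\Omega$, set $\sigma:=T_{\mathfrak B}(\omega)$ (the random time from \eqref{timeB}), and let $T_{\mathfrak D}(\cdot)$ be an absorption time of $\mathfrak B$ into which $\mathfrak D$ flows, i.e. $\phi(r,\theta_{-r}\omega',\mathfrak D(\theta_{-r}\omega'))\subset\mathfrak B(\omega')$ for all $r\ge T_{\mathfrak D}(\omega')$. Put $T^{*}_{\mathfrak D}(\omega):=\sigma+T_{\mathfrak D}(\theta_{-\sigma}\omega)$. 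For $t\ge T^{*}_{\mathfrak D}(\omega)$, write $t=\sigma+r$ with $r:=t-\sigma\ge T_{\mathfrak D}(\theta_{-\sigma}\omega)$; for any $\textbf{v}_0\in\mathfrak D(\theta_{-t}\omega)$ the cocycle property gives
\[
\phi(t,\theta_{-t}\omega,\textbf{v}_0)
=\phi\bigl(\sigma,\theta_{-\sigma}\omega,\,\phi(r,\theta_{-r}(\theta_{-\sigma}\omega),\textbf{v}_0)\bigr)
=:\phi(\sigma,\theta_{-\sigma}\omega,\textbf{w}_0),
\]
and since $\textbf{v}_0\in\mathfrak D(\theta_{-t}\omega)=\mathfrak D(\theta_{-r}(\theta_{-\sigma}\omega))$ with $r\ge T_{\mathfrak D}(\theta_{-\sigma}\omega)$, applying the absorption property at $\omega'=\theta_{-\sigma}\omega$ yields $\textbf{w}_0\in\mathfrak B(\theta_{-\sigma}\omega)$.

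Finally, fix any $\textbf{u}_0\in\mathcal A_0$. Lemma~\ref{lemma4.4} applied with time $\sigma\ge T_{\mathfrak B}(\omega)$, parameter $\varepsilon=0$ and initial datum $\textbf{w}_0\in\mathfrak B(\theta_{-\sigma}\omega)$ gives
\[
\bigl\|A\phi(t,\theta_{-t}\omega,\textbf{v}_0)-A\textbf{u}(\sigma,\textbf{u}_0)\bigr\|^2
=\bigl\|A\textbf{v}(\sigma,\theta_{-\sigma}\omega,\textbf{w}_0)-A\textbf{u}(\sigma,\textbf{u}_0)\bigr\|^2
\le\rho(\omega).
\]
As $\mathcal A_0$ is invariant under the deterministic semigroup $S$, $\textbf{u}(\sigma,\textbf{u}_0)\in\mathcal A_0$, so $\dist_{H^2}\bigl(\phi(t,\theta_{-t}\omega,\textbf{v}_0),\mathcal A_0\bigr)\le\sqrt{\rho(\omega)}$, i.e. $\phi(t,\theta_{-t}\omega,\textbf{v}_0)\in\mathfrak B_{H^2}(\omega)$; since $\textbf{v}_0\in\mathfrak D(\theta_{-t}\omega)$ and $t\ge T^{*}_{\mathfrak D}(\omega)$ were arbitrary, $\mathfrak B_{H^2}$ absorbs $\mathfrak D$. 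For the stated consequence: the $\D_H$-random attractor $\mathcal A$ exists by Theorem~\ref{theorem4.6} and is tempered in $H$, hence $\mathcal A\in\D_H$; by invariance $\mathcal A(\omega)=\phi(t,\theta_{-t}\omega,\mathcal A(\theta_{-t}\omega))$ for every $t\ge0$, so letting $t\ge T^{*}_{\mathcal A}(\omega)$ in the absorption just established gives $\mathcal A(\omega)\subset\mathfrak B_{H^2}(\omega)$; thus $\mathcal A$ is a bounded and tempered random set in $H^2$.

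The genuine analytic work has already been carried out in Lemma~\ref{lemma4.4} (uniform $H^2$-closeness of the random and deterministic trajectories, which rests on the $H$- and $H^1$-bounds of Lemmas~\ref{lemma4.1}--\ref{lem:H1bound} and the $H^2$-distance estimate of Lemma~\ref{lem:H2bd}); within the present theorem the only delicate point is the bookkeeping of the random times under the cocycle shift---choosing $\sigma=T_{\mathfrak B}(\omega)$ \emph{first}, so that the $\mathfrak D$-dependent absorption time is evaluated at the frozen argument $\theta_{-\sigma}\omega$, which makes $T^{*}_{\mathfrak D}$ a legitimate random variable---together with the measurability and temperedness check for $\mathfrak B_{H^2}$.
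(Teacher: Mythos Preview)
Your proposal is correct and follows essentially the same approach as the paper, which simply records the theorem as ``a direct consequence of Lemma~\ref{lemma4.4}'' (with $\varepsilon=0$) and refers to \cite{cui}. You have written out explicitly the cocycle bookkeeping (first flow $\mathfrak D$ into $\mathfrak B$, then apply Lemma~\ref{lemma4.4} at the fixed time $\sigma=T_{\mathfrak B}(\omega)$) and the measurability/temperedness checks for $\mathfrak B_{H^2}$, which the paper leaves implicit.
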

\begin{proof}
This is a direct consequence of  Lemma \ref{lemma4.4}, see also   \cite[ Theorem 13]{cui}.\end{proof}

\section{The ($H,H^2$)-smoothing effect} \label{sec5}

In this section, we will prove the ($H,H^2$)-smoothing effect  of the random anisotropic  NS equations \eqref{2.2} driven by colored noise. This effect  is essentially a
local $(H, H^2)$-Lipschitz continuity in initial values. It will be shown  step-by-step by   proving the local Lipschitz continuity in $H$, the  $(H,H^1)$-smoothing and finally the  $(H,H^2)$-smoothing. For simplicity, we set
 $$ \bar v(t,\omega, \bar v(0)):= \textbf{v}_1(t,\omega, \textbf{v}_1(0))-\textbf{v}_2(t,\omega, \textbf{v}_2(0)) $$
the difference between two solutions.

\subsection{Lipschitz continuity in $H$}

\begin{lemma}\label{lemma5.0}
 Let Assumption \ref{assum} hold and $f\in H$.
 For  any tempered set $\mathfrak D\in \D_H $, then there is random variables $t_{\mathfrak D}(\cdot)$ and $L_1(\mathfrak D,\cdot) $ such that any
 two solutions $\textbf{v}_1$ and $\textbf{v}_2$ of the random anisotropic  NS equations \eqref{2.2} driven by colored noise corresponding to initial values  $\textbf{v}_{1,0},$ $\textbf{v}_{2,0}$ in $ \mathfrak D( \theta_{-t_{\mathfrak D}(\omega)}\omega),$   respectively,  satisfy
\begin{align*}
 & \left \| \textbf{v}_1 \! \left ( t_{\mathfrak D} (\omega) ,\theta_{-t_{\mathfrak D}(\omega)}\omega,  \textbf{v}_{1,0}\right)
 - \textbf{v}_2 \! \left ( t_{\mathfrak D} (\omega) ,\theta_{-t_{\mathfrak D}(\omega)}\omega, \textbf{v}_{2,0}\right) \right \|^2  \\[0.8ex]
 &\quad
 \leq L_1({\mathfrak D}, \omega) \|\textbf{v}_{1,0}-\textbf{v}_{2,0}\|^2  ,\quad \omega \in \Omega.
\end{align*}

\end{lemma}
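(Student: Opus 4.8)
The plan is to subtract the two copies of \eqref{2.2} satisfied by $\textbf{v}_1$ and $\textbf{v}_2$, run an $L^2$ energy estimate on the difference, and close a Gronwall inequality whose coefficient is integrable in time thanks to the a priori bounds of Section \ref{sec3}. Writing $\textbf{u}_{h,i}:=\textbf{v}_i+hy_\delta(\theta_t\omega)$ and $\bar v:=\textbf{v}_1-\textbf{v}_2=\textbf{u}_{h,1}-\textbf{u}_{h,2}$, the external force $f$ and the terms $-\nu A_1hy_\delta(\theta_t\omega)+hy_\delta(\theta_t\omega)$ are identical in the two equations and cancel on subtraction, so that
\begin{align*}
\frac{\d\bar v}{\d t}+\nu A_1\bar v+B(\textbf{u}_{h,1},\bar v)+B(\bar v,\textbf{u}_{h,2})=0,\qquad \bar v(0)=\textbf{v}_{1,0}-\textbf{v}_{2,0}.
\end{align*}

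I would then take the $H$ inner product with $\bar v$. The term $(B(\textbf{u}_{h,1},\bar v),\bar v)=b(\textbf{u}_{h,1},\bar v,\bar v)$ vanishes because $\nabla\cdot\textbf{u}_{h,1}=0$; the dissipative term equals $\nu\|(\partial_y\bar v_1,\partial_x\bar v_2)\|^2$, which by \eqref{3.9} (applicable since $\nabla\cdot\bar v=0$) dominates $\tfrac\nu2\|A^{\frac12}\bar v\|^2$; and the remaining term $b(\bar v,\textbf{u}_{h,2},\bar v)$ is controlled directly by H\"older's inequality, the two-dimensional Ladyzhenskaya inequality $\|\bar v\|_{L^4}^2\le C\|\bar v\|\,\|A^{\frac12}\bar v\|$ and Young's inequality: $|b(\bar v,\textbf{u}_{h,2},\bar v)|\le\|\bar v\|_{L^4}^2\|A^{\frac12}\textbf{u}_{h,2}\|\le C\|\bar v\|\,\|A^{\frac12}\bar v\|\,\|A^{\frac12}\textbf{u}_{h,2}\|\le\tfrac\nu4\|A^{\frac12}\bar v\|^2+C\|A^{\frac12}\textbf{u}_{h,2}\|^2\|\bar v\|^2$. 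Since $h\in H^3$, $\|A^{\frac12}\textbf{u}_{h,2}\|^2\le C(\|A^{\frac12}\textbf{v}_2\|^2+|y_\delta(\theta_t\omega)|^2)$, so one is left with
\begin{align*}
\frac{\d}{\d t}\|\bar v\|^2+\frac\nu2\|A^{\frac12}\bar v\|^2\le C\big(1+\|A^{\frac12}\textbf{v}_2(t)\|^2+|y_\delta(\theta_t\omega)|^2\big)\|\bar v\|^2 .
\end{align*}

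Gronwall's lemma on $[0,t]$ gives $\|\bar v(t)\|^2\le\|\bar v(0)\|^2\exp\big(C\int_0^t(1+\|A^{\frac12}\textbf{v}_2(s)\|^2+|y_\delta(\theta_s\omega)|^2)\,\d s\big)$. Replacing $\omega$ by $\theta_{-t}\omega$ and choosing $t=t_{\mathfrak D}(\omega)\ge T_1(\omega)$ — in practice taken even larger (past the absorbing time $T_{\mathfrak B}(\omega)$ of Lemma \ref{lemma4.3} and beyond), for compatibility with the subsequent $(H,H^1)$- and $(H,H^2)$-smoothing steps — yields the claimed inequality with
\[
L_1(\mathfrak D,\omega):=\exp\Big(C\!\int_0^{t_{\mathfrak D}(\omega)}\!\big(1+\|A^{\frac12}\textbf{v}_2(s,\theta_{-t_{\mathfrak D}(\omega)}\omega,\textbf{v}_{2,0})\|^2+|y_\delta(\theta_{s-t_{\mathfrak D}(\omega)}\omega)|^2\big)\,\d s\Big).
\]
To see that $L_1(\mathfrak D,\cdot)$ is a well-defined, finite random variable and that the bound is uniform over $\textbf{v}_{2,0}\in\mathfrak D(\theta_{-t_{\mathfrak D}(\omega)}\omega)$, I would invoke estimate \eqref{4.29} from the proof of Lemma \ref{lemma4.1}: it bounds $\int_0^{t_{\mathfrak D}(\omega)}\|A^{\frac12}\textbf{v}_2(s,\theta_{-t_{\mathfrak D}(\omega)}\omega,\textbf{v}_{2,0})\|^2\,\d s$ by a $(t_{\mathfrak D}(\omega),\omega)$-dependent constant times $\|\textbf{v}_{2,0}\|^2+\zeta_1(\omega)$, and $\|\textbf{v}_{2,0}\|^2\le\|\mathfrak D(\theta_{-t_{\mathfrak D}(\omega)}\omega)\|^2<\infty$ by temperedness of $\mathfrak D$; moreover $\int_0^{t_{\mathfrak D}(\omega)}|y_\delta(\theta_{s-t_{\mathfrak D}(\omega)}\omega)|^2\,\d s<\infty$ since $y_\delta$ has continuous sample paths. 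Measurability of $L_1(\mathfrak D,\cdot)$ is inherited from that of $\textbf{v}_2$, $\zeta_1$ and $t_{\mathfrak D}$.

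The nonlinear estimate and the passage through the anisotropic dissipation via \eqref{3.9} are routine; the main point requiring care is keeping the Gronwall coefficient integrable while simultaneously fixing the random time $t_{\mathfrak D}(\omega)$ large enough for the later bootstrapping. For a general tempered initial set the a priori bounds of Lemma \ref{lemma4.1} only become sharp after a random entrance time and come with an exponential-in-$t$ prefactor, so $t_{\mathfrak D}(\omega)$ must be chosen once and the estimate then read off as genuinely uniform in the initial datum $\textbf{v}_{2,0}$ — which is precisely the structure the $(H,H^1)$- and $(H,H^2)$-smoothing lemmas of the next sections will rely on.
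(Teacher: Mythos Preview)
Your proposal is correct and follows essentially the same route as the paper: subtract the two equations, test with $\bar v$, use $b(\cdot,\bar v,\bar v)=0$ and the Ladyzhenskaya/Young estimate on the remaining trilinear term, control the anisotropic dissipation via \eqref{3.9}, and close with Gronwall together with the time-integrated $H^1$ bound \eqref{4.29}. The only cosmetic difference is that the paper decomposes $B(\textbf{u}_{h,1})-B(\textbf{u}_{h,2})=B(\bar v,\textbf{u}_{h,1})+B(\textbf{u}_{h,2},\bar v)$, so its Gronwall coefficient carries $\|A^{1/2}\textbf{v}_1\|^2$ rather than your $\|A^{1/2}\textbf{v}_2\|^2$; and the paper writes $L_1(\mathfrak D,\omega)$ explicitly as $\exp\!\big(Ce^{(\frac4\alpha-3)\lambda t_{\mathfrak D}(\omega)}(1+\zeta_1(\omega))+C\int_{-t_{\mathfrak D}(\omega)}^0|y_\delta(\theta_\tau\omega)|^2\,\d\tau\big)$, having first used temperedness to enforce $e^{-\lambda t_{\mathfrak D}(\omega)}\|\mathfrak D(\theta_{-t_{\mathfrak D}(\omega)}\omega)\|^2\le 1$, so that $L_1$ manifestly does not depend on the particular initial datum---you sketch this step but should carry it through so that your displayed $L_1$ is genuinely a function of $(\mathfrak D,\omega)$ alone.
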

\begin{proof}
The  difference $\bar{v}  =\textbf{v}_1-\textbf{v}_2$ of   solutions of system \eqref{2.2}  satisfies
\begin{align}\label{5.1}
\frac{\d \bar{v}}{
\d t}+  \nu  A_1\bar{v}+B( \textbf{v}_1+hy_\delta(\theta_t\omega))-B( \textbf{v}_2+hy_\delta(\theta_t\omega))=0.
\end{align}
Taking the inner product of the system   \eqref{5.1} with $\bar{v}$ in $H$,  by integration by parts, we can deduce that
\begin{align*}
\frac{1}{2}\frac{\d}{\d t}\|\bar{v}\|^2+ \nu  \|\partial_y\bar{v}_1\|^2+\nu  \|\partial_x\bar{v}_2\|^2&=- \big (B(\bar{v},\textbf{v}_1+hy_\delta(\theta_t\omega)),\bar{v} \big )
-\big (B(\textbf{v}_2+hy_\delta(\theta_t\omega),\bar{v}),\bar{v}\big )\nonumber\\
&=-\big (B(\bar{v},\textbf{v}_1+hy_\delta(\theta_t\omega)),\bar{v} \big )\nonumber\\
&\leq \|\bar{v}\|^2_{L^4}\|\nabla(\textbf{v}_1+hy_\delta(\theta_t\omega))\|\nonumber\\
&\leq C\|\bar{v}\|\|A^{\frac12}\bar{v}\| \left (\|A^\frac12\textbf{v}_1\|+\|A^\frac12hy_\delta(\theta_t\omega)\| \right)\nonumber\\
&\leq \frac\nu4\|A^{\frac12}\bar{v}\|^2+C \left(\|A^\frac12\textbf{v}_1\|^2+|y_\delta(\theta_t\omega)|^2 \right )\|\bar{v}\|^2.
\end{align*}
By direct computations and \eqref{3.9}, it yields
\begin{align*}
 \frac{\nu}{2}\|\bar{v}\|^2\leq\nu  \|\partial_y\bar{v}_1\|^2+\nu  \|\partial_x\bar{v}_2\|^2.
\end{align*}
Moreover,
\begin{align*}
\frac{\d}{\d t}\|\bar{v}\|^2+  \frac{\nu}{2} \|A^{\frac12}\bar{v}\|^2
\leq C\left (\|A^\frac12\textbf{v}_1\|^2+|y_\delta(\theta_t\omega)|^2 \right) \|\bar{v}\|^2.
\end{align*}
By using Gronwall's lemma, we can  get that
\begin{align} \label{mar9.1}
 & \|\bar{v}(t)\|^2+  \frac{\nu}{2} \int_0^te^{\int_s^tC(\|A^\frac12\textbf{v}_1(\tau) \|^2+|y_\delta(\theta_\tau\omega)|^2)\, \d \tau}\|A^{\frac12}\bar{v} (s) \|^2\, \d s \nonumber \\
& \quad
\leq e^{\int_0^tC(\|A^\frac12\textbf{v}_1(\tau)\|^2+|y_\delta(\theta_\tau\omega)|^2)\, \d \tau}\|\bar{v}(0)\|^2, \quad t>0.
\end{align}

By virtue of \eqref{4.29}, we   can deduce that
\begin{align*}
   \int_0^t \|A^\frac12\textbf{v}_1(\tau,\theta_{-t}\omega, \textbf{v}_{1,0})\|^2 \, \d \tau &\leq e^{ \left(\frac 4 \alpha -3  \right)\lambda t}  \int_0^t e^{ \left( \frac 4 \alpha -3  \right) \lambda (\tau-t)} \|A^\frac12\textbf{v}_1 (\tau,\theta_{-t}\omega, \textbf{v}_{1,0})\|^2  \,  \d \tau  \\
 &   \leq \frac {2}{\alpha\nu} e^{ \left(\frac 4 \alpha -3  \right)\lambda t}  \left( e^{-\lambda t} \|\textbf{v}_{1,0}\|^2 +\zeta_1(\omega) \right) ,\quad t\geq T_1(\omega),
 \end{align*}
so for \eqref{mar9.1} we also can  get
\begin{align*}
 \|\bar v(t,\theta_{-t}\omega, \bar v(0))\|^2
 & \leq  e^{\int_0^tC \left (\|A^\frac12\textbf{v}_1(\tau,\theta_{-t}\omega, \textbf{v}_{1,0})\|^2+|y_\delta(\theta_{\tau-t} \omega)|^2 \right)   \d \tau }
 \|\bar{v}(0)\|^2 \\
 &\leq e^{C  e^{ \left(\frac 4 \alpha -3  \right)\lambda t}  \left( e^{-\lambda t} \|\textbf{v}_{1,0}\|^2 +\zeta_1(\omega) \right) +C\int_{-t}^0|y_\delta(\theta_{\tau} \omega)|^2 \, \d \tau } \|\bar{v}(0)\|^2
\end{align*}
for any $t\geq T_1 (\omega)$.
Since $\textbf{v}_{1,0} $ belongs to $\mathfrak D(\theta_{-t}\omega) $ which is tempered, there exists a random variable $ t_{\mathfrak D} (\omega)  \geq T_1(\omega)$  such that
\begin{align*}
 e^{-\lambda t_{\mathfrak D}(\omega) } \|\textbf{v}_{1,0}\|^2 \leq  e^{-\lambda t_{\mathfrak D}(\omega) } \left \|  \, \mathfrak D \!  \left (\theta_{-t_{\mathfrak D}(\omega)} \omega \right ) \right\|^2 \leq 1,\quad \omega\in \Omega.
\end{align*}
Moreover, since $\mathfrak D$ is pullback absorbed by the absorbing set $\mathfrak B$,  this exists a large enough $t_{\mathfrak D}(\omega)$  such that
\begin{align}\label{mar19.2}
 \phi \left ( t_{\mathfrak D}(\omega) , \theta_{-t_{\mathfrak D}(\omega)}\omega, \mathfrak D(\theta_{- t_{\mathfrak D}(\omega) }\omega) \right ) \subset \mathfrak B(\omega), \quad \omega\in \Omega .
\end{align}
Therefore,  a random variable given by as follows
\begin{align*}
L_1({\mathfrak D}, \omega)=e^{C  e^{ \left(\frac 4 \alpha -3  \right)\lambda  t_{\mathfrak D}(\omega)}    ( 1+\zeta_1(\omega)  ) + C\int_{-t_{\mathfrak D}(\omega)}^0|y_\delta(\theta_{\tau} \omega)|^2 \, \d \tau } ,\quad \omega\in \Omega,
\end{align*}
then we have
\be \label{mar18.8}
\left \|\bar v  \! \left ( t_{\mathfrak D} (\omega) ,\theta_{-t_{\mathfrak D}(\omega)}\omega, \bar v(0) \right) \right \|^2
 \leq L_1({\mathfrak D}, \omega) \|\bar{v}(0)\|^2  ,\quad
\ee
whenever $\textbf{v}_{1,0},$ $\textbf{v}_{2,0}\in \mathfrak D \big ( \theta_{-t_{\mathfrak D}( \omega)}\omega \big),$  $\omega\in \Omega$. This completes the proof of Lemma \ref{lemma5.0}.
\end{proof}

\subsection{$(H,H^1)$-smoothing}

In this subsection, the $(H,H^1)$-smoothing will be shown  by two steps. Since  we have constructed  by   \eqref{4.19} an $H^1$ random absorbing set $\mathfrak B$, we begin with initial values lying  in the absorbing set $\mathfrak B$, and then consider the  initial values  in every tempered set $\mathfrak D $ in $\D_H$.

\begin{lemma}\label{lemma5.2}[$(H,H^1)$-smoothing on $\mathfrak B$]
 Let Assumption \ref{assum} hold and $f\in H$. For   the random absorbing set $\mathfrak B$ given by \eqref{4.19}     there are  random variables $\tau_\omega$ and $L_2(\mathfrak B,\omega) $ such that any
 two solutions $\textbf{v}_1$ and $\textbf{v}_2$ of the random anisotropic  NS equations \eqref{2.2} driven by colored noise corresponding to initial values  $\textbf{v}_{1,0},$ $\textbf{v}_{2,0}$ in $ \mathfrak B( \theta_{- \tau_\omega}\omega),$   respectively,  satisfy
\begin{align}\label{mar18.9}
 \left \| \textbf{v}_1  (  \tau_\omega,\theta_{-\tau_\omega}\omega,  \textbf{v}_{1,0} )
 - \textbf{v}_2   ( \tau_\omega,\theta_{-\tau_\omega}\omega, \textbf{v}_{2,0} ) \right \|^2_{H^1}
 \leq L_2({\mathfrak B}, \omega) \|\textbf{v}_{1,0}-\textbf{v}_{2,0}\|^2  ,\quad \omega \in \Omega.
\end{align}
\end{lemma}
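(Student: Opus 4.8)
The plan is to run an $H^1$ energy estimate for the difference of the two solutions and then close it by a uniform Gronwall argument over a unit time window, feeding in the $H$-control already obtained in Lemma \ref{lemma5.0} together with the $H^1$- and $L^2_tH^2$-bounds for trajectories issuing from $\mathfrak B$ recorded in \eqref{mar8.3}.

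First I would test the difference equation \eqref{5.1} --- which carries \emph{no} forcing term, since $f$, $-\nu A_1hy_\delta(\theta_t\omega)$ and $hy_\delta(\theta_t\omega)$ are common to both equations --- against $A\bar v$ in $H$, treating the anisotropic viscous contribution exactly as in \eqref{3.10}--\eqref{3.12}, which produces the lower bound $\tfrac12\nu\|A\bar v\|^2$ for the dissipation. It then remains to estimate $-(B(\bar v,\textbf{v}_1+hy_\delta(\theta_t\omega)),A\bar v)-(B(\textbf{v}_2+hy_\delta(\theta_t\omega),\bar v),A\bar v)$. For the first term I would use H\"older's inequality in the form $\|\bar v\|_{L^4}\|\nabla(\textbf{v}_1+hy_\delta(\theta_t\omega))\|_{L^4}\|A\bar v\|$ together with the two-dimensional Gagliardo--Nirenberg inequalities $\|\bar v\|_{L^4}\le C\|\bar v\|^{1/2}\|A^{1/2}\bar v\|^{1/2}$ and $\|\nabla(\textbf{v}_1+hy_\delta(\theta_t\omega))\|_{L^4}\le C\|A^{1/2}(\textbf{v}_1+hy_\delta(\theta_t\omega))\|^{1/2}\|A(\textbf{v}_1+hy_\delta(\theta_t\omega))\|^{1/2}$, Poincar\'e's inequality and Young's inequality; the second term is handled symmetrically via $\|\textbf{v}_2+hy_\delta(\theta_t\omega)\|_{L^4}\|\nabla\bar v\|_{L^4}\|A\bar v\|$. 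Absorbing the top-order factors into $\tfrac\nu4\|A\bar v\|^2$ then yields
\[
\frac{\d}{\d t}\|A^{1/2}\bar v\|^2+\frac\nu4\|A\bar v\|^2\le g(t)\,\|A^{1/2}\bar v\|^2,
\]
with $g(t)=C\big(1+\|A^{1/2}(\textbf{v}_1+hy_\delta(\theta_t\omega))\|^2+\|A(\textbf{v}_1+hy_\delta(\theta_t\omega))\|^2+\|A^{1/2}(\textbf{v}_2+hy_\delta(\theta_t\omega))\|^2\big)$.

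Next I would fix $\tau_\omega\ge T_{\mathfrak B}(\omega)$ large enough that, moreover, $e^{-\lambda\tau_\omega}\|\mathfrak B(\theta_{-\tau_\omega}\omega)\|_{H^1}^2\le1$. For $\textbf{v}_{1,0},\textbf{v}_{2,0}\in\mathfrak B(\theta_{-\tau_\omega}\omega)$, applying Gronwall's lemma on $(s,\tau_\omega)$ and then integrating over $s\in[\tau_\omega-1,\tau_\omega]$ gives
\[
\|A^{1/2}\bar v(\tau_\omega,\theta_{-\tau_\omega}\omega,\bar v(0))\|^2\le e^{\int_{\tau_\omega-1}^{\tau_\omega}g(s)\,\d s}\int_{\tau_\omega-1}^{\tau_\omega}\|A^{1/2}\bar v(s,\theta_{-\tau_\omega}\omega,\bar v(0))\|^2\,\d s.
\]
The exponent is bounded by a tempered random variable: with $\omega$ replaced by $\theta_{-\tau_\omega}\omega$, \eqref{mar8.3} gives $\sup_{s\in[\tau_\omega-1,\tau_\omega]}\|A^{1/2}\textbf{v}_i(s,\theta_{-\tau_\omega}\omega,\textbf{v}_{i,0})\|^2\le\zeta_3(\omega)$ and $\int_{\tau_\omega-1}^{\tau_\omega}\|A\textbf{v}_i(s,\theta_{-\tau_\omega}\omega,\textbf{v}_{i,0})\|^2\,\d s\le\tfrac2\nu\zeta_3(\omega)$, while $\int_{-1}^0|y_\delta(\theta_r\omega)|^2\,\d r<\infty$, so $\int_{\tau_\omega-1}^{\tau_\omega}g(s)\,\d s\le C\big(\zeta_3(\omega)+\sup_{r\in[-1,0]}|y_\delta(\theta_r\omega)|^2+\int_{-1}^0|y_\delta(\theta_r\omega)|^2\,\d r+1\big)=:C_1(\omega)$. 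For the integral factor I would use the $H$-estimate \eqref{mar9.1} (with $\omega$ replaced by $\theta_{-\tau_\omega}\omega$, at time $\tau_\omega$): since the weight there is $\ge1$, $\int_{\tau_\omega-1}^{\tau_\omega}\|A^{1/2}\bar v(s,\theta_{-\tau_\omega}\omega,\bar v(0))\|^2\,\d s\le\tfrac2\nu e^{\int_0^{\tau_\omega}C(\|A^{1/2}\textbf{v}_1\|^2+|y_\delta|^2)\,\d\tau}\|\bar v(0)\|^2$, and that exponent is a finite (large) random variable because \eqref{4.29} bounds $\int_0^{\tau_\omega}\|A^{1/2}\textbf{v}_1(\tau,\theta_{-\tau_\omega}\omega,\textbf{v}_{1,0})\|^2\,\d\tau$ by $\tfrac2{\alpha\nu}e^{(4/\alpha-3)\lambda\tau_\omega}(e^{-\lambda\tau_\omega}\|\textbf{v}_{1,0}\|^2+\zeta_1(\omega))\le\tfrac2{\alpha\nu}e^{(4/\alpha-3)\lambda\tau_\omega}(1+\zeta_1(\omega))$, using $\|\textbf{v}_{1,0}\|^2\le\lambda_1^{-1}\zeta_2(\theta_{-\tau_\omega}\omega)$ and the temperedness of $\zeta_2$ in the choice of $\tau_\omega$. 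Collecting these and recalling $\|\cdot\|_{H^1}=\|A^{1/2}\cdot\|$ yields \eqref{mar18.9} with $L_2(\mathfrak B,\omega):=\tfrac2\nu e^{C_1(\omega)}e^{\int_0^{\tau_\omega}C(\|A^{1/2}\textbf{v}_1\|^2+|y_\delta|^2)\,\d\tau}$.

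The main obstacle I anticipate is not the inequalities themselves but ensuring that the coefficient $g(t)$ enters the argument only through its integral over the unit window $[\tau_\omega-1,\tau_\omega]$, so that the top-order term $\|A(\textbf{v}_1+hy_\delta(\theta_t\omega))\|^2$ --- which is merely $L^1_{loc}$ in $t$ --- can be tolerated; this is precisely what makes the already-established $L^2_tH^2$-bound \eqref{mar8.3} for trajectories starting in $\mathfrak B$ indispensable. The remaining difficulty is the cocycle/pullback bookkeeping needed to turn the (possibly enormous) exponential factors into honest finite, tempered random variables, exactly as was done for $L_1(\mathfrak D,\cdot)$ in Lemma \ref{lemma5.0}.
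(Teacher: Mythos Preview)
Your argument is correct, but the nonlinear estimate you choose differs from the paper's and this changes what bounds you need downstream. You split $(B(\bar v,\textbf v_1+hy_\delta),A\bar v)$ via $\|\bar v\|_{L^4}\|\nabla(\textbf v_1+hy_\delta)\|_{L^4}\|A\bar v\|$, which after Gagliardo--Nirenberg and Young puts the top-order factor $\|A(\textbf v_1+hy_\delta)\|^2$ into $g(t)$; this is only $L^1_{\rm loc}$ in time, so you must restrict $g$ to the unit window $[\tau_\omega-1,\tau_\omega]$ and invoke the $L^2_tH^2$ bound \eqref{mar8.3}, which in turn forces $\tau_\omega\ge T_{\mathfrak B}(\omega)$. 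The paper instead estimates that term by $\|\bar v\|_{L^\infty}\|\nabla(\textbf v_1+hy_\delta)\|\|A\bar v\|$ with $\|\bar v\|_{L^\infty}\le C\|\bar v\|^{1/2}\|A\bar v\|^{1/2}$; after Young's inequality the coefficient is only $C(\|A^{1/2}\textbf v_1\|^4+\|A^{1/2}\textbf v_2\|^4+|y_\delta|^4)$, with no $H^2$ norm (see \eqref{5.3}--\eqref{mar9.2}). The paper then needs $\int_0^{\tau_\omega}\|A^{1/2}\textbf v_i\|^4$ over the \emph{whole} interval, obtained from a direct Gronwall on \eqref{4.27} giving a sup-in-time $H^1$ bound for initial data in the $H^1$ ball $\mathfrak B$ (this is \eqref{5.8}); its $\tau_\omega$ only needs \eqref{mar18.2}. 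Both routes close; yours trades a cleaner differential inequality for reliance on \eqref{mar8.3}, while the paper's avoids $H^2$ norms at the cost of a sup bound over $[0,\tau_\omega]$. One minor point: with your $L^4\times L^4$ split on the second term, Young's inequality actually produces $\|A^{1/2}(\textbf v_2+hy_\delta)\|^4$ rather than the square you wrote in $g(t)$; this is harmless since \eqref{mar8.3} bounds the fourth power as well.
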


\begin{proof}
Taking the inner product of the system   \eqref{5.1} with $A\bar{v}$ in $H$,  by integration by parts, we can deduce that
\begin{align}\label{5.2}
&\frac{1}{2}\frac{\d}{\d t}\|A^\frac12\bar{v}\|^2+ \nu \int_{\mathbb{T}^2}\partial_{yy}\bar{v}_1\Delta \bar{v}_1\d x+\nu \int_{\mathbb{T}^2}\partial_{xx}\bar{v}_2\Delta \bar{v}_2\d x\nonumber\\
&=- \big (B(\bar{v},\textbf{v}_1+hy_\delta(\theta_t\omega)),A\bar{v}\big )
-\big (B(\textbf{v}_2+hy_\delta(\theta_t\omega),\bar{v}),A\bar{v}\big ) .
\end{align}
By direct computations and similar method, we also have
\begin{align}
&\nu \int_{\mathbb{T}^2}\partial_{yy}\bar{v}_1\Delta \bar{v}_1\d x+\nu \int_{\mathbb{T}^2}\partial_{xx}\bar{v}_2\Delta \bar{v}_2\d x\nonumber\\
&\geq \frac12\nu \int_{\mathbb{T}^2}(\partial_{xx}\bar{v}_1+\partial_{yy}\bar{v}_1)^2\d x
+\frac12\nu \int_{\mathbb{T}^2}(\partial_{xx}\bar{v}_2+\partial_{yy}\bar{v}_2)^2\d x
\nonumber\\
&=\frac12\nu\|\Delta \bar{v}\|^2=\frac12\nu\|A \bar{v}\|^2.
\end{align}
By using the H\"{o}lder inequality, the Gagliardo-Nirenberg inequality and the Young inequality, we can get that
\begin{align}\label{5.3}
 \big| \big (B(\bar{v},\textbf{v}_1+hy_\delta(\theta_t\omega)),A\bar{v}\big )\big|
 &\leq \|\bar{v}\|_{L^\infty}\|\nabla(\textbf{v}_1+hy_\delta(\theta_t\omega))\|\|A\bar{v}\|\nonumber\\
&\leq C\|\bar{v}\|^\frac12\|A\bar{v}\|^\frac32
\left (\|A^\frac12\textbf{v}_1\|+\|A^\frac12hy_\delta(\theta_t\omega)\| \right)\nonumber\\
&\leq C\|A^\frac12\bar{v}\|^\frac12\|A\bar{v}\|^\frac32 \left (\|A^\frac12\textbf{v}_1\|+ |y_\delta(\theta_t\omega) | \right)\nonumber\\
&\leq \frac \nu 8\|A\bar{v}\|^2+C \left (\|A^\frac12\textbf{v}_1\|^4+|y_\delta(\theta_t\omega)|^4\right)\|A^\frac12\bar{v}\|^2,
\end{align}
and
\begin{align}\label{5.4}
 \big| \big (B(\textbf{v}_2+hy_\delta(\theta_t\omega),\bar{v}),A\bar{v}\big )\big| &\leq \|A^\frac12\bar{v}\|_{L^4}\|\textbf{v}_2+hy_\delta(\theta_t\omega)\|_{L^4}\|A\bar{v}\|\nonumber\\
&\leq C\|A^\frac12\bar{v}\|^\frac12\|A\bar{v}\|^\frac32 \left(\|A^\frac12\textbf{v}_2\|+\|A^\frac12hy_\delta(\theta_t\omega)\| \right)\nonumber\\
&\leq \frac \nu 8\|A\bar{v}\|^2+C\left (\|A^\frac12\textbf{v}_2\|^4+|y_\delta(\theta_t\omega)|^4\right)\|A^\frac12\bar{v}\|^2.
\end{align}
Inserting \eqref{5.3}-\eqref{5.4} into \eqref{5.2}, we can deduce that
\begin{align} \label{mar9.2}
\frac{\d}{\d t}\|A^\frac12\bar{v}\|^2+  \frac{\nu}{2} \|A\bar{v}\|^2
\leq C\left(\|A^\frac12\textbf{v}_1\|^4+\|A^\frac12\textbf{v}_2\|^4+|y_\delta(\theta_t\omega)|^4\right)\|A^\frac12\bar{v}\|^2.
\end{align}
By using Gronwall's lemma, we can get that for any $s\in(t-1, t-\frac 12)$, $t\geq 1$,
\begin{align}
 & \|A^\frac12\bar{v}(t)\|^2
 +  \frac{\nu }{2} \int_s^te^{\int_\eta^tC(\|A^\frac12\textbf{v}_1(\tau)\|^4+\|A^\frac12\textbf{v}_2(\tau)\|^4+|y_\delta(\theta_\tau\omega)|^4)\, \d \tau}\|A\bar{v}(\eta) \|^2 \, \d \eta
\nonumber\\
&\quad \leq e^{ \int_s^tC(\|A^\frac12\textbf{v}_1(\tau) \|^4+\|A^\frac12\textbf{v}_2(\tau)\|^4+|y_\delta(\theta_\tau\omega)|^4)\, \d \tau}\|A^\frac12\bar{v}(s)\|^2, \nonumber
\end{align}
and then integrating w.r.t. $s$ over $(t-1,t-\frac 12)$  yields
\begin{align*}
 & \|A^\frac12\bar{v}(t)\|^2
 + \frac{\nu}{2} \int_{t-\frac 12} ^te^{\int_\eta^tC(\|A^\frac12\textbf{v}_1(\tau)\|^4+\|A^\frac12\textbf{v}_2(\tau)\|^4+|y_\delta(\theta_\tau\omega)|^4)\, \d \tau}\|A\bar{v}(\eta) \|^2 \, \d \eta
\nonumber\\
&\quad \leq  2 \int_{t-1} ^{t-\frac 12}   e^{ \int_s^tC(\|A^\frac12\textbf{v}_1(\tau) \|^4+\|A^\frac12\textbf{v}_2(\tau)\|^4+|y_\delta(\theta_\tau\omega)|^4)\, \d \tau}\|A^\frac12\bar{v}(s)\|^2  \, \d s \\
& \quad \leq  2e^{ \int_{t-1}^tC(\|A^\frac12\textbf{v}_1(\tau) \|^4+\|A^\frac12\textbf{v}_2(\tau)\|^4+|y_\delta(\theta_\tau\omega)|^4)\, \d \tau}  \int_{t-1} ^{t-\frac 12} \|A^\frac12\bar{v}(s)\|^2 \, \d s
.
\end{align*}
Applying \eqref{mar9.1},  it yields
\begin{align}
 & \|A^\frac12\bar{v}(t)\|^2
 + \frac{\nu}{2}\int_{t-\frac 12} ^te^{\int_\eta^tC(\|A^\frac12\textbf{v}_1(\tau)\|^4+\|A^\frac12\textbf{v}_2(\tau)\|^4+|y_\delta(\theta_\tau\omega)|^4)\, \d \tau}\|A\bar{v}(\eta) \|^2 \, \d \eta \nonumber  \\
  &\quad  \leq  2e^{ \int_{0}^tC(\|A^\frac12\textbf{v}_1(\tau) \|^4+\|A^\frac12\textbf{v}_2(\tau)\|^4+|y_\delta(\theta_\tau\omega)|^4+1)\, \d \tau}
   \|\bar{v}(0)\|^2
,\quad t\geq 1. \label{mar19.1}
\end{align}

Since the absorbing set $\mathfrak B$ itself belongs to the attraction universe $\D_H$,  it pullback absorbs itself. Then  there exists a random variable $\tau_\omega \geq 1$ such that
\begin{align}\label{mar18.2}
 \phi \big(\tau_\omega,\theta_{-\tau_\omega}\omega, \B(\theta_{-\tau_\omega}\omega)\big)
 \subset \B(\omega), \quad \omega\in \Omega.
\end{align}
We replace $\omega$ with $\theta_{-\tau_\omega} \omega$ in \eqref{mar19.1}  and deduce  the estimate at $t=\tau_\omega$ that
\begin{align}
  \big \|
 A^{\frac 12}  \bar {v}(\tau_\omega,\theta_{-\tau_\omega}\omega,  \bar v(0))  \big \| {^2}
 \leq  2e^{ C\int_{0}^{\tau_\omega}  \sum_{i=1}^2\|A^\frac12\textbf{v}_i(s, \theta_{-\tau_\omega}\omega, v_i(0)) \|^4 \d s+ C\int^0_{-\tau_\omega} (|y_\delta(\theta_s\omega)|^4+1) \d s}
   \|\bar v(0) \|^2 .  \label{mar18.1}
\end{align}

 In order to prove the bound on the right hand side of \eqref{mar18.1}, by \eqref{4.27}, we can deduce that
\begin{align*}
\frac{\d}{\d t}\|A^{\frac12}\textbf{v}\|^2
&\leq    C\left ( 1 +  |y_\delta(\theta_t\omega)|^2 \right)  \|A^{\frac12}\textbf{v}\|^2  +
C \left(1 +|y_\delta(\theta_t\omega)|^6  \right) .
\end{align*}
Since the absorbing set $\mathfrak B $ is bounded and tempered in $H^1$, for  initial values $\textbf{v}(0)$ in   $\mathfrak B(\omega)$, applying  Gronwall's lemma, we can  deduce that
\begin{align*}
 \|A^{\frac12}\textbf{v}(s,\omega, \textbf{v}(0))\|^2
&\leq e^{ \int_0^s C(1+|y_\delta(\theta_\tau\omega)|^2)\, \d \tau}\|A^{\frac12}\textbf{v}( 0)\|^2
\\
&\quad +C\int_0^s e^{ \int_\eta^s C(1+ |y_\delta (\theta_\tau\omega)|^2)\, \d \tau} \left (1 +|y_\delta(\theta_\eta\omega)|^6 \right) \d \eta \\
&\leq e^{ \int_0^s C(1+|y_\delta(\theta_\tau\omega)|^2)\, \d \tau}
\left[ \zeta_2(\omega)+C\int_0^s   \left (1 +|y_\delta(\theta_\eta\omega)|^6 \right) \d \eta \right]
\end{align*}
for any $s\geq 0$ uniformly for $\textbf{v}(0)\in  \mathfrak B(\omega)$,   where $\zeta_2(\omega) $ is the square of the radius of $\mathfrak B(\omega)$ in $H^1$. As a consequence,  for any $s\in (0, \tau_\omega)$ we can deduce that
\begin{align*}
  \sup_{s\in (0, \tau_\omega)} \|A^{\frac12}\textbf{v}(s,\omega, \textbf{v}(0))\|^2
&\leq e^{ \int_0^{\tau_\omega}  C(1+|y_\delta(\theta_\tau\omega)|^2)\, \d \tau}
\left[\zeta_2(\omega)+C\int_0^{\tau_\omega}   \left (1 +|y_\delta(\theta_\eta\omega)|^6 \right) \d \eta \right] ,
\end{align*}
and then, we replace $\omega $ with $ \theta_{-\tau_\omega} \omega$ to get,
\begin{align*}
&  \sup_{s\in (0, \tau_\omega)} \|A^{\frac12}\textbf{v}(s, \theta_{-\tau_\omega} \omega, \textbf{v}(0))\|^2 \\
&\quad \leq e^{ \int^0_{-\tau_\omega}  C(1+|y_\delta(\theta_\tau\omega)|^2)\, \d \tau}
\left[ \zeta_2( \theta_{-\tau_\omega}\omega)+C\int^0_{-\tau_\omega}   \left (1 +|y_\delta(\theta_\eta\omega)|^6 \right) \d \eta \right] =: \zeta_7(\omega) .
\end{align*}
 Then, we have
\begin{align}\label{5.8}
    \int_{0}^{\tau_\omega}  \|A^\frac12\textbf{v}(s, \theta_{-\tau_\omega}\omega, \textbf{v}(0)) \|^4 \, \d s  \leq  \tau_\omega |\zeta_7(\omega)|^2
\end{align}
uniformly for any  $\textbf{v}(0) $ in $  \mathfrak B( \theta_{-\tau_\omega} \omega)$ and thus  for \eqref{mar18.1} we can get that
\begin{align*}
  \|
 A^{\frac 12} \bar {v}(\tau_\omega,\theta_{-\tau_\omega}\omega, \bar v(0))  \|^2
&  \leq  2e^{ C \tau_\omega  |\zeta_7( \omega)|^2 + C\int^0_{-\tau_\omega} (|y_\delta(\theta_s\omega)|^4+1) \d s}
   \| \bar v(0)  \|^2  \\
   &\leq 2e^{ C \tau_\omega  |\zeta_7( \omega)|^2 }
   \| \bar v(0)  \|^2.
\end{align*}
Hence, the random variable defined by
\begin{align*}
  L_2(\mathfrak B, \omega):=  2e^{ C \tau_\omega  |\zeta_7( \omega)|^2 }
,\quad \omega\in \Omega.
\end{align*}
This completes the proof of Lemma \ref{lemma5.2}.
\end{proof}

Nextly, we will prove  the   $(H,H^1)$-smoothing for initial values from any tempered set $\mathfrak D$ in $H$.
\begin{lemma} \label{lem:H1}
 Let Assumption \ref{assum} hold and $f\in H$. For   any tempered set   $\mathfrak D\in \D_H$ there exist random variables $ T_1(\mathfrak D ,\cdot )  $ and $L_3(\mathfrak D,\cdot) $ such that any
 two solutions $\textbf{v}_1$ and $\textbf{v}_2$ of  the random anisotropic  NS equations \eqref{2.2} driven by colored noise corresponding to initial values  $\textbf{v}_{1,0},$ $\textbf{v}_{2,0}$ in $ \mathfrak D( \theta_{-T_1(\mathfrak D,\omega)}\omega),$   respectively,  satisfy
\begin{align}\label{mar19.4}
  \left \| \textbf{v}_1  (T_1 ,\theta_{-T_1}\omega,  \textbf{v}_{1,0} )
 - \textbf{v}_2   ( T_1 ,\theta_{-T_1}\omega, \textbf{v}_{2,0} ) \right \|^2_{H^1}
 \leq L_3({\mathfrak D}, \omega) \|\textbf{v}_{1,0}-\textbf{v}_{2,0}\|^2  ,
\end{align}
where $T_1= T_1(\mathfrak D, \omega)$, $\omega\in \Omega$. In addition, $ v (T_1 ,\theta_{-T_1}\omega,  v_{0}) \in \mathfrak B(\omega)$ for any $v_0\in \mathfrak D(\theta_{-T_1}\omega) $.
\end{lemma}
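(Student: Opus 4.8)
The plan is to bootstrap from the two results already proved for initial data inside the absorbing set: the Lipschitz continuity in $H$ (Lemma \ref{lemma5.0}) and the $(H,H^1)$-smoothing on $\mathfrak B$ (Lemma \ref{lemma5.2}). The idea is a standard two-stage ``trap and smooth'' argument. First, since $\mathfrak D\in\D_H$ is pullback absorbed by $\mathfrak B$, I choose a time $t_1(\mathfrak D,\omega)\ge \tau_\omega$ large enough that $\phi(t_1,\theta_{-t_1}\omega,\mathfrak D(\theta_{-t_1}\omega))\subset \mathfrak B(\theta_{-\tau_\omega}\omega)$ (using the cocycle base point shifted appropriately). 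Then after time $t_1$ the two trajectories have entered $\mathfrak B$, where Lemma \ref{lemma5.2} applies over the subsequent interval of length $\tau_\omega$. Composing, with $T_1 := T_1(\mathfrak D,\omega) = t_1(\mathfrak D,\omega) + \tau_{\theta_{-\tau}\omega}$-type total time, one gets the $H^1$ estimate.

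Concretely, I would proceed as follows. Set $\textbf{v}_{i}(t) = \textbf{v}_i(t,\theta_{-T_1}\omega,\textbf{v}_{i,0})$. By the $H$-Lipschitz Lemma \ref{lemma5.0} there are $t_{\mathfrak D}(\omega)$ and $L_1(\mathfrak D,\omega)$ with
\begin{align*}
\|\bar v(t_{\mathfrak D}(\omega),\theta_{-t_{\mathfrak D}(\omega)}\omega,\bar v(0))\|^2 \le L_1(\mathfrak D,\omega)\|\textbf{v}_{1,0}-\textbf{v}_{2,0}\|^2,
\end{align*}
and by \eqref{mar19.2} we may enlarge $t_{\mathfrak D}(\omega)$ so that $\phi(t_{\mathfrak D}(\omega),\theta_{-t_{\mathfrak D}(\omega)}\omega,\mathfrak D(\theta_{-t_{\mathfrak D}(\omega)}\omega))\subset\mathfrak B(\omega)$, i.e. $\textbf{v}_i(t_{\mathfrak D}(\omega),\theta_{-t_{\mathfrak D}(\omega)}\omega,\textbf{v}_{i,0})\in\mathfrak B(\omega)$. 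Next apply Lemma \ref{lemma5.2} with these new (intermediate) data, which lie in $\mathfrak B$: using the cocycle identity to concatenate the two time segments and shifting $\omega\mapsto\theta_{-\tau_\omega}\omega$ appropriately, set $T_1 = T_1(\mathfrak D,\omega) := \tau_{\omega} + t_{\mathfrak D}(\theta_{-\tau_\omega}\omega)$. The cocycle property $\phi(\tau_\omega + s,\theta_{-\tau_\omega-s}\omega,\cdot) = \phi(\tau_\omega,\theta_{-\tau_\omega}\omega,\phi(s,\theta_{-\tau_\omega-s}\omega,\cdot))$ then gives
\begin{align*}
\|\bar v(T_1,\theta_{-T_1}\omega,\bar v(0))\|^2_{H^1}
&\le L_2(\mathfrak B,\omega)\,\|\bar v(t_{\mathfrak D}(\theta_{-\tau_\omega}\omega),\theta_{-T_1}\omega,\bar v(0))\|^2 \\
&\le L_2(\mathfrak B,\omega)\,L_1(\mathfrak D,\theta_{-\tau_\omega}\omega)\,\|\textbf{v}_{1,0}-\textbf{v}_{2,0}\|^2,
\end{align*}
so one takes $L_3(\mathfrak D,\omega):= L_2(\mathfrak B,\omega)\,L_1(\mathfrak D,\theta_{-\tau_\omega}\omega)$. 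The final assertion $\textbf{v}(T_1,\theta_{-T_1}\omega,v_0)\in\mathfrak B(\omega)$ is immediate from \eqref{mar18.2} applied after the trajectory has been absorbed into $\mathfrak B$.

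The only genuine subtlety — and the step I would be most careful about — is the bookkeeping of base points $\omega$ under the cocycle shifts, making sure the absorbing time $t_{\mathfrak D}$ from Lemma \ref{lemma5.0} is chosen so that the \emph{intermediate} states land in $\mathfrak B(\theta_{-\tau_\omega}\omega)$ (not $\mathfrak B(\omega)$), since Lemma \ref{lemma5.2} needs data in $\mathfrak B(\theta_{-\tau_\omega}\omega)$ to produce the $H^1$ bound at base point $\omega$. This is purely a matter of invoking \eqref{mar19.2} with the shifted noise and then re-indexing; no new estimate is needed. Measurability and temperedness of $L_3(\mathfrak D,\cdot)$ follow from those of $L_1$, $L_2$ and the measure-preserving property of $\theta$. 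I do not expect any analytic obstacle here, since all the hard work — the energy estimates producing \eqref{mar9.1}, \eqref{mar19.1}, and the absorbing-set bounds \eqref{5.8} — has already been carried out.
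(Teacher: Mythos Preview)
Your proposal is correct and matches the paper's proof essentially line for line: the paper also sets $T_1(\mathfrak D,\omega):=\tau_\omega + t_{\mathfrak D}(\theta_{-\tau_\omega}\omega)$ and $L_3(\mathfrak D,\omega):=L_2(\mathfrak B,\omega)\,L_1(\mathfrak D,\theta_{-\tau_\omega}\omega)$, composes \eqref{mar18.9} with \eqref{mar18.8} via the cocycle identity, and then verifies the final inclusion $\textbf{v}(T_1,\theta_{-T_1}\omega,v_0)\in\mathfrak B(\omega)$ exactly as you describe, first using \eqref{mar19.5} at the shifted base point to land in $\mathfrak B(\theta_{-\tau_\omega}\omega)$ and then \eqref{mar18.2}. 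Your identification of the base-point bookkeeping as the only subtlety is accurate.
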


\begin{proof}
By virtue of ${\mathfrak D} \in \D_H$   is pullback absorbed by $\mathfrak B$ and \eqref{mar19.2}, then  there exists  a random variable $  t_{\mathfrak D} (\cdot) $ such that
\begin{align}\label{mar19.5}
 \phi \left (  t_{\mathfrak D}(\omega) , \theta_{-  t_{\mathfrak D}(\omega)}\omega, \mathfrak D(\theta_{-  t_{\mathfrak D}(\omega)}\omega) \right) \subset \mathfrak B(\omega), \quad \omega\in \Omega.
\end{align}
Hence,    for any $ \omega\in \Omega $   we can deduce that
\begin{align*}
&   \left \|  \bar v  \!\left( \tau_\omega+   t_{\mathfrak D}(\theta_{-\tau_\omega} \omega)   , \, \theta_{-\tau_\omega  - t_{\mathfrak D}(\theta_{-\tau_\omega} \omega)}
 \omega, \, \bar v(0)\right)
   \right \|^2_{H^1}
    \nonumber \\
         &\quad = \left \| \bar v  \! \left( \tau_\omega  , \, \theta_{-\tau_\omega   }
 \omega, \,  \bar v \big(   t_{\mathfrak D}(\theta_{-\tau_\omega} \omega) ,\,  \theta_{-\tau_\omega- t_{\mathfrak D}(\theta_{-\tau_\omega} \omega)  }
 \omega,  \, \bar v(0)\big) \right)
   \right \|^2_{H^1} \nonumber
   \\
 &
  \quad \leq  L_2  (\mathfrak B,   \omega  )   \left \|  \bar v \!  \left( t_{\mathfrak D}(\theta_{-\tau_\omega} \omega),\,  \theta_{-\tau_\omega- t_{\mathfrak D}(\theta_{-\tau_\omega} \omega)    }
 \omega, \,  \bar v(0) \right)
   \right \|^2  \quad \text{(by \eqref{mar18.9})} \nonumber \\
 &\quad \leq     L_2  (\mathfrak B,   \omega  )   L_1  \big (\mathfrak D, \theta_{-\tau_\omega}\omega  \big)  \|\bar{v}(0)\|^2  \quad \text{(by \eqref{mar18.8})}
 \nonumber
\end{align*}
for any $\textbf{v}_{1,0}$, $\textbf{v}_{2,0}\in \mathfrak D \left (  \theta_{-\tau_\omega  - t_{\mathfrak D}(\theta_{-\tau_\omega} \omega)} \omega \right)$.  Let
\begin{align*}
 & T_1({\mathfrak D,\omega}) := \tau_\omega + t_{\mathfrak D}(\theta_{-\tau_\omega} \omega) ,  \\
 & L_3(\mathfrak D,\omega):=  L_2 \left (\mathfrak B,    \omega \right )   L_1 \left(\mathfrak D, \theta_{-\tau_\omega}\omega  \right) ,
\end{align*}
This completes the proof of \eqref{mar19.4}.

By the definition \eqref{mar19.5}  of $ t_{\mathfrak D}(\theta_{-\tau_\omega} \omega)$, we can deduce that, for any $\textbf{v}_0\in \mathfrak D(\theta_{-T_1}\omega) $,
 \begin{align*}
 y:= \textbf{v} \left(    t_{\mathfrak D}(\theta_{-\tau_\omega} \omega)   , \, \theta_{- t_{\mathfrak D}(\theta_{-\tau_\omega} \omega)} \circ \theta_{-\tau_\omega}
 \omega, \,  \textbf{v}(0)\right) \in \mathfrak B(\theta_{-\tau_\omega}\omega).
\end{align*}
Moreover, from the definition \eqref{mar18.2}  of $\tau_\omega$ we can deduce that
\begin{align*}
\textbf{ v}(\tau_\omega, \theta_{-\tau_\omega} \omega, y) \in \mathfrak B(\omega).
\end{align*}
Hence, we also have
\begin{align*}
    \textbf{v}\left (T_1 ,\theta_{-T_1}\omega,  \textbf{v}_{0}\right)
    & =  \textbf{v} \left( \tau_\omega+   t_{\mathfrak D}(\theta_{-\tau_\omega} \omega)   , \, \theta_{- t_{\mathfrak D}(\theta_{-\tau_\omega} \omega)}(\theta_{-\tau_\omega}
 \omega), \,  \textbf{v}(0)\right)\\
 &= \textbf{v}(\tau_\omega, \theta_{-\tau_\omega} \omega, y)
  \in \mathfrak B(\omega)
\end{align*}
 for any $v_0\in \mathfrak D(\theta_{-T_1}\omega) $, as desired. This completes the proof of Lemma \ref{lem:H1}.
 \end{proof}

\subsection{$(H , H^2)$-smoothing}

In this subsection, we will prove the $(H , H^2)$-smoothing property. For ease of analysis we  restrict again ourselves on the absorbing set $\mathfrak B $ first. We  first introduce  the following main estimate.

\begin{lemma}  Let Assumption \ref{assum} hold and $f\in H$. We say that
the solutions $\textbf{v}$ of the random anisotropic  NS equations \eqref{2.2} driven by colored noise corresponding to initial values  $\textbf{v}(0)$  satisfy the following estimate
\begin{align}
\sup_{s\in [t-1,t]}
 \|A\textbf{v}(s,\theta_{-t}\omega,  \textbf{v}(0)) \|^2
  \leq     2\rho(\omega) +2\|\mathcal A_0\|_{H^2}^2,
 \quad  t\geq T_{\mathfrak B} (\omega)+ 1,\label{mar9.4}
\end{align}
whenever  $\textbf{v}(0) \in\mathfrak B (\theta_{-t} \omega)$, where  $T_{\mathfrak B} $ is the random variable defined in   \eqref{timeB}.
\end{lemma}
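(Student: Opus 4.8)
The plan is to reduce \eqref{mar9.4} to Lemma \ref{lemma4.4} by working with the difference $w=\textbf{v}-\textbf{u}$ between the random solution and a deterministic solution whose initial datum lies in the global attractor $\mathcal A_0$. First I would fix an arbitrary $\textbf{u}_0\in\mathcal A_0$ and denote by $\textbf{u}(\cdot)=\textbf{u}(\cdot,\textbf{u}_0)$ the corresponding solution of the deterministic anisotropic NS equations \eqref{2.1}. Since $\mathcal A_0$ is invariant under the deterministic semigroup $S$ and (for $f\in H$) bounded in $H^2$ as recalled in Section \ref{sec3}, one has $\textbf{u}(s)\in\mathcal A_0$ and hence $\|A\textbf{u}(s,\textbf{u}_0)\|^2\leq\|\mathcal A_0\|_{H^2}^2$ for every $s\geq0$. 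Writing $\textbf{v}=w+\textbf{u}$ and using $\|A\textbf{v}\|^2\leq 2\|Aw\|^2+2\|A\textbf{u}\|^2$, it then suffices to show that $\|Aw(s,\theta_{-t}\omega,w(0))\|^2\leq\rho(\omega)$ for all $s\in[t-1,t]$ and all $t\geq T_{\mathfrak B}(\omega)+1$, whenever $\textbf{v}(0)\in\mathfrak B(\theta_{-t}\omega)$.

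The key step is a reindexing of the time/noise parameters that makes Lemma \ref{lemma4.4} directly applicable. For $s\in[t-1,t]$ write $s=t-\varepsilon$ with $\varepsilon\in[0,1]$; then $s+\varepsilon=t$, so trivially $\theta_{-t}\omega=\theta_{-s-\varepsilon}\omega$ and $\mathfrak B(\theta_{-t}\omega)=\mathfrak B(\theta_{-s-\varepsilon}\omega)$, whence
\[
  w(s,\theta_{-t}\omega,w(0))=\textbf{v}(s,\theta_{-s-\varepsilon}\omega,\textbf{v}(0))-\textbf{u}(s,\textbf{u}_0).
\]
Because $t\geq T_{\mathfrak B}(\omega)+1$ forces $s\geq t-1\geq T_{\mathfrak B}(\omega)$, and because the hypothesis $\textbf{v}(0)\in\mathfrak B(\theta_{-t}\omega)=\mathfrak B(\theta_{-s-\varepsilon}\omega)$ is precisely the admissibility condition on the initial datum, Lemma \ref{lemma4.4} applied with its running time equal to $s$ gives
\[
  \sup_{\varepsilon\in[0,1]}\big\|A\textbf{v}(s,\theta_{-s-\varepsilon}\omega,\textbf{v}(0))-A\textbf{u}(s,\textbf{u}_0)\big\|^2\leq\rho(\omega),
\]
that is, $\|Aw(s,\theta_{-t}\omega,w(0))\|^2\leq\rho(\omega)$ uniformly over $s\in[t-1,t]$.

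Combining the two bounds, for every $s\in[t-1,t]$ and $t\geq T_{\mathfrak B}(\omega)+1$,
\[
  \|A\textbf{v}(s,\theta_{-t}\omega,\textbf{v}(0))\|^2\leq 2\|Aw(s,\theta_{-t}\omega,w(0))\|^2+2\|A\textbf{u}(s,\textbf{u}_0)\|^2\leq 2\rho(\omega)+2\|\mathcal A_0\|_{H^2}^2,
\]
and taking the supremum over $s\in[t-1,t]$ yields \eqref{mar9.4}. The only delicate point — and the step I expect to be the main (if modest) obstacle — is the bookkeeping of the random-time shifts: one must check that the substitution $s=t-\varepsilon$ converts the pullback estimate ``at time $t$'' into a pullback estimate ``at time $s$'' with noise sample $\theta_{-s-\varepsilon}\omega$, so that the time condition $s\geq T_{\mathfrak B}(\omega)$ and the membership $\textbf{v}(0)\in\mathfrak B(\theta_{-s-\varepsilon}\omega)$ hold simultaneously. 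Once this alignment is secured, the conclusion follows at once from Lemma \ref{lemma4.4} and the $H^2$-boundedness of $\mathcal A_0$.
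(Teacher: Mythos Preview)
Your proposal is correct and follows essentially the same approach as the paper: the paper's proof also rewrites $\theta_{-t}\omega=\theta_{-s-(t-s)}\omega$ with $\varepsilon=t-s\in[0,1]$ and invokes Lemma \ref{lemma4.4} together with the $H^2$-boundedness of $\mathcal A_0$ to obtain the bound $2\rho(\omega)+2\|\mathcal A_0\|_{H^2}^2$. Your write-up is simply more explicit about the triangle-inequality decomposition $\|A\textbf v\|^2\le 2\|Aw\|^2+2\|A\textbf u\|^2$ that the paper leaves implicit.
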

\begin{proof}
For any $ t\geq T_{\mathfrak B} (\omega)+1$ and $s\in (t-1,t)$ so that $s\geq T_{\mathfrak B} (\omega)$, by
 Lemma \ref{lemma4.4} we can get that
\begin{align*}
 \|A\textbf{v}(s,\theta_{-t}\omega,  \textbf{v}(0)) \|^2    =
 \|A\textbf{v}(s,\theta_{-s-(t-s)}\omega,  \textbf{v}(0)) \|^2
 \leq     2\rho(\omega) +2\|\mathcal A_0\|_{H^2}^2
\end{align*}
uniformly for $s\in [t-1,t]$ and $v(0) \in\mathfrak B (\theta_{-t} \omega)$.
\end{proof}

\begin{lemma}[$(H^1, H^2)$-smoothing on $\mathfrak B$]\label{lemma5.1}  Let Assumption \ref{assum} hold and $f\in H$. There  exist  random variables $T_\omega$ and $ L_4(\mathfrak B, \omega )$ such that   any two solutions $\textbf{v}_1$ and $\textbf{v}_2$ of the random anisotropic  NS equations \eqref{2.2} driven by colored noise corresponding to initial values  $\textbf{v}_{1,0},$ $\textbf{v}_{2,0}$ in $\mathfrak{B}(\theta_{-T_\omega}\omega)$, respectively,   satisfy
\begin{align*}
 \big \|\textbf{v}_1(T_\omega,\theta_{-T_\omega}\omega,\textbf{v}_{1,0})-\textbf{v}_2(T_\omega,\theta_{-T_\omega}\omega,\textbf{v}_{2,0}) \big \|{^2_{H^2}}\leq  L_4(\mathfrak B, \omega)\|\textbf{v}_{1,0}-\textbf{v}_{2,0}\|^2_{H^1}, \quad \omega\in \Omega.
\end{align*}
\end{lemma}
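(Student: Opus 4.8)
The plan is to prove the $(H^1,H^2)$-smoothing on the absorbing set $\mathfrak B$ by energy estimates on the difference $\bar v = \textbf{v}_1-\textbf{v}_2$, which satisfies the linear-in-$\bar v$ equation \eqref{5.1}. First I would test \eqref{5.1} with $A^2\bar v$ in $H$ and integrate by parts; as in the computations \eqref{3.10}--\eqref{3.12} for the anisotropic viscous term, the viscous contribution is bounded below by $\tfrac12\nu\|A^{3/2}\bar v\|^2$. The nonlinear terms split as $-(B(\bar v,\textbf{v}_1+hy_\delta),A^2\bar v)-(B(\textbf{v}_2+hy_\delta,\bar v),A^2\bar v)$; using H\"older, the Gagliardo--Nirenberg inequality, Poincar\'e's inequality and Young's inequality — exactly the pattern of Lemma \ref{lem:H2bd} — these are absorbed into $\tfrac\nu4\|A^{3/2}\bar v\|^2$ at the cost of terms of the form $C(1+\|A\textbf{v}_1\|^4+\|A\textbf{v}_2\|^4+|y_\delta(\theta_t\omega)|^4)\|A\bar v\|^2$ plus lower-order pieces controlled by $\|A^{1/2}\bar v\|^2$. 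The resulting differential inequality has the shape
\begin{align*}
\frac{\d}{\d t}\|A\bar v\|^2 + \frac\nu2\|A^{3/2}\bar v\|^2
\leq C\left(1+\|A\textbf{v}_1\|^4+\|A\textbf{v}_2\|^4+|y_\delta(\theta_t\omega)|^4\right)\left(\|A\bar v\|^2+\|A^{1/2}\bar v\|^2\right).
\end{align*}

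Next, since this inequality contains $\|A\bar v\|^2$ on the left but $\|A^{1/2}\bar v\|^2$ on the right, I would first run Gronwall on an intermediate time window. The idea, as in the proof of Lemma \ref{lemma5.2}, is: apply Gronwall on $(s,t)$ with $s\in(t-1,t-\tfrac12)$, then integrate in $s$ over $(t-1,t-\tfrac12)$ to convert the pointwise $\|A\bar v(s)\|^2$ into a time-average $\int_{t-1}^{t-1/2}\|A\bar v(s)\|^2\,\d s$. That time-average is in turn bounded by the $(H,H^1)$-smoothing estimate \eqref{mar9.2}--\eqref{mar19.1}: integrating \eqref{mar9.2} gives $\int_{t-1}^{t}\|A\bar v(s)\|^2\,\d s$ controlled by $\exp(\cdots)\|\bar v(0)\|_{H^1}^2$ after replacing $\omega$ by $\theta_{-T_\omega}\omega$. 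To control the exponential integrating factor $\int C(1+\|A\textbf{v}_1\|^4+\|A\textbf{v}_2\|^4+|y_\delta|^4)\,\d s$, I would invoke the uniform $H^2$ bound \eqref{mar9.4}: on the window $[T_\omega-1,T_\omega]$, whenever the initial data lie in $\mathfrak B(\theta_{-T_\omega}\omega)$, both $\|A\textbf{v}_i(s,\theta_{-T_\omega}\omega,\textbf{v}_{i,0})\|^2\leq 2\rho(\omega)+2\|\mathcal A_0\|_{H^2}^2$, so the integral of the fourth powers over a unit interval is bounded by $C(\rho(\omega)^2+\|\mathcal A_0\|_{H^2}^4+\int_{-1}^0|y_\delta(\theta_s\omega)|^4\,\d s)$, a tempered random variable. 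The time $T_\omega$ must be chosen $\geq T_{\mathfrak B}(\omega)+1$ (so that \eqref{mar9.4} applies on the whole window) and also large enough that $\mathfrak B$ pullback-absorbs itself on $[0,T_\omega]$, as in \eqref{mar18.2}.

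Assembling: replace $\omega$ by $\theta_{-T_\omega}\omega$, evaluate at $t=T_\omega$, and bound the integrating factor by a tempered random variable $L_4(\mathfrak B,\omega)$ of the form $C\exp(C(\rho(\omega)^2+\|\mathcal A_0\|_{H^2}^4+\int_{-1}^0|y_\delta(\theta_s\omega)|^8\,\d s))$ times the constant from the $(H,H^1)$-smoothing; the $\|\bar v(0)\|^2$ appearing from \eqref{mar9.1} gets upgraded to $\|\bar v(0)\|_{H^1}^2$ via the $H^1$ estimate \eqref{mar19.1}. This yields
\begin{align*}
\|A\bar v(T_\omega,\theta_{-T_\omega}\omega,\bar v(0))\|^2 \leq L_4(\mathfrak B,\omega)\|\bar v(0)\|_{H^1}^2,
\end{align*}
and combining with the $H^1$ and $H$ bounds on $\bar v$ gives the full $H^2$-norm estimate. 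The main obstacle is keeping all the random times coherent: the window $[T_\omega-1,T_\omega]$ on which \eqref{mar9.4} is valid must sit inside the range where $\textbf{v}_{i,0}\in\mathfrak B(\theta_{-T_\omega}\omega)$ forces $\textbf{v}_i$ to be in the $H^2$-absorbing neighborhood, and simultaneously $T_\omega$ must exceed the self-absorption time of $\mathfrak B$; one must verify these constraints are compatible (they are, since all the relevant times are $\geq T_{\mathfrak B}(\omega)$ and can be enlarged freely). The purely analytic estimates — the Gagliardo--Nirenberg bookkeeping for the trilinear terms against $A^2\bar v$ — are routine and parallel to Lemma \ref{lem:H2bd}, so I would not belabor them.
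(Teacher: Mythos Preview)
Your proposal is correct and follows essentially the same route as the paper: test \eqref{5.1} with $A^2\bar v$, use the anisotropic dissipation bound, Gronwall on a unit window, integrate in $s$, and feed in the $H^1$-level Gronwall estimate from \eqref{mar9.2} together with the $H^2$ bound \eqref{mar9.4} and the $\int_0^{T_\omega}\|A^{1/2}\textbf v_i\|^4$ control \eqref{mar11.3}. The only cosmetic difference is that the paper's trilinear estimates \eqref{5.6}--\eqref{5.7} keep the Gronwall coefficient as $\sum_j\|A^{1/2}\textbf v_j\|^4$ and isolate $\|A\textbf v_1\|^4$ only in the source term multiplying $\|A^{1/2}\bar v\|^2$, whereas you lump both into $\|A\textbf v_j\|^4$; since your Gronwall runs only on $[t-1,t]$ where \eqref{mar9.4} applies, this coarser bookkeeping causes no trouble.
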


\begin{proof} Taking the inner product of the system   \eqref{5.1} with $A^2\bar{v}$ in $H$,  by integration by parts, we can deduce that
\begin{align}\label{5.5}
&\frac{1}{2}\frac{\d}{\d t}\|A\bar{v}\|^2-\nu \int_{\mathbb{T}^2}\partial_{yy}\Delta \bar{v}_1\Delta\bar{v}_1dx-\nu \int_{\mathbb{T}^2}\partial_{xx}\Delta \bar{v}_2\Delta \bar{v}_2dx\nonumber\\
&=- \big (B(\bar{v},\textbf{v}_1+hy_\delta(\theta_t\omega)),\, A^2\bar{v}  \big )
- \big (B(\textbf{v}_2+hy_\delta(\theta_t\omega),\bar{v}),\, A^2\bar{v} \big ).
\end{align}
By direct computations and $\nabla\cdot \bar{v}=0$ and \eqref{3.9}, we also have
\begin{align}
&-\nu \int_{\mathbb{T}^2}\partial_{yy}\Delta \bar{v}_1\Delta\bar{v}_1dx-\nu \int_{\mathbb{T}^2}\partial_{xx}\Delta \bar{v}_2\Delta \bar{v}_2dx\nonumber\\
&= \nu \int_{\mathbb{T}^2}(\partial_{y}\Delta \bar{v}_1)^2dx
+\nu \int_{\mathbb{T}^2}(\partial_{x}\Delta \bar{v}_2)^2dx
\nonumber\\
&\geq\frac12\nu\|A^{\frac32} \bar{v}\|^2.
\end{align}
By using the H\"{o}lder inequality, the Gagliardo-Nirenberg inequality and the Young inequality, we can get that
\begin{align}\label{5.6}
 \big| \big(B(\bar{v},\textbf{v}_1+hy_\delta(\theta_t\omega)),\, A^2\bar{v}\big)
 \big| &\leq \|A^\frac12\bar{v}\|_{L^\infty}\|\nabla(\textbf{v}_1+hy_\delta(\theta_t\omega))\|\|A^\frac32\bar{v}\|  \nonumber\\
&\quad
 +\|\bar{v}\|_{L^\infty}\|A(\textbf{v}_1+hy_\delta(\theta_t\omega))\|\|A^\frac32\bar{v}\|
 \nonumber\\
&\leq C\|A\bar{v}\|^\frac12\|A^\frac32\bar{v}\|^\frac32
\left (\|A^\frac12\textbf{v}_1\|+\|A^\frac12h\||y_\delta(\theta_t\omega)| \right )\nonumber\\
&\quad +C\|\bar{v}\|^\frac12\|A\bar{v}\|^\frac12\|A^\frac32\bar{v}\| \big(\|A\textbf{v}_1\|+\|Ah\||y_\delta(\theta_t\omega)| \big)\nonumber\\
&\leq \frac  \nu 8\|A^\frac32\bar{v}\|^2+C\|A\bar{v}\|^2 \left (\|A^\frac12\textbf{v}_1\|^4+|y_\delta(\theta_t\omega)|^4 \right)\nonumber\\
&\quad +C\|A^\frac12\bar{v}\|^2 \left (\|A\textbf{v}_1\|^4+|y_\delta(\theta_t\omega)|^4 \right),
\end{align}
and 
\begin{align}\label{5.7}
\big| \big(B(\textbf{v}_2+hy_\delta(\theta_t\omega),\bar{v}),\, A^2\bar{v}\big)\big|
&\leq \|A^\frac12(\textbf{v}_2+hy_\delta(\theta_t\omega))\|\|\nabla\bar{v}\|_{L^\infty}\|A^\frac32\bar{v}\| \nonumber\\
&\quad
 +\|\textbf{v}_2+hy_\delta(\theta_t\omega)\|_{L^4}\|A\bar{v}\|_{L^4}\|A^\frac32\bar{v}\|\nonumber\\
&\leq C\|A\bar{v}\|^\frac12\|A^\frac32\bar{v}\|^\frac32
\left (\|A^\frac12\textbf{v}_2\|+\|A^\frac12h\||y_\delta(\theta_t\omega)| \right)\nonumber\\
&\quad +C \|A^\frac12(\textbf{v}_2+hy_\delta(\theta_t\omega)
)\|\|A\bar{v}\|^\frac12\|A^\frac32\bar{v}\|^\frac32 \nonumber\\
&\leq \frac  \nu 8\|A^\frac32\bar{v}\|^2+C\|A\bar{v}\|^2 \left (\|A^\frac12\textbf{v}_2\|^4+|y_\delta(\theta_t\omega)|^4 \right).
\end{align}
Inserting \eqref{5.6} and \eqref{5.7} into \eqref{5.5} yields
\begin{align}\label{7.1}
&\frac{\d}{\d t}\|A\bar{v}\|^2+\frac12\nu\|A^{\frac32} \bar{v}\|^2\nonumber\\
&\leq C\|A\bar{v}\|^2 \left ( \sum_{j=1}^2 \|A^\frac12\textbf{v}_j\|^4 +|y_\delta(\theta_t\omega)|^4 \right )   +C\|A^\frac12\bar{v}\|^2 \left (\|A\textbf{v}_1\|^4+|y_\delta(\theta_t\omega)|^4 \right).
\end{align}
For any $s\in(t-1,t)$ and $t\geq 1$, applying Gronwall's lemma, then  we can get that
\begin{align*}
&\|A\bar{v}(t)\|^2
-e^{\int_s^tC( \sum_{i=1}^2 \|A^\frac12\textbf{v}_i\|^4 +|y_\delta(\theta_\tau\omega)|^4)\, \d \tau}\|A\bar{v}(s)\|^2\nonumber\\
&\quad \leq \int_s^tCe^{\int_\eta^tC( \sum_{i=1}^2 \|A^\frac12\textbf{v}_i\|^4+|y_\delta(\theta_\tau\omega)|^4)\, \d \tau}
\|A^\frac12\bar{v}\|^2 \left(\|A\textbf{v}_1\|^4+|y_\delta(\theta_\eta\omega)|^4\right) \d \eta\nonumber\\
&\quad \leq Ce^{\int_{t-1}^tC( \sum_{i=1}^2 \|A^\frac12\textbf{v}_i\|^4+|y_\delta(\theta_\tau\omega)|^4)\, \d \tau}
\int_{t-1}^t\|A^\frac12\bar{v}\|^2\left(\|A\textbf{v}_1\|^4+|y_\delta(\theta_\eta\omega)|^4 \right) \d \eta. \nonumber
\end{align*}
Integrating w.r.t. $s$ on $(t-1,t)$ yields
\begin{align}\label{5.9}
&\|A\bar{v}(t)\|^2
-\int_{t-1}^te^{\int_s^tC( \sum_{i=1}^2 \|A^\frac12\textbf{v}_i\|^4+|y_\delta(\theta_\tau\omega)|^4)\, \d \tau}\|A\bar{v}(s)\|^2\, \d s\nonumber\\
&\quad \leq Ce^{\int_{t-1}^tC( \sum_{i=1}^2 \|A^\frac12\textbf{v}_i\|^4+|y_\delta(\theta_\tau\omega)|^4)\, \d \tau}
\int_{t-1}^t\|A^\frac12\bar{v}\|^2\left (\|A\textbf{v}_1\|^4+|y_\delta(\theta_\eta\omega)|^4\right) \d \eta.
\end{align}

Since our initial values $\textbf{v}_{1,0}$ and $\textbf{v}_{2,0}$   belong to the $H^1$ random  absorbing set  $\mathfrak B$,  applying Gronwall's lemma to \eqref{mar9.2}, then we can deduce    that
\begin{align}
 & \|A^\frac12\bar{v}(t)\|^2 + \frac{\nu}{2} \int_0^te^{\int_s^tC( \sum_{i=1}^2 \|A^\frac12\textbf{v}_i\|^4+|y_\delta(\theta_\tau\omega)|^4)\, \d \tau}\|A\bar{v}(s) \|^2\, \d s
\nonumber\\
&\quad \leq e^{\int_0^tC( \sum_{i=1}^2 \|A^\frac12\textbf{v}_i\|^4+|y_\delta(\theta_\tau\omega)|^4)\, \d \tau}\|A^\frac12\bar{v}(0)\|^2,\quad t\geq 1. \nonumber
\end{align}
Hence, we can get the  following inequalities
\begin{align}
& \frac{\nu}{2} \int_{t-1}^te^{\int_s^tC( \sum_{i=1}^2 \|A^\frac12\textbf{v}_i\|^4+|y_\delta(\theta_\tau\omega)|^4)\, \d \tau}\|A\bar{v}(s)\|^2\, \d s\nonumber\\
&\quad \leq e^{\int_{0}^tC( \sum_{i=1}^2 \|A^\frac12\textbf{v}_i\|^4+|y_\delta(\theta_\tau \omega)|^4)\, \d \tau}\|A^\frac12\bar{v}(0)\|^2, \nonumber
\end{align}
and
\begin{align}
&
\int_{t-1}^t\|A^\frac12\bar{v}\|^2 \left (\|A\textbf{v}_1\|^4+|y_\delta(\theta_\eta\omega)|^4\right) \d \eta\nonumber\\
&\quad \leq e^{\int_0^tC( \sum_{i=1}^2 \|A^\frac12\textbf{v}_i\|^4+|y_\delta(\theta_\tau\omega)|^4)\, \d \tau}\|A^\frac12\bar{v}(0)\|^2
\int_{t-1}^t \!  \left (\|A\textbf{v}_1\|^4+|y_\delta(\theta_\eta\omega)|^4\right) \d \eta.\nonumber
\end{align}
Inserting these inequalities  into \eqref{5.9}, we also can deduce that
\begin{align}
 \|A\bar{v}(t)\|^2
&\leq Ce^{\int_0^tC( \sum_{i=1}^2 \|A^\frac12\textbf{v}_i\|^4+|y_\delta(\theta_\tau\omega)|^4)\, \d \tau}\|A^\frac12\bar{v}(0)\|^2
\int_{t-1}^t  \! \left (1+\|A\textbf{v}_1\|^4+|y_\delta(\theta_\eta\omega)|^4\right)  \d \eta   \nonumber
\end{align}
for any $t\geq 1$.
We replace $\omega$ by $\theta_{-t}\omega$ to  get  for any $ t\geq 1$
\begin{align} \label{mar11.1}
 \|A\bar{v}(t,\theta_{-t}\omega,\bar{v}(0))\|^2
&\leq Ce^{\int_0^tC( \sum_{i=1}^2\|A^\frac12\textbf{v}_i(\tau,\theta_{-t}\omega,\textbf{v}_i(0))\|^4
+|y_\delta(\theta_{\tau-t}\omega)|^4)\, \d \tau}\|A^\frac12\bar{v}(0)\|^2\nonumber\\
&\quad  \times \int_{t-1}^t  \! \left ( \|A\textbf{v}_1(\eta,\theta_{-t}\omega,\textbf{v}_{1,0})\|^4+|y_\delta(\theta_{\eta-t}\omega)|^4+1\right) \d \eta.
\end{align}

By \eqref{mar9.4}, we can deduce that
\begin{align*}
\sup_{s\in [t-1,t]}
 \|A\textbf{v}(s,\theta_{-t}\omega,  \textbf{v}(0)) \|^2
  \leq     2\rho(\omega) +2\|\mathcal A_0\|_{H^2}^2,
 \quad  t\geq T_{\mathfrak B} (\omega)+1.
\end{align*}
As a consequence,  let
\begin{align*}
T_\omega :=T_{\mathfrak B} (\omega)+1, \quad \omega\in \Omega,
\end{align*}
we can get that
\begin{align}
 & \int_{T_\omega -1}^{T_\omega }  \left ( \|A\textbf{v}_1(\eta,\theta_{-T_\omega }\omega,\textbf{v}_{1,0})\|^4+|y_\delta(\theta_{\eta-T_\omega }\omega)|^4\right)  \d \eta  \nonumber\\
 &\quad
 \leq 4 \left( \rho(\omega) +\|\mathcal A_0\|_{H^2}^2 \right)^2 + \int_{-1}^0 |y_\delta(\theta_{\eta}\omega)|^4 \, \d \eta =: \zeta_8(\omega) . \label{mar11.2}
 \end{align}

On the other hand, by \eqref{4.27}, it yields
\begin{align*}
\frac{\d}{\d t}\|A^{\frac12}\textbf{v}\|^2 \leq C \left ( 1+ |y_\delta(\theta_t\omega)|^2 \right )\|A^{\frac12}\textbf{v}\|^2 + C \left (1 +|y_\delta(\theta_t\omega)|^6 \right) .
\end{align*}
we apply Gronwall's lemma to deduce
\begin{align*}
 \|A^{\frac 12} \textbf{v}(t,\omega, \textbf{v}(0))\|^2
  & \leq  e^{ \int^t_0 C(1+|y_\delta(\theta_\tau\omega)|^2) \, \d \tau} \|A^{\frac 12} \textbf{v}(0)\|^2  \\
 &\quad
 +\int_0^t Ce^{ \int^t_s C(1+|y_\delta(\theta_\tau\omega)|^2) \, \d \tau} \left (1+ |y_\delta(\theta_s\omega)|^6 \right)  \d s \\
 &\leq  e^{ \int^t_0 C(1+|y_\delta(\theta_\tau\omega)|^2) \, \d \tau} \left[ \|A^{\frac 12} \textbf{v}(0)\|^2
 +\int_0^t  C \left(1+ |y_\delta(\theta_s\omega)|^6 \right)  \d s \right]  .
\end{align*}
Since
\begin{align*}
\int_0^t  C \left (1+ |y_\delta(\theta_s\omega)|^6 \right)  \d s \leq e^{ \int^t_0 C(1+|y_\delta(\theta_\tau\omega)|^6) \, \d \tau},
\end{align*}
then the estimate is given by
\begin{align*}
 \|A^{\frac 12} \textbf{v}(t,\omega, \textbf{v}(0))\|^2
   \leq  \left( \|A^{\frac 12} \textbf{v}(0)\|^2 +1 \right) e^{ \int^t_0 C(1+|y_\delta(\theta_\tau\omega)|^6) \, \d \tau} ,
   \quad t>0.
\end{align*}
Hence,  for   $\textbf{v}(0) \in \mathfrak B(\theta_{-t}\omega)$ we can deduce that
 \begin{align*}
 \int_0^t
 \|A^{\frac 12} \textbf{v}(\eta, \theta_{-t} \omega, \textbf{v}(0))\|^4 \, \d \eta
 &   \leq  \left( \| \mathfrak B  (\theta_{-t}\omega)\|^2_{H^1} +1 \right)^2 \int_0^t e^{ \int^\eta_0 C(1+|y_\delta(\theta_{\tau-t} \omega)|^6) \, \d \tau}  \, \d \eta \nonumber \\
 &   \leq  \big( \zeta_2(\theta_{-t}\omega)  +C \big)^2  e^{ Ct+C \int^0_{-t}  |y_\delta(\theta_{\tau} \omega)|^6 \, \d \tau}   ,
   \quad t>0  , \nonumber
\end{align*}
and, particularly for $t=T_\omega$,
 \begin{align}  \label{mar11.3}
 & \int_0^{T_\omega}
 \|A^{\frac 12} \textbf{v}(\eta, \theta_{-{T_\omega}} \omega, \textbf{v}(0))\|^4 \, \d \eta \nonumber \\
 &   \quad  \leq  \big( \zeta_2(\theta_{-{T_\omega}}\omega)  +C \big)^2  e^{ C {T_\omega}+C \int^0_{-{T_\omega}}  |y_\delta(\theta_{\tau} \omega)|^6 \, \d \tau} =:\zeta_9(\omega).
\end{align}

Finally, inserting     \eqref{mar11.2}  and \eqref{mar11.3}  into \eqref{mar11.1}  we can get at $t=T_\omega$ that
\begin{align}
 \|A\bar{v}(T_\omega ,\theta_{-T_\omega }\omega,\bar{v}(0))\|^2
&\leq Ce^{ C \zeta_9(\omega) } \zeta_8(\omega) \|A^\frac12\bar{v}(0)\|^2. \nonumber
\end{align}
Hence, let
\begin{align*}
 L_4 (\mathfrak B, \omega) :=   Ce^{ C \zeta_9(\omega) } \zeta_8(\omega) ,\quad \omega\in \Omega. \qedhere
\end{align*}
This completes the proof of Lemma \ref{lemma5.1}.
\end{proof}

In this section, we next are ready to introduce the following main result.
\begin{theorem}[$(H, H^2)$-smoothing] \label{theorem5.6}
 Let Assumption \ref{assum} hold and $f\in H$. For   any tempered set $\mathfrak D \in \D_H$, there
  are  random variables $T_{{\mathfrak D}} (\cdot)  $   and $ L_{\mathfrak D}(\cdot )$ such that  any two solutions $\textbf{v}_1$ and $\textbf{v}_2$ of the random anisotropic  NS equations \eqref{2.2} driven by colored noise corresponding to initial values   $\textbf{v}_{1,0},$ $ \textbf{v}_{2,0}$ in $\mathfrak D \left (\theta_{-T_{\mathfrak D}(\omega)}\omega \right)$, respectively, satisfy
  \begin{align}
  &
  \big\|\textbf{v}_1 \!  \big (T_{\mathfrak D}(\omega),\theta_{-T_{\mathfrak D}(\omega)}\omega,\textbf{v}_{1,0}\big)
  -\textbf{v}_2 \big (T_{\mathfrak D}(\omega),\theta_{-T_{\mathfrak D}(\omega)}\omega,\textbf{v}_{2,0} \big) \big \|{^2_{H^2}}  \notag \\[0.8ex]
&\quad \leq  L_{\mathfrak D} (\omega)\|\textbf{v}_{1,0}-\textbf{v}_{2,0}\|^2,\quad \omega\in \Omega.\label{sep5.1}
\end{align}
\end{theorem}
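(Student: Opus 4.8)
The plan is to chain together the local Lipschitz continuity in $H$ (Lemma \ref{lemma5.0}), the $(H,H^1)$-smoothing from an arbitrary tempered set (Lemma \ref{lem:H1}), and the $(H^1,H^2)$-smoothing on the absorbing set $\mathfrak B$ (Lemma \ref{lemma5.1}), composing them via the cocycle property. The key observation, already exploited in the proof of Lemma \ref{lem:H1}, is that the composition $\textbf{v}(t_2,\theta_{-t_2}\omega,\textbf{v}(t_1,\theta_{-t_1-t_2}\omega,\cdot))=\textbf{v}(t_1+t_2,\theta_{-t_1-t_2}\omega,\cdot)$ allows us to split a long-time evolution into an initial stretch that carries $\mathfrak D(\theta_{-t}\omega)$ into $\mathfrak B$ and improves $H$-distance to $H^1$-distance, followed by a second stretch that improves $H^1$-distance to $H^2$-distance.

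Concretely, first I would apply Lemma \ref{lem:H1}: there are random variables $T_1=T_1(\mathfrak D,\omega)$ and $L_3(\mathfrak D,\omega)$ so that the time-$T_1$ map pushes $\mathfrak D(\theta_{-T_1}\omega)$ into $\mathfrak B(\omega)$ and
$\|\bar v(T_1,\theta_{-T_1}\omega,\bar v(0))\|_{H^1}^2\le L_3(\mathfrak D,\omega)\|\bar v(0)\|^2$.
Then I would apply Lemma \ref{lemma5.1} to the two trajectories restarted from their time-$T_1$ positions, which lie in $\mathfrak B$: with $T_\omega=T_{\mathfrak B}(\omega)+1$ we get
$\|\bar v(T_\omega,\theta_{-T_\omega}\omega,\cdot)\|_{H^2}^2\le L_4(\mathfrak B,\omega)\|\cdot\|_{H^1}^2$.
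To compose these cleanly I would evaluate the $(H^1,H^2)$ step at the shifted noise $\theta_{-T_1}\omega$, i.e.\ use $T_{T_1\omega}:=T_{\mathfrak B}(\theta_{-T_1}\omega)+1$ and $L_4(\mathfrak B,\theta_{-T_1}\omega)$, and set
\begin{align*}
T_{\mathfrak D}(\omega)&:=T_1(\mathfrak D,\theta_{-T_{T_1\omega}}\omega)+T_{T_1\omega},\\
L_{\mathfrak D}(\omega)&:=L_4(\mathfrak B,\omega)\,L_3\big(\mathfrak D,\theta_{-T_{T_1\omega}}\omega\big),
\end{align*}
so that, abbreviating $s:=T_{T_1\omega}$ and $r:=T_1(\mathfrak D,\theta_{-s}\omega)$, the cocycle identity gives
\begin{align*}
&\big\|\bar v\big(r+s,\theta_{-r-s}\omega,\bar v(0)\big)\big\|_{H^2}^2
=\big\|\bar v\big(s,\theta_{-s}\omega,\bar v(r,\theta_{-r-s}\omega,\bar v(0))\big)\big\|_{H^2}^2\\
&\quad\le L_4(\mathfrak B,\omega)\,\big\|\bar v(r,\theta_{-r-s}\omega,\bar v(0))\big\|_{H^1}^2
\le L_4(\mathfrak B,\omega)\,L_3\big(\mathfrak D,\theta_{-s}\omega\big)\,\|\bar v(0)\|^2,
\end{align*}
where the second inequality needs that $\bar v(r,\theta_{-r-s}\omega,\bar v(0))$ has both components in $\mathfrak B(\theta_{-s}\omega)$, which is exactly the last assertion of Lemma \ref{lem:H1}.

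The main obstacle is bookkeeping rather than analysis: one must be careful that each invoked lemma is applied with the correct $\omega$-argument (shifted appropriately) so that the absorption and smoothing estimates line up, and that all the resulting random variables $T_{\mathfrak D}(\cdot)$ and $L_{\mathfrak D}(\cdot)$ are genuinely well-defined (measurable, finite a.s.) — this follows since they are built from finitely many compositions of the measurable, a.s.\ finite random variables produced in Lemmas \ref{lemma5.0}, \ref{lem:H1} and \ref{lemma5.1}. I would also note that the embedding $H^1\hookrightarrow H$ lets us bound $\|\bar v(0)\|\le C\|\bar v(0)\|_{H^1}$ only when we need the $H^1$ norm on the right, but here the statement \eqref{sep5.1} wants $\|\textbf{v}_{1,0}-\textbf{v}_{2,0}\|^2$ on the right, so no such trade is needed — the chain above already delivers the plain $H$-norm of the initial difference. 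This completes the proof of Theorem \ref{theorem5.6}.
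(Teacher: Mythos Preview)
Your approach is exactly the paper's: compose the $(H,H^1)$-smoothing of Lemma~\ref{lem:H1} with the $(H^1,H^2)$-smoothing of Lemma~\ref{lemma5.1} via the cocycle property, using that Lemma~\ref{lem:H1} also lands the trajectories in $\mathfrak B$. The only issue is a bookkeeping slip in which step receives the shift: you announce that you will apply Lemma~\ref{lemma5.1} at $\theta_{-T_1}\omega$ and define $T_{T_1\omega}:=T_{\mathfrak B}(\theta_{-T_1}\omega)+1$, but your displayed chain uses $L_4(\mathfrak B,\omega)$ and $\|\bar v(s,\theta_{-s}\omega,\cdot)\|_{H^2}^2\le L_4(\mathfrak B,\omega)\|\cdot\|_{H^1}^2$, which is Lemma~\ref{lemma5.1} at the \emph{unshifted} $\omega$ and forces $s=T_\omega$, not $s=T_{T_1\omega}$. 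Moreover, defining $s$ through $T_1$ while simultaneously defining $r=T_1(\mathfrak D,\theta_{-s}\omega)$ makes the construction circular. The clean (and paper's) choice is to keep Lemma~\ref{lemma5.1} at $\omega$ and shift Lemma~\ref{lem:H1}: take $s:=T_\omega$, $r:=T_1(\mathfrak D,\theta_{-T_\omega}\omega)$, $T_{\mathfrak D}(\omega):=T_\omega+T_1(\mathfrak D,\theta_{-T_\omega}\omega)$ and $L_{\mathfrak D}(\omega):=L_4(\mathfrak B,\omega)\,L_3(\mathfrak D,\theta_{-T_\omega}\omega)$; with this fix your computation is correct as written.
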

\begin{proof}
By virtue of Lemma \ref{lem:H1}, for any the tempered set   $\mathfrak D\in \D_H$ there are random variables $ T_1(\mathfrak D ,\cdot )  $ and $L_3(\mathfrak D,\cdot) $ such that
  \begin{align} \label{mar19.6}
  \left \| \bar v\left (T_1 ,\theta_{-T_1}\omega,  \bar v(0)\right)
  \right \|^2_{H^1}
 \leq L_3({\mathfrak D}, \omega) \|\bar {v} (0)\|^2  ,
  \end{align}
where $T_1= T_1(\mathfrak D, \omega)$. Moreover, $ \textbf{v}\left (T_1 ,\theta_{-T_1}\omega,  \textbf{v}_{0}\right) \in \mathfrak B(\omega)$ for any $\textbf{v}_0\in \mathfrak D(\theta_{-T_1}\omega) $, $\omega\in \Omega$.
Hence, by Lemma \ref{lemma5.1} and \eqref{mar19.6}, we can deduce that
 \begin{align*}
  \left \| \bar v \big( T_\omega +T_1, \theta_{-T_\omega-T_1}
 \omega, \bar v(0) \big)
   \right \|^2_{H^2}
 &  =  \left \| \bar v \big( T_\omega  , \theta_{-T_\omega }
 \omega, \bar v  ( T_1, \theta_{-T_\omega-T_1}
 \omega, \bar v(0)  )\big)
   \right \|^2_{H^2}  \\
   &\leq  L_4(\mathfrak B, \omega) \left \|  \bar v \big  ( T_1, \theta_{-T_\omega-T_1}
 \omega, \bar v(0)  \big)
   \right \|^2_{H^1}  \  \text{(by Lemma \ref{lemma5.1})}\\
   &\leq L_4(\mathfrak B,\omega)  L_3 \big (\mathfrak D, \theta_{-T_\omega}\omega \big) \|\bar v(0)\|^2  \  \text{(by \eqref{mar19.6})} ,
 \end{align*}
where $T_1=T_1(\theta_{-T_\omega}  \omega)$,
uniformly for any $\textbf{v}_{1,0},$ $ \textbf{v}_{2,0}\in \B(\theta_{-T_\omega-T_1(\theta_{-T_\omega} \omega)}\omega)$, $  \omega\in \Omega$. Thence,
let  the random variables is given by
 \begin{align*}
  & T_{\mathfrak D}( \omega) := T_\omega +T_1(\theta_{-T_\omega} \omega) , \\
  &L_{\mathfrak D}(\omega) :=  L_4(\mathfrak B,\omega)   L_3 \big (\mathfrak D, \theta_{-T_\omega}\omega \big).
 \end{align*}
This completes the proof of Lemma \ref{theorem5.6}.
  \end{proof}

\section{The $(H,H^2)$-random attractor}\label{sec6}
In this section,  we will prove that the  random attractor  $\mathcal A$ of the random anisotropic  NS equations \eqref{2.2} driven by colored noise is in fact a finite-dimensional  $(H,H^2)$-random attractor. More precisely, Theorem \ref{theorem4.6} is now strengthened to as stated below.

\begin{theorem} \label{theorem5.1}
 Let Assumption \ref{assum} hold and $f\in H$. Then the  RDS $\phi$ generated by the the random anisotropic  NS equations \eqref{2.2} driven by colored noise  has  a tempered $(H,H^2)$-random attractor $\mathcal A $. In addition, $\mathcal A$ has finite fractal dimension in $H^2$: there exists   a constant $ d>0$ such that
\[
d_f^{H^2} \! \big ( \A(\omega) \big)  \leq d, \quad \omega \in \Omega.
\]
\end{theorem}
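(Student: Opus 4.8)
The plan is to combine three ingredients already established in the paper: (i) the existence of the $\mathcal D_H$-random attractor $\mathcal A$ in $H$ (Theorem~\ref{theorem4.6}); (ii) the $H^2$ random absorbing set $\mathfrak B_{H^2}$ constructed in Theorem~\ref{theorem4.1}, together with the fact that $\mathcal A$ is a bounded tempered random set in $H^2$; and (iii) the $(H,H^2)$-smoothing property of Theorem~\ref{theorem5.6}. The abstract machinery that fuses these is the bi-spatial random attractor theory of \cite{cui18jdde}, i.e.\ Lemma~\ref{lem:cui18} applied with $X=H$ and $Y=H^2$, plus the smoothing-implies-finite-dimension argument in the spirit of Langa \& Robinson \cite{langa2}.

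First I would verify the two hypotheses of Lemma~\ref{lem:cui18} for $(X,Y)=(H,H^2)$. Hypothesis (i) is essentially Theorem~\ref{theorem4.1}: $\mathfrak B_{H^2}(\omega)$ is a random $H^2$-neighborhood of $\mathcal A_0$ with tempered radius $\sqrt{\rho(\omega)}$, hence tempered and closed in $H^2$, and by Lemma~\ref{lemma4.4} it pullback-absorbs every tempered set in $H$. For hypothesis (ii), the $(H,H^2)$-asymptotic compactness, I would argue as follows: given $\mathfrak D\in\mathcal D_H$, $t_n\to\infty$, and $x_n\in\mathfrak D(\theta_{-t_n}\omega)$, split each orbit at the intermediate time $T_{\mathfrak D}(\omega)$ from Theorem~\ref{theorem5.6}. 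Writing $\phi(t_n,\theta_{-t_n}\omega,x_n)=\phi(T_{\mathfrak D}(\omega),\theta_{-T_{\mathfrak D}(\omega)}\omega,\,\phi(t_n-T_{\mathfrak D}(\omega),\theta_{-t_n}\omega,x_n))$; the inner points $y_n:=\phi(t_n-T_{\mathfrak D}(\omega),\theta_{-t_n}\omega,x_n)$ lie, for $n$ large, in the $H$-absorbing set and hence (by the already-established $H$-attractor) have a subsequence converging in $H$, so $\{y_n\}$ is in particular $H$-Cauchy. Then the smoothing estimate \eqref{sep5.1} gives
\[
\big\|\phi(T_{\mathfrak D}(\omega),\theta_{-T_{\mathfrak D}(\omega)}\omega,y_n)-\phi(T_{\mathfrak D}(\omega),\theta_{-T_{\mathfrak D}(\omega)}\omega,y_m)\big\|_{H^2}^2\le L_{\mathfrak D}(\omega)\|y_n-y_m\|^2\to 0,
\]
so the images form an $H^2$-Cauchy sequence, hence converge in $H^2$. (One should take care that the $y_n$ can be arranged to lie in a single tempered set so that \eqref{sep5.1} applies uniformly; this follows because $\mathfrak D$ is pullback-absorbed into $\mathfrak B$.) Lemma~\ref{lem:cui18} then yields a unique $(H,H^2)$-random attractor, which must coincide with the $H$-attractor $\mathcal A$ of Theorem~\ref{theorem4.6} by uniqueness of $\omega$-limit sets; in particular $\mathcal A$ is compact in $H^2$ and attracts in the $H^2$-metric, and its required measurability in $H^2$ is part of the conclusion of the bi-spatial theory.

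For the finite fractal dimension in $H^2$, I would run the standard squeezing/covering argument: $\mathcal A(\omega)=\phi(T_{\mathfrak D}(\omega),\theta_{-T_{\mathfrak D}(\omega)}\omega,\mathcal A(\theta_{-T_{\mathfrak D}(\omega)}\omega))$ by invariance, and on $\mathcal A$ (which lies in the tempered $H$-bounded, indeed $H^2$-bounded, absorbing set) the map $x\mapsto\phi(T_{\mathfrak D}(\omega),\theta_{-T_{\mathfrak D}(\omega)}\omega,x)$ is Lipschitz from $H$ into $H^2$ by \eqref{sep5.1}. Since $\mathcal A$ already has finite fractal dimension in $H$ (Theorem~\ref{theorem4.6}), an $H$-ball cover of $\mathcal A(\theta_{-T_{\mathfrak D}(\omega)}\omega)$ of radius $\varepsilon$ by $N(\varepsilon)$ balls is mapped, under this $(H,H^2)$-Lipschitz map with constant $\sqrt{L_{\mathfrak D}(\omega)}$, to a cover of $\mathcal A(\omega)$ by $N(\varepsilon)$ balls of radius $\sqrt{L_{\mathfrak D}(\omega)}\,\varepsilon$ in $H^2$; taking $\limsup_{\varepsilon\to0}$ of $\log N/(-\log\varepsilon)$ gives $d_f^{H^2}(\mathcal A(\omega))\le d_f^{H}(\mathcal A(\theta_{-T_{\mathfrak D}(\omega)}\omega))\le d$, with $d$ the dimension bound from Theorem~\ref{theorem4.6} (the multiplicative constant $\sqrt{L_{\mathfrak D}(\omega)}$ does not affect the $\limsup$).

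The main obstacle I anticipate is purely a matter of bookkeeping rather than of new estimates: making sure the random times line up so that a single smoothing estimate \eqref{sep5.1} with a single random Lipschitz constant $L_{\mathfrak D}(\omega)$ governs the whole tail of the sequence $\{y_n\}$, and that the $\omega$-dependence of $T_{\mathfrak D}$ and $L_{\mathfrak D}$ is handled consistently under the shifts $\theta_{-t}$ (in particular verifying that the chosen intermediate set is genuinely tempered and that $y_n$ eventually lies in $\mathfrak D':=\mathfrak B$ so that Theorem~\ref{theorem5.6} is invoked with $\mathfrak D'$ and not with a moving family of sets). Once this is arranged, everything else is a direct citation of Lemma~\ref{lem:cui18}, Theorem~\ref{theorem4.1}, Theorem~\ref{theorem5.6} and the Langa--Robinson covering lemma.
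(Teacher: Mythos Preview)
Your proposal is correct and follows essentially the same route as the paper: invoke the $H^2$ absorbing set of Theorem~\ref{theorem4.1}, derive $(H,H^2)$-asymptotic compactness from the smoothing estimate \eqref{sep5.1}, apply Lemma~\ref{lem:cui18} to obtain the $(H,H^2)$-random attractor, and then lift the finite $H$-dimension of Theorem~\ref{theorem4.6} to $H^2$ via the $(H,H^2)$-Lipschitz bound. The paper compresses the asymptotic-compactness and dimension-lifting steps into one-line citations (to \cite{cui18jdde} and to Lemma~5 of \cite{cui}), whereas you spell out the orbit-splitting and covering arguments explicitly; the content is the same.
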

\begin{proof}
 By Theorem \ref{theorem4.1},  we have also constructed an $H^2$ random absorbing set $\mathfrak B_{H^2}$, which is tempered and closed in $H^2$.  By the  $(H,H^2)$-smoothing property \eqref{sep5.1},   we have proved the $(H,H^2)$-asymptotic compactness of $\phi$. Hence,  by Lemma \ref{lem:cui18} we can show that $\A$ is indeed an $(H,H^2)$-random attractor of $\phi$.
 Since the fractal dimension in $H$ of  $\A$ is finite  (see the Theorem \ref{theorem4.6}),    by the $(H,H^2)$-smoothing property \eqref{sep5.1}  and Lemma 5 in \cite{cui}, the   finite-dimensionality  in $H^2$  is proved. This completes the proof of Theorem \ref{theorem5.1}.
\end{proof}

 \begin{remark}
 The $(H,H^2)$-smoothing property \eqref{sep5.1}  and the finite fractal dimension in $H^2$ of the global attractor    are new even for     deterministic  anisotropic  NS equations.
 \end{remark}

\section{The $(H,H^3)$-random attractor}\label{sec7}
In this section, we first prove an  $H^3$ random absorbing set of system  \eqref{2.2} as $f\in H^2$. Nextly, we will prove the ($H,H^3$)-smoothing effect  of the random anisotropic  NS equations \eqref{2.2} driven by colored noise. This effect  is essentially a local $(H, H^3)$-Lipschitz continuity in initial values. Finally,   we will prove that the  random attractor  $\mathcal A$ of the random anisotropic  NS equations \eqref{2.2} driven by colored noise is in fact a finite-dimensional  $(H,H^3)$-random attractor.

\begin{lemma}\label{lemma7.1} [$H^2$ bound]
Let  $f\in H^1$ and Assumption \ref{assum1} hold.
Then there exists a random variable $ T_{\mathfrak B} (\omega)>T_1(\omega)$ such that,  for any $t\geq T_{\mathfrak B} (\omega)$,
\begin{align*}
\big\|A\textbf{v}(t,\theta_{-t}\omega,\textbf{v}(0))  \big\|^2 + \int_{t-\frac 12 }^t \|A^{\frac32}\textbf{v}(s,\theta_{-t}\omega,\textbf{v}(0))\|^2  \ \d s
\leq  \zeta_{10}(\omega),
\end{align*}
where $\zeta_{10}(\omega)$  is a tempered random variable given by \eqref{6.17} such that $\zeta_{10}(\omega) > \zeta_1(\omega)\geq 1$, $ \omega\in\Omega$.
\end{lemma}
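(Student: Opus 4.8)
The plan is to mimic the $H^1$-bound argument (Lemma \ref{lem:H1bound}) one derivative higher, i.e.\ to test the random equation \eqref{2.2} against $A^2\textbf{v}$ rather than against $A\textbf{v}$. First I would compute the linear anisotropic term $\nu(A_1\textbf{v},A^2\textbf{v})$ by repeated integration by parts exactly as in \eqref{3.10}--\eqref{3.12}, exploiting $\nabla\cdot\textbf{v}=0$; the same bookkeeping that produced $\frac12\nu\|A\textbf{v}\|^2$ at the $H^1$ level should here produce a coercive lower bound $\frac12\nu\|A^{3/2}\textbf{v}\|^2$ (this identity was already used inside Lemma \ref{lem:H2bd}, so I can borrow it). The forcing term $\big(f-\nu A_1 h y_\delta-h y_\delta,\,A^2\textbf{v}\big)$ is controlled by H\"older and Young, absorbing $\tfrac{\nu}{8}\|A^{3/2}\textbf{v}\|^2$ and leaving $C\|f\|_{H^1}^2+C(1+|y_\delta(\theta_t\omega)|^2)$ on the right — this is exactly where the hypothesis $f\in H^1$ (from Assumption \ref{assum1}, $h\in H^4$ gives $\|A^{3/2}h\|<\infty$) is consumed.

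The real work is the nonlinear term $\big(B(\textbf{v}+hy_\delta(\theta_t\omega)),\,A^2\textbf{v}\big)$. I would split it, as in \eqref{4.4}, as $\big(B(\textbf{u}_h),A^2\textbf{v}\big)=\big(B(\textbf{u}_h,\textbf{u}_h),A^2\textbf{v}\big)$ where $\textbf{u}_h=\textbf{v}+hy_\delta$, integrate by parts once to move one derivative off $A^2\textbf{v}$ onto the product $B(\textbf{u}_h)$, and then estimate in $L^4$--$L^4$ or $L^\infty$--$L^2$ using the Gagliardo--Nirenberg inequality together with the Poincar\'e inequality $\|A^{1/2}\textbf{v}\|\ge\lambda_1\|\textbf{v}\|$ and the already-established $H^1$ bound $\|A^{1/2}\textbf{v}(t)\|^2\le\zeta_2(\omega)$ (Lemma \ref{lem:H1bound}) plus the $\int_{t-1/2}^t\|A\textbf{v}\|^2\,\d s\le\zeta_2(\omega)$ control. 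The target is a differential inequality of the schematic form
\begin{align*}
\frac{\d}{\d t}\|A\textbf{v}\|^2+\frac{\nu}{2}\|A^{3/2}\textbf{v}\|^2
\le C\big(1+\|A^{1/2}\textbf{v}\|^2+|y_\delta(\theta_t\omega)|^{p}\big)\|A\textbf{v}\|^2
+C\big(1+|y_\delta(\theta_t\omega)|^{q}\big),
\end{align*}
with the coefficient of $\|A\textbf{v}\|^2$ and the inhomogeneity both integrable on bounded time intervals thanks to \eqref{erg} and Lemma \ref{lemma3.4}.

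Then I would apply the uniform Gronwall trick exactly as in Lemma \ref{lem:H1bound}: Gronwall on $(\eta,t)$ with $\eta\in(t-1,t-\tfrac12)$, then integrate over $\eta$, using $\int_{t-1}^t\|A^{1/2}\textbf{v}\|^2\,\d s$ and $\int_{t-1}^t\|A\textbf{v}\|^2\,\d s$ (both bounded by $\zeta_2(\omega)$, up to shifts of $\omega$) to feed the $\eta$-integral. After replacing $\omega$ by $\theta_{-t}\omega$ and discarding the exponentially decaying term in $\|\textbf{v}(0)\|$ for $t$ past a random time $T_{\mathfrak B}(\omega)>T_1(\omega)$, this yields the claimed bound with a tempered random variable $\zeta_{10}(\omega)$ defined by an expression of the form $C e^{C\int_{-1}^0|y_\delta(\theta_\tau\omega)|^2\,\d\tau}\big(\zeta_2(\omega)+\int_{-1}^0(1+|y_\delta(\theta_s\omega)|^{q})\,\d s\big)$, which one labels \eqref{6.17}; temperedness follows from that of $\zeta_2$ and the moment/ergodic properties of $y_\delta$, and $\zeta_{10}(\omega)>\zeta_1(\omega)\ge1$ can be arranged by enlarging the constant. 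The main obstacle is bookkeeping the nonlinear estimate so that every factor of $\|A^{3/2}\textbf{v}\|$ appears with exponent strictly below $2$ (so it can be absorbed) while the compensating lower-order norms are only $\|A^{1/2}\textbf{v}\|$ and $\|A\textbf{v}\|$ — in two dimensions Gagliardo--Nirenberg is generous enough for this, but one must be careful that the worst term $\|\textbf{u}_h\|_{L^\infty}\|A^{3/2}\textbf{v}\|\cdot(\text{something})$ does not force a norm higher than $H^2$ of $\textbf{u}_h$ that we do not yet control; here the saving grace is that $h\in H^4$ so $hy_\delta\in H^4$ contributes only harmless coefficients, and the genuinely dangerous pieces involve only $\|A^{1/2}\textbf{v}\|$, $\|A\textbf{v}\|$ and $\|A^{3/2}\textbf{v}\|$.
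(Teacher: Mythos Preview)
Your plan is correct and follows essentially the paper's proof: test \eqref{2.2} against $A^2\textbf{v}$, extract the coercive term $\tfrac{\nu}{2}\|A^{3/2}\textbf{v}\|^2$, estimate $K_1$--$K_4$ to reach the differential inequality \eqref{6.4} (whose coefficient is $\|A^{1/2}\textbf{v}\|^4$ and whose inhomogeneity also contains $\|A^{1/2}\textbf{v}\|^6$, not only noise powers), then run the uniform Gronwall trick and feed in the $H^1$ bounds. One refinement: to bound $\int_{t-1/2}^t\|A^{1/2}\textbf{v}(\tau,\theta_{-t}\omega,\cdot)\|^6\,\d\tau$ and $\int_{t-1/2}^t\|A\textbf{v}(\tau,\theta_{-t}\omega,\cdot)\|^2\,\d\tau$ with the \emph{fixed} shift $\theta_{-t}$ (your ``up to shifts of $\omega$'' is exactly the delicate point), the paper does not invoke Lemma~\ref{lem:H1bound} directly but instead uses the interval-uniform estimate \eqref{mar8.3} (the random variable $\zeta_3$ and the time $T_{\mathfrak B}$) established in Section~\ref{sec4}, which gives $\sup_{\eta\in[t-2,t]}\|A^{1/2}\textbf{v}(\eta,\theta_{-t}\omega,\cdot)\|^2\le\zeta_3(\omega)$ --- see \eqref{6.7}--\eqref{6.5}.
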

\begin{proof}
Taking the inner product of the first equation   \eqref{2.2} with $A^2\textbf{v}$ in $H$, by integration by parts yields
\begin{align}\label{6.1}
&\frac12\frac{\d}{\d t}\|A\textbf{v}\|^2-\nu \int_{\mathbb{T}^2}\partial_{yy}\Delta v_1\Delta v_1dx-\nu \int_{\mathbb{T}^2}\partial_{xx}\Delta v_2\Delta v_2dx\nonumber\\
&=-\big(B(\textbf{v}+hy_\delta(\vartheta_t\omega)), A^2\textbf{v} \big)+(f,A^2\textbf{v})\nonumber\\
& +(-\nu A_1hy_\delta(\vartheta_t\omega),A^2\textbf{v})+(hy_\delta(\vartheta_t\omega),A^2\textbf{v})\nonumber\\
&=:K_1(t)+K_2(t)+K_3(t)+K_4(t).
\end{align}
By direct computations and $\nabla\cdot \textbf{v}=0$ and \eqref{3.9}, it also yields
\begin{align}
&-\nu \int_{\mathbb{T}^2}\partial_{yy}\Delta v_1\Delta v_1dx-\nu \int_{\mathbb{T}^2}\partial_{xx}\Delta v_2\Delta v_2dx\nonumber\\
&= \nu \int_{\mathbb{T}^2}(\partial_{y}\Delta v_1)^2dx
+\nu \int_{\mathbb{T}^2}(\partial_{x}\Delta v_2)^2dx
\nonumber\\
&\geq\frac\nu2\|A^{\frac32} \textbf{v}\|^2.
\end{align}
For the term $K_1(t)$,  by integration by parts, by using the H\"{o}lder inequality, and the Gagliardo-Nirenberg inequality and the Young inequality, we can get that for any $h\in H^3$
\begin{align}\label{6.2}
K_1(t)&\leq C\|A^{\frac{1}{2}}(\textbf{v}+hy_\delta(\vartheta_t\omega))\|_{L^4}\|\nabla(\textbf{v}+hy_\delta(\vartheta_t\omega))\|_{L^4}
\|A^{\frac{3}{2}}\textbf{v}\|\nonumber\\
&+C\|\textbf{v}+hy_\delta(\vartheta_t\omega)\|_{L^4}\|A(\textbf{v}+hy_\delta(\vartheta_t\omega))\|_{L^4}
\|A^{\frac{3}{2}}\textbf{v}\|\nonumber\\
&\leq C\|A^{\frac{1}{2}}(\textbf{v}+hy_\delta(\vartheta_t\omega))\|\|A(\textbf{v}+hy_\delta(\vartheta_t\omega))\|
\|A^{\frac{3}{2}}\textbf{v}\|\nonumber\\
&+C\|A^{\frac{1}{2}}(\textbf{v}+hy_\delta(\vartheta_t\omega))\|\|A(\textbf{v}+hy_\delta(\vartheta_t\omega))\|^\frac12
(\|A^{\frac{3}{2}}\textbf{v}\|^\frac12+\|A^{\frac{3}{2}}hy_\delta(\vartheta_t\omega)\|^\frac12)\|A^{\frac{3}{2}}\textbf{v}\|\nonumber\\
&\leq \frac\nu8\|A^{\frac{3}{2}}\textbf{v}\|^2
+C\|A^{\frac{1}{2}}(\textbf{v}+hy_\delta(\vartheta_t\omega))\|^2\|A(\textbf{v}+hy_\delta(\vartheta_t\omega))\|^2\nonumber\\
&+C\|A^{\frac{1}{2}}(\textbf{v}+hy_\delta(\vartheta_t\omega))\|^4\|A(\textbf{v}+hy_\delta(\vartheta_t\omega))\|^2\nonumber\\
&+C\|A^{\frac{1}{2}}(\textbf{v}+hy_\delta(\vartheta_t\omega))\|^2\|A(\textbf{v}+hy_\delta(\vartheta_t\omega))\|
\|A^{\frac{3}{2}}hy_\delta(\vartheta_t\omega)\|\nonumber\\
&\leq \frac\nu8\|A^{\frac{3}{2}}\textbf{v}\|^2
+C(\|A^{\frac{1}{2}}\textbf{v}\|^2+\|A^{\frac{1}{2}}hy_\delta(\vartheta_t\omega)\|^2
+\|A^{\frac{1}{2}}\textbf{v}\|^4+\|A^{\frac{1}{2}}hy_\delta(\vartheta_t\omega)\|^4)\|A\textbf{v}\|^2
\nonumber\\
&+C(\|A^{\frac{1}{2}}\textbf{v}\|^2+\|A^{\frac{1}{2}}hy_\delta(\vartheta_t\omega)\|^2
+\|A^{\frac{1}{2}}\textbf{v}\|^4+\|A^{\frac{1}{2}}hy_\delta(\vartheta_t\omega)\|^4)
(\|Ahy_\delta(\vartheta_t\omega)\|^2+\|A^\frac32hy_\delta(\vartheta_t\omega)\|^2)
\nonumber\\
&\leq \frac\nu8\|A^{\frac{3}{2}}\textbf{v}\|^2
+C(\|A^{\frac{1}{2}}\textbf{v}\|^2
+\|A^{\frac{1}{2}}\textbf{v}\|^4+|y_\delta(\vartheta_t\omega)|^2+|y_\delta(\vartheta_t\omega)|^4)\|A\textbf{v}\|^2
\nonumber\\
&+C(\|A^{\frac{1}{2}}\textbf{v}\|^4+|y_\delta(\vartheta_t\omega)|^2
+|y_\delta(\vartheta_t\omega)|^4)|y_\delta(\vartheta_t\omega)|^2\nonumber\\
&\leq \frac\nu8\|A^{\frac{3}{2}}\textbf{v}\|^2
+C(\|A^{\frac{1}{2}}\textbf{v}\|^2
+\|A^{\frac{1}{2}}\textbf{v}\|^4+|y_\delta(\vartheta_t\omega)|^2+|y_\delta(\vartheta_t\omega)|^4)\|A\textbf{v}\|^2
\nonumber\\
&+C(\|A^{\frac{1}{2}}\textbf{v}\|^6+|y_\delta(\vartheta_t\omega)|^4
+|y_\delta(\vartheta_t\omega)|^6).
\end{align}
For the last three terms $K_2(t)$-$K_{4}(t)$, by integration by parts, by using the H\"{o}lder inequality and the Young inequality, we can get that for $h\in H^3$ and $f\in H^1$
\begin{align}\label{6.3}
&K_{2}(t)+K_{3}(t)+K_{4}(t)\nonumber\\
&\leq\|A^\frac{1}{2}f\|\|A^\frac{3}{2}\textbf{v}\|+\nu\|A^\frac{3}{2}h\||y_\delta(\vartheta_t\omega)|\|A^\frac{3}{2}\textbf{v}\|
+\|A^\frac{1}{2}h\||y_\delta(\vartheta_t\omega)|\|A^\frac{3}{2}\textbf{v}\|\nonumber\\
&\leq \frac\nu8\|A^\frac{3}{2}\textbf{v}\|^2+C(\|A^\frac{1}{2}f\|^2+\|A^\frac{3}{2}h\|^2|y_\delta(\vartheta_t\omega)|^2
+\|A^\frac{1}{2}h\|^2|y_\delta(\vartheta_t\omega)|^2)\nonumber\\
&\leq \frac\nu8\|A^\frac{3}{2}\textbf{v}\|^2+C(\|A^\frac{1}{2}f\|^2+|y_\delta(\vartheta_t\omega)|^2
).
\end{align}
Inserting \eqref{6.2}-\eqref{6.3} into \eqref{6.1}, it yields
\begin{align}\label{6.4}
\frac{\d}{\d t}\|A\textbf{v}\|^2+\frac{\nu}{2}\|A^\frac{3}{2}\textbf{v}\|^2&\leq C(\|A^{\frac{1}{2}}\textbf{v}\|^2
+\|A^{\frac{1}{2}}\textbf{v}\|^4+|y_\delta(\vartheta_t\omega)|^2+|y_\delta(\vartheta_t\omega)|^4)\|A\textbf{v}\|^2\nonumber\\
&+C(\|A^\frac{1}{2}f\|^2+|y_\delta(\vartheta_t\omega)|^2+\|A^{\frac{1}{2}}\textbf{v}\|^6+|y_\delta(\vartheta_t\omega)|^4
+|y_\delta(\vartheta_t\omega)|^6)\nonumber\\
&\leq C(1+\|A^{\frac{1}{2}}\textbf{v}\|^4+|y_\delta(\vartheta_t\omega)|^4)\|A\textbf{v}\|^2\nonumber\\
&+C(1+\|A^{\frac{1}{2}}\textbf{v}\|^6+|y_\delta(\vartheta_t\omega)|^6).
\end{align}
For  $\eta\in(t-\frac{1}{2},t-\frac{1}{4})$,  applying Gronwall's lemma to \eqref{6.4} on $(\eta,t)$, we can get that
\begin{align*}
&\|A\textbf{v}(t)\|^2+\frac\nu2\int_\eta^te^{C\int_s^t(1+\|A^\frac12\textbf{v}\|^4+|y_\delta(\vartheta_\tau\omega)|^4)\d\tau}
\|A^\frac{3}{2}\textbf{v}(s)\|^2 \d s\nonumber\\
&\leq
e^{C\int_\eta^t(1+\|A^\frac12\textbf{v}\|^4+|y_\delta(\vartheta_\tau\omega)|^4)\d\tau}\|A\textbf{v}(\eta)\|^2\nonumber\\
&+C\int_\eta^te^{C\int_s^t(1+\|A^\frac12\textbf{v}\|^2+|y_\delta(\vartheta_\tau\omega)|^4)\d\tau}
(1+\|A^{\frac{1}{2}}\textbf{v}\|^6+|y_\delta(\vartheta_s\omega)|^6)\d s,
\end{align*}
and integrating over $\eta\in(t-\frac{1}{2},t-\frac{1}{4})$ yields
\begin{align}
&\frac{1}{4}\|A\textbf{v}(t)\|^2+\frac{\nu}{8}\int_{t-\frac{1}{4}}^te^{C\int_s^t(1+\|A^\frac12\textbf{v}\|^4+
|y_\delta(\vartheta_\tau\omega)|^4)\d\tau}
\|A^\frac{3}{2}\textbf{v}(s)\|^2 \d s\nonumber\\
&\leq
\int_{t-\frac{1}{2}}^{t-\frac{1}{4}}e^{C\int_\eta^t(1+\|A^\frac12\textbf{v}\|^4+|y_\delta(\vartheta_\tau\omega)|^4)\d\tau}
\|A\textbf{v}(\eta)\|^2\d \eta\nonumber\\
&+C\int_{t-\frac{1}{2}}^te^{C\int_s^t(1+\|A^\frac12\textbf{v}\|^4+|y_\delta(\vartheta_\tau\omega)|^4)\d\tau}
(1+\|A^\frac{1}{2}\textbf{v}(s)\|^6+|y_\delta(\vartheta_s\omega)|^6)\d s.
\end{align}
We replace $\omega$ with $\vartheta_{-t}\omega$ to deduce
\begin{align}\label{6.6}
&\|A\textbf{v}(t,\vartheta_{-t}\omega,\textbf{v}_0)\|^2+\frac{\nu}{2}\int_{t-\frac{1}{4}}^t
\|A^\frac{3}{2}\textbf{v}(s,\vartheta_{-t}\omega,\textbf{v}_0)\|^2 \d s\nonumber\\
&\leq\|A\textbf{v}(t,\vartheta_{-t}\omega,\textbf{v}_0)\|^2\nonumber\\
&+\frac{\nu}{2}\int_{t-\frac{1}{4}}^te^{C\int_s^t(1+\|A^\frac12\textbf{v}(\tau,\vartheta_{-t}\omega,\textbf{v}_0)\|^4)\d\tau+
C\int_{s-t}^0|y_\delta(\vartheta_\tau\omega)|^4\d\tau}
\|A^\frac{3}{2}\textbf{v}(s,\vartheta_{-t}\omega,\textbf{v}_0)\|^2 \d s\nonumber\\
&\leq
4\int_{t-\frac{1}{2}}^{t-\frac{1}{4}}e^{C\int_\eta^t(1+\|A^\frac12\textbf{v}(\tau,\vartheta_{-t}\omega,\textbf{v}_0)\|^4)\d\tau
+C\int_\eta^t|y_\delta(\vartheta_{\tau-t}\omega)|^4\d\tau}
\|A\textbf{v}(\eta,\vartheta_{-t}\omega,\textbf{v}_0)\|^2\d \eta\nonumber\\
&+C\int_{t-\frac{1}{2}}^te^{C\int_s^t(1+\|A^\frac12\textbf{v}(\tau,\vartheta_{-t}\omega,\textbf{v}_0)\|^4)\d\tau
+C\int_s^t|y_\delta(\vartheta_{\tau-t}\omega)|^4\d\tau}\nonumber\\
&(1+\|A^\frac{1}{2}\textbf{v}(s,\vartheta_{-t}\omega,\textbf{v}_0)\|^6+|y_\delta(\vartheta_{s-t}\omega)|^6)\d s\nonumber\\
&\leq Ce^{C\int_{t-\frac12}^t(1+\|A^\frac12\textbf{v}(\tau,\vartheta_{-t}\omega,\textbf{v}_0)\|^4)\d\tau
+C\int_{-\frac12}^0|y_\delta(\vartheta_{\tau}\omega)|^4\d\tau}
(\int_{t-\frac{1}{2}}^{t-\frac{1}{4}}\|A\textbf{v}(\eta,\vartheta_{-t}\omega,\textbf{v}_0)\|^2\d \eta\nonumber\\
&+\int_{t-\frac{1}{2}}^t(1+\|A^\frac{1}{2}\textbf{v}(s,\vartheta_{-t}\omega,\textbf{v}_0)\|^6+|y_\delta(\vartheta_{s-t}\omega)|^6)\d s)\nonumber\\
&\leq Ce^{C\int_{t-\frac12}^t(1+\|A^\frac12\textbf{v}(\tau,\vartheta_{-t}\omega,\textbf{v}_0)\|^4)\d\tau
+C\int_{-\frac12}^0|y_\delta(\vartheta_{\tau}\omega)|^4\d\tau}
(\int_{t-\frac{1}{2}}^{t}\|A\textbf{v}(\eta,\vartheta_{-t}\omega,\textbf{v}_0)\|^2\d \eta\nonumber\\
&+\int_{t-\frac{1}{2}}^t(1+\|A^\frac{1}{2}\textbf{v}(s,\vartheta_{-t}\omega,\textbf{v}_0)\|^6)\d s
+\int_{t-\frac{1}{2}}^t|y_\delta(\vartheta_{s-t}\omega)|^6\d s)\nonumber\\
&\leq Ce^{C\int_{t-\frac12}^t(1+\|A^\frac12\textbf{v}(\tau,\vartheta_{-t}\omega,\textbf{v}_0)\|^6)\d\tau
+C\int_{-\frac12}^0|y_\delta(\vartheta_{\tau}\omega)|^6\d\tau}
(\int_{t-\frac{1}{2}}^{t}\|A\textbf{v}(\eta,\vartheta_{-t}\omega,\textbf{v}_0)\|^2\d \eta\nonumber\\
&+\int_{t-\frac{1}{2}}^t\|A^\frac{1}{2}\textbf{v}(s,\vartheta_{-t}\omega,\textbf{v}_0)\|^6\d s
+\int_{-1}^0|y_\delta(\vartheta_{s}\omega)|^6\d s+1),
\end{align}
By \eqref{mar8.3}, we also can deduce that
\begin{align}
\int_{t-\frac{1}{2}}^{t}\|A\textbf{v}(\eta,\vartheta_{-t}\omega,\textbf{v}_0)\|^2\d \eta&\leq    \zeta_3(\omega) ,\label{6.7}\\
  \int_{t-1}^t\|A^{\frac12}\textbf{v}(\eta,\theta_{-t}\omega,\textbf{v}(0))\|^6 \, \d \eta
&  \leq    |\zeta_3(\omega)|^3 ,  \quad t\geq T_{\mathfrak B} (\omega) ,\label{6.5}
\end{align}
here    $ \zeta_3 $ is a tempered random variable defined in \eqref{mar8.3} and  a random variable $T_{\mathfrak B} (\omega)$ defined in \eqref{timeB}, respectively.

Inserting \eqref{6.7} and \eqref{6.5} into \eqref{6.6} yields
\begin{align}\label{6.17}
&\|A\textbf{v}(t,\vartheta_{-t}\omega,\textbf{v}_0)\|^2+\frac{\nu}{2}\int_{t-\frac{1}{4}}^t
\|A^\frac{3}{2}\textbf{v}(s,\vartheta_{-t}\omega,\textbf{v}_0)\|^2 \d s\nonumber\\
&\leq
e^{C
(1+|\zeta_3(\omega)|^3)+C
\int_{-\frac{1}{2}}^0
|y_\delta(\vartheta_\tau\omega)|^6 \d\tau}\left(C|\zeta_3(\omega)|^3+C\int_{-1}^0
|y_\delta(\vartheta_{s}\omega)|^6\d s+C\right)\nonumber\\
&:=\zeta_{10}(\omega).
\end{align}
This completes the proof of the Lemma \ref{lemma7.1}.
\end{proof}

\begin{lemma}\label{lemma7.2} [$H^3$ bound]
Let  $f\in H^2$ and Assumption \ref{assum1} hold.
Then there exists a random variable $ T_{\mathfrak B}^* (\omega)>T_1(\omega)$ such that,  for any $t\geq T_{\mathfrak B}^* (\omega)$,
\begin{align*}
\big\|A^\frac32\textbf{v}(t,\theta_{-t}\omega,\textbf{v}(0))  \big\|^2 + \int_{t-\frac 12 }^t \|A^{2}\textbf{v}(s,\theta_{-t}\omega,\textbf{v}(0))\|^2  \ \d s
\leq  \zeta_{12}(\omega),
\end{align*}
where $\zeta_{12}(\omega)$  is a tempered random variable given by \eqref{6.18} such that $\zeta_{12}(\omega) > \zeta_1(\omega)\geq 1$, $ \omega\in\Omega$.
\end{lemma}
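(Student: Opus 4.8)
The plan is to reproduce, one power of $A$ higher, the scheme used for the $H^{2}$ bound in Lemma \ref{lemma7.1}. First I would take the inner product of the first equation of \eqref{2.2} with $A^{3}\textbf{v}$ in $H$, so that the left-hand side carries $\frac12\frac{\d}{\d t}\|A^{3/2}\textbf{v}\|^{2}$. For the anisotropic viscous contribution $\nu(A_{1}\textbf{v},A^{3}\textbf{v})$ I would integrate by parts and run the cancellation identities of \eqref{3.10}–\eqref{3.12} one order higher, together with the curl bound \eqref{3.9}; this should give a coercive lower bound of the form $\nu(A_{1}\textbf{v},A^{3}\textbf{v})\ge\frac{\nu}{2}\|A^{2}\textbf{v}\|^{2}$, exactly as $\|A^{3/2}\textbf{v}\|^{2}$ arose in Lemma \ref{lemma7.1}.

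Next I would estimate the remaining terms so that the top-order quantity $\|A^{2}\textbf{v}\|$ is always absorbed. The linear terms are handled by moving one power of $A$ onto each factor: $(f,A^{3}\textbf{v})=(Af,A^{2}\textbf{v})$, $(-\nu A_{1}hy_\delta(\theta_t\omega),A^{3}\textbf{v})=(-\nu A A_{1}hy_\delta(\theta_t\omega),A^{2}\textbf{v})$ and $(hy_\delta(\theta_t\omega),A^{3}\textbf{v})=(Ahy_\delta(\theta_t\omega),A^{2}\textbf{v})$; since $f\in H^{2}$ and $h\in H^{4}$ under Assumption \ref{assum1}, Cauchy–Schwarz and Young bound their sum by $\tfrac{\nu}{8}\|A^{2}\textbf{v}\|^{2}+C(\|Af\|^{2}+|y_\delta(\theta_t\omega)|^{2})$ (these are precisely the regularity demands that make $f\in H^{2}$, $h\in H^{4}$ sharp here). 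The nonlinear term $(B(\textbf{v}+hy_\delta(\theta_t\omega)),A^{3}\textbf{v})$ I would rewrite as $(A B(\textbf{v}+hy_\delta(\theta_t\omega)),A^{2}\textbf{v})$, expand the Laplacian of $B$ by Leibniz, split $\textbf{v}+hy_\delta(\theta_t\omega)$ into its two parts, and estimate each piece by Hölder's inequality together with two-dimensional Gagliardo–Nirenberg/Agmon interpolation (of the type used to obtain \eqref{6.2}), so that $\|A^{2}\textbf{v}\|$ appears only to the first power and can be hit with Young's inequality. The outcome should be a differential inequality of the shape
\begin{align*}
\frac{\d}{\d t}\|A^{3/2}\textbf{v}\|^{2}+\frac{\nu}{2}\|A^{2}\textbf{v}\|^{2}
&\le C\bigl(1+\|A^{1/2}\textbf{v}\|^{4}+\|A\textbf{v}\|^{4}+|y_\delta(\theta_t\omega)|^{4}\bigr)\|A^{3/2}\textbf{v}\|^{2}\\
&\quad+C\bigl(1+\|A^{1/2}\textbf{v}\|^{p}+\|A\textbf{v}\|^{q}+|y_\delta(\theta_t\omega)|^{r}\bigr)
\end{align*}
for suitable exponents $p,q,r$, in complete analogy with \eqref{6.4}.

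From here the argument is bookkeeping. I would apply Gronwall's lemma on $(\eta,t)$ with $\eta\in(t-\tfrac12,t-\tfrac14)$, integrate in $\eta$, and replace $\omega$ by $\theta_{-t}\omega$, just as in the passage leading from \eqref{6.4} to \eqref{6.17}. The coefficient integrals $\int_{t-1}^{t}\bigl(1+\|A^{1/2}\textbf{v}\|^{4}+\|A\textbf{v}\|^{4}\bigr)\,\d s$ and the solution integrals $\int_{t-1/2}^{t}\|A\textbf{v}\|^{2}\,\d s$, $\int_{t-1/2}^{t}\|A^{3/2}\textbf{v}\|^{2}\,\d s$ appearing on the right-hand side after this manipulation are controlled, for $t$ large, by combining \eqref{mar8.3} and Lemma \ref{lem:H1bound} (for the $\|A^{1/2}\textbf{v}\|$ quantities) with Lemma \ref{lemma7.1} (for the $\|A\textbf{v}\|$ and $\|A^{3/2}\textbf{v}\|$ quantities), while all moments of $|y_\delta(\theta_t\omega)|$ are tempered by \eqref{erg} and Proposition \ref{proposition3.5}. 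Choosing $T_{\mathfrak B}^{*}(\omega)\ge T_{\mathfrak B}(\omega)$ so that trajectories have already entered the $H^{2}$ absorbing set of Lemma \ref{lemma7.1}, assembling these bounds yields the tempered random variable $\zeta_{12}(\omega)$ of the statement, with $\zeta_{12}(\omega)>\zeta_{1}(\omega)\ge1$.

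The main obstacle will be the nonlinear estimate: one must distribute the derivatives in $(B(\textbf{v}+hy_\delta(\theta_t\omega)),A^{3}\textbf{v})$ so that every resulting term contains at most one factor of $\|A^{2}\textbf{v}\|$ — absorbable on the left — and leaves behind, in the Gronwall coefficient, only quantities already bounded at the $H^{1}$ and $H^{2}$ levels, namely $\|A^{1/2}\textbf{v}\|$, $\|A\textbf{v}\|$ and powers of $|y_\delta(\theta_t\omega)|$. Keeping the anisotropy of $A_{1}$ under control while doing this (so that the viscous term still dominates the borderline triple products) is the delicate point; once a differential inequality of the stated form is in hand, the rest is a verbatim repetition of the proof of Lemma \ref{lemma7.1}.
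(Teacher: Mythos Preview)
Your plan is correct and matches the paper's proof essentially step for step: inner product with $A^{3}\textbf{v}$, the coercive bound $\nu(A_{1}\textbf{v},A^{3}\textbf{v})\ge\tfrac{\nu}{2}\|A^{2}\textbf{v}\|^{2}$ via the curl identities one order higher, absorption of the nonlinear and linear terms into a differential inequality of exactly the shape you wrote (the paper then simplifies the coefficient to $C(1+\|A\textbf{v}\|^{6}+|y_\delta|^{6})$), Gronwall on $(\eta,t)$ with $\eta\in(t-\tfrac12,t-\tfrac14)$, and the pullback shift $\omega\mapsto\theta_{-t}\omega$. The one place where the paper is slightly more explicit than your sketch is in bounding $\int_{t-1/2}^{t}\|A\textbf{v}(\tau,\theta_{-t}\omega)\|^{6}\,\d\tau$: Lemma~\ref{lemma7.1} as stated controls $\|A\textbf{v}\|$ only at the endpoint $\tau=t$, so the paper reruns the $H^{2}$ estimate \eqref{6.4} with the same $\varepsilon$-shift device you already cite from \eqref{mar8.3}, now one regularity level higher, to obtain a bound $\zeta_{11}(\omega)$ uniform over $\eta\in[t-1,t]$ before assembling $\zeta_{12}$.
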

\begin{proof}
Taking the inner product of the first equation   \eqref{2.2} with $A^3\textbf{v}$ in $H$, by integration by parts yields
\begin{align}\label{6.8}
&\frac12\frac{\d}{\d t}\|A^\frac32\textbf{v}\|^2+\nu \int_{\mathbb{T}^2}\partial_{yy}\Delta v_1\Delta^2 v_1dx+\nu \int_{\mathbb{T}^2}\partial_{xx}\Delta v_2\Delta^2 v_2dx\nonumber\\
&=-\big(B(\textbf{v}+hy_\delta(\vartheta_t\omega)), A^3\textbf{v} \big)+(f,A^3\textbf{v})\nonumber\\
& +(-\nu A_1hy_\delta(\vartheta_t\omega),A^3\textbf{v})+(hy_\delta(\vartheta_t\omega),A^3\textbf{v})\nonumber\\
&=:K_5(t)+K_6(t)+K_7(t)+K_8(t).
\end{align}
By direct computations and $\nabla\cdot \textbf{v}=0$ and \eqref{3.9}, it also yields
\begin{align}\label{6.11}
&\nu \int_{\mathbb{T}^2}\partial_{yy}\Delta v_1\Delta^2 v_1dx-\nu \int_{\mathbb{T}^2}\partial_{xx}\Delta v_2\Delta^2 v_2dx\nonumber\\
&= \nu \int_{\mathbb{T}^2}(\partial_{yy}\Delta v_1)^2dx+\nu \int_{\mathbb{T}^2}\partial_{xy}\Delta v_1\partial_{xy}\Delta v_1dx\nonumber\\
&+\nu \int_{\mathbb{T}^2}(\partial_{xx}\Delta v_2)^2dx+\nu \int_{\mathbb{T}^2}\partial_{xy}\Delta v_2\partial_{xy}\Delta v_2dx
\nonumber\\
&=\nu \int_{\mathbb{T}^2}(\partial_{yy}\Delta v_1)^2dx+\nu \int_{\mathbb{T}^2}(\partial_{yy}\Delta v_2)^2dx\nonumber\\
&+\nu \int_{\mathbb{T}^2}(\partial_{xx}\Delta v_2)^2dx+\nu \int_{\mathbb{T}^2}(\partial_{xx}\Delta v_1)^2dx
\nonumber\\
&\geq\frac\nu2\|A^{2} \textbf{v}\|^2.
\end{align}
For the term $K_5(t)$,  by integration by parts, applying the H\"{o}lder inequality, the Gagliardo-Nirenberg inequality and the Young inequality, we also can get that
\begin{align}\label{6.9}
K_5(t)&\leq C\|A(\textbf{v}+hy_\delta(\vartheta_t\omega))\|_{L^4}\|\nabla(\textbf{v}+hy_\delta(\vartheta_t\omega))\|_{L^4}
\|A^{2}\textbf{v}\|\nonumber\\
&+C\|\textbf{v}+hy_\delta(\vartheta_t\omega)\|_{L^4}\|A^\frac32(\textbf{v}+hy_\delta(\vartheta_t\omega))\|_{L^4}
\|A^{2}\textbf{v}\|\nonumber\\
&\leq C\|A^{\frac{3}{2}}(\textbf{v}+hy_\delta(\vartheta_t\omega))\|\|A(\textbf{v}+hy_\delta(\vartheta_t\omega))\|
\|A^{2}\textbf{v}\|\nonumber\\
&+C\|A^{\frac{1}{2}}(\textbf{v}+hy_\delta(\vartheta_t\omega))\|\|A^\frac32(\textbf{v}+hy_\delta(\vartheta_t\omega))\|^\frac12
(\|A^2\textbf{v}\|^\frac12+\|A^2hy_\delta(\vartheta_t\omega)\|^\frac12)\|A^{2}\textbf{v}\|\nonumber\\
&\leq \frac\nu8\|A^{2}\textbf{v}\|^2
+C\|A(\textbf{v}+hy_\delta(\vartheta_t\omega))\|^2\|A^{\frac{3}{2}}(\textbf{v}+hy_\delta(\vartheta_t\omega))\|^2\nonumber\\
&+C\|A^{\frac{1}{2}}(\textbf{v}+hy_\delta(\vartheta_t\omega))\|^4\|A^\frac32(\textbf{v}+hy_\delta(\vartheta_t\omega))\|^2
\nonumber\\
&+C\|A^{\frac{1}{2}}(\textbf{v}+hy_\delta(\vartheta_t\omega))\|^2\|A^\frac32(\textbf{v}+hy_\delta(\vartheta_t\omega))\|
\|A^2hy_\delta(\vartheta_t\omega)\|
\nonumber\\
&\leq \frac\nu8\|A^{2}\textbf{v}\|^2
+C(\|A\textbf{v}\|^2+\|Ahy_\delta(\vartheta_t\omega)\|^2
+\|A^{\frac{1}{2}}\textbf{v}\|^4+\|A^{\frac{1}{2}}hy_\delta(\vartheta_t\omega)\|^4)\|A^\frac32\textbf{v}\|^2
\nonumber\\
&+C(\|A\textbf{v}\|^2+\|Ahy_\delta(\vartheta_t\omega)\|^2
+\|A^{\frac{1}{2}}\textbf{v}\|^4+\|A^{\frac{1}{2}}hy_\delta(\vartheta_t\omega)\|^4)\|A^2hy_\delta(\vartheta_t\omega)\|^2
\nonumber\\
&\leq \frac\nu8\|A^{2}\textbf{v}\|^2
+C(\|A\textbf{v}\|^2
+\|A^{\frac{1}{2}}\textbf{v}\|^4+|y_\delta(\vartheta_t\omega)|^2+|y_\delta(\vartheta_t\omega)|^4)\|A^\frac32\textbf{v}\|^2
\nonumber\\
&+C(\|A\textbf{v}\|^2+\|A^{\frac{1}{2}}\textbf{v}\|^4+|y_\delta(\vartheta_t\omega)|^2
+|y_\delta(\vartheta_t\omega)|^4)|y_\delta(\vartheta_t\omega)|^2\nonumber\\
&\leq \frac\nu8\|A^{2}\textbf{v}\|^2
+C(1+\|A\textbf{v}\|^2
+\|A^{\frac{1}{2}}\textbf{v}\|^4+|y_\delta(\vartheta_t\omega)|^4)\|A^\frac32\textbf{v}\|^2
\nonumber\\
&+C(1+\|A\textbf{v}\|^4+\|A^{\frac{1}{2}}\textbf{v}\|^6
+|y_\delta(\vartheta_t\omega)|^6).
\end{align}
For the last three terms $K_6(t)$-$K_{8}(t)$, by integration by parts and the similar method, we also can get that for $h\in H^4$ and $f\in H^2$
\begin{align}\label{6.10}
&K_{6}(t)+K_{7}(t)+K_{8}(t)\nonumber\\
&\leq\|Af\|\|A^2\textbf{v}\|+\nu\|A^2h\||y_\delta(\vartheta_t\omega)|\|A^2\textbf{v}\|
+\|Ah\||y_\delta(\vartheta_t\omega)|\|A^2\textbf{v}\|\nonumber\\
&\leq \frac\nu8\|A^2\textbf{v}\|^2+C(\|Af\|^2+\|A^2h\|^2|y_\delta(\vartheta_t\omega)|^2
+\|Ah\|^2|y_\delta(\vartheta_t\omega)|^2)\nonumber\\
&\leq \frac\nu8\|A^2\textbf{v}\|^2+C(\|Af\|^2+|y_\delta(\vartheta_t\omega)|^2
).
\end{align}
Inserting \eqref{6.11}, \eqref{6.9} and \eqref{6.10} into \eqref{6.8}, it yields
\begin{align}\label{6.12}
\frac{\d}{\d t}\|A^\frac{3}{2}\textbf{v}\|^2+\frac{\nu}{2}\|A^2\textbf{v}\|^2&\leq C(1+\|A\textbf{v}\|^2
+\|A^{\frac{1}{2}}\textbf{v}\|^4+|y_\delta(\vartheta_t\omega)|^4)\|A^\frac{3}{2}v\|^2\nonumber\\
&+C(1+\|Af\|^2+\|A^{\frac{1}{2}}\textbf{v}\|^6+\|A\textbf{v}\|^4
+|y_\delta(\vartheta_t\omega)|^6)\nonumber\\
&\leq C(1+\|A\textbf{v}\|^4
+\|A^{\frac{1}{2}}\textbf{v}\|^6+|y_\delta(\vartheta_t\omega)|^6)\|A^\frac{3}{2}v\|^2\nonumber\\
&+C(1+\|A\textbf{v}\|^4+\|A^{\frac{1}{2}}\textbf{v}\|^6
+|y_\delta(\vartheta_t\omega)|^6)\nonumber\\
&\leq C(1+\|A\textbf{v}\|^6
+|y_\delta(\vartheta_t\omega)|^6)\|A^\frac{3}{2}v\|^2\nonumber\\
&+C(1+\|A\textbf{v}\|^6
+|y_\delta(\vartheta_t\omega)|^6).
\end{align}
For  $\eta\in(t-\frac{1}{2},t-\frac{1}{4})$,  applying Gronwall's lemma to \eqref{6.12} on $(\eta,t)$, we also can get that
\begin{align*}
&\|A^\frac{3}{2}\textbf{v}(t)\|^2+\frac\nu2\int_\eta^te^{C\int_s^t(1+\|A\textbf{v}\|^6+|y_\delta(\vartheta_\tau\omega)|^6)\d\tau}
\|A^2\textbf{v}(s)\|^2 \d s\nonumber\\
&\leq
e^{C\int_\eta^t(1+\|A\textbf{v}\|^6+|y_\delta(\vartheta_\tau\omega)|^6)\d\tau}\|A^\frac{3}{2}\textbf{v}(\eta)\|^2\nonumber\\
&+C\int_\eta^te^{C\int_s^t(1+\|A\textbf{v}\|^6+|y_\delta(\vartheta_\tau\omega)|^6)\d\tau}
(1+\|A\textbf{v}\|^6+|y_\delta(\vartheta_s\omega)|^6)\d s,
\end{align*}
and integrating over $\eta\in(t-\frac{1}{2},t-\frac{1}{4})$ yields
\begin{align}
&\frac{1}{4}\|A^\frac{3}{2}\textbf{v}(t)\|^2+\frac{\nu}{8}\int_{t-\frac{1}{4}}^te^{C\int_s^t(1+\|A\textbf{v}\|^6+
|y_\delta(\vartheta_\tau\omega)|^6)\d\tau}
\|A^2\textbf{v}(s)\|^2 \d s\nonumber\\
&\leq
\int_{t-\frac{1}{2}}^{t-\frac{1}{4}}e^{C\int_\eta^t(1+\|A\textbf{v}\|^6+|y_\delta(\vartheta_\tau\omega)|^6)\d\tau}
\|A^\frac{3}{2}\textbf{v}(\eta)\|^2\d \eta\nonumber\\
&+C\int_{t-\frac{1}{2}}^te^{C\int_s^t(1+\|A\textbf{v}\|^6+|y_\delta(\vartheta_\tau\omega)|^6)\d\tau}
(1+\|A\textbf{v}(s)\|^6+|y_\delta(\vartheta_s\omega)|^6)\d s.
\end{align}
We replace $\omega$ with $\vartheta_{-t}\omega$ to deduce
\begin{align}\label{6.13}
&\|A^\frac{3}{2}\textbf{v}(t,\vartheta_{-t}\omega,\textbf{v}_0)\|^2+\frac{\nu}{2}\int_{t-\frac{1}{4}}^t
\|A^2\textbf{v}(s,\vartheta_{-t}\omega,\textbf{v}_0)\|^2 \d s\nonumber\\
&\leq\|A^\frac{3}{2}\textbf{v}(t,\vartheta_{-t}\omega,\textbf{v}_0)\|^2\nonumber\\
&+\frac{\nu}{2}\int_{t-\frac{1}{4}}^te^{C\int_s^t(1+\|A\textbf{v}(\tau,\vartheta_{-t}\omega,\textbf{v}_0)\|^6)\d\tau+
C\int_{s-t}^0|y_\delta(\vartheta_\tau\omega)|^6\d\tau}
\|A^2\textbf{v}(s,\vartheta_{-t}\omega,\textbf{v}_0)\|^2 \d s\nonumber\\
&\leq
4\int_{t-\frac{1}{2}}^{t-\frac{1}{4}}e^{C\int_\eta^t(1+\|A\textbf{v}(\tau,\vartheta_{-t}\omega,\textbf{v}_0)\|^6)\d\tau
+C\int_\eta^t|y_\delta(\vartheta_{\tau-t}\omega)|^6\d\tau}
\|A^\frac{3}{2}\textbf{v}(\eta,\vartheta_{-t}\omega,\textbf{v}_0)\|^2\d \eta\nonumber\\
&+C\int_{t-\frac{1}{2}}^te^{C\int_s^t(1+\|A\textbf{v}(\tau,\vartheta_{-t}\omega,\textbf{v}_0)\|^6)\d\tau
+C\int_s^t|y_\delta(\vartheta_{\tau-t}\omega)|^6\d\tau}\nonumber\\
&(1+\|A\textbf{v}(s,\vartheta_{-t}\omega,\textbf{v}_0)\|^6+|y_\delta(\vartheta_{s-t}\omega)|^6)\d s\nonumber\\
&\leq Ce^{C\int_{t-\frac12}^t(1+\|A\textbf{v}(\tau,\vartheta_{-t}\omega,\textbf{v}_0)\|^6)\d\tau
+C\int_{-\frac12}^0|y_\delta(\vartheta_{\tau}\omega)|^6\d\tau}
(\int_{t-\frac{1}{2}}^{t-\frac{1}{4}}\|A^\frac{3}{2}\textbf{v}(\eta,\vartheta_{-t}\omega,\textbf{v}_0)\|^2\d \eta\nonumber\\
&+\int_{t-\frac{1}{2}}^t(1+\|A\textbf{v}(s,\vartheta_{-t}\omega,\textbf{v}_0)\|^6+|y_\delta(\vartheta_{s-t}\omega)|^6)\d s)\nonumber\\
&\leq Ce^{C\int_{t-\frac12}^t(1+\|A\textbf{v}(\tau,\vartheta_{-t}\omega,\textbf{v}_0)\|^6)\d\tau
+C\int_{-\frac12}^0|y_\delta(\vartheta_{\tau}\omega)|^6\d\tau}
(\int_{t-\frac{1}{2}}^{t}\|A^\frac{3}{2}\textbf{v}(\eta,\vartheta_{-t}\omega,\textbf{v}_0)\|^2\d \eta\nonumber\\
&+\int_{t-\frac{1}{2}}^t(1+\|A\textbf{v}(s,\vartheta_{-t}\omega,\textbf{v}_0)\|^6)\d s
+\int_{t-\frac{1}{2}}^t|y_\delta(\vartheta_{s-t}\omega)|^6\d s)\nonumber\\
&\leq Ce^{C\int_{t-\frac12}^t(1+\|A\textbf{v}(\tau,\vartheta_{-t}\omega,\textbf{v}_0)\|^6)\d\tau
+C\int_{-\frac12}^0|y_\delta(\vartheta_{\tau}\omega)|^6\d\tau}
(\int_{t-\frac{1}{2}}^{t}\|A^\frac{3}{2}\textbf{v}(\eta,\vartheta_{-t}\omega,\textbf{v}_0)\|^2\d \eta\nonumber\\
&+\int_{t-\frac{1}{2}}^t\|A\textbf{v}(s,\vartheta_{-t}\omega,\textbf{v}_0)\|^6\d s
+\int_{-1}^0|y_\delta(\vartheta_{s}\omega)|^6\d s+1),
\end{align}
By \eqref{6.4}, we also have
\begin{align}\label{6.14}
\frac{\d}{\d t}\|A\textbf{v}\|^2+\frac{\nu}{2}\|A^\frac{3}{2}\textbf{v}\|^2
&\leq C(1+\|A^{\frac{1}{2}}\textbf{v}\|^6+|y_\delta(\vartheta_t\omega)|^6)\|Av\|^2\nonumber\\
&+C(1+\|A^{\frac{1}{2}}\textbf{v}\|^6+|y_\delta(\vartheta_t\omega)|^6).
\end{align}
For $t>2$, integrating \eqref{6.14} over $(\tau,t)$ with $\tau\in (t-2,t-1)$, we can deduce that
\begin{align*}
&
\|A\textbf{v} (t) \|^2 -\|A\textbf{v}(\tau) \|^2  + \frac{\nu }{2}\int^t_{t-1} \|A^\frac32 \textbf{v}(s)\|^2 \, \d s \\
&\quad
 \leq  C \int^t_{t-2} \left(1+\|A^{\frac{1}{2}}\textbf{v}\|^6+  |y_\delta(\theta_s\omega)|^6 \right) \|A\textbf{v}(s) \|^2\, \d s
+ C\int^t_{t-2} \left(1+\|A^{\frac{1}{2}}\textbf{v}\|^6 +|y_\delta(\theta_s\omega)|^6 \right )  \d s,
\end{align*}
and then integrating over $\tau\in (t-2,t-1)$ yields
\begin{align*}
\|A\textbf{v} (t) \|^2+  \frac{\nu}{2} \int^t_{t-1} \|A^{\frac32} \textbf{v}(s)\|^2\, \d s
& \leq   C \int^t_{t-2} \left (\|A^{\frac{1}{2}}\textbf{v}\|^6+|y_\delta(\theta_s\omega)|^6+1\right)  \|A\textbf{v}(s) \|^2 \, \d s \\
&\quad + C\int^t_{t-2} \left (1+\|A^{\frac{1}{2}}\textbf{v}\|^6 +|y_\delta(\theta_s\omega)|^6 \right)  \d s ,\quad t> 2.
\end{align*}
For $\varepsilon\in  [0,2]$, by \eqref{mar8.3}, we replace $\omega$ with $\theta_{-t-\varepsilon} \omega$  to deduce
\begin{align*}
 & \|A\textbf{v} (t,\theta_{-t-\varepsilon} \omega, \textbf{v}(0)) \|^2 + \frac{ \nu}{2} \int_{t-1}^t  \|A^\frac32\textbf{v} (s,\theta_{-t-\varepsilon} \omega, \textbf{v}(0)) \|^2 \, \d s \nonumber \\
& \quad \leq   C \int^t_{t-2} \! \left (\|A^{\frac12}\textbf{v}(s,\theta_{-t-\varepsilon} \omega, \textbf{v}(0)) \|^6+|y_\delta(\theta_{s-t-\varepsilon} \omega)|^2+1\right)  \|A\textbf{v}(s,\theta_{-t-\varepsilon} \omega, \textbf{v}(0)) \|^2 \, \d s \nonumber\\
&\quad+\int^t_{t-2} \! \|A^{\frac12}\textbf{v}(s,\theta_{-t-\varepsilon} \omega, \textbf{v}(0)) \|^6\, \d s+ C\int^{-\varepsilon}_{-\varepsilon-2} \! \left(1 +|y_\delta(\theta_s\omega)|^6 \right )  \d s\nonumber\\
& \quad \leq   C\left (  |\zeta_3(\omega)|^3+ \sup_{\tau\in (-4,0)}|y_\delta(\theta_\tau\omega)|^6+1\right) \int^{t}_{t+\varepsilon-4}  \|A\textbf{v}(s,\theta_{-t-\varepsilon} \omega, \textbf{v}(0)) \|^2 \, \d s \nonumber\\
&\qquad+ |\zeta_3(\omega)|^3+ C\int^{0}_{-4}    \left(1 +|y_\delta(\theta_s\omega)|^6 \right)   \d s \nonumber\\
& \quad \leq   C\left (  |\zeta_3(\omega)|^3+ \sup_{\tau\in (-4,0)}|y_\delta(\theta_\tau\omega)|^6+1\right) \zeta_3(\omega) \nonumber\\
&\qquad+ |\zeta_3(\omega)|^3+ C\int^{0}_{-4}    \left(1 +|y_\delta(\theta_s\omega)|^6 \right)   \d s,\quad t>T_{\mathfrak B} (\omega) .
\end{align*}

 In other words,  for any  $t\geq T_{\mathfrak B}^* (\omega)= T_{\mathfrak B} (\omega) +5$ and $\eta\in [t-1,t ]$ $($so that $\eta\geq T_{\mathfrak B} (\omega)+4)$, it can obtain that
\begin{align*}
& \|A\textbf{v} (\eta ,\theta_{-t } \omega, \textbf{v}(0)) \|^2 + \frac{\nu}{2} \int_{\eta-1}^\eta  \|A^{\frac32}\textbf{v } (s,\theta_{-t} \omega, \textbf{v}(0)) \|^2 \, \d s\\
  & \quad =  \|A\textbf{v} (\eta ,\theta_{-\eta-(t-\eta) } \omega, \textbf{v}(0)) \|^2
  + \frac{\nu}{2}  \int_{\eta-1}^\eta  \|A^{\frac32}\textbf{v} (s,\theta_{-\eta-(t-\eta)} \omega, \textbf{v}(0)) \|^2 \, \d s \\
&\quad \leq  C\left (  |\zeta_3(\omega)|^3+ \sup_{\tau\in (-4,0)}|y_\delta(\theta_\tau\omega)|^6+1\right) \zeta_3(\omega) \nonumber\\
&\qquad+ |\zeta_3(\omega)|^3+ C\int^{0}_{-4}    \left(1 +|y_\delta(\theta_s\omega)|^6 \right)   \d s:=\zeta_{11}(\omega),\quad t>T_{\mathfrak B}^* (\omega) .
\end{align*}
Then we get the following main estimates
\begin{align}
\int_{t-\frac{1}{2}}^{t}\|A^\frac{3}{2}\textbf{v}(\eta,\vartheta_{-t}\omega,\textbf{v}_0)\|^2\d \eta&\leq \zeta_{11}(\omega)\label{6.15}\\
\int_{t-\frac{1}{2}}^t\|A\textbf{v}(s,\vartheta_{-t}\omega,\textbf{v}_0)\|^6 \d s&\leq |\zeta_{11}(\omega)|^3.\label{6.16}
\end{align}
Inserting \eqref{6.15} and \eqref{6.16} into \eqref{6.13}, we have that
\begin{align}\label{6.18}
&\|A^\frac{3}{2}\textbf{v}(t,\vartheta_{-t}\omega,\textbf{v}_0)\|^2+\frac{\nu}{2}\int_{t-\frac{1}{4}}^t
\|A^2\textbf{v}(s,\vartheta_{-t}\omega,\textbf{v}_0)\|^2 \d s\nonumber\\
&\leq
e^{C
(1+|\zeta_{11}(\omega)|^3)+C
\int_{-\frac{1}{2}}^0
|y_\delta(\vartheta_\tau\omega)|^6 \d\tau}\left(C|\zeta_{11}(\omega)|^3+C\int_{-1}^0
|y_\delta(\vartheta_{s}\omega)|^6\d s+C\right)\nonumber\\
&:=\zeta_{12}(\omega).
\end{align}
This completes the proof of the Lemma \ref{lemma7.2}.
\end{proof}
\subsection{$(H, H^3)$-smoothing}

Let $k=2$ or $k=3$. We introduce a random set given by
$\mathfrak{B}^{k}=\{\mathfrak{B}^k(\omega)\}_{\omega\in\Omega}$ in $H^k$  by
\begin{align}
\mathfrak{B}^{k}(\omega):= \left \{ v \in H^k:\|A^{\frac k2}v\|^2\leq \zeta_{12}(\omega)
\right \}, \quad \omega\in\Omega,
\end{align}
where $\zeta_{12}(\cdot)$ is the tempered random variable given by \eqref{6.18}. In order to prove the $( H, H^{3})$-Lipschtiz, we  need to prove the $( H, H^{2})$-Lipschtiz and $(H^2, H^{3})$-Lipschtiz. By the similar method in Theorem \ref{theorem5.6} and $f\in H^2$, we will obtain the $(H, H^2)$-smoothing. Moreover, we only prove the $(H^2, H^{3})$-Lipschtiz in this subsection.

\begin{lemma}[$(H^2, H^3)$-smoothing on $\mathfrak B^2$]\label{lemma7.3} Let Assumption \ref{assum1} hold and $f\in H^2$. There  exist  random variables $T_\omega^*$ and $ L_5(\mathfrak B^2, \omega )$ such that   any two solutions $\textbf{v}_1$ and $\textbf{v}_2$ of the random anisotropic  NS equations \eqref{2.2} driven by colored noise corresponding to initial values  $\textbf{v}_{1,0},$ $\textbf{v}_{2,0}$ in $\mathfrak{B}^2(\theta_{-T_\omega^*}\omega)$, respectively,   satisfy
\begin{align*}
 \big \|\textbf{v}_1(T_\omega^*,\theta_{-T_\omega^*}\omega,\textbf{v}_{1,0})-\textbf{v}_2(T_\omega^*,\theta_{-T_\omega^*}\omega,
 \textbf{v}_{2,0}) \big \|{^2_{H^3}}\leq  L_5(\mathfrak B^2, \omega)\|\textbf{v}_{1,0}-\textbf{v}_{2,0}\|^2_{H^2}, \quad \omega\in \Omega.
\end{align*}
\end{lemma}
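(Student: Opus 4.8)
The plan is to follow the proof of Lemma \ref{lemma5.1} verbatim, lifted by one derivative, so that $(H^2,H^3)$-smoothing on $\mathfrak B^2$ plays the role that $(H^1,H^2)$-smoothing on $\mathfrak B$ played before; composing it with the $(H,H^2)$-smoothing (the analogue of Theorem \ref{theorem5.6} available when $f\in H^2$) then yields the $(H,H^3)$ estimate. Let $\bar v=\textbf{v}_1-\textbf{v}_2$, which solves the forcing-free difference equation \eqref{5.1}. First I would take the inner product of \eqref{5.1} with $A^3\bar v$ in $H$; the anisotropic principal part is treated exactly as in \eqref{6.11}, using $\nabla\cdot\bar v=0$ and \eqref{3.9}, to bound the viscous contribution below by $\tfrac{\nu}{2}\|A^2\bar v\|^2$. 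The two bilinear terms $-(B(\bar v,\textbf{v}_1+hy_\delta(\theta_t\omega)),A^3\bar v)$ and $-(B(\textbf{v}_2+hy_\delta(\theta_t\omega),\bar v),A^3\bar v)$ are then estimated by integration by parts together with the H\"older, Gagliardo--Nirenberg and Young inequalities, distributing derivatives so that $\|A^2\bar v\|$ always pairs with lower-order factors; since $h\in H^4$ under Assumption \ref{assum1}, every $h$-term is harmless. After absorbing the top-order part into the left side this gives a differential inequality of the shape
\begin{align*}
\frac{\d}{\d t}\|A^{\frac32}\bar v\|^2+\frac{\nu}{2}\|A^2\bar v\|^2
\le C\,P(t)\,\|A^{\frac32}\bar v\|^2+C\,Q(t)\,\|A\bar v\|^2,
\end{align*}
where $P(t),Q(t)$ are polynomials in $\|A^{\frac12}\textbf{v}_i\|,\|A\textbf{v}_i\|,\|A^{\frac32}\textbf{v}_i\|$ and $|y_\delta(\theta_t\omega)|$ (no norm of $\textbf{v}_i$ higher than $H^{3/2}$, no norm of $h$ higher than $H^4$), hence locally integrable in $t$.

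Next I would apply Gronwall's lemma on $(s,t)$ with $s\in(t-1,t)$ and integrate in $s$ over $(t-1,t)$ — thereby avoiding any use of $\|A^{3/2}\bar v\|$ at the initial time, consistent with the smoothing philosophy — obtaining
\begin{align*}
\|A^{\frac32}\bar v(t)\|^2\le Ce^{C\int_{t-1}^tP}\left(\int_{t-1}^t\|A^{\frac32}\bar v(s)\|^2\,\d s+\int_{t-1}^tQ(s)\|A\bar v(s)\|^2\,\d s\right).
\end{align*}
Both integrals on the right are controlled by $\|A\bar v(0)\|^2$: applying Gronwall to the already-derived $H^2$-level inequality \eqref{7.1} (whose weights are integrable thanks to the pointwise $H^2$-bound of Lemma \ref{lemma7.1} and temperedness of $y_\delta$) gives $\|A\bar v(t)\|^2\lesssim e^{C\int}\|A\bar v(0)\|^2$ and, integrating \eqref{7.1}, also $\int_{t-1}^t\|A^{\frac32}\bar v\|^2\,\d s\lesssim e^{C\int}\|A\bar v(0)\|^2$; the factor $Q$ and the exponential $e^{C\int_{t-1}^tP}$ are bounded by tempered random variables using the pointwise and integrated $H^3$-bounds on $\textbf{v}_1,\textbf{v}_2$ from Lemma \ref{lemma7.2}. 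Finally I would replace $\omega$ by $\theta_{-t}\omega$ and evaluate at $t=T_\omega^*:=T_{\mathfrak B}^*(\omega)+1$, using the cocycle identity (as in the derivation of \eqref{mar9.4}) to upgrade Lemma \ref{lemma7.2} to a uniform bound on the window $[T_\omega^*-1,T_\omega^*]$; since $\textbf{v}_{1,0},\textbf{v}_{2,0}\in\mathfrak B^2(\theta_{-T_\omega^*}\omega)$ means $\|A\textbf{v}_{i,0}\|^2\le\zeta_{12}(\theta_{-T_\omega^*}\omega)$, collecting constants produces the claimed estimate with a tempered $L_5(\mathfrak B^2,\omega)$.

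The main obstacle will be the nonlinear estimate at the $H^3$ level. In $-(B(\textbf{v}_2+hy_\delta(\theta_t\omega),\bar v),A^3\bar v)$ and in the ``$\textbf{v}_1$-differentiated'' half of $-(B(\bar v,\textbf{v}_1+hy_\delta(\theta_t\omega)),A^3\bar v)$ the worst contribution places all three derivatives on $\bar v$, and because the anisotropic viscosity controls only $\|A^2\bar v\|$ through \eqref{6.11} (not the full third-order seminorm of each component separately), one must interpolate — via Gagliardo--Nirenberg and $\|A^{\frac32}\bar v\|\le\|A\bar v\|^{1/2}\|A^2\bar v\|^{1/2}$ — so that $\|A^2\bar v\|$ enters with an exponent strictly below $2$ (allowing Young's inequality to close into $\epsilon\|A^2\bar v\|^2$ plus lower terms) while the coefficients stay among the pointwise-bounded norms of $\textbf{v}_i$ (at most $H^{3/2}$) and of $h$ (at most $H^4$), never involving $\|A^2\textbf{v}_i\|$, which is only integrable. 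Once this bookkeeping is carried out, the remainder of the argument parallels Section \ref{sec5} without change.
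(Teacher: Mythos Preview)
Your proposal is correct and matches the paper's approach closely: test \eqref{5.1} against $A^3\bar v$, use the anisotropic dissipation bound \eqref{6.19} to extract $\tfrac{\nu}{2}\|A^2\bar v\|^2$, estimate the two bilinear terms to reach the $P,Q$-form differential inequality, apply Gronwall on $(s,t)$ and integrate over $s\in(t-1,t)$, feed in the $H^2$-level Gronwall from \eqref{7.1} (with $\|A^{1/2}\bar v\|^2\lesssim\|A\bar v\|^2$ absorbing the source), and evaluate at $T_\omega^*=T_{\mathfrak B}^*(\omega)+1$ using Lemma \ref{lemma7.2} for the window integrals. The only slip is that the exponential weight $\int_0^{T_\omega^*}\|Av_i\|^4$ arising from the $H^2$-level Gronwall is not controlled by Lemma \ref{lemma7.1} (a pullback bound valid only for $t\ge T_{\mathfrak B}(\omega)$, not on all of $[0,T_\omega^*]$) but rather by a forward Gronwall on the differential inequality \eqref{6.4} using $\|Av_{i,0}\|^2\le\zeta_{12}(\theta_{-T_\omega^*}\omega)$ from $\mathfrak B^2$-membership, exactly as the paper does in deriving \eqref{7.4}.
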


\begin{proof} Taking the inner product of the system   \eqref{5.1} with $A^3\bar{v}$ in $H$,  by integration by parts, we can deduce that
\begin{align}\label{6.20}
&\frac{1}{2}\frac{\d}{\d t}\|A^{\frac32}\bar{v}\|^2+\nu \int_{\mathbb{T}^2}\partial_{yy}\Delta \bar{v}_1\Delta^2\bar{v}_1dx+\nu \int_{\mathbb{T}^2}\partial_{xx}\Delta \bar{v}_2\Delta^2 \bar{v}_2dx\nonumber\\
&=- \big (B(\bar{v},\textbf{v}_1+hy_\delta(\theta_t\omega)),\, A^3\bar{v}  \big )
- \big (B(\textbf{v}_2+hy_\delta(\theta_t\omega),\bar{v}),\, A^3\bar{v} \big ).
\end{align}
By direct computations and $\nabla\cdot \bar{v}=0$ and \eqref{3.9}, we also have
\begin{align}\label{6.19}
&\nu \int_{\mathbb{T}^2}\partial_{yy}\Delta \bar{v}_1\Delta^2 \bar{v}_1dx-\nu \int_{\mathbb{T}^2}\partial_{xx}\Delta \bar{v}_2\Delta^2 \bar{v}_2dx\nonumber\\
&= \nu \int_{\mathbb{T}^2}(\partial_{yy}\Delta \bar{v}_1)^2dx+\nu \int_{\mathbb{T}^2}\partial_{xy}\Delta \bar{v}_1\partial_{xy}\Delta \bar{v}_1dx\nonumber\\
&+\nu \int_{\mathbb{T}^2}(\partial_{xx}\Delta \bar{v}_2)^2dx+\nu \int_{\mathbb{T}^2}\partial_{xy}\Delta \bar{v}_2\partial_{xy}\Delta \bar{v}_2dx
\nonumber\\
&=\nu \int_{\mathbb{T}^2}(\partial_{yy}\Delta \bar{v}_1)^2dx+\nu \int_{\mathbb{T}^2}(\partial_{yy}\Delta \bar{v}_2)^2dx\nonumber\\
&+\nu \int_{\mathbb{T}^2}(\partial_{xx}\Delta \bar{v}_2)^2dx+\nu \int_{\mathbb{T}^2}(\partial_{xx}\Delta \bar{v}_1)^2dx
\nonumber\\
&\geq\frac\nu2\|A^2 \textbf{v}\|^2.
\end{align}
By using the H\"{o}lder inequality, the Gagliardo-Nirenberg inequality and the Young inequality, we can get that
\begin{align}\label{6.21}
 \big| \big(B(\bar{v},\textbf{v}_1+hy_\delta(\theta_t\omega)),\, A^3\bar{v}\big)
 \big| &\leq C\|A\bar{v}\|_{L^\infty}\|\nabla(\textbf{v}_1+hy_\delta(\theta_t\omega))\|\|A^2\bar{v}\|  \nonumber\\
&\quad
 +C\|\bar{v}\|_{L^\infty}\|A^\frac32(\textbf{v}_1+hy_\delta(\theta_t\omega))\|\|A^2\bar{v}\|
 \nonumber\\
&\leq C\|A^\frac32\bar{v}\|^\frac12\|A^2\bar{v}\|^\frac32
\left (\|A^\frac12\textbf{v}_1\|+\|A^\frac12h\||y_\delta(\theta_t\omega)| \right )\nonumber\\
&\quad +C\|A\bar{v}\|\|A^2\bar{v}\| \big(\|A^\frac32\textbf{v}_1\|+\|A^\frac32h\||y_\delta(\theta_t\omega)| \big)\nonumber\\
&\leq \frac  \nu 8\|A^2\bar{v}\|^2+C\|A^\frac32\bar{v}\|^2 \left (\|A^\frac12\textbf{v}_1\|^4+|y_\delta(\theta_t\omega)|^4 \right)\nonumber\\
&\quad +C\|A\bar{v}\|^2 \left (\|A^\frac32\textbf{v}_1\|^2+|y_\delta(\theta_t\omega)|^2 \right),
\end{align}
and
\begin{align}\label{6.22}
&\big| \big(B(\textbf{v}_2+hy_\delta(\theta_t\omega),\bar{v}),\, A^3\bar{v}\big)\big|\nonumber\\
&\leq C\|A(\textbf{v}_2+hy_\delta(\theta_t\omega))\|_{L^4}\|\nabla\bar{v}\|_{L^4}\|A^2\bar{v}\| \nonumber\\
&\quad
 +C\|\textbf{v}_2+hy_\delta(\theta_t\omega)\|_{L^4}\|A^\frac32\bar{v}\|_{L^4}\|A^2\bar{v}\|\nonumber\\
&\leq C\|A\bar{v}\|\|A^2\bar{v}\|
\left (\|A^\frac32\textbf{v}_2\|+\|A^\frac32h\||y_\delta(\theta_t\omega)| \right)\nonumber\\
&\quad +C \|A^\frac12(\textbf{v}_2+hy_\delta(\theta_t\omega)
)\|\|A^\frac32\bar{v}\|^\frac12\|A^2\bar{v}\|^\frac32 \nonumber\\
&\leq \frac  \nu 8\|A^2\bar{v}\|^2+C\|A^\frac32\bar{v}\|^2 \left (\|A^\frac12\textbf{v}_2\|^4+|y_\delta(\theta_t\omega)|^4\right)\nonumber\\
&\quad +C\|A\bar{v}\|^2 \left (\|A^\frac32\textbf{v}_2\|^2+|y_\delta(\theta_t\omega)|^2 \right).
\end{align}
Inserting \eqref{6.19}, \eqref{6.21} and \eqref{6.22} into \eqref{6.20} yields
\begin{align*}
\frac{\d}{\d t}\|A^\frac32\bar{v}\|^2
&\leq C\|A^\frac32\bar{v}\|^2 \left ( \sum_{j=1}^2 \|A^\frac12\textbf{v}_j\|^4 +|y_\delta(\theta_t\omega)|^4 \right )
\nonumber\\
 &+C\|A\bar{v}\|^2 \left (\sum_{j=1}^2\|A^\frac32\textbf{v}_j\|^2+|y_\delta(\theta_t\omega)|^2 \right).
\end{align*}
For any $s\in(t-1,t)$ and $t\geq 1$, applying Gronwall's lemma, then  we also can get that
\begin{align*}
&\|A^\frac32\bar{v}(t)\|^2
-e^{\int_s^tC( \sum_{i=1}^2 \|A^\frac12\textbf{v}_i\|^4 +|y_\delta(\theta_\tau\omega)|^4)\, \d \tau}\|A^\frac32\bar{v}(s)\|^2\nonumber\\
&\quad \leq \int_s^tCe^{\int_\eta^tC( \sum_{i=1}^2 \|A^\frac12\textbf{v}_i\|^4+|y_\delta(\theta_\tau\omega)|^4)\, \d \tau}
\|A\bar{v}\|^2 \left(\sum_{i=1}^2\|A^\frac32\textbf{v}_i\|^2+|y_\delta(\theta_\eta\omega)|^2\right) \d \eta\nonumber\\
&\quad \leq Ce^{\int_{t-1}^tC( \sum_{i=1}^2 \|A^\frac12\textbf{v}_i\|^2+|y_\delta(\theta_\tau\omega)|^4)\, \d \tau}
\int_{t-1}^t\|A\bar{v}\|^2\left(\sum_{i=1}^2\|A^\frac32\textbf{v}_i\|^2+|y_\delta(\theta_\eta\omega)|^2 \right) \d \eta. \nonumber
\end{align*}
Integrating w.r.t. $s$ on $(t-1,t)$ yields
\begin{align}\label{6.23}
&\|A^\frac32\bar{v}(t)\|^2
-\int_{t-1}^te^{\int_s^tC( \sum_{i=1}^2 \|A^\frac12\textbf{v}_i\|^4+|y_\delta(\theta_\tau\omega)|^4)\, \d \tau}\|A^\frac32\bar{v}(s)\|^2\, \d s\nonumber\\
&\quad \leq Ce^{\int_{t-1}^tC( \sum_{i=1}^2 \|A^\frac12\textbf{v}_i\|^4+|y_\delta(\theta_\tau\omega)|^4)\, \d \tau}
\int_{t-1}^t\|A\bar{v}\|^2\left (\sum_{i=1}^2\|A^\frac32\textbf{v}_i\|^2+|y_\delta(\theta_\eta\omega)|^2\right) \d \eta.
\end{align}

Since our initial values $\textbf{v}_{1,0}$ and $\textbf{v}_{2,0}$   belong to the $H^2$ random  absorbing set  $\mathfrak B^2$,  applying Gronwall's lemma to \eqref{7.1}, then we can deduce    that
\begin{align*}
 & \|A\bar{v}(t)\|^2 + \frac{\nu}{2} \int_0^te^{\int_s^tC( \sum_{i=1}^2 \|A^\frac12\textbf{v}_i\|^4+\|A\textbf{v}_1\|^4+|y_\delta(\theta_\tau\omega)|^4)\, \d \tau}\|A^\frac32\bar{v}(s) \|^2\, \d s
\nonumber\\
&\quad \leq e^{\int_0^tC( \sum_{i=1}^2 \|A^\frac12\textbf{v}_i\|^4+\|A\textbf{v}_1\|^4+|y_\delta(\theta_\tau\omega)|^4)\, \d \tau}\|A\bar{v}(0)\|^2.
\end{align*}
Hence, we can get the  following inequalities
\begin{align*}
& \frac{\nu}{2} \int_{t-1}^te^{\int_s^tC( \sum_{i=1}^2 \|A^\frac12\textbf{v}_i\|^4+|y_\delta(\theta_\tau\omega)|^4)\, \d \tau}\|A^\frac32\bar{v}(s)\|^2\, \d s\nonumber\\
&\quad  \leq\frac{\nu}{2} \int_0^te^{\int_s^tC( \sum_{i=1}^2 \|A^\frac12\textbf{v}_i\|^4+\|A\textbf{v}_1\|^4+|y_\delta(\theta_\tau\omega)|^4)\, \d \tau}\|A^\frac32\bar{v}(s) \|^2\, \d s\nonumber\\
&\quad \leq e^{\int_{0}^tC( \sum_{i=1}^2 \|A^\frac12\textbf{v}_i\|^4+\|A\textbf{v}_1\|^4+|y_\delta(\theta_\tau \omega)|^4)\, \d \tau}\|A\bar{v}(0)\|^2, \nonumber
\end{align*}
and
\begin{align}
&
\int_{t-1}^t\|A\bar{v}\|^2 \left (\sum_{i=1}^2\|A^\frac32\textbf{v}_i\|^2+|y_\delta(\theta_\eta\omega)|^2\right) \d \eta\nonumber\\
&\quad \leq e^{\int_0^tC( \sum_{i=1}^2 \|A^\frac12\textbf{v}_i\|^4+\|A\textbf{v}_1\|^4+|y_\delta(\theta_\tau\omega)|^4)\, \d \tau}\|A\bar{v}(0)\|^2
\int_{t-1}^t \!  \left (\sum_{i=1}^2\|A^\frac32\textbf{v}_i\|^2+|y_\delta(\theta_\eta\omega)|^2\right) \d \eta.\nonumber
\end{align}
Inserting these inequalities  into \eqref{6.23}, we also can deduce that
\begin{align}
 \|A^\frac32\bar{v}(t)\|^2
&\leq Ce^{\int_0^tC( \sum_{i=1}^2 \|A^\frac12\textbf{v}_i\|^4+\|A\textbf{v}_1\|^4+|y_\delta(\theta_\tau\omega)|^4)\, \d \tau}\|A\bar{v}(0)\|^2\nonumber\\
&\int_{t-1}^t  \! \left (1+\sum_{i=1}^2\|A^\frac32\textbf{v}_i\|^2+|y_\delta(\theta_\eta\omega)|^2\right)  \d \eta   \nonumber
\end{align}
for any $t\geq 1$.
We replace $\omega$ by $\theta_{-t}\omega$ to  get  for any $ t\geq 1$
\begin{align} \label{7.6}
 &\|A^\frac32\bar{v}(t,\theta_{-t}\omega,\bar{v}(0))\|^2\nonumber\\
&\leq Ce^{\int_0^tC( \sum_{i=1}^2\|A^\frac12\textbf{v}_i(\tau,\theta_{-t}\omega,\textbf{v}_i(0))\|^4
+\|A\textbf{v}_i(\tau,\theta_{-t}\omega,\textbf{v}_i(0))\|^4
+|y_\delta(\theta_{\tau-t}\omega)|^4)\, \d \tau}\|A\bar{v}(0)\|^2\nonumber\\
&\quad  \times \int_{t-1}^t  \! \left ( \sum_{i=1}^2\|A^\frac32\textbf{v}_i(\eta,\theta_{-t}\omega,\textbf{v}_{i}(0))\|^2+|y_\delta(\theta_{\eta-t}\omega)|^2+1\right) \d \eta.
\end{align}

By \eqref{6.18},   let
\begin{align*}
T_\omega^* :=T_{\mathfrak B}^* (\omega)+1, \quad \omega\in \Omega,
\end{align*}
we can get that
\begin{align}
 & \int_{T_\omega^* -1}^{T_\omega^* }  \left ( \sum_{i=1}^2\|A^\frac32\textbf{v}_i(\eta,\theta_{-T_\omega^* }\omega,\textbf{v}_{i}(0))\|^2+|y_\delta(\theta_{\eta-T_\omega^* }\omega)|^2\right)  \d \eta  \nonumber\\
 &\quad
 \leq  C\zeta_{12}(\omega) + \int_{-1}^0 |y_\delta(\theta_{\eta}\omega)|^2 \, \d \eta =: \zeta_{13}(\omega) . \label{7.2}
 \end{align}

By \eqref{mar11.3},  for   $\textbf{v}(0) \in \mathfrak B(\theta_{-t}\omega)$ we also can deduce that
 \begin{align*}
 \int_0^t
 \|A^{\frac 12} \textbf{v}(\eta, \theta_{-t} \omega, \textbf{v}(0))\|^4 \, \d \eta
 &   \leq  \left( \| \mathfrak B  (\theta_{-t}\omega)\|^2_{H^1} +1 \right)^2 \int_0^t e^{ \int^\eta_0 C(1+|y_\delta(\theta_{\tau-t} \omega)|^6) \, \d \tau}  \, \d \eta \nonumber \\
 &   \leq  \big( \zeta_2(\theta_{-t}\omega)  +C \big)^2  e^{ Ct+C \int^0_{-t}  |y_\delta(\theta_{\tau} \omega)|^6 \, \d \tau}   ,
   \quad t>0  , \nonumber
\end{align*}
and, particularly for $t=T_\omega^*$,
 \begin{align}\label{7.3}
 & \int_0^{T_\omega^*}
 \|A^{\frac 12} \textbf{v}(\eta, \theta_{-{T_\omega^*}} \omega, \textbf{v}(0))\|^4 \, \d \eta \nonumber \\
 &   \quad  \leq  \big( \zeta_2(\theta_{-{T_\omega^*}}\omega)  +C \big)^2  e^{ C {T_\omega^*}+C \int^0_{-{T_\omega^*}}  |y_\delta(\theta_{\tau} \omega)|^6 \, \d \tau} =:\zeta_9(\omega).
\end{align}

On the other hand, by \eqref{6.4}, it yields
\begin{align*}
\frac{\d}{\d t}\|A\textbf{v}\|^2&\leq  C(1+\|A^{\frac{1}{2}}\textbf{v}\|^4+|y_\delta(\vartheta_t\omega)|^4)\|A\textbf{v}\|^2\nonumber\\
&+C(1+\|A^{\frac{1}{2}}\textbf{v}\|^6+|y_\delta(\vartheta_t\omega)|^6).
\end{align*}
we apply Gronwall's lemma to deduce
\begin{align*}
 \|A \textbf{v}(t,\omega, \textbf{v}(0))\|^2
  & \leq  e^{ \int^t_0 C(1+\|A^{\frac{1}{2}}\textbf{v}\|^4+|y_\delta(\theta_\tau\omega)|^4) \, \d \tau} \|A \textbf{v}(0)\|^2  \\
 &\quad
 +\int_0^t Ce^{ \int^t_s C(1+\|A^{\frac{1}{2}}\textbf{v}\|^4+|y_\delta(\theta_\tau\omega)|^2) \, \d \tau} \left (1+\|A^{\frac{1}{2}}\textbf{v}\|^6+ |y_\delta(\theta_s\omega)|^6 \right)  \d s \\
 &\leq  e^{ \int^t_0 C(1+\|A^{\frac{1}{2}}\textbf{v}\|^4+|y_\delta(\theta_\tau\omega)|^4) \, \d \tau} \left[ \|A \textbf{v}(0)\|^2
 +\int_0^t  C \left(1+\|A^{\frac{1}{2}}\textbf{v}\|^6+ |y_\delta(\theta_s\omega)|^6 \right)  \d s \right]  .
\end{align*}
Since
\begin{align*}
\int_0^t  C \left (1+ |y_\delta(\theta_s\omega)|^6 \right)  \d s \leq e^{ \int^t_0 C(1+|y_\delta(\theta_\tau\omega)|^6) \, \d \tau},
\end{align*}
then the estimate is given by
\begin{align*}
 \|A \textbf{v}(t,\omega, \textbf{v}(0))\|^2
   \leq  \left( \|A \textbf{v}(0)\|^2 +1 \right) e^{ \int^t_0 C(1+\|A^{\frac{1}{2}}\textbf{v}\|^6+|y_\delta(\theta_\tau\omega)|^6) \, \d \tau} ,
   \quad t>0.
\end{align*}
For   $\textbf{v}(0) \in \mathfrak B^2(\theta_{-t}\omega)$ we also can deduce that
 \begin{align*}
 \int_0^t
 \|A \textbf{v}(\eta, \theta_{-t} \omega, \textbf{v}(0))\|^4 \, \d \eta
 &   \leq  \left( \| \mathfrak B  (\theta_{-t}\omega)\|^2_{H^2} +1 \right)^2 \int_0^t e^{ \int^\eta_0 C(1+|y_\delta(\theta_{\tau-t} \omega)|^6) \, \d \tau}  \, \d \eta \nonumber \\
 &   \leq  \big( \zeta_{12}(\theta_{-t}\omega)  +C \big)^2  e^{ Ct+C \int^0_{-t}  |y_\delta(\theta_{\tau} \omega)|^6 \, \d \tau}   ,
   \quad t>0  , \nonumber
\end{align*}
and, particularly for $t=T_\omega^*$,
 \begin{align}\label{7.4}
 & \int_0^{T_\omega^*}
 \|A \textbf{v}(\eta, \theta_{-{T_\omega^*}} \omega, \textbf{v}(0))\|^4 \, \d \eta \nonumber \\
 &   \quad  \leq  \big( \zeta_{12}(\theta_{-{T_\omega^*}}\omega)  +C \big)^3  e^{ C {T_\omega^*}+C \int^0_{-{T_\omega^*}}  |y_\delta(\theta_{\tau} \omega)|^6 \, \d \tau} =:\zeta_{14}(\omega).
\end{align}
Finally, inserting     \eqref{7.2}, \eqref{7.3}  and \eqref{7.4}  into \eqref{7.6}  we can get at $t=T_\omega^*$ that
\begin{align}
 \|A^\frac32\bar{v}(T_\omega^* ,\theta_{-T_\omega^* }\omega,\bar{v}(0))\|^2
&\leq Ce^{  C (\zeta_9(\omega)+\zeta_{14}(\omega)) } \zeta_{13}(\omega) \|A^\frac32\bar{v}(0)\|^2. \nonumber
\end{align}
Hence, let
\begin{align*}
 L_5 (\mathfrak B^2, \omega) :=   Ce^{ C (\zeta_9(\omega)+\zeta_{14}(\omega)) } \zeta_{13}(\omega) ,\quad \omega\in \Omega. \qedhere
\end{align*}
This completes the proof of Lemma \ref{lemma7.3}.
\end{proof}

In this section, we will introduce the following main result.
\begin{theorem}[$(H, H^3)$-smoothing] \label{theorem7.1}
 Let Assumption \ref{assum1} hold and $f\in H^2$. For   any tempered set $\mathfrak D \in \D_H$, there
  are  random variables $T_{{\mathfrak D}}^* (\cdot)  $   and $ L_{\mathfrak D}^*(\cdot )$ such that  any two solutions $\textbf{v}_1$ and $\textbf{v}_2$ of the random anisotropic  NS equations \eqref{2.2} driven by colored noise corresponding to initial values   $\textbf{v}_{1,0},$ $ \textbf{v}_{2,0}$ in $\mathfrak D \left (\theta_{-T_{\mathfrak D}^*(\omega)}\omega \right)$, respectively, satisfy
  \begin{align}
  &
  \big\|\textbf{v}_1 \!  \big (T_{\mathfrak D}^*(\omega),\theta_{-T_{\mathfrak D}^*(\omega)}\omega,\textbf{v}_{1,0}\big)
  -\textbf{v}_2 \big (T_{\mathfrak D}^*(\omega),\theta_{-T_{\mathfrak D}^*(\omega)}\omega,\textbf{v}_{2,0} \big) \big \|{^2_{H^3}}  \notag \\[0.8ex]
&\quad \leq  L_{\mathfrak D}^* (\omega)\|\textbf{v}_{1,0}-\textbf{v}_{2,0}\|^2,\quad \omega\in \Omega.\label{sep5.1}
\end{align}
\end{theorem}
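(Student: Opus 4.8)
The plan is to obtain \eqref{sep5.1} by composing two Lipschitz estimates along the flow, exactly in the spirit of Theorem \ref{theorem5.6}, but lifted by one derivative. The two building blocks are: (a) the $(H,H^2)$-smoothing of Theorem \ref{theorem5.6}, which is available here since Assumption \ref{assum1} implies Assumption \ref{assum} and $H^2\hookrightarrow H$, together with the fact --- supplied by the $H^3$-bound of Lemma \ref{lemma7.2} --- that the $H^2$-ball $\mathfrak{B}^2$ is a tempered random absorbing set (in $H^2$) for $\phi$; and (b) the $(H^2,H^3)$-smoothing on $\mathfrak{B}^2$ of Lemma \ref{lemma7.3}, which provides random variables $T^*_\omega$ and $L_5(\mathfrak{B}^2,\cdot)$.

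First I would upgrade the conclusion of Theorem \ref{theorem5.6}: for a tempered set $\mathfrak D\in\D_H$ there exist a random time $\widehat T_{\mathfrak D}(\omega)$ (enlarge the time of Theorem \ref{theorem5.6}, using that $\mathfrak{B}^2$ pullback-absorbs $\mathfrak D$, by Lemma \ref{lemma7.2}) and a tempered $\widehat L_{\mathfrak D}(\cdot)$ such that
\begin{align*}
\|\bar v(\widehat T_{\mathfrak D},\theta_{-\widehat T_{\mathfrak D}}\omega,\bar v(0))\|^2_{H^2}\leq \widehat L_{\mathfrak D}(\omega)\|\bar v(0)\|^2 ,
\end{align*}
and, in addition, $\textbf{v}(\widehat T_{\mathfrak D},\theta_{-\widehat T_{\mathfrak D}}\omega,\textbf{v}_0)\in\mathfrak{B}^2(\omega)$ for every $\textbf{v}_0\in\mathfrak D(\theta_{-\widehat T_{\mathfrak D}}\omega)$; this is proved just like Lemma \ref{lem:H1}, with the roles of $H$, $H^1$, $\mathfrak B$ played by $H$, $H^2$, $\mathfrak{B}^2$. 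Then I invoke Lemma \ref{lemma7.3} on $\mathfrak{B}^2(\theta_{-T^*_\omega}\omega)$.

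Next I would chain the two estimates via the cocycle property. Setting $T^*_{\mathfrak D}(\omega):=T^*_\omega+\widehat T_{\mathfrak D}(\theta_{-T^*_\omega}\omega)$ and using $\textbf{v}(\widehat T_{\mathfrak D},\cdot)\in\mathfrak{B}^2$, one gets, for $\textbf{v}_{1,0},\textbf{v}_{2,0}\in\mathfrak D(\theta_{-T^*_{\mathfrak D}(\omega)}\omega)$,
\begin{align*}
\big\|\bar v\big(T^*_{\mathfrak D},\theta_{-T^*_{\mathfrak D}}\omega,\bar v(0)\big)\big\|^2_{H^3}
&\leq L_5(\mathfrak{B}^2,\omega)\,\big\|\bar v\big(\widehat T_{\mathfrak D}(\theta_{-T^*_\omega}\omega),\theta_{-T^*_{\mathfrak D}}\omega,\bar v(0)\big)\big\|^2_{H^2} \\
&\leq L_5(\mathfrak{B}^2,\omega)\,\widehat L_{\mathfrak D}(\theta_{-T^*_\omega}\omega)\,\|\bar v(0)\|^2 ,
\end{align*}
so one may take $L^*_{\mathfrak D}(\omega):=L_5(\mathfrak{B}^2,\omega)\,\widehat L_{\mathfrak D}(\theta_{-T^*_\omega}\omega)$; the measurability and temperedness of $T^*_{\mathfrak D}$ and $L^*_{\mathfrak D}$ are inherited from those of the constituents.

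The main obstacle is not the composition (which is bookkeeping once the pieces are assembled) but the genuinely new ingredient behind Lemma \ref{lemma7.3}: closing the energy inequality for $\|A^{3/2}\bar v\|^2$ requires the uniform control $\int_{t-1}^t\|A^{3/2}\textbf{v}_i(s,\theta_{-t}\omega,\textbf{v}_{i,0})\|^2\,\d s\leq\zeta(\omega)$ over the absorbing set, which is precisely what the $H^3$-bound Lemma \ref{lemma7.2} delivers, and it uses in an essential way the extra regularity $h\in H^4$, $f\in H^2$ of Assumption \ref{assum1} to absorb the forcing and the nonlinear terms. A secondary technical point is that one must route the $(H,H^2)$-smoothing into the $H^2$-absorbing set $\mathfrak{B}^2$ (not merely into the $H^1$-absorbing set $\mathfrak B$) before Lemma \ref{lemma7.3} is applicable; this is the reason for the preliminary upgrade of Theorem \ref{theorem5.6} described above.
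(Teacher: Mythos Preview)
Your proposal is correct and essentially identical to the paper's proof: the paper also composes the $(H,H^2)$-smoothing of Theorem~\ref{theorem5.6} with the $(H^2,H^3)$-smoothing on $\mathfrak B^2$ of Lemma~\ref{lemma7.3} via the cocycle property, and sets $T^*_{\mathfrak D}(\omega)=T^*_\omega+T_{\mathfrak D}(\theta_{-T^*_\omega}\omega)$ and $L^*_{\mathfrak D}(\omega)=L_5(\mathfrak B^2,\omega)\,L_{\mathfrak D}(\theta_{-T^*_\omega}\omega)$. Your explicit step of routing the intermediate state into $\mathfrak B^2$ (rather than merely $\mathfrak B$) before invoking Lemma~\ref{lemma7.3} is a point the paper handles only implicitly, so your version is in fact slightly more careful.
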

\begin{proof}
By virtue of Theorem \ref{theorem5.6}, for any the tempered set   $\mathfrak D\in \D_H$ there are random variables $ T_{\mathfrak D}(\cdot )  $ and $L_{\mathfrak D}(\cdot) $ such that
  \begin{align} \label{7.5}
  \left \| \bar v\left (T_{\mathfrak D} ,\theta_{-T_{\mathfrak D}}\omega,  \bar v(0)\right)
  \right \|^2_{H^2}
 \leq L_{\mathfrak D}( \omega) \|\bar {v} (0)\|^2  ,
  \end{align}
where $T_{\mathfrak D}= T_{\mathfrak D}(\mathfrak D, \omega)$. Moreover, $ \textbf{v}\left (T_{\mathfrak D} ,\theta_{-T_{\mathfrak D}}\omega,  \textbf{v}_{0}\right) \in \mathfrak B(\omega)$ for any $\textbf{v}_0\in \mathfrak D(\theta_{-T_{\mathfrak D}}\omega) $, $\omega\in \Omega$.
Hence, by Lemma \ref{lemma7.3} and \eqref{7.5}, we also can deduce that
 \begin{align*}
  \left \| \bar v \big( T_\omega +T_{\mathfrak D}, \theta_{-T_\omega-T_{\mathfrak D}}
 \omega, \bar v(0) \big)
   \right \|^2_{H^3}
 &  =  \left \| \bar v \big( T_\omega  , \theta_{-T_\omega }
 \omega, \bar v  ( T_{\mathfrak D}, \theta_{-T_\omega-T_{\mathfrak D}}
 \omega, \bar v(0)  )\big)
   \right \|^2_{H^3}  \\
   &\leq  L_5(\mathfrak B^2, \omega) \left \|  \bar v \big  ( T_{\mathfrak D}, \theta_{-T_\omega-T_{\mathfrak D}}
 \omega, \bar v(0)  \big)
   \right \|^2_{H^2}  \  \text{(by Lemma \ref{lemma7.3})}\\
   &\leq L_5(\mathfrak B^2,\omega)  L_{\mathfrak D} \big (\mathfrak D, \theta_{-T_\omega}\omega \big) \|\bar v(0)\|^2  \  \text{(by \eqref{7.5})} ,
 \end{align*}
where $T_{\mathfrak D}=T_{\mathfrak D}(\theta_{-T_\omega}  \omega)$,
uniformly for any $\textbf{v}_{1,0},$ $ \textbf{v}_{2,0}\in \B(\theta_{-T_\omega-T_{\mathfrak D}(\theta_{-T_\omega} \omega)}\omega)$, $  \omega\in \Omega$. Thence,
let  the random variables is given by
 \begin{align*}
  & T_{\mathfrak D}^*( \omega) := T_\omega +T_{\mathfrak D}(\theta_{-T_\omega} \omega) , \\
  &L_{\mathfrak D}^*(\omega) :=  L_5(\mathfrak B^2,\omega)   L_{\mathfrak D} \big (\mathfrak D, \theta_{-T_\omega}\omega \big).
 \end{align*}
This completes the proof of Theorem \ref{theorem7.1}.
  \end{proof}
By virtue of the Theorem \ref{theorem5.1}, we use the similar method to get the following main result.
\begin{theorem} \label{theorem7.2}
 Let Assumption \ref{assum1} hold and $f\in H^2$. Then the  RDS $\phi$ generated by the the random anisotropic  NS equations \eqref{2.2} driven by colored noise  has  a tempered $(H,H^3)$-random attractor $\mathcal A $. In addition, $\mathcal A$ has finite fractal dimension in $H^3$: there exists   a constant $ d>0$ such that
\[
d_f^{H^3} \! \big ( \A(\omega) \big)  \leq d, \quad \omega \in \Omega.
\]
\end{theorem}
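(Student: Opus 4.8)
The plan is to follow the now-standard template established in the preceding sections: the $(H,H^3)$-random attractor is obtained by combining the $H^3$ random absorbing set from Lemma \ref{lemma7.2} with the $(H,H^3)$-smoothing property from Theorem \ref{theorem7.1}, exactly as Theorem \ref{theorem5.1} was obtained from Theorem \ref{theorem4.1} and Theorem \ref{theorem5.6}. First I would recall that, by Lemma \ref{lemma7.2}, the RDS $\phi$ possesses a random absorbing set $\mathfrak B^3(\omega)=\{v\in H^3:\|A^{3/2}v\|^2\leq \zeta_{12}(\omega)\}$ which is tempered and closed in $H^3$; this verifies hypothesis (i) of Lemma \ref{lem:cui18} with $X=H$, $Y=H^3$. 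Next I would use the $(H,H^3)$-smoothing estimate \eqref{sep5.1} of Theorem \ref{theorem7.1}: given any $\mathfrak D\in\D_H$ and any sequences $t_n\to\infty$ and $x_n\in\mathfrak D(\theta_{-t_n}\omega)$, write each $\phi(t_n,\theta_{-t_n}\omega,x_n)$ as a two-step composition through time $T^*_{\mathfrak D}(\omega)$; the tail part lands in a set that is bounded in $H^3$ (hence precompact in $H^3$ by the compact embedding of a bounded ball of $H^3$ into itself is not available, so instead one uses that it is bounded in $H^3$ and then applies the smoothing from $H$ to $H^3$ on the remaining finite time — this is precisely the argument giving $(H,H^3)$-asymptotic compactness). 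More carefully, following the proof of Theorem \ref{theorem5.1} verbatim: from the $(H,H^3)$-Lipschitz continuity \eqref{sep5.1} and the existence of a convergent-in-$H$ subsequence of $\phi(t_n-T^*_{\mathfrak D}(\omega),\dots)$ (which holds since $\phi$ already has an $(H,H)$-random attractor by Theorem \ref{theorem4.6}), one deduces that $\phi(t_n,\theta_{-t_n}\omega,x_n)$ is Cauchy in $H^3$, establishing hypothesis (ii) of Lemma \ref{lem:cui18}.

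Having verified both hypotheses, Lemma \ref{lem:cui18} yields a unique $(H,H^3)$-random attractor $\mathcal A$, given by the usual $\omega$-limit-set formula over $\mathfrak B^3$; since this $\mathcal A$ pullback attracts in $H$ and $\phi$ already has the unique $(H,H)$-attractor, these two coincide as random sets, so $\mathcal A$ is simultaneously the $H$-attractor and the $(H,H^3)$-attractor, with temperedness and compactness in $H^3$. For the fractal-dimension bound I would invoke, exactly as in Theorem \ref{theorem5.1}, the fact that $d_f^H(\mathcal A(\omega))\leq d$ (Theorem \ref{theorem4.6}) together with the $(H,H^3)$-smoothing \eqref{sep5.1} and the abstract lifting lemma (Lemma 5 in \cite{cui}), which propagates finite fractal dimension from the weaker space $H$ to the stronger space $H^3$ whenever the system enjoys a local $(H,H^3)$-Lipschitz property on a pullback-absorbing set; this gives $d_f^{H^3}(\mathcal A(\omega))\leq d$ for all $\omega\in\Omega$.

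The only genuinely new ingredients relative to Section \ref{sec6} are Lemma \ref{lemma7.2} and Theorem \ref{theorem7.1}, both of which are assumed available; the remaining argument is a routine transcription. Consequently the main obstacle is not in this final theorem itself but lies upstream: it is the chain of energy estimates in Lemmas \ref{lemma7.1}--\ref{lemma7.2} and Lemma \ref{lemma7.3}, where one must control the nonlinear term $B(\mathbf v+hy_\delta)$ tested against $A^3\mathbf v$ (respectively $A^3\bar v$), which forces $f\in H^2$, $h\in H^4$, and the careful use of the anisotropic dissipation identities \eqref{3.10}--\eqref{3.12} at the $A^2$-level together with Gagliardo--Nirenberg interpolation to absorb the top-order term $\tfrac\nu8\|A^2\mathbf v\|^2$. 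Within the proof of the present theorem, the one point requiring a line of care is checking that the absorbing set $\mathfrak B^3$ is indeed \emph{closed} and \emph{measurable} in $H^3$ — closedness is immediate from its definition as a sublevel set of $\|A^{3/2}\cdot\|$, and measurability of $\omega\mapsto\zeta_{12}(\omega)$ follows from the measurability of the Ornstein--Uhlenbeck data as in Lemma \ref{lemma3.4} and Proposition \ref{proposition3.5}.

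\begin{proof}
By Lemma \ref{lemma7.2}, the RDS $\phi$ has a random absorbing set
\[
 \mathfrak B^3(\omega):=\left\{\mathbf v\in H^3:\|A^{\frac32}\mathbf v\|^2\leq \zeta_{12}(\omega)\right\},\quad\omega\in\Omega,
\]
which is tempered and closed in $H^3$. This verifies condition (i) of Lemma \ref{lem:cui18} with $X=H$ and $Y=H^3$.

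To verify condition (ii), let $\mathfrak D\in\D_H$, $t_n\to\infty$ and $x_n\in\mathfrak D(\theta_{-t_n}\omega)$. For $n$ large, $t_n\geq T_{\mathfrak D}^*(\omega)$, so we may split
\[
 \phi(t_n,\theta_{-t_n}\omega,x_n)=\phi\!\left(T_{\mathfrak D}^*(\omega),\theta_{-T_{\mathfrak D}^*(\omega)}\omega,\ \phi\!\left(t_n-T_{\mathfrak D}^*(\omega),\theta_{-t_n}\omega,x_n\right)\right).
\]
Since $\phi$ has an $(H,H)$-random attractor (Theorem \ref{theorem4.6}), the inner sequence $y_n:=\phi(t_n-T_{\mathfrak D}^*(\omega),\theta_{-t_n}\omega,x_n)$ has a subsequence (not relabelled) converging in $H$ to some $y\in H$; moreover each $y_n$ lies in a tempered set of $H$ absorbed by $\mathfrak B$. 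Applying the $(H,H^3)$-smoothing estimate \eqref{sep5.1} of Theorem \ref{theorem7.1} to the pair $y_n,y_m$ (or a slight variant absorbing $y$ into the universe), we obtain
\[
 \left\|\phi(t_n,\theta_{-t_n}\omega,x_n)-\phi(t_m,\theta_{-t_m}\omega,x_m)\right\|_{H^3}^2\leq L_{\mathfrak D}^*(\omega)\,\|y_n-y_m\|^2\longrightarrow 0,
\]
so $\{\phi(t_n,\theta_{-t_n}\omega,x_n)\}$ is Cauchy, hence convergent, in $H^3$. Thus $\phi$ is $(H,H^3)$-asymptotically compact.

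By Lemma \ref{lem:cui18}, $\phi$ has a unique $(H,H^3)$-random attractor $\mathcal A$, given by
\[
 \mathcal A(\omega)=\bigcap_{s\geq0}\overline{\bigcup_{t\geq s}\phi(t,\theta_{-t}\omega,\mathfrak B^3(\theta_{-t}\omega))}^{\,H^3},\quad\omega\in\Omega,
\]
which is tempered and compact in $H^3$. Since $\mathcal A$ pullback attracts every tempered set of $H$ in the $H$-metric as well, by uniqueness of the $(H,H)$-random attractor (Theorem \ref{theorem4.6}) the two attractors coincide.

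Finally, by Theorem \ref{theorem4.6} there is a constant $d>0$ with $d_f^H(\mathcal A(\omega))\leq d$ for all $\omega\in\Omega$. Combining this with the local $(H,H^3)$-Lipschitz continuity \eqref{sep5.1} on the absorbing set and Lemma 5 in \cite{cui}, the finite-dimensionality lifts from $H$ to $H^3$, giving $d_f^{H^3}(\mathcal A(\omega))\leq d$ for all $\omega\in\Omega$. This completes the proof of Theorem \ref{theorem7.2}.
\end{proof}
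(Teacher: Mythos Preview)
Your proposal is correct and follows essentially the same approach as the paper, which simply states ``By virtue of Theorem \ref{theorem5.1}, we use the similar method'' and gives no further argument. Your write-up in fact supplies more detail than the paper does, spelling out explicitly the asymptotic-compactness step (via the cocycle splitting and the Cauchy-in-$H^3$ argument) and the dimension-lifting via Lemma~5 in \cite{cui}, whereas the paper leaves these as a direct transcription of the proof of Theorem \ref{theorem5.1}.
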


\section{Convergence of solutions}\label{sec8}

In this section, we introduce the following stochastic two-dimensional anisotropic Navier-Stokes (NS) equations driven by additive white noise  on  $\mathbb{T}^2=[0,L]^2$, $L>0$:
 \begin{align} \label{8.1}
 \left \{
  \begin{aligned}
 &  \frac{\partial \textbf{u}}{\partial t}-  \nu \left(
\begin{array}{ccc}
\partial_{yy}u\\ \partial_{xx}v
\end{array}\right)+(\textbf{u}\cdot\nabla)\textbf{u}+\nabla p=  f(\textbf{x}) +h(\textbf{x})\frac{\d W}{\d t},\\
  & \nabla\cdot \textbf{u}=0,
  \end{aligned} \right.
\end{align}
here, $\textbf{u}:=\textbf{u}_{h}$, $\textbf{u}_{h}$ given by in Section \ref{1}. We introduce the following transformation
 \begin{align*}
\textbf{u}(t)=\textbf{v}(t)+hz(\theta_t\omega),\quad t>0,\ \omega\in \Omega,
\end{align*}
then it yields
\begin{align}\label{8.5}
\left\{
\begin{aligned}
&
\frac{\d \textbf{v}}{\d t}+ \nu A_1\textbf{v}+B\big( \textbf{v}(t)+hz(\theta_t\omega)\big)=f(\textbf{x})-  \nu A_1hz(\theta_t\omega)+hz(\theta_t\omega), \\
& \textbf{v}(0)=\textbf{u}(0)-hz (\omega).
\end{aligned}
\right.
\end{align}

For $\delta>0$, we introduce the following pathwise random equations
 \begin{align} \label{8.2}
 \left \{
  \begin{aligned}
 &  \frac{\partial \textbf{u}_{\delta}}{\partial t}-  \nu \left(
\begin{array}{ccc}
\partial_{yy}u_{\delta}\\ \partial_{xx}v_{\delta}
\end{array}\right)+(\textbf{u}_{\delta}\cdot\nabla)\textbf{u}_{\delta}+\nabla p=  f(\textbf{x}) +h(\textbf{x})\zeta_{\delta}(\theta_t\omega),\\
  & \nabla\cdot \textbf{u}_{\delta}=0.
  \end{aligned} \right.
\end{align}
By the Proposition \ref{proposition3.5}, we introduce a new variable
 \begin{align}\label{8.3}
\textbf{u}_\delta(t)=\textbf{v}_\delta(t)+hy_\delta(\theta_t\omega),\quad t>0,\ \omega\in \Omega,
\end{align}
Applying \eqref{8.2} and \eqref{8.3}, it yields
\begin{align}\label{8.4}
\left\{
\begin{aligned}
&
\frac{\d \textbf{v}_\delta}{\d t}+ \nu A_1\textbf{v}_\delta+B\big( \textbf{v}_\delta(t)+hy_\delta(\theta_t\omega)\big)=f(\textbf{x})-  \nu A_1hy_\delta(\theta_t\omega)+hy_\delta (\theta_t\omega), \\
& \textbf{v}_\delta(0)=\textbf{u}(0)-hy_\delta (\omega).
\end{aligned}
\right.
\end{align}
Inspired by the Theorem 3.7 in \cite{gu2020} and the Theorem 4.6 in \cite{gu2019}, by the similar method and Lemma \ref{lemma3.4} and Proposition \ref{proposition3.5}, we introduce the upper semi-continuity of random attractors as $\delta\rightarrow0$.
\begin{theorem}
 Let Assumption \ref{assum} hold and $f\in H$. Then for $\omega\in\Omega$, we have
 \begin{align*}
 \lim\limits_{\delta\rightarrow 0} \dist_{H}(\A_{\delta}(\tau,\omega),\A_0(\tau,\omega))=0.
 \end{align*}
\end{theorem}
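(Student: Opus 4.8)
The plan is to apply the abstract upper semi-continuity criterion for random attractors (as in Theorem 3.7 of \cite{gu2020} and Theorem 4.6 of \cite{gu2019}); here $\A_\delta$ denotes the $H$-random attractor of the colored-noise system \eqref{8.4} and $\A_0$ that of the white-noise system \eqref{8.5}. The criterion requires three ingredients: (i) convergence of the driving terms, namely $y_\delta(\theta_t\omega)\to z(\theta_t\omega)$ uniformly on compact time intervals and convergence of the associated integral functionals, which are exactly furnished by Proposition \ref{proposition3.5}, equations \eqref{2.4}--\eqref{2.5}, together with Lemma \ref{lemma3.4}; (ii) a \emph{uniform} (in $\delta$, for $0<\delta\le\delta_0$) bounded and tempered random absorbing set in $H$ for the family of RDS $\phi_\delta$ generated by \eqref{8.4}; and (iii) convergence of solutions, i.e.\ $\textbf{v}_\delta(t,\omega,\textbf{v}_{\delta,0})\to\textbf{v}(t,\omega,\textbf{v}_0)$ in $H$ as $\delta\to0$ whenever $\textbf{v}_{\delta,0}\to\textbf{v}_0$ in $H$, uniformly on finite time intervals.

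First I would establish (ii). Repeating the energy estimate of Lemma \ref{lemma4.1} for \eqref{8.4} instead of \eqref{2.2}, the only $\delta$-dependence enters through the terms $|y_\delta(\theta_t\omega)|$, $|y_\delta(\theta_t\omega)|^4$ and through the ergodic average $\frac1t\int_0^t|y_\delta(\theta_s\omega)|\,\d s$. By \eqref{erg}, $\mathbb E(|y_\delta(\theta_t\omega)|)=1/\sqrt{\pi\delta}$, and Assumption \ref{assum} is precisely $\|\nabla h\|_{L^\infty}<\sqrt{\pi\delta}\,\nu\lambda_1/2$; so for the white-noise limit one instead uses $\|\nabla h\|_{L^\infty}<\nu\lambda_1/2$ together with $\mathbb E(|z|)=1/\sqrt\pi$, and one checks that the absorption radius $\zeta_1^\delta(\omega)$ can be bounded by a single tempered random variable $\hat\zeta(\omega)$ uniformly for $0<\delta\le\delta_0$ with $\delta_0$ small. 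Using Proposition \ref{proposition3.5} (the uniform-in-$\delta$ limits for $0<\delta\le\tfrac12$), the exponential-integrability bounds for $|y_\delta|$ transfer to uniform bounds, giving the desired uniform tempered absorbing set $\mathfrak B_{\mathrm{uni}}$ in $H$.

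Next I would prove (iii). Setting $\textbf{p}:=\textbf{v}_\delta-\textbf{v}$, subtracting \eqref{8.5} from \eqref{8.4} gives
\[
\frac{\d\textbf{p}}{\d t}+\nu A_1\textbf{p}+B(\textbf{v}_\delta+hy_\delta(\theta_t\omega))-B(\textbf{v}+hz(\theta_t\omega))
=-\nu A_1h\big(y_\delta(\theta_t\omega)-z(\theta_t\omega)\big)+h\big(y_\delta(\theta_t\omega)-z(\theta_t\omega)\big).
\]
Taking the $H$ inner product with $\textbf{p}$, using \eqref{3.9}, Poincar\'e's inequality, the bilinear estimates (as in Lemma \ref{lemma5.0}) and the fact that $\|\textbf{v}_\delta\|_{H^1},\|\textbf{v}\|_{H^1}$ are controlled on $[0,T]$ by the $H^1$-bound (Lemma \ref{lem:H1bound}, whose $\delta$-uniform version follows from step (ii)), one gets a differential inequality of the form $\frac{\d}{\d t}\|\textbf{p}\|^2\le g(t)\|\textbf{p}\|^2+C\sup_{[0,T]}|y_\delta(\theta_t\omega)-z(\theta_t\omega)|^2$ with $g\in L^1_{loc}$ independent of $\delta$; Gronwall's lemma then yields $\|\textbf{p}(t)\|^2\le C_T(\omega)\big(\|\textbf{v}_{\delta,0}-\textbf{v}_0\|^2+\sup_{[0,T]}|y_\delta(\theta_s\omega)-z(\theta_s\omega)|^2\big)$, which tends to $0$ by \eqref{2.4}. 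Finally, combining (i)--(iii) with the abstract upper semi-continuity theorem (every sequence $\textbf{v}_{\delta_n}(t_n,\theta_{-t_n}\omega,x_n)$ with $x_n\in\mathfrak B_{\mathrm{uni}}$, $\delta_n\to0$, $t_n\to\infty$ is precompact in $H$ by the compact embedding $H^1\hookrightarrow H$ and step (ii), and its limit lies in $\A_0$ by step (iii)) yields $\dist_H(\A_\delta(\tau,\omega),\A_0(\tau,\omega))\to0$. The main obstacle is securing the $\delta$-\emph{uniformity} in step (ii): one must carefully track how the constants in the absorbing-set radius depend on $\delta$ through \eqref{erg} and the threshold in Assumption \ref{assum}, and verify that shrinking $\delta$ keeps the smallness condition on $\|\nabla h\|_{L^\infty}$ valid with room to spare, so that a single tempered random variable dominates all $\zeta_1^\delta$ for $\delta\in(0,\delta_0]$.
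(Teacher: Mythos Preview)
Your proposal is correct and follows essentially the same route as the paper. The paper gives no detailed argument for this theorem; it simply states that the result follows ``by the similar method'' of Theorem~3.7 in \cite{gu2020} and Theorem~4.6 in \cite{gu2019}, invoking Lemma~\ref{lemma3.4} and Proposition~\ref{proposition3.5}, and your three-step outline (uniform convergence of $y_\delta\to z$, a $\delta$-uniform tempered absorbing set, and $H$-convergence of solutions via a Gronwall argument on $\textbf{v}_\delta-\textbf{v}$) is exactly the verification that this abstract criterion requires. Your flagging of the $\delta$-uniformity issue in step~(ii)---that Assumption~\ref{assum} carries an explicit $\sqrt{\pi\delta}$ factor which shrinks as $\delta\to0$---is a legitimate concern that the paper does not address; it is a tension in the paper's hypotheses rather than a defect in your argument.
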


\subsection{$(H,H)$-convergence on $\mathfrak D$}
In this subsection, by \eqref{8.5} and \eqref{8.4}, we introduce the following system
\begin{align}\label{8.6}
&
\frac{\d (\textbf{v}_\delta-\textbf{v})}{\d t}+ \nu A_1(\textbf{v}_\delta-\textbf{v})+B\big( \textbf{v}_\delta(t)+hy_\delta(\theta_t\omega)\big)-B\big( \textbf{v}+hz(\theta_t\omega)\big)\nonumber\\
&=-  (\nu A_1hy_\delta(\theta_t\omega)-\nu A_1hz(\theta_t\omega))+hy_\delta (\theta_t\omega)-hz(\theta_t\omega),
\end{align}
here, we set $\bar{\textbf{v}}=\textbf{v}_\delta-\textbf{v}$.
\begin{theorem}\label{lemma8.1}
 Let Assumption \ref{assum} hold and $f\in H$.
 For  any tempered set $\mathfrak D\in \D_H $, then there exists random variables $t_{\mathfrak D}^1(\cdot) $ such that any
 two solutions $\textbf{u}$ and $\textbf{u}_\delta$ of the random anisotropic  NS equations \eqref{8.1} and \eqref{8.2} driven by  white noise  and colored noise corresponding to initial values $\textbf{u}_{0}$  in $ \mathfrak D( \theta_{-t_{\mathfrak D}^1(\omega)}\omega),$   respectively,  satisfy
\begin{align*}
\lim\limits_{\delta\rightarrow0}\left \| \textbf{u}_\delta \! \left ( t_{\mathfrak D} (\omega) ,\theta_{-t_{\mathfrak D}(\omega)}\omega,  \textbf{u}_{0}\right)
 - \textbf{u} \! \left ( t_{\mathfrak D} (\omega) ,\theta_{-t_{\mathfrak D}(\omega)}\omega, \textbf{u}_{0}\right) \right \|^2  =0,\quad \omega \in \Omega.
\end{align*}

\end{theorem}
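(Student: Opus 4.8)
The plan is to take the energy estimate for the difference $\bar{\textbf{v}} = \textbf{v}_\delta - \textbf{v}$ satisfying \eqref{8.6}, and show that the $L^2$-norm of $\bar{\textbf{v}}$ converges to zero as $\delta\to 0$ on a fixed finite time horizon, then transfer this back to $\textbf{u}_\delta - \textbf{u}$ via the transformations \eqref{8.3}, \eqref{8.5} together with the uniform convergence $y_\delta(\theta_t\omega)\to z(\theta_t\omega)$ from \eqref{2.4} of Proposition \ref{proposition3.5}. First I would take the inner product of \eqref{8.6} with $\bar{\textbf{v}}$ in $H$. Writing the difference of the nonlinear terms as
\[
B(\textbf{v}_\delta + h y_\delta(\theta_t\omega)) - B(\textbf{v} + h z(\theta_t\omega))
= B(\bar{\textbf{v}} + h(y_\delta-z)(\theta_t\omega),\, \textbf{v}_\delta + h y_\delta(\theta_t\omega)) + B(\textbf{v} + h z(\theta_t\omega),\, \bar{\textbf{v}} + h(y_\delta-z)(\theta_t\omega)),
\]
the term with $\bar{\textbf{v}}$ in the last slot of the second bracket vanishes by the divergence-free antisymmetry, and the remaining terms are estimated by Hölder, Gagliardo--Nirenberg, Poincaré and Young exactly as in the proof of Lemma \ref{lemma5.0}, absorbing $\frac\nu4\|A^{\frac12}\bar{\textbf{v}}\|^2$ on the left and producing (i) a coefficient term $C(\|A^{\frac12}\textbf{v}\|^2 + \|A^{\frac12}\textbf{v}_\delta\|^2 + |y_\delta(\theta_t\omega)|^2 + |z(\theta_t\omega)|^2)\|\bar{\textbf{v}}\|^2$ and (ii) a forcing term of the form $C\,|(y_\delta - z)(\theta_t\omega)|^2\,\big(1 + \|A^{\frac12}\textbf{v}\|^2 + \|A^{\frac12}\textbf{v}_\delta\|^2 + \|h\|_{H^2}^2 + |y_\delta(\theta_t\omega)|^2 + |z(\theta_t\omega)|^2\big)$ coming both from the nonlinearity and from the explicit right-hand side of \eqref{8.6}. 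Note that $\bar{\textbf{v}}(0) = h z(\omega) - h y_\delta(\omega)$, so the initial error is itself $O(|z(\omega) - y_\delta(\omega)|)$.

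Next I would apply Gronwall's lemma on the interval $[0, t_{\mathfrak D}(\omega)]$, replace $\omega$ by $\theta_{-t_{\mathfrak D}(\omega)}\omega$, and use the $H^1$-bounds: since $\textbf{u}_0 \in \mathfrak D(\theta_{-t_{\mathfrak D}(\omega)}\omega)$ lies in a tempered set and $\mathfrak D$ is pullback-absorbed by the $H^1$ absorbing set $\mathfrak B$, both $\int_0^{t_{\mathfrak D}(\omega)}\|A^{\frac12}\textbf{v}(s,\cdot)\|^2\,ds$ and $\int_0^{t_{\mathfrak D}(\omega)}\|A^{\frac12}\textbf{v}_\delta(s,\cdot)\|^2\,ds$ are bounded, uniformly in $\delta\in(0,1]$, by arguments parallel to Lemmas \ref{lemma4.1} and \ref{lem:H1bound} (the $H$- and $H^1$-estimates there are uniform in $\delta$ because \eqref{erg} and \eqref{erg1} hold uniformly for $0<\delta\le\frac12$, and $\|\nabla h\|_{L^\infty}<\sqrt{\pi\delta}\,\nu\lambda_1/2$ is in fact \emph{easier} for small $\delta$ — here one uses that $\frac 1{\sqrt{\pi\delta}}$ is the relevant $\mathbb E$-value but the constraint \eqref{2.3} exactly balances it). Thus the coefficient exponent $\exp\!\big(C\int_0^{t_{\mathfrak D}(\omega)}(\cdots)\,ds\big)$ is bounded by a $\delta$-independent random constant $M(\omega)$. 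Then
\[
\|\bar{\textbf{v}}(t_{\mathfrak D}(\omega),\theta_{-t_{\mathfrak D}(\omega)}\omega,\bar{\textbf{v}}(0))\|^2
\le M(\omega)\Big(\|h\|^2 |z - y_\delta|^2(\theta_{-t_{\mathfrak D}(\omega)}\omega) + C\!\!\sup_{s\in[-t_{\mathfrak D}(\omega),0]}\!\!|(y_\delta - z)(\theta_s\omega)|^2\Big),
\]
and by \eqref{2.4} the right-hand side tends to $0$ as $\delta\to 0$ (uniform convergence of $y_\delta$ to $z$ on the compact interval $[-t_{\mathfrak D}(\omega),0]$ after the time reversal). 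Finally, from $\textbf{u}_\delta - \textbf{u} = \bar{\textbf{v}} + h(y_\delta - z)(\theta_t\omega)$ and $\|h(y_\delta-z)(\theta_t\omega)\|^2 \le \|h\|^2\,|(y_\delta-z)(\theta_t\omega)|^2 \to 0$, we obtain $\lim_{\delta\to 0}\|\textbf{u}_\delta(t_{\mathfrak D}(\omega),\cdot,\textbf{u}_0) - \textbf{u}(t_{\mathfrak D}(\omega),\cdot,\textbf{u}_0)\|^2 = 0$, which is the claim (with $t_{\mathfrak D}^1 := t_{\mathfrak D}$).

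The main obstacle I expect is establishing the \emph{$\delta$-uniformity} of all the intermediate bounds: one must verify that $T_1(\omega)$, $T_{\mathfrak B}(\omega)$ and the tempered random variables $\zeta_1,\zeta_2,\dots$ appearing in Lemmas \ref{lemma4.1}--\ref{lem:H1bound} can be chosen independently of $\delta\in(0,1]$ (or at least that the relevant integrals stay uniformly bounded), which hinges on the uniform-in-$\delta$ ergodic limits \eqref{erg}, \eqref{erg1} and the uniform convergence \eqref{2.4}. A subtle point is that the forcing constants in the $H$- and $H^1$-estimates involve moments $\mathbb E(|y_\delta|^m) = \Gamma(\frac{1+m}{2})/\sqrt{\pi\delta^m}$ which \emph{blow up} as $\delta\to 0$; however, when these appear multiplied against $|(y_\delta - z)(\theta_s\omega)|^2$ inside the forcing term, one should instead keep the pathwise quantity $\sup_{s}|y_\delta(\theta_s\omega)|$ on a fixed compact interval, which is bounded uniformly in $\delta$ for a.e.\ $\omega$ by the continuity and convergence in \eqref{2.4} — so the product genuinely vanishes. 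Managing this bookkeeping carefully, rather than any single estimate, is the crux.
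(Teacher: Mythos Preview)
Your proposal is correct and follows essentially the same route as the paper: take the $H$-inner product of \eqref{8.6} with $\bar{\textbf{v}}$, split the nonlinear difference as you describe, arrive at a differential inequality of the form \eqref{8.11} (the paper puts only $\|A^{\frac12}\textbf{v}_\delta\|^2+|y_\delta|^2$ in the Gronwall coefficient rather than both solutions, but this is cosmetic), apply Gronwall, shift $\omega\mapsto\theta_{-t}\omega$, invoke the $H$/$H^1$ bounds from Lemmas \ref{lemma4.1}--\ref{lem:H1bound} to control the exponent, and finish via \eqref{2.4}. Your concern about $\delta$-uniformity is exactly the right one and is in fact handled more carefully in your sketch than in the paper, which simply writes down $\bar L_1(\mathfrak D,\omega)$ containing $\int_{-t_{\mathfrak D}^1(\omega)}^0|y_\delta(\theta_\tau\omega)|^2\,\d\tau$ and implicitly uses that this stays bounded as $\delta\to 0$ by the uniform convergence $y_\delta\to z$ on the compact interval $[-t_{\mathfrak D}^1(\omega),0]$.
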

\begin{proof}
Taking the inner product of the system   \eqref{8.6} with $\textbf{v}_\delta-\textbf{v}$ in $H$,  by integration by parts we can deduce that
\begin{align}\label{8.7}
&\frac{\d }{\d t}\|\textbf{v}_\delta-\textbf{v}\|^2+2\nu(A_1(\textbf{v}_\delta-\textbf{v}),\textbf{v}_\delta-\textbf{v})\nonumber\\
&=-2(B\big( \textbf{v}_\delta(t)+hy_\delta(\theta_t\omega)\big)-B\big( \textbf{v}+hz(\theta_t\omega)\big),\textbf{v}_\delta-\textbf{v})\nonumber\\
&-  2(\nu A_1hy_\delta(\theta_t\omega)-\nu A_1hz(\theta_t\omega),\textbf{v}_\delta-\textbf{v})
+2(hy_\delta (\theta_t\omega)-hz(\theta_t\omega),\textbf{v}_\delta-\textbf{v}).
\end{align}
By \eqref{3.9} and the similar method of \eqref{4.3} yields
\begin{align}\label{8.8}
\nu\|A^\frac12(\textbf{v}_\delta-\textbf{v})\|^2\leq2\nu(A_1(\textbf{v}_\delta-\textbf{v}),\textbf{v}_\delta-\textbf{v}).
\end{align}
For the first term on the right hand side of \eqref{8.7}, applying the H\"{o}lder inequality, the Gagliardo-Nirenberg inequality and the Young inequality, we can get that
\begin{align}\label{8.9}
&-2(B\big( \textbf{v}_\delta(t)+hy_\delta(\theta_t\omega)\big)-B\big( \textbf{v}+hz(\theta_t\omega)\big),\textbf{v}_\delta-\textbf{v})\nonumber\\
&=-2(B\big(\textbf{v}+hz(\theta_t\omega),\textbf{v}_\delta(t)-\textbf{v}+h(y_\delta(\theta_t\omega)-z(\theta_t\omega)))
,\textbf{v}_\delta-\textbf{v}\big)\nonumber\\
&-2(B\big( \textbf{v}_\delta(t)-\textbf{v}+h(y_\delta(\theta_t\omega)-z(\theta_t\omega)),\textbf{v}_\delta(t)+hy_\delta(\theta_t\omega))
,\textbf{v}_\delta-\textbf{v}\big)\nonumber\\
&=-2(B\big(\textbf{v}+hz(\theta_t\omega),h(y_\delta(\theta_t\omega)-z(\theta_t\omega)))
,\textbf{v}_\delta-\textbf{v}\big)\nonumber\\
&-2(B\big( \textbf{v}_\delta(t)-\textbf{v},\textbf{v}_\delta(t)+hy_\delta(\theta_t\omega))
,\textbf{v}_\delta-\textbf{v}\big)\nonumber\\
&-2(B\big( h(y_\delta(\theta_t\omega)-z(\theta_t\omega)),\textbf{v}_\delta(t)+hy_\delta(\theta_t\omega))
,\textbf{v}_\delta-\textbf{v}\big)\nonumber\\
&\leq 2\|\nabla(\textbf{v}_\delta(t)-\textbf{v})\|\|\textbf{v}(t)+hz(\theta_t\omega)\|_{L^4}
\|h(y_\delta(\theta_t\omega)-z(\theta_t\omega))\|_{L^4}\nonumber\\
&+2\|\textbf{v}_\delta(t)-\textbf{v}\|_{L^4}^2\|\nabla(\textbf{v}_\delta(t)+hy_\delta(\theta_t\omega))\|\nonumber\\
&+2\|\textbf{v}_\delta(t)-\textbf{v}\|_{L^4}\|\nabla(\textbf{v}_\delta(t)+hy_\delta(\theta_t\omega))\|
\|h(y_\delta(\theta_t\omega)-z(\theta_t\omega))\|_{L^4}\nonumber\\
&\leq C\|\nabla(\textbf{v}_\delta(t)-\textbf{v})\|
|y_\delta(\theta_t\omega)-z(\theta_t\omega)|(\|\nabla\textbf{v}\|+|z(\theta_t\omega)|)\nonumber\\
&+C\|\textbf{v}_\delta(t)-\textbf{v}\|\|A^\frac12(\textbf{v}_\delta(t)-\textbf{v})\|
(\|\nabla\textbf{v}_\delta(t)\|+|y_\delta(\theta_t\omega)|)
\nonumber\\
&+C\|\|A^\frac12(\textbf{v}_\delta(t)-\textbf{v})\||y_\delta(\theta_t\omega)-z(\theta_t\omega)|
(\|\nabla\textbf{v}_\delta(t)\|+|y_\delta(\theta_t\omega)|)
\nonumber\\
&\leq \frac{\nu}{4}\|A^\frac12(\textbf{v}_\delta(t)-\textbf{v})\|^2
+C(\|A^\frac12\textbf{v}_\delta(t)\|^2+|y_\delta(\theta_t\omega)|^2)\|\textbf{v}_\delta-\textbf{v}\|^2\nonumber\\
&+C(\|A^\frac12\textbf{v}_\delta(t)\|^2+\|A^\frac12\textbf{v}(t)\|^2+|y_\delta(\theta_t\omega)|^2
+|z(\theta_t\omega)|^2)|y_\delta(\theta_t\omega)-z(\theta_t\omega)|^2.
\end{align}
For the last two terms in the right hand side of \eqref{8.7}, we also can deduce that
\begin{align}\label{8.10}
&|2(hy_\delta (\theta_t\omega)-hz(\theta_t\omega),\textbf{v}_\delta-\textbf{v})-  2(\nu A_1hy_\delta(\theta_t\omega)-\nu A_1hz(\theta_t\omega),\textbf{v}_\delta-\textbf{v})|\nonumber\\
&\leq 2\|h\||y_\delta (\theta_t\omega)-z(\theta_t\omega)|\|\textbf{v}_\delta-\textbf{v}\|
+2\|A^\frac12 h\||y_\delta (\theta_t\omega)-z(\theta_t\omega)|\|A^\frac12(\textbf{v}_\delta-\textbf{v})\|\nonumber\\
&\leq\frac{\nu}{4}\|A^\frac12(\textbf{v}_\delta-\textbf{v})\|^2+
C\|A^\frac12 h\|^2|y_\delta (\theta_t\omega)-z(\theta_t\omega)|^2\nonumber\\
&\leq\frac{\nu}{4}\|A^\frac12(\textbf{v}_\delta-\textbf{v})\|^2+
C|y_\delta (\theta_t\omega)-z(\theta_t\omega)|^2.
\end{align}
Inserting \eqref{8.8}, \eqref{8.9} and \eqref{8.10} into \eqref{8.7}, it is easy to get that
\begin{align}\label{8.11}
&\frac{\d }{\d t}\|\bar{\textbf{v}}\|^2+\frac{\nu}{2}\|A^\frac12\bar{\textbf{v}}\|^2\nonumber\\
&\leq
C(\|A^\frac12\textbf{v}_\delta(t)\|^2+|y_\delta(\theta_t\omega)|^2)\|\bar{\textbf{v}}\|^2\nonumber\\
&+C(1+\|A^\frac12\textbf{v}_\delta(t)\|^2+\|A^\frac12\textbf{v}(t)\|^2+|y_\delta(\theta_t\omega)|^2
+|z(\theta_t\omega)|^2)|y_\delta(\theta_t\omega)-z(\theta_t\omega)|^2.
\end{align}

Applying Gronwall's lemma to \eqref{8.11}, we can  deduce that for $t>0$
\begin{align}\label{8.12}
 & \|\bar{\textbf{v}}(t)\|^2+  \frac{\nu}{2} \int_0^te^{\int_s^tC(\|A^\frac12\textbf{v}_\delta(\tau) \|^2+|y_\delta(\theta_\tau\omega)|^2)\, \d \tau}\|A^{\frac12}\bar{\textbf{v}} (s) \|^2\, \d s \nonumber \\
& \quad
\leq e^{\int_0^tC(\|A^\frac12\textbf{v}_\delta(\tau)\|^2+|y_\delta(\theta_\tau\omega)|^2)\, \d \tau}\|\bar{\textbf{v}}(0)\|^2\nonumber \\
& \quad
+ C\int_0^te^{\int_s^tC(\|A^\frac12\textbf{v}_\delta(\tau) \|^2+|y_\delta(\theta_\tau\omega)|^2)\, \d \tau}(1+\|A^\frac12\textbf{v}_\delta\|^2+\|A^\frac12\textbf{v}\|^2+|y_\delta(\theta_s\omega)|^2
+|z(\theta_s\omega)|^2)\nonumber \\
&\cdot|y_\delta(\theta_s\omega)-z(\theta_s\omega)|^2\d s\nonumber \\
& \quad
\leq e^{\int_0^tC(\|A^\frac12\textbf{v}_\delta(\tau)\|^2+|y_\delta(\theta_\tau\omega)|^2)\, \d \tau}(\|\bar{\textbf{v}}(0)\|^2\nonumber \\
& \quad
+ C\int_0^t(1+\|A^\frac12\textbf{v}_\delta\|^2+\|A^\frac12\textbf{v}\|^2+|y_\delta(\theta_s\omega)|^2
+|z(\theta_s\omega)|^2)|y_\delta(\theta_s\omega)-z(\theta_s\omega)|^2\d s).
\end{align}

By virtue of \eqref{4.29}, we   can deduce that
\begin{align*}
   \int_0^t \|A^\frac12\textbf{v}_\delta(\tau,\theta_{-t}\omega, \textbf{v}_{\delta,0})\|^2 \, \d \tau &\leq e^{ \left(\frac 4 \alpha -3  \right)\lambda t}  \int_0^t e^{ \left( \frac 4 \alpha -3  \right) \lambda (\tau-t)} \|A^\frac12\textbf{v}_\delta (\tau,\theta_{-t}\omega, \textbf{v}_{\delta,0})\|^2  \,  \d \tau  \\
 &   \leq \frac {2}{\alpha\nu} e^{ \left(\frac 4 \alpha -3  \right)\lambda t}  \left( e^{-\lambda t} \|\textbf{v}_{\delta,0}\|^2 +\zeta_1(\omega) \right) ,\quad t\geq T_1(\omega),
 \end{align*}
so for \eqref{8.12}, replacing $\omega$ by $\theta_{-t}\omega$, we also can  get that
\begin{align*}
 \|\bar{\textbf{v}}(t,\theta_{-t}\omega, \bar{\textbf{v}} (0))\|^2
 & \leq  e^{\int_0^tC \left (1+\|A^\frac12\textbf{v}_\delta(\tau,\theta_{-t}\omega, \textbf{v}_{\delta,0})\|^2+\|A^\frac12\textbf{v}(\tau,\theta_{-t}\omega, \textbf{v}_{\delta,0})\|^2+|y_\delta(\theta_{\tau-t} \omega)|^2+|z(\theta_{\tau-t} \omega)|^2 \right)   \d \tau }\nonumber\\
 &\cdot(\|\bar{\textbf{v}}(0)\|^2+\sup\limits_{s\in[0,t]}|y_\delta(\theta_s\omega)-z(\theta_s\omega)|^2) \\
 &\leq e^{C  e^{ \left(\frac 4 \alpha -3  \right)\lambda t}  \left( e^{-\lambda t} \|\textbf{v}_{\delta,0}\|^2+e^{-\lambda t} \|\textbf{v}_{0}\|^2 +\zeta_1(\omega) \right) +C\int_{-t}^0|y_\delta(\theta_{\tau} \omega)|^2 +C\int_{-t}^0|z(\theta_{\tau} \omega)|^2\, \d \tau } \nonumber\\
 &\cdot(\|\bar{\textbf{v}}(0)\|^2+\sup\limits_{s\in[0,t]}|y_\delta(\theta_s\omega)-z(\theta_s\omega)|^2).
\end{align*}
for any $t\geq T_1 (\omega)$.
Since $\textbf{v}_{\delta,0} $ and $\textbf{v}_{0} $ belong to $\mathfrak D(\theta_{-t}\omega) $ which is tempered, there exists a random variable $ t_{\mathfrak D}^1 (\omega)  \geq T_1(\omega)$  such that
\begin{align*}
 e^{-\lambda t_{\mathfrak D}^1(\omega) } (\|\textbf{v}_{\delta,0}\|^2+\|\textbf{v}_{0}\|^2) \leq  e^{-\lambda t_{\mathfrak D}^1(\omega) } 2\left \|  \, \mathfrak D \!  \left (\theta_{-t_{\mathfrak D}(\omega)} \omega \right ) \right\|^2 \leq 1,\quad \omega\in \Omega.
\end{align*}
Moreover, since $\mathfrak D$ is pullback absorbed by the absorbing set $\mathfrak B$,  this exists a large enough $t_{\mathfrak D}^1(\omega)$  such that
\begin{align}
 \phi \left ( t_{\mathfrak D}^1(\omega) , \theta_{-t_{\mathfrak D}^1(\omega)}\omega, \mathfrak D(\theta_{- t_{\mathfrak D}(\omega) }^1\omega) \right ) \subset \mathfrak B(\omega), \quad \omega\in \Omega .
\end{align}
Therefore,  a random variable given by as follows
\begin{align*}
\bar{L}_1({\mathfrak D}, \omega)=e^{C  e^{ \left(\frac 4 \alpha -3  \right)\lambda  t_{\mathfrak D}^1(\omega)}    ( 1+\zeta_1(\omega)  ) + C\int_{-t_{\mathfrak D}^1(\omega)}^0|y_\delta(\theta_{\tau} \omega)|^2+ C\int_{-t_{\mathfrak D}^1(\omega)}^0|z(\theta_{\tau} \omega)|^2 \, \d \tau } ,\quad \omega\in \Omega,
\end{align*}
then we have
\begin{align}
\left \|\bar{\textbf{v}} \! \left ( t_{\mathfrak D}^1 (\omega) ,\theta_{-t_{\mathfrak D}^1(\omega)}\omega, \bar{\textbf{v}}(0) \right) \right \|^2
 \leq \bar{L}_1({\mathfrak D}, \omega)\left (\|\bar{\textbf{v}}(0)\|^2+\sup\limits_{s\in[0,t_{\mathfrak D}^1]}|y_\delta(\theta_s\omega)-z(\theta_s\omega)|^2\right ) ,
\end{align}
whenever $\textbf{v}_{\delta,0},$ $\textbf{v}_{0}\in \mathfrak D \big ( \theta_{-t_{\mathfrak D}^1( \omega)}\omega \big),$  $\omega\in \Omega$. Then it yields
\begin{align*}
&\left \| \textbf{u}_\delta \! \left ( t_{\mathfrak D}^1 (\omega) ,\theta_{-t_{\mathfrak D}^1(\omega)}\omega,  \textbf{u}_{0}\right)
 - \textbf{u} \! \left ( t_{\mathfrak D}^1 (\omega) ,\theta_{-t_{\mathfrak D}^1(\omega)}\omega, \textbf{u}_{0}\right) \right \|^2
 \nonumber\\
& =\left \| \textbf{v}_\delta \! \left ( t_{\mathfrak D}^1 (\omega) ,\theta_{-t_{\mathfrak D}^1(\omega)}\omega,  \textbf{v}_{\delta,0}\right)
 - \textbf{v} \! \left ( t_{\mathfrak D}^1 (\omega) ,\theta_{-t_{\mathfrak D}^1(\omega)}\omega, \textbf{v}_{0}\right)+h(y_\delta(\omega)-z(\omega)) \right \|^2\nonumber\\
&\leq 2\left \| \textbf{v}_\delta \! \left ( t_{\mathfrak D}^1 (\omega) ,\theta_{-t_{\mathfrak D}^1(\omega)}\omega,  \textbf{v}_{\delta,0}\right)
 - \textbf{v} \! \left ( t_{\mathfrak D}^1 (\omega) ,\theta_{-t_{\mathfrak D}^1(\omega)}\omega, \textbf{v}_{0}\right)\right\|^2+2\left\|h\left (y_\delta(\omega)-z(\omega)\right ) \right \|^2,
\end{align*}
where $\textbf{v}_{\delta,0}=\textbf{u}_0-hy_\delta(\omega)$, $\textbf{v}_{0}=\textbf{u}_0-hz(\omega)$  and $\bar{\textbf{v}}(0)=-h(y_\delta(\omega)-z(\omega))$. Moreover, we also have
\begin{align*}
&\left \| \textbf{u}_\delta \! \left ( t_{\mathfrak D}^1 (\omega) ,\theta_{-t_{\mathfrak D}^1(\omega)}\omega,  \textbf{u}_{0}\right)
 - \textbf{u} \! \left ( t_{\mathfrak D}^1 (\omega) ,\theta_{-t_{\mathfrak D}^1(\omega)}\omega, \textbf{u}_{0}\right) \right \|^2\nonumber\\
 &\leq C\bar{L}_1({\mathfrak D}, \omega)\left (\|\bar{\textbf{v}}(0)\|^2+\sup\limits_{s\in[0,t_{\mathfrak D}^1]}|y_\delta(\theta_s\omega)-z(\theta_s\omega)|^2\right )\nonumber\\
 &\leq C\bar{L}_1({\mathfrak D}, \omega)\left (|y_\delta(\omega)-z(\omega)|^2+\sup\limits_{s\in[0,t_{\mathfrak D}^1]}|y_\delta(\theta_s\omega)-z(\theta_s\omega)|^2\right ).
\end{align*}
By \eqref{2.4} and \eqref{2.5}, this completes the proof of Theorem \ref{lemma8.1}.

\end{proof}
\subsection{$(H,H^1)$-convergence on $\mathfrak B$}
\begin{theorem}\label{lemma8.2}[$(H,H^1)$-convergence on $\mathfrak B$]
Let Assumption \ref{assum} hold and $f\in H$.
 For the random absorbing set $\mathfrak B$ given by \eqref{4.19}, then there exists random variables $\tau_\omega^1(\cdot) $ such that  any
 two solutions $\textbf{u}$ and $\textbf{u}_\delta$ of the random anisotropic  NS equations \eqref{8.1} and \eqref{8.2} driven by  white noise  and colored noise corresponding to initial values $\textbf{u}_{0}$  in $ \mathfrak D( \theta_{-\tau_\omega^1(\omega)}\omega),$   respectively,  satisfy
\begin{align*}
\lim\limits_{\delta\rightarrow0}\left \| \textbf{u}_\delta \! \left ( \tau_\omega^1 (\omega) ,\theta_{-\tau_\omega^1(\omega)}\omega,  \textbf{u}_{0}\right)
 - \textbf{u} \! \left ( \tau_\omega^1 (\omega) ,\theta_{-\tau_\omega^1(\omega)}\omega, \textbf{u}_{0}\right) \right \|^2_{H^1}  =0,\quad \omega \in \Omega.
\end{align*}
\end{theorem}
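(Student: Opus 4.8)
The plan is to follow the proof of the $(H,H^1)$-smoothing effect (Lemma \ref{lemma5.2}) almost verbatim, with the two genuine solutions there replaced by $\textbf{v}_\delta$ and $\textbf{v}$, and with careful bookkeeping of the noise discrepancy $y_\delta(\theta_t\omega)-z(\theta_t\omega)$, which by \eqref{2.4} tends to $0$ uniformly on compact time intervals as $\delta\to0$. Writing $\bar{\textbf{v}}=\textbf{v}_\delta-\textbf{v}$ (so that $\bar{\textbf{v}}$ solves \eqref{8.6}), I would first take the inner product of \eqref{8.6} with $A\bar{\textbf{v}}$ in $H$. As in \eqref{3.12} the anisotropic viscous term is bounded below by $\tfrac{\nu}{2}\|A\bar{\textbf{v}}\|^2$, so it remains to estimate the trilinear term $B(\textbf{v}_\delta+hy_\delta(\theta_t\omega))-B(\textbf{v}+hz(\theta_t\omega))$ paired with $A\bar{\textbf{v}}$, together with the two linear noise-difference terms on the right-hand side.

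For the trilinear term I would decompose exactly as in \eqref{8.9}, namely into a piece linear in $\bar{\textbf{v}}$ and a piece proportional to $h(y_\delta(\theta_t\omega)-z(\theta_t\omega))$, and split each piece into a "difference in the first argument" and a "difference in the second argument" contribution. Using H\"older's inequality, the Gagliardo--Nirenberg inequality $\|w\|_{L^\infty}\le C\|w\|^{1/2}\|Aw\|^{1/2}$, Poincar\'e's inequality and Young's inequality, and always assigning the highest power of $\|A\bar{\textbf{v}}\|$ so that it can be absorbed into $\tfrac{\nu}{2}\|A\bar{\textbf{v}}\|^2$ (exactly as in \eqref{5.3}--\eqref{5.4}), and combining with the elementary bound on the linear noise terms (as in \eqref{8.10}), one is led to a differential inequality of the form
\begin{align*}
\frac{\d}{\d t}\|A^{\frac12}\bar{\textbf{v}}\|^2+\frac{\nu}{2}\|A\bar{\textbf{v}}\|^2
\le Cg(t)\,\|A^{\frac12}\bar{\textbf{v}}\|^2+Ck(t)\,|y_\delta(\theta_t\omega)-z(\theta_t\omega)|^2,
\end{align*}
where $g(t)=1+\|A^{\frac12}\textbf{v}_\delta(t)\|^4+\|A^{\frac12}\textbf{v}(t)\|^4+|y_\delta(\theta_t\omega)|^4+|z(\theta_t\omega)|^4$ and $k(t)$ is a combination of the same type.

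Then I would apply Gronwall's lemma on $(s,t)$ for $s\in(t-1,t-\tfrac12)$ and integrate in $s$ over $(t-1,t-\tfrac12)$, as in Lemma \ref{lemma5.2}, to obtain
\begin{align*}
\|A^{\frac12}\bar{\textbf{v}}(t)\|^2
\le Ce^{\int_{t-1}^{t}Cg(\tau)\,\d\tau}\left(\int_{t-1}^{t}\|A^{\frac12}\bar{\textbf{v}}(s)\|^2\,\d s
+\int_{t-1}^{t}k(s)\,|y_\delta(\theta_s\omega)-z(\theta_s\omega)|^2\,\d s\right).
\end{align*}
The first integral on the right is controlled, through the $H$-level estimate \eqref{8.12} established in Theorem \ref{lemma8.1}, by a ($\delta$-independent, tempered) random constant times $\|\bar{\textbf{v}}(0)\|^2+\sup_{s\in[0,t]}|y_\delta(\theta_s\omega)-z(\theta_s\omega)|^2$; the exponential $\int_{t-1}^t Cg$ and the factor $k$ are controlled, uniformly for $0<\delta\le1$, by the $H^1$ absorbing-set bound of Lemma \ref{lem:H1bound} applied to both $\textbf{v}_\delta$ (driven by $y_\delta$) and $\textbf{v}$ (driven by $z$), together with \eqref{erg} and the temperedness of $|z(\cdot)|$. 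After replacing $\omega$ by $\theta_{-\tau_\omega^1}\omega$ and choosing $\tau_\omega^1\ge1$ large enough that $\mathfrak B$ is pullback-absorbed into itself (as in \eqref{mar18.2}) and the temperedness factors $e^{-\lambda\tau_\omega^1}\|\textbf{v}(0)\|^2\le1$ hold, the bound becomes a tempered random variable times $\|\bar{\textbf{v}}(0)\|^2+\sup_{s\in[0,\tau_\omega^1]}|y_\delta(\theta_s\omega)-z(\theta_s\omega)|^2$. Since $\bar{\textbf{v}}(0)=-h\,(y_\delta(\omega)-z(\omega))$ and, by \eqref{2.4}, $y_\delta(\theta_s\omega)\to z(\theta_s\omega)$ uniformly on $[0,\tau_\omega^1]$ as $\delta\to0$, this upper bound tends to $0$; translating back via $\textbf{u}_\delta-\textbf{u}=\bar{\textbf{v}}+h(y_\delta(\theta_t\omega)-z(\theta_t\omega))$ yields the claimed $H^1$-convergence.

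The main obstacle will be the bookkeeping that makes the right-hand side \emph{uniform in} $\delta$: one must verify that every random coefficient produced by Gronwall --- the exponential of $\int g$, the factor $k$, and the $H$- and $H^1$-absorbing radii entering \eqref{8.12} and Lemma \ref{lem:H1bound} --- admits a bound independent of $\delta\in(0,1]$ and depends on $\omega$ only through tempered random variables, so that the sole $\delta$-dependent quantity is the noise discrepancy $\sup_{s\in[0,\tau_\omega^1]}|y_\delta(\theta_s\omega)-z(\theta_s\omega)|$, which vanishes by \eqref{2.4}. A secondary technical point is the handling of the pullback time-shift so that this supremum runs over a fixed (random but $\delta$-independent) compact interval, which is precisely what the uniform convergence in \eqref{2.4} requires.
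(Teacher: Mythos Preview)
Your proposal is correct and follows essentially the same approach as the paper: derive the $H^1$-level differential inequality \eqref{9.5}, apply the Gronwall-then-integrate trick over $(t-1,t-\tfrac12)$, feed in the $H$-level bound \eqref{8.12}, and control the exponential coefficient via the a~priori $H^1$ estimate (the paper uses \eqref{5.8} rather than Lemma~\ref{lem:H1bound} directly, since what is needed is $\int_0^{\tau_\omega^1}\|A^{1/2}\textbf{v}\|^4\,\d s$ over the whole interval, not just the pullback absorbing bound). Your identification of the $\delta$-uniformity of all coefficients as the key bookkeeping issue is apt; the paper does not address it explicitly, but it follows since $y_\delta(\theta_\cdot\omega)\to z(\theta_\cdot\omega)$ uniformly on the fixed compact interval $[-\tau_\omega^1,0]$, so every $y_\delta$-dependent constant stays bounded as $\delta\to0$.
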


\begin{proof}
Taking the inner product of the system   \eqref{8.6} with $A\bar{\textbf{v}}$ in $H$,  by integration by parts, we can deduce that
\begin{align}\label{8.13}
&\frac{\d}{\d t}\|A^\frac12\bar{\textbf{v}}\|^2+ 2\nu \int_{\mathbb{T}^2}\partial_{yy}\bar{\textbf{v}}_1\Delta \bar{\textbf{v}}_1dx+2\nu \int_{\mathbb{T}^2}\partial_{xx}\bar{\textbf{v}}_2\Delta \bar{\textbf{v}}_2dx\nonumber\\
&=-2(B\big( \textbf{v}_\delta(t)+hy_\delta(\theta_t\omega)\big)-B\big( \textbf{v}+hz(\theta_t\omega)\big),A(\textbf{v}_\delta-\textbf{v}))\nonumber\\
&-  2(\nu A_1h(y_\delta(\theta_t\omega)-z(\theta_t\omega)),A(\textbf{v}_\delta-\textbf{v}))
+2(h(y_\delta (\theta_t\omega)-z(\theta_t\omega)),A(\textbf{v}_\delta-\textbf{v}))\nonumber\\
&=-2(B\big(\textbf{v}+hz(\theta_t\omega),\textbf{v}_\delta(t)-\textbf{v}+h(y_\delta(\theta_t\omega)-z(\theta_t\omega)))
,A(\textbf{v}_\delta-\textbf{v})\big)\nonumber\\
&-2(B\big( \textbf{v}_\delta(t)-\textbf{v}+h(y_\delta(\theta_t\omega)-z(\theta_t\omega)),\textbf{v}_\delta(t)+hy_\delta(\theta_t\omega))
,A(\textbf{v}_\delta-\textbf{v})\big)\nonumber\\
&-  2(\nu A_1h(y_\delta(\theta_t\omega)-z(\theta_t\omega)),A(\textbf{v}_\delta-\textbf{v}))
+2(h(y_\delta (\theta_t\omega)-z(\theta_t\omega)),A(\textbf{v}_\delta-\textbf{v})).
\end{align}
By direct computations and similar method, we also have
\begin{align}\label{8.14}
&\nu \int_{\mathbb{T}^2}\partial_{yy}\bar{\textbf{v}}_1\Delta \bar{\textbf{v}}_1dx+\nu \int_{\mathbb{T}^2}\partial_{xx}\bar{\textbf{v}}_2\Delta \bar{\textbf{v}}_2dx\nonumber\\
&\geq \frac12\nu \int_{\mathbb{T}^2}(\partial_{xx}\bar{\textbf{v}}_1+\partial_{yy}\bar{\textbf{v}}_1)^2dx
+\frac12\nu \int_{\mathbb{T}^2}(\partial_{xx}\bar{\textbf{v}}_2+\partial_{yy}\bar{\textbf{v}}_2)^2dx
\nonumber\\
&=\frac12\nu\|\Delta \bar{\textbf{v}}\|^2=\frac12\nu\|A \bar{\textbf{v}}\|^2.
    \end{align}
For the first term and the second term on the right hand side of \eqref{8.13}, by using the H\"{o}lder inequality, the Gagliardo-Nirenberg inequality and the Young inequality, we can get that
\begin{align}
 &\big| 2(B\big(\textbf{v}+hz(\theta_t\omega),\textbf{v}_\delta(t)-\textbf{v}+h(y_\delta(\theta_t\omega)-z(\theta_t\omega)))
,A(\textbf{v}_\delta-\textbf{v})\big)\big|\nonumber\\
 &\leq 2\|\textbf{v}+hz(\theta_t\omega)\|_{L^4}\|\nabla(\textbf{v}_\delta(t)-\textbf{v})\|_{L^4}\|A\bar{\textbf{v}}\|\nonumber\\
& +2\|\textbf{v}+hz(\theta_t\omega)\|_{L^4}\|\nabla(h(y_\delta(\theta_t\omega)-z(\theta_t\omega)))\|_{L^4}\|A\bar{\textbf{v}}\|
\nonumber\\
&\leq C\|A^\frac12(\textbf{v}+hz(\theta_t\omega))\|\|A^\frac12\bar{\textbf{v}}\|^\frac12\|A\bar{\textbf{v}}\|^\frac32\nonumber\\
& +C\|A^\frac12(\textbf{v}+hz(\theta_t\omega))\|\|Ah\||y_\delta(\theta_t\omega)-z(\theta_t\omega)|\|A\bar{\textbf{v}}\|\nonumber\\
&\leq\frac{\nu}{8}\|A\bar{\textbf{v}}\|^2+C(\|A^\frac12\textbf{v}\|^4+|z(\theta_t\omega)|^4)\|A^\frac12\bar{\textbf{v}}\|^2
\nonumber\\
& +C(\|A^\frac12\textbf{v}\|^2+|z(\theta_t\omega)|^2)|y_\delta(\theta_t\omega)-z(\theta_t\omega)|^2,
\end{align}
and
\begin{align}
 &\big|2(B\big( \textbf{v}_\delta(t)-\textbf{v}+h(y_\delta(\theta_t\omega)-z(\theta_t\omega)),\textbf{v}_\delta(t)+hy_\delta(\theta_t\omega))
,A(\textbf{v}_\delta-\textbf{v})\big)\big| \nonumber\\
&\leq 2\|\bar{\textbf{v}}\|_{L^\infty}\|\nabla(\textbf{v}_\delta(t)+hy_\delta(\theta_t\omega))\|\|A\bar{\textbf{v}}\|\nonumber\\
&+2\|h(y_\delta(\theta_t\omega)-z(\theta_t\omega))\|_{L^\infty}\|\nabla(\textbf{v}_\delta(t)+hy_\delta(\theta_t\omega))\|
\|A\bar{\textbf{v}}\|\nonumber\\
&\leq C\|A^\frac12\bar{\textbf{v}}\|^\frac12\|A\bar{\textbf{v}}\|^\frac32(\|A^\frac12\textbf{v}_\delta(t)\|+\|A^\frac12h\|
|y_\delta(\theta_t\omega)|)\nonumber\\
&+C\|Ah\||y_\delta(\theta_t\omega)-z(\theta_t\omega)|\|A\bar{\textbf{v}}\|(\|A^\frac12\textbf{v}_\delta(t)\|+\|A^\frac12h\|
|y_\delta(\theta_t\omega)|)
\nonumber\\
&\leq \frac{\nu}{8}\|A\bar{\textbf{v}}\|^2+C(\|A^\frac12\textbf{v}_\delta(t)\|^4+|y_\delta(\theta_t\omega)|^4)
\|A^\frac12\bar{\textbf{v}}\|^2
\nonumber\\
& +C(\|A^\frac12\textbf{v}_\delta(t)\|^2+
|y_\delta(\theta_t\omega)|^2)|y_\delta(\theta_t\omega)-z(\theta_t\omega)|^2.
\end{align}
For the last two terms on the right hand side of \eqref{8.13}, by using the similar method, we also can deduce that
\begin{align}\label{8.15}
&-  2(\nu A_1h(y_\delta(\theta_t\omega)-z(\theta_t\omega)),A(\textbf{v}_\delta-\textbf{v}))
+2(h(y_\delta (\theta_t\omega)-z(\theta_t\omega)),A(\textbf{v}_\delta-\textbf{v}))\nonumber\\
&\leq 2\nu\|Ah\||y_\delta(\theta_t\omega)-z(\theta_t\omega)|\|A\bar{\textbf{v}}\|
+2\|h\||y_\delta(\theta_t\omega)-z(\theta_t\omega)|\|A\bar{\textbf{v}}\|
\nonumber\\
&\leq \frac{\nu}{4}\|A\bar{\textbf{v}}\|^2+C|y_\delta(\theta_t\omega)-z(\theta_t\omega)|^2.
\end{align}

Inserting \eqref{8.14}-\eqref{8.15} into \eqref{8.13}, we can deduce that
\begin{align}\label{9.5}
&\frac{\d}{\d t}\|A^\frac12\bar{\textbf{v}}\|^2+  \frac{\nu}{2} \|A\bar{\textbf{v}}\|^2\nonumber\\
&\leq C(\|A^\frac12\textbf{v}\|^4+\|A^\frac12\textbf{v}_\delta\|^4+|y_\delta(\theta_t\omega)|^4+|z(\theta_t\omega)|^4)
\|A^\frac12\bar{\textbf{v}}\|^2\nonumber\\
&+ C(1+\|A^\frac12\textbf{v}\|^2+\|A^\frac12\textbf{v}_\delta\|^2+|y_\delta(\theta_t\omega)|^2+|z(\theta_t\omega)|^2)
|y_\delta(\theta_t\omega)-z(\theta_t\omega)|^2.
\end{align}
Applying Gronwall's lemma, we can get that for any $s\in(t-1, t-\frac 12)$, $t\geq 1$,
\begin{align}
 & \|A^\frac12\bar{\textbf{v}}(t)\|^2
 +  \frac{\nu }{2} \int_s^te^{\int_\eta^tC(\|A^\frac12\textbf{v}\|^4+\|A^\frac12\textbf{v}_\delta\|^4
 +|y_\delta(\theta_\tau\omega)|^4+|z(\theta_\tau\omega)|^4)\, \d \tau}\|A\bar{\textbf{v}}(\eta) \|^2 \, \d \eta
\nonumber\\
&\quad \leq e^{ \int_s^tC(\|A^\frac12\textbf{v}\|^4+\|A^\frac12\textbf{v}_\delta\|^4+|y_\delta(\theta_\tau\omega)|^4+|z(\theta_\tau\omega)|^4)\, \d \tau}\|A^\frac12\bar{\textbf{v}}(s)\|^2 \nonumber\\
&\quad +C\int_s^te^{\int_\eta^tC(\|A^\frac12\textbf{v}\|^4+\|A^\frac12\textbf{v}_\delta\|^4
 +|y_\delta(\theta_\tau\omega)|^4+|z(\theta_\tau\omega)|^4)\, \d \tau}(1+\|A^\frac12\textbf{v}\|^2+\|A^\frac12\textbf{v}_\delta\|^2\nonumber\\
 &\quad +|y_\delta(\theta_\eta\omega)|^2+|z(\theta_\eta\omega)|^2)
|y_\delta(\theta_\eta\omega)-z(\theta_\eta\omega)|^2 \, \d \eta\nonumber\\
&\quad \leq e^{ \int_s^tC(\|A^\frac12\textbf{v}\|^4+\|A^\frac12\textbf{v}_\delta\|^4+|y_\delta(\theta_\tau\omega)|^4+|z(\theta_\tau\omega)|^4)\, \d \tau}(\|A^\frac12\bar{\textbf{v}}(s)\|^2 \nonumber\\
&\quad +C\int_s^t(1+\|A^\frac12\textbf{v}\|^2+\|A^\frac12\textbf{v}_\delta\|^2 +|y_\delta(\theta_\eta\omega)|^2+|z(\theta_\eta\omega)|^2)
|y_\delta(\theta_\eta\omega)-z(\theta_\eta\omega)|^2 \, \d \eta),
\end{align}
and then integrating w.r.t. $s$ over $(t-1,t-\frac 12)$  yields
\begin{align*}
 & \|A^\frac12\bar{\textbf{v}}(t)\|^2
 + \frac{\nu}{2} \int_{t-\frac 12} ^te^{\int_\eta^tC(\|A^\frac12\textbf{v}\|^4+\|A^\frac12\textbf{v}_\delta\|^4
 +|y_\delta(\theta_\tau\omega)|^4+|z(\theta_\tau\omega)|^4)\, \d \tau}\|A\bar{\textbf{v}}(\eta) \|^2 \, \d \eta
\nonumber\\
&\quad \leq  2  e^{ \int_{t-1}^tC(\|A^\frac12\textbf{v}\|^4+\|A^\frac12\textbf{v}_\delta\|^4+|y_\delta(\theta_\tau\omega)|^4+|z(\theta_\tau\omega)|^4)\, \d \tau}( \int_{t-1} ^{t-\frac 12}\|A^\frac12\bar{\textbf{v}}(s)\|^2\, \d s \nonumber\\
&\quad +C\int_{t-1}^t(1+\|A^\frac12\textbf{v}\|^2+\|A^\frac12\textbf{v}_\delta\|^2 +|y_\delta(\theta_\eta\omega)|^2+|z(\theta_\eta\omega)|^2)
|y_\delta(\theta_\eta\omega)-z(\theta_\eta\omega)|^2 \, \d \eta) .
\end{align*}
Applying \eqref{8.12},  it yields
\begin{align}\label{8.16}
 & \|A^\frac12\bar{\textbf{v}}(t)\|^2
 + \frac{\nu}{2}\int_{t-\frac 12} ^te^{\int_\eta^tC(\|A^\frac12\textbf{v}\|^4+\|A^\frac12\textbf{v}_\delta\|^4
 +|y_\delta(\theta_\tau\omega)|^4+|z(\theta_\tau\omega)|^4)\, \d \tau}\|A\bar{\textbf{v}}(\eta) \|^2 \, \d \eta \nonumber  \\
  &\quad  \leq  Ce^{ \int_{0}^tC(\|A^\frac12\textbf{v}\|^4+\|A^\frac12\textbf{v}_\delta\|^4
 +|y_\delta(\theta_\tau\omega)|^4+|z(\theta_\tau\omega)|^4+1)\, \d \tau}\nonumber  \\
   &\cdot(\|\bar{\textbf{v}}(0)\|^2+\sup\limits_{s\in[t-1,t]}(|y_\delta(\theta_s\omega)-z(\theta_s\omega)|^2 ))
,\quad t\geq 1.
\end{align}

Since the absorbing set $\mathfrak B$ itself belongs to the attraction universe $\D_H$,  it pullback absorbs itself. Then  there exists a random variable $\tau_\omega^1 \geq 1$ such that
\begin{align}
 \phi \big(\tau_\omega^1,\theta_{-\tau_\omega^1}\omega, \B(\theta_{-\tau_\omega^1}\omega)\big)
 \subset \B(\omega), \quad \omega\in \Omega.
\end{align}
We replace $\omega$ with $\theta_{-\tau_\omega^1} \omega$ in \eqref{8.16}  and deduce  the estimate at $t=\tau_\omega^1$ that
\begin{align}\label{8.17}
  &\big \|
 A^{\frac 12}  \bar {\textbf{v}}(\tau_\omega^1,\theta_{-\tau_\omega^1}\omega,  \bar{\textbf{v}}(0))  \big \| {^2}\nonumber\\
 &\leq  Ce^{ C\int_{0}^{\tau_\omega^1} (\|A^\frac12\textbf{v}_\delta(s, \theta_{-\tau_\omega^1}\omega, \textbf{v}_\delta(0)) \|^4 +\|A^\frac12\textbf{v}(s, \theta_{-\tau_\omega^1}\omega, \textbf{v}(0)) \|^4)\d s+ C\int^0_{-\tau_\omega^1} (|y_\delta(\theta_s\omega)|^4+|z(\theta_s\omega)|^4+1) \d s}\nonumber\\
 &  \cdot(\|\bar{\textbf{v}}(0)\|^2+\sup\limits_{s\in[-1,0]}(|y_\delta(\theta_s\omega)-z(\theta_s\omega)|^2 )).
\end{align}

 In order to prove the bound on the right hand side of \eqref{8.17}, by \eqref{5.8}, we can deduce that
\begin{align*}
    \int_{0}^{\tau_\omega^1}  \|A^\frac12\textbf{v}_\delta(s, \theta_{-\tau_\omega^1}\omega, \textbf{v}_\delta(0)) \|^4 \, \d s  \leq  \tau_\omega^1 |\zeta_7(\omega)|^2
\end{align*}
and similarly, we also have
\begin{align*}
    \int_{0}^{\tau_\omega^1}  \|A^\frac12\textbf{v}(s, \theta_{-\tau_\omega^1}\omega, \textbf{v}(0)) \|^4 \, \d s  \leq  \tau_\omega^1 |\zeta_7(\omega)|^2
\end{align*}
uniformly for any  $\textbf{v}_\delta(0) $ and $\textbf{v}(0) $ in $  \mathfrak B( \theta_{-\tau_\omega^1} \omega)$ and thus  for \eqref{8.17} we can get that
\begin{align*}
  \|
 &A^{\frac 12} \bar {\textbf{v}}(\tau_\omega^1,\theta_{-\tau_\omega^1}\omega, \bar{\textbf{v}}(0))  \|^2\nonumber\\
&  \leq  Ce^{ C \tau_\omega^1  |\zeta_7( \omega)|^2 + C\int^0_{-\tau_\omega^1} (|y_\delta(\theta_s\omega)|^4+|z(\theta_s\omega)|^4+1) \d s}
   (\|\bar{\textbf{v}}(0)\|^2+\sup\limits_{s\in[-1,0]}(|y_\delta(\theta_s\omega)-z(\theta_s\omega)|^2 ))  \\
   &\leq Ce^{ C \tau_\omega^1  |\zeta_7( \omega)|^2 }(\|\bar{\textbf{v}}(0)\|^2+\sup\limits_{s\in[-1,0]}(|y_\delta(\theta_s\omega)-z(\theta_s\omega)|^2 )).
\end{align*}
Hence, the random variable defined by
\begin{align*}
  \bar{L}_2(\mathfrak B, \omega):=  Ce^{ C \tau_\omega^1  |\zeta_7( \omega)|^2 }
,\quad \omega\in \Omega,
\end{align*}
then we have
\begin{align*}
  \|
 A^{\frac 12} \bar {\textbf{v}}(\tau_\omega^1,\theta_{-\tau_\omega^1}\omega, \bar{\textbf{v}}(0))  \|^2
  \leq   \bar{L}_2(\mathfrak B, \omega)(\|\bar{\textbf{v}}(0)\|^2+\sup\limits_{s\in[-1,0]}
  (|y_\delta(\theta_s\omega)-z(\theta_s\omega)|^2 )),
\end{align*}
whenever $\textbf{v}_{\delta,0},$ $\textbf{v}_{0}\in \mathfrak D \big ( \theta_{-\tau_\omega^1( \omega)}\omega \big),$  $\omega\in \Omega$. Then it yields
\begin{align*}
&\left \| \textbf{u}_\delta \! \left ( \tau_\omega^1 (\omega) ,\theta_{-\tau_\omega^1(\omega)}\omega,  \textbf{u}_{0}\right)
 - \textbf{u} \! \left ( \tau_\omega^1 (\omega) ,\theta_{-\tau_\omega^1(\omega)}\omega, \textbf{u}_{0}\right) \right \|^2_{H^1}
 \nonumber\\
& =\left \| \textbf{v}_\delta \! \left ( \tau_\omega^1 (\omega) ,\theta_{-\tau_\omega^1(\omega)}\omega,  \textbf{v}_{\delta,0}\right)
 - \textbf{v} \! \left ( \tau_\omega^1 (\omega) ,\theta_{-\tau_\omega^1(\omega)}\omega, \textbf{v}_{0}\right)+h(y_\delta(\omega)-z(\omega)) \right \|^2_{H^1}\nonumber\\
&\leq 2\left \| \textbf{v}_\delta \! \left ( \tau_\omega^1 (\omega) ,\theta_{-\tau_\omega^1(\omega)}\omega,  \textbf{v}_{\delta,0}\right)
 - \textbf{v} \! \left ( \tau_\omega^1 (\omega) ,\theta_{-\tau_\omega^1(\omega)}\omega, \textbf{v}_{0}\right)\right\|^2_{H^1}+2\left\|h\left (y_\delta(\omega)-z(\omega)\right ) \right \|^2_{H^1},
\end{align*}
where $\textbf{v}_{\delta,0}=\textbf{u}_0-hy_\delta(\omega)$, $\textbf{v}_{0}=\textbf{u}_0-hz(\omega)$  and $\bar{\textbf{v}}(0)=-h(y_\delta(\omega)-z(\omega))$. Moreover, we also have
\begin{align*}
&\left \| \textbf{u}_\delta \! \left ( \tau_\omega^1 (\omega) ,\theta_{-\tau_\omega^1(\omega)}\omega,  \textbf{u}_{0}\right)
 - \textbf{u} \! \left ( \tau_\omega^1 (\omega) ,\theta_{-\tau_\omega^1(\omega)}\omega, \textbf{u}_{0}\right) \right \|^2_{H^1}\nonumber\\
 &\leq C\bar{L}_2({\mathfrak B}, \omega)\left (\|\bar{\textbf{v}}(0)\|^2+\sup\limits_{s\in[-1,0]}|y_\delta(\theta_s\omega)-z(\theta_s\omega)|^2\right )\nonumber\\
 &\leq C\bar{L}_2({\mathfrak B}, \omega)\left (|y_\delta(\omega)-z(\omega)|^2+\sup\limits_{s\in[-1,0]}|y_\delta(\theta_s\omega)-z(\theta_s\omega)|^2\right ).
\end{align*}
By \eqref{2.4} and \eqref{2.5}, this completes the proof of Theorem \ref{lemma8.2}.

\end{proof}

\subsection{$(H^1, H^2)$-convergence on $\mathfrak B$}
\begin{theorem}[$(H^1, H^2)$-convergence on $\mathfrak B$]\label{lemma8.3}  Let Assumption \ref{assum1} hold and $f\in H^1$. There  exists a  random variable $T_\omega^1$  such that   any
 two solutions $\textbf{u}$ and $\textbf{u}_\delta$ of the random anisotropic  NS equations \eqref{8.1} and \eqref{8.2} driven by  white noise  and colored noise corresponding to initial values  $\textbf{v}_{\delta,0},$ $\textbf{v}_{0}$ in $\mathfrak{B}(\theta_{-T_\omega^1}\omega)$, respectively,   satisfy
\begin{align*}
 \lim\limits_{\delta\rightarrow0}\|\textbf{u}_\delta(T_\omega^1,\theta_{-T_\omega^1}\omega,\textbf{u}_{0})
 -\textbf{u}(T_\omega^1,\theta_{-T_\omega^1}\omega,\textbf{u}_{0}) \big \|^2_{H^2}=0, \quad \omega\in \Omega.
\end{align*}
\end{theorem}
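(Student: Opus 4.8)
The plan is to run the proof of Lemma \ref{lemma5.1} again, now for the difference $\bar{\textbf{v}}=\textbf{v}_\delta-\textbf{v}$ of the colored-noise solution of \eqref{8.4} and the white-noise solution of \eqref{8.5}, which solves \eqref{8.6}. First I would take the inner product of \eqref{8.6} with $A^2\bar{\textbf{v}}$ in $H$. The anisotropic dissipative term is treated exactly as in \eqref{6.11}/\eqref{6.19}, giving a lower bound $\tfrac\nu2\|A^{3/2}\bar{\textbf{v}}\|^2$. Assumption \ref{assum1} provides $h\in H^4$, which is what lets us pair the linear forcing $-\nu A_1h(y_\delta(\theta_t\omega)-z(\theta_t\omega))+h(y_\delta(\theta_t\omega)-z(\theta_t\omega))$ with $A^2\bar{\textbf{v}}$ and absorb it into $\tfrac{\nu}{8}\|A^{3/2}\bar{\textbf{v}}\|^2+C|y_\delta(\theta_t\omega)-z(\theta_t\omega)|^2$ via Cauchy--Schwarz and Young's inequality.

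Next I would estimate the nonlinear contribution $-\big(B(\textbf{v}_\delta+hy_\delta(\theta_t\omega))-B(\textbf{v}+hz(\theta_t\omega)),A^2\bar{\textbf{v}}\big)$. Following the splitting already used in \eqref{8.9} and in \eqref{6.21}--\eqref{6.22}, one writes it as the sum of the three bilinear pieces built from $B(\textbf{v}+hz(\theta_t\omega),\bar{\textbf{v}}+h(y_\delta-z))$, $B(\bar{\textbf{v}},\textbf{v}_\delta+hy_\delta(\theta_t\omega))$ and $B(h(y_\delta-z),\textbf{v}_\delta+hy_\delta(\theta_t\omega))$, all tested against $A^2\bar{\textbf{v}}$. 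Using H\"older, Gagliardo--Nirenberg, Poincar\'e and Young as in the smoothing lemmas, every term becomes either $\tfrac{\nu}{8}\|A^{3/2}\bar{\textbf{v}}\|^2$, a term of the form $C\big(1+\|A^{\frac12}\textbf{v}\|^4+\|A^{\frac12}\textbf{v}_\delta\|^4+|y_\delta(\theta_t\omega)|^4+|z(\theta_t\omega)|^4\big)\|A\bar{\textbf{v}}\|^2$, a term $C\big(\|A^{\frac32}\textbf{v}\|^2+\|A^{\frac32}\textbf{v}_\delta\|^2+\dots\big)\|A^{\frac12}\bar{\textbf{v}}\|^2$, or a source term proportional to $|y_\delta(\theta_t\omega)-z(\theta_t\omega)|^2$ times a locally integrable factor. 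The outcome is a differential inequality
\begin{align*}
\frac{\d}{\d t}\|A\bar{\textbf{v}}\|^2+\frac{\nu}{2}\|A^{\frac32}\bar{\textbf{v}}\|^2
\le P(t)\|A\bar{\textbf{v}}\|^2+Q(t)\|A^{\frac12}\bar{\textbf{v}}\|^2+R(t)\,|y_\delta(\theta_t\omega)-z(\theta_t\omega)|^2,
\end{align*}
where $P,Q,R$ are nonnegative and locally integrable and, on initial data taken from $\mathfrak B$, their integrals over unit time intervals are bounded by tempered random variables, by Lemma \ref{lem:H1bound}, Lemma \ref{lemma7.1} (applied to $\textbf{v}_\delta$, uniformly for $\delta\in(0,1]$) and their white-noise analogues for $\textbf{v}$.

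Then I would apply Gronwall's lemma on $(s,t)$ with $s\in(t-1,t)$ and integrate over $s$, inserting: the $H^1$-level estimate \eqref{8.16} to control $\int_{t-1}^t\|A\bar{\textbf{v}}\|^2$ (bounded by a tempered random variable times $\|\bar{\textbf{v}}(0)\|^2+\sup_s|y_\delta(\theta_s\omega)-z(\theta_s\omega)|^2$); the $(H,H^1)$-convergence Theorem \ref{lemma8.2} to control $\sup_{s\in[t-1,t]}\|A^{\frac12}\bar{\textbf{v}}(s)\|^2$; and Lemma \ref{lemma7.1} together with the $H^1$ absorption to control $\int_{t-1}^t Q$ and $\int_{t-1}^t R$. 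Choosing $T_\omega^1$ large enough (e.g. past both $\tau_\omega^1$ of Theorem \ref{lemma8.2} and $T_{\mathfrak B}(\omega)+1$, so that $\textbf{v}_\delta,\textbf{v}$ have entered the $H^2$-bounded regime on $[T_\omega^1-1,T_\omega^1]$), this yields $\|A\bar{\textbf{v}}(T_\omega^1,\theta_{-T_\omega^1}\omega,\bar{\textbf{v}}(0))\|^2\le C(\omega)\big(\|\bar{\textbf{v}}(0)\|^2+\sup_{s\in[-1,0]}|y_\delta(\theta_s\omega)-z(\theta_s\omega)|^2\big)$. Since $\bar{\textbf{v}}(0)=-h(y_\delta(\omega)-z(\omega))$ and $\textbf{u}_\delta(T_\omega^1,\theta_{-T_\omega^1}\omega,\textbf{u}_0)-\textbf{u}(T_\omega^1,\theta_{-T_\omega^1}\omega,\textbf{u}_0)=\bar{\textbf{v}}(T_\omega^1,\theta_{-T_\omega^1}\omega,\bar{\textbf{v}}(0))+h(y_\delta(\omega)-z(\omega))$, we get $\|\textbf{u}_\delta-\textbf{u}\|_{H^2}^2\le C(\omega)\big(|y_\delta(\omega)-z(\omega)|^2+\sup_{s\in[-1,0]}|y_\delta(\theta_s\omega)-z(\theta_s\omega)|^2\big)$, which tends to $0$ as $\delta\to0$ by \eqref{2.4} (uniform convergence $y_\delta\to z$ on compact intervals). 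The main obstacle is bookkeeping rather than any new idea: one must organize the many nonlinear cross-terms so that each ends up as (a tempered random variable bounded over $\mathfrak B$) times one of $\|A\bar{\textbf{v}}\|^2$, $\|A^{\frac12}\bar{\textbf{v}}\|^2\times(\text{integrable factor})$, or $|y_\delta-z|^2\times(\text{integrable factor})$, and one must make sure the entire ``source'' part stays uniformly small in $\delta$; a secondary point requiring care is obtaining uniform-in-$\delta$ $H^2$ bounds for $\textbf{v}_\delta$ and matching $H^2$ bounds for the limit solution $\textbf{v}$, both of which follow from Lemma \ref{lemma7.1} and its computation since the colored noise and the Ornstein--Uhlenbeck process $z$ are tempered uniformly.
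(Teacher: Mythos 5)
Your proposal is correct and follows essentially the same route as the paper: testing \eqref{8.6} with $A^2\bar{\textbf{v}}$, splitting the nonlinearity as in \eqref{9.2}--\eqref{9.3}, applying Gronwall over unit time intervals and inserting the $H^1$-level estimate \eqref{8.16} to control $\int_{t-1}^t\|A\bar{\textbf{v}}\|^2$, then concluding from $\bar{\textbf{v}}(0)=-h(y_\delta(\omega)-z(\omega))$ and the uniform convergence $y_\delta\to z$ in \eqref{2.4}. The only difference is bookkeeping: the paper organizes the cross-terms so that only $\|A\bar{\textbf{v}}\|^2$ appears with coefficients $\|A\textbf{v}\|^2+\|A\textbf{v}_\delta\|^2$ (controlled via \eqref{mar9.4}/\eqref{9.8}), whereas you allow an extra $\|A^{\frac12}\bar{\textbf{v}}\|^2$ term with $H^3$-norm coefficients handled by Lemma \ref{lemma7.1} and Theorem \ref{lemma8.2}, which works equally well.
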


\begin{proof} Taking the inner product of the system   \eqref{8.6} with $A^2\bar{\textbf{v}}$ in $H$,  by integration by parts, we can deduce that
\begin{align}\label{8.18}
&\frac{\d}{\d t}\|A\bar{\textbf{v}}\|^2-2\nu \int_{\mathbb{T}^2}\partial_{yy}\Delta \bar{\textbf{v}}_1\Delta\bar{\textbf{v}}_1dx-2\nu \int_{\mathbb{T}^2}\partial_{xx}\Delta \bar{\textbf{v}}_2\Delta \bar{\textbf{v}}_2dx\nonumber\\
&=-2(B\big( \textbf{v}_\delta(t)+hy_\delta(\theta_t\omega)\big)-B\big( \textbf{v}+hz(\theta_t\omega)\big),A^2(\textbf{v}_\delta-\textbf{v}))\nonumber\\
&-  2(\nu A_1h(y_\delta(\theta_t\omega)-z(\theta_t\omega)),A^2(\textbf{v}_\delta-\textbf{v}))
+2(h(y_\delta (\theta_t\omega)-z(\theta_t\omega)),A^2(\textbf{v}_\delta-\textbf{v}))\nonumber\\
&=-2(B\big(\textbf{v}+hz(\theta_t\omega),\textbf{v}_\delta(t)-\textbf{v}+h(y_\delta(\theta_t\omega)-z(\theta_t\omega)))
,A^2(\textbf{v}_\delta-\textbf{v})\big)\nonumber\\
&-2(B\big( \textbf{v}_\delta(t)-\textbf{v}+h(y_\delta(\theta_t\omega)-z(\theta_t\omega)),\textbf{v}_\delta(t)+hy_\delta(\theta_t\omega))
,A^2(\textbf{v}_\delta-\textbf{v})\big)\nonumber\\
&-  2(\nu A_1h(y_\delta(\theta_t\omega)-z(\theta_t\omega)),A^2(\textbf{v}_\delta-\textbf{v}))
+2(h(y_\delta (\theta_t\omega)-z(\theta_t\omega)),A^2(\textbf{v}_\delta-\textbf{v})).
\end{align}
By direct computations and $\nabla\cdot \bar{\textbf{v}}=0$ and \eqref{3.9}, we also have
\begin{align}\label{9.1}
&-2\nu \int_{\mathbb{T}^2}\partial_{yy}\Delta \bar{\textbf{v}}_1\Delta\bar{\textbf{v}}_1dx-2\nu \int_{\mathbb{T}^2}\partial_{xx}\Delta \bar{\textbf{v}}_2\Delta \bar{\textbf{v}}_2dx\nonumber\\
&= 2\nu \int_{\mathbb{T}^2}(\partial_{y}\Delta \bar{\textbf{v}}_1)^2dx
+2\nu \int_{\mathbb{T}^2}(\partial_{x}\Delta \bar{\textbf{v}}_2)^2dx
\nonumber\\
&\geq\nu\|A^{\frac32} \bar{\textbf{v}}\|^2.
\end{align}
By using the H\"{o}lder inequality, the Gagliardo-Nirenberg inequality and the Young inequality, we can get that
\begin{align}\label{9.2}
 &\big| 2(B\big(\textbf{v}+hz(\theta_t\omega),\textbf{v}_\delta(t)-\textbf{v}+h(y_\delta(\theta_t\omega)-z(\theta_t\omega)))
,A^2\bar{\textbf{v}}\big)\big|\nonumber\\
 &\leq 2\|A^\frac12(\textbf{v}+hz(\theta_t\omega))\|_{L^4}\|A^\frac12\bar{\textbf{v}}\|_{L^4}\|A^\frac32\bar{\textbf{v}}\|
 +2\|\textbf{v}+hz(\theta_t\omega)\|_{L^\infty}\|A\bar{\textbf{v}}\|\|A^\frac32\bar{\textbf{v}}\|\nonumber\\
 &+2\|A^\frac12(\textbf{v}+hz(\theta_t\omega))\|_{L^4}\|A^\frac12h(y_\delta(\theta_t\omega)-z(\theta_t\omega))
 \|_{L^4}\|A^\frac32\bar{\textbf{v}}\|\nonumber\\
 &+2\|\textbf{v}+hz(\theta_t\omega)\|_{L^\infty}\|Ah(y_\delta(\theta_t\omega)-z(\theta_t\omega))\|\|A^\frac32\bar{\textbf{v}}\|
 \nonumber\\
 &\leq C(\|A\textbf{v}\|+\|Ah\||z(\theta_t\omega)|)\|A\bar{\textbf{v}}\|\|A^\frac32\bar{\textbf{v}}\|\nonumber\\
 &+C\|A(\textbf{v}+hz(\theta_t\omega))\|\|A\bar{\textbf{v}}\|\|A^\frac32\bar{\textbf{v}}\|\nonumber\\
 &+C(\|A\textbf{v}\|+\|Ah\||z(\theta_t\omega)|)\|Ah\||y_\delta(\theta_t\omega)-z(\theta_t\omega)|
 \|A^\frac32\bar{\textbf{v}}\|\nonumber\\
 &+C\|A(\textbf{v}+hz(\theta_t\omega))\|\|Ah\||y_\delta(\theta_t\omega)-z(\theta_t\omega)|\|A^\frac32\bar{\textbf{v}}\|\nonumber\\
 &\leq \frac{\nu}{8}\|A^\frac32\bar{\textbf{v}}\|^2+C(\|A\textbf{v}\|^2+|z(\theta_t\omega)|^2)\|A\bar{\textbf{v}}\|^2\nonumber\\
 &+C(\|A\textbf{v}\|^2+|z(\theta_t\omega)|^2)|y_\delta(\theta_t\omega)-z(\theta_t\omega)|^2,
\end{align}
and 
\begin{align}\label{9.3}
&\big| -2(B\big( \textbf{v}_\delta(t)-\textbf{v}+h(y_\delta(\theta_t\omega)-z(\theta_t\omega)),\textbf{v}_\delta(t)+hy_\delta(\theta_t\omega))
,A^2(\textbf{v}_\delta-\textbf{v})\big)\big|\nonumber\\
&\leq 2\|A^\frac12\bar{\textbf{v}}\|_{L^4}\|A^\frac12(\textbf{v}_\delta(t)+hy_\delta(\theta_t\omega))\|_{L^4}
\|A^\frac32\bar{\textbf{v}}\|\nonumber\\
&+2\|\bar{\textbf{v}}\|_{L^\infty}\|A(\textbf{v}_\delta(t)+hy_\delta(\theta_t\omega))\|
\|A^\frac32\bar{\textbf{v}}\|\nonumber\\
&+2\|A^\frac12h(y_\delta(\theta_t\omega)-z(\theta_t\omega))\|_{L^4}\|A^\frac12(\textbf{v}_\delta(t)+hy_\delta(\theta_t\omega))\|_{L^4}
\|A^\frac32\bar{\textbf{v}}\|\nonumber\\
&+2\|h(y_\delta(\theta_t\omega)-z(\theta_t\omega))\|_{L^\infty}\|A(\textbf{v}_\delta(t)+hy_\delta(\theta_t\omega))\|
\|A^\frac32\bar{\textbf{v}}\|\nonumber\\
&\leq C(\|A\textbf{v}_\delta(t)\|+\|Ah\||y_\delta(\theta_t\omega)|)\|A\bar{\textbf{v}}\|
\|A^\frac32\bar{\textbf{v}}\|\nonumber\\
&+C\|A\bar{\textbf{v}}\|\|A(\textbf{v}_\delta(t)+hy_\delta(\theta_t\omega))\|
\|A^\frac32\bar{\textbf{v}}\|\nonumber\\
&+C\|Ah\||y_\delta(\theta_t\omega)-z(\theta_t\omega)|\|A(\textbf{v}_\delta(t)+hy_\delta(\theta_t\omega))\|
\|A^\frac32\bar{\textbf{v}}\|\nonumber\\
&+C(\|A\textbf{v}_\delta(t)\|+\|Ah\||y_\delta(\theta_t\omega)|)\|Ah\||y_\delta(\theta_t\omega)-z(\theta_t\omega)|
\|A^\frac32\bar{\textbf{v}}\|\nonumber\\
&\leq \frac{\nu}{8}\|A^\frac32\bar{\textbf{v}}\|^2
+C(\|A\textbf{v}_\delta(t)\|^2+|y_\delta(\theta_t\omega)|^2)\|A\bar{\textbf{v}}\|^2\nonumber\\
&+C(\|A\textbf{v}_\delta(t)\|^2+|y_\delta(\theta_t\omega)|^2)|y_\delta(\theta_t\omega)-z(\theta_t\omega)|^2.
\end{align}
For the last two terms on the right hand side of \eqref{8.18}, by using the similar method, we also can deduce that
\begin{align}\label{9.4}
&-  2(\nu A_1h(y_\delta(\theta_t\omega)-z(\theta_t\omega)),A^2(\textbf{v}_\delta-\textbf{v}))
+2(h(y_\delta (\theta_t\omega)-z(\theta_t\omega)),A^2(\textbf{v}_\delta-\textbf{v}))\nonumber\\
&\leq 2\nu\|A^\frac32h\||y_\delta(\theta_t\omega)-z(\theta_t\omega)|\|A^\frac32\bar{\textbf{v}}\|
+2\|A^\frac32h\||y_\delta(\theta_t\omega)-z(\theta_t\omega)|\|A^\frac32\bar{\textbf{v}}\|
\nonumber\\
&\leq \frac{\nu}{4}\|A^\frac32\bar{\textbf{v}}\|^2+C|y_\delta(\theta_t\omega)-z(\theta_t\omega)|^2.
\end{align}
Inserting \eqref{9.1}, \eqref{9.2}, \eqref{9.3} and \eqref{9.4} into \eqref{8.18} yields
\begin{align}\label{9.15}
&\frac{\d}{\d t}\|A\bar{\textbf{v}}\|^2+\frac12\nu\|A^{\frac32} \bar{\textbf{v}}\|^2\nonumber\\
&\leq C\|A\bar{\textbf{v}}\|^2 \left ( \|A\textbf{v}\|^2+\|A\textbf{v}_\delta(t)\|^2+|y_\delta(\theta_t\omega)|^2+|z(\theta_t\omega)|^2 \right ) \nonumber\\
 &+C(1+\|A\textbf{v}\|^2+\|A\textbf{v}_\delta(t)\|^2+|y_\delta(\theta_t\omega)|^2+|z(\theta_t\omega)|^2
 )|y_\delta(\theta_t\omega)-z(\theta_t\omega)|^2.
\end{align}
For any $s\in(t-1,t)$ and $t\geq 1$, applying Gronwall's lemma, then  we can get that
\begin{align*}
&\|A\bar{\textbf{v}}(t)\|^2\nonumber\\
&\quad-e^{\int_s^tC( \|A\textbf{v}\|^2+\|A\textbf{v}_\delta\|^2+|y_\delta(\theta_\tau\omega)|^2+|z(\theta_\tau\omega)|^2)\, \d \tau}\|A\bar{\textbf{v}}(s)\|^2\nonumber\\
&\quad \leq \int_s^te^{\int_\eta^tC( \|A\textbf{v}\|^2+\|A\textbf{v}_\delta\|^2+|y_\delta(\theta_\tau\omega)|^2+|z(\theta_\tau\omega)|^2)\, \d \tau}
C(1+\|A\textbf{v}\|^2+\|A\textbf{v}_\delta\|^2\nonumber\\
&\quad +|y_\delta(\theta_\eta\omega)|^2+|z(\theta_\eta\omega)|^2
 )|y_\delta(\theta_\eta\omega)-z(\theta_\eta\omega)|^2 \d \eta\nonumber\\
&\quad \leq Ce^{\int_{t-1}^tC( \|A\textbf{v}\|^2+\|A\textbf{v}_\delta\|^2+|y_\delta(\theta_\tau\omega)|^2+|z(\theta_\tau\omega)|^2)\, \d \tau}
\int_{t-1}^t C(1+\|A\textbf{v}\|^2+\|A\textbf{v}_\delta\|^2\nonumber\\
&\quad +|y_\delta(\theta_\eta\omega)|^2+|z(\theta_\eta\omega)|^2
 )|y_\delta(\theta_\eta\omega)-z(\theta_\eta\omega)|^2 \d \eta.
\end{align*}
Integrating w.r.t. $s$ on $(t-1,t)$ yields
\begin{align}\label{9.6}
&\|A\bar{\textbf{v}}(t)\|^2\nonumber\\
&\quad \leq Ce^{\int_{t-1}^tC( \|A\textbf{v}\|^2+\|A\textbf{v}_\delta\|^2+|y_\delta(\theta_\tau\omega)|^2+|z(\theta_\tau\omega)|^2)\, \d \tau}
[\int_{t-1}^t\|A\bar{\textbf{v}}(s)\|^2\, \d s+\int_{t-1}^t C(1+\|A\textbf{v}\|^2\nonumber\\
&\quad +\|A\textbf{v}_\delta\|^2+|y_\delta(\theta_\eta\omega)|^2+|z(\theta_\eta\omega)|^2
 )|y_\delta(\theta_\eta\omega)-z(\theta_\eta\omega)|^2 \d \eta].
\end{align}

Since our initial values $\textbf{v}_{\delta,0}$ and $\textbf{v}_{0}$   belong to the $H^1$ random  absorbing set  $\mathfrak B$,  applying Gronwall's lemma to \eqref{9.5}, then we can deduce    that
\begin{align}
 & \|A^\frac12\bar{\textbf{v}}(t)\|^2 + \frac{\nu}{2} \int_0^te^{\int_s^tC( \|A^\frac12\textbf{v}\|^4+\|A^\frac12\textbf{v}_\delta\|^4+|y_\delta(\theta_\tau\omega)|^4+|z(\theta_\tau\omega)|^4)\, \d \tau}\|A\bar{\textbf{v}}(s) \|^2\, \d s
\nonumber\\
&\quad \leq e^{\int_0^tC( \|A^\frac12\textbf{v}\|^4+\|A^\frac12\textbf{v}_\delta\|^4+|y_\delta(\theta_\tau\omega)|^4+|z(\theta_\tau\omega)|^4)\, \d \tau} (\|A^\frac12\bar{v}(0)\|^2\nonumber\\
&\quad +C\int_0^t(1+\|A^\frac12\textbf{v}\|^2+\|A^\frac12\textbf{v}_\delta\|^2 +|y_\delta(\theta_\eta\omega)|^2+|z(\theta_\eta\omega)|^2)
|y_\delta(\theta_\eta\omega)-z(\theta_\eta\omega)|^2 \, \d \eta)
.
\end{align}
Hence, we can get the  following inequalities
\begin{align*}
& \frac{\nu}{2} \int_{t-1}^t\|A\bar{\textbf{v}}(s)\|^2\, \d s\nonumber\\
& \quad \leq\frac{\nu}{2} \int_{t-1}^te^{\int_s^tC( \|A^\frac12\textbf{v}\|^4+\|A^\frac12\textbf{v}_\delta\|^4+|y_\delta(\theta_\tau\omega)|^4+|z(\theta_\tau\omega)|^4)\, \d \tau}\|A\bar{\textbf{v}}(s)\|^2\, \d s\nonumber\\
&\quad \leq e^{\int_0^tC( \|A^\frac12\textbf{v}\|^4+\|A^\frac12\textbf{v}_\delta\|^4+|y_\delta(\theta_\tau\omega)|^4+|z(\theta_\tau\omega)|^4)\, \d \tau} (\|A^\frac12\bar{v}(0)\|^2\nonumber\\
&\quad +C\int_0^t(1+\|A^\frac12\textbf{v}\|^2+\|A^\frac12\textbf{v}_\delta\|^2 +|y_\delta(\theta_\eta\omega)|^2+|z(\theta_\eta\omega)|^2)
|y_\delta(\theta_\eta\omega)-z(\theta_\eta\omega)|^2 \, \d \eta)
.
\end{align*}

Inserting these inequalities  into \eqref{9.6}, we also can deduce that
\begin{align}
 \|A\bar{\textbf{v}}(t)\|^2
&\leq Ce^{\int_0^tC( 1+\|A^\frac12\textbf{v}\|^4+\|A^\frac12\textbf{v}_\delta\|^4+|y_\delta(\theta_\tau\omega)|^4+|z(\theta_\tau\omega)|^4)\, \d \tau}e^{\int_{t-1}^tC( \|A\textbf{v}\|^2+\|A\textbf{v}_\delta\|^2)\, \d \tau}\nonumber\\
&\left(\|A^\frac12\bar{v}(0)\|^2 + \sup\limits_{s\in[0,t]}|y_\delta(\theta_s\omega)-z(\theta_s\omega)|^2\right).
\end{align}
for any $t\geq 1$.
We replace $\omega$ by $\theta_{-t}\omega$ to  get  for any $ t\geq 1$
\begin{align}\label{9.7}
& \|A\bar{\textbf{v}}(t,\theta_{-t}\omega,\bar{v}(0))\|^2\nonumber\\
&\quad \leq Ce^{\int_0^tC( 1+\|A^\frac12\textbf{v}(\tau,\theta_{-t}\omega,\textbf{v}(0))\|^4
+\|A^\frac12\textbf{v}_\delta(\tau,\theta_{-t}\omega,\textbf{v}_\delta(0))\|^4
+|y_\delta(\theta_{\tau-t}\omega)|^4+|z(\theta_{\tau-t}\omega)|^4)\, \d \tau}\nonumber\\
&\quad e^{\int_{t-1}^tC( \|A\textbf{v}(\tau,\theta_{-t}\omega,\textbf{v}(0))\|^2+\|A\textbf{v}_\delta(\tau,\theta_{-t}\omega,\textbf{v}_\delta(0))
\|^2)\, \d \tau} \nonumber\\
&\left(\|A^\frac12\bar{\textbf{v}}(0)\|^2+ \sup\limits_{s\in[0,t]}|y_\delta(\theta_{s-t}\omega)-z(\theta_{s-t}\omega)|^2\right).
\end{align}
By \eqref{mar11.2} and \eqref{mar9.4}, for $\omega\in \Omega$, there exists $T_\omega^1 \geq T_\omega$
such that
\begin{align}\label{9.8}
  \int_{T_\omega^1 -1}^{T_\omega^1 }  \left ( \|A^\frac12\textbf{v}_\delta(\eta,\theta_{-T_\omega^1 }\omega,\textbf{v}_{\delta,0})\|^2+\|A^\frac12\textbf{v}(\eta,\theta_{-T_\omega^1 }\omega,\textbf{v}_{0})\|^2\right)  \d \eta
 \leq  2\zeta_8(\omega) .
 \end{align}
By \eqref{mar11.3} and the similar method, for   $\textbf{v}(0), \textbf{v}_\delta(0)\in \mathfrak B(\theta_{-t}\omega)$ we can deduce that
 \begin{align*}
 &\int_0^t
 (\|A^{\frac 12} \textbf{v}_\delta(\eta, \theta_{-t} \omega, \textbf{v}_\delta(0))\|^4+\|A^{\frac 12} \textbf{v}(\eta, \theta_{-t} \omega, \textbf{v}(0))\|^4) \, \d \eta\nonumber\\
 &   \leq  2\left( \| \mathfrak B  (\theta_{-t}\omega)\|^2_{H^1} +1 \right)^2 \int_0^t e^{ \int^\eta_0 C(1+|y_\delta(\theta_{\tau-t} \omega)|^6+|z(\theta_{\tau-t} \omega)|^6) \, \d \tau}  \, \d \eta \nonumber \\
 &   \leq  2\big( \zeta_2(\theta_{-t}\omega)  +C \big)^2  e^{ Ct+C \int^0_{-t} ( |y_\delta(\theta_{\tau} \omega)|^6 ++|z(\theta_{\tau} \omega)|^6)\, \d \tau}   ,
   \quad t>0  , \nonumber
\end{align*}
and, particularly for $t=T_\omega^1$,
 \begin{align}\label{9.9}
 & \int_0^{T_\omega^1}
 (\|A^{\frac 12} \textbf{v}_\delta(\eta, \theta_{-{T_\omega^1}} \omega, \textbf{v}_\delta(0))\|^4+\|A^{\frac 12} \textbf{v}(\eta, \theta_{-{T_\omega^1}} \omega, \textbf{v}(0))\|^4 )\, \d \eta \nonumber \\
 &   \quad  \leq  2\big( \zeta_2(\theta_{-{T_\omega^1}}\omega)  +C \big)^2  e^{ C {T_\omega^1}+C \int^0_{-{T_\omega^1}}  (|y_\delta(\theta_{\tau} \omega)|^6+|z(\theta_{\tau} \omega)|^6 )\, \d \tau} =\zeta_{15}(\omega).
\end{align}
Finally, inserting     \eqref{9.8}  and \eqref{9.9}  into \eqref{9.7}  we can get at $t=T_\omega^1$ that
\begin{align*}
 \|A\bar{\textbf{v}}(T_\omega^1 ,\theta_{-T_\omega^1 }\omega,\bar{\textbf{v}}(0))\|^2
&\leq Ce^{ C \zeta_{15}(\omega)+C\zeta_8(\omega) } \left(\|A^\frac12\bar{\textbf{v}}(0)\|^2+ \sup\limits_{s\in[-T_\omega^1,0]}|y_\delta(\theta_{s}\omega)-z(\theta_{s}\omega)|^2\right). \nonumber
\end{align*}
Hence, let
\begin{align*}
 \bar{L}_3 (\mathfrak B, \omega) :=   Ce^{ C \zeta_{15}(\omega)+C\zeta_8(\omega) }  ,\quad \omega\in \Omega. \qedhere
\end{align*}
Then it yields
\begin{align*}
&\left \| \textbf{u}_\delta \! \left ( T_\omega^1 (\omega) ,\theta_{-T_\omega^1(\omega)}\omega,  \textbf{u}_{0}\right)
 - \textbf{u} \! \left ( T_\omega^1 (\omega) ,\theta_{-T_\omega^1(\omega)}\omega, \textbf{u}_{0}\right) \right \|^2_{H^2}
 \nonumber\\
& =\left \| \textbf{v}_\delta \! \left ( T_\omega^1 (\omega) ,\theta_{-T_\omega^1}\omega,  \textbf{v}_{\delta,0}\right)
 - \textbf{v} \! \left ( T_\omega^1 (\omega) ,\theta_{-T_\omega^1(\omega)}\omega, \textbf{v}_{0}\right)+h(y_\delta(\omega)-z(\omega)) \right \|^2_{H^2}\nonumber\\
&\leq 2\left \| \textbf{v}_\delta \! \left ( T_\omega^1 (\omega) ,\theta_{-T_\omega^1(\omega)}\omega,  \textbf{v}_{\delta,0}\right)
 - \textbf{v} \! \left ( T_\omega^1 (\omega) ,\theta_{-T_\omega^1(\omega)}\omega, \textbf{v}_{0}\right)\right\|^2_{H^2}+2\left\|h\left (y_\delta(\omega)-z(\omega)\right ) \right \|^2_{H^2},
\end{align*}
where $\textbf{v}_{\delta,0}=\textbf{u}_0-hy_\delta(\omega)$, $\textbf{v}_{0}=\textbf{u}_0-hz(\omega)$  and $\bar{\textbf{v}}(0)=-h(y_\delta(\omega)-z(\omega))$. Moreover, we also have
\begin{align*}
&\left \| \textbf{u}_\delta \! \left ( T_\omega^1 (\omega) ,\theta_{-T_\omega^1(\omega)}\omega,  \textbf{u}_{0}\right)
 - \textbf{u} \! \left ( T_\omega^1(\omega) ,\theta_{-T_\omega^1(\omega)}\omega, \textbf{u}_{0}\right) \right \|^2_{H^2}\nonumber\\
 &\leq C\bar{L}_3({\mathfrak B}, \omega)\left (\|A^\frac12\bar{\textbf{v}}(0)\|^2+\sup\limits_{s\in[-1,0]}|y_\delta(\theta_s\omega)-z(\theta_s\omega)|^2\right )\nonumber\\
 &\leq C\bar{L}_3({\mathfrak B}, \omega)\left (|y_\delta(\omega)-z(\omega)|^2+\sup\limits_{s\in[-1,0]}|y_\delta(\theta_s\omega)-z(\theta_s\omega)|^2\right ).
\end{align*}
By \eqref{2.4} and \eqref{2.5}, this completes the proof of Theorem \ref{lemma8.3}.
\end{proof}
\subsection{$(H^2, H^3)$-convergence on $\mathfrak B^2$}
\begin{theorem}[$(H^2, H^3)$-convergence on $\mathfrak B^2$]\label{lemma8.4}  Let Assumption \ref{assum1} hold and $f\in H^2$. There  exists a  random variable $\bar{T}_\omega^*$  such that    any
 two solutions $\textbf{u}$ and $\textbf{u}_\delta$ of the random anisotropic  NS equations \eqref{8.1} and \eqref{8.2} driven by  white noise  and colored noise corresponding to initial values  $\textbf{u}_{0}$  in $\mathfrak{B}^2(\theta_{-\bar{T}_\omega^*})$, respectively,   satisfy
\begin{align*}
 \lim\limits_{\delta\rightarrow0}\|\textbf{u}_\delta(\bar{T}_\omega^*,\theta_{-\bar{T}_\omega^*}\omega,\textbf{u}_{0})
 -\textbf{u}(\bar{T}_\omega^*,\theta_{-\bar{T}_\omega^*}\omega,\textbf{u}_{0}) \big \|^2_{H^3}=0, \quad \omega\in \Omega.
\end{align*}
\end{theorem}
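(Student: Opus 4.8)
The plan is to run an $H^3$-level energy estimate on the difference $\bar{\textbf{v}}=\textbf{v}_\delta-\textbf{v}$, parallel to the proof of Lemma \ref{lemma7.3} (the $(H^2,H^3)$-smoothing) but keeping track of the extra forcing $h\bigl(y_\delta(\theta_t\omega)-z(\theta_t\omega)\bigr)$ which is absent there. Recall $\bar{\textbf{v}}$ solves \eqref{8.6}. First I would take the inner product of \eqref{8.6} with $A^3\bar{\textbf{v}}$ in $H$ and integrate by parts. The anisotropic viscous term is handled by the same algebraic identity as in \eqref{6.19}, giving a lower bound $\tfrac{\nu}{2}\|A^2\bar{\textbf{v}}\|^2$. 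The bilinear difference is split following the decomposition used in \eqref{9.2}--\eqref{9.3}, into $B\bigl(\textbf{v}+hz(\theta_t\omega),\,\bar{\textbf{v}}+h(y_\delta(\theta_t\omega)-z(\theta_t\omega))\bigr)$ and $B\bigl(\bar{\textbf{v}}+h(y_\delta(\theta_t\omega)-z(\theta_t\omega)),\,\textbf{v}_\delta+hy_\delta(\theta_t\omega)\bigr)$, and each piece is estimated by the H\"older, Gagliardo--Nirenberg and Young inequalities, absorbing one factor of $\|A^2\bar{\textbf{v}}\|$ into the dissipation; here the higher regularity $h\in H^4$ granted by Assumption \ref{assum1} is used to control the norms $\|A^{3/2}h\|$, $\|A^2h\|$ and (together with $f\in H^2$) the norm $\|Af\|$ entering the a priori bounds on $\|A^{3/2}\textbf{v}_\delta\|$, $\|A^{3/2}\textbf{v}\|$.

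After these bounds the estimate takes the form
\begin{align*}
&\frac{\d}{\d t}\|A^{3/2}\bar{\textbf{v}}\|^2+\frac{\nu}{2}\|A^2\bar{\textbf{v}}\|^2\\
&\quad\leq C\|A^{3/2}\bar{\textbf{v}}\|^2\,\Phi(t)
+C\|A\bar{\textbf{v}}\|^2\,\Psi(t)
+C\,\Xi(t)\,|y_\delta(\theta_t\omega)-z(\theta_t\omega)|^2,
\end{align*}
where $\Phi,\Psi,\Xi$ are locally integrable combinations of $\|A^{1/2}\textbf{v}_\delta\|,\|A\textbf{v}_\delta\|,\|A^{3/2}\textbf{v}_\delta\|$, the analogous $\textbf{v}$-quantities, and $|y_\delta(\theta_t\omega)|,|z(\theta_t\omega)|$. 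Then I would apply Gronwall's lemma on $(s,t)$, integrate $s$ over a unit interval, replace $\omega$ by $\theta_{-t}\omega$, and evaluate at a suitably large random time $\bar{T}_\omega^*$ (e.g.\ $\bar{T}_\omega^*\geq T_\omega^*+1$, with $T_\omega^*$ the time from Lemma \ref{lemma7.3}, taken also to exceed the absorbing time of $\mathfrak{B}^2$ for the white-noise system). The time integrals of $\Phi,\Psi$ and of $\|A^{3/2}\textbf{v}_\delta\|^2$, $\|A\textbf{v}_\delta\|^4$ are bounded via Lemma \ref{lemma7.1}, Lemma \ref{lemma7.2} and their white-noise counterparts (obtained by the same arguments with $z$ in place of $y_\delta$), while $\int\|A\bar{\textbf{v}}\|^2$ is controlled by the $(H^1,H^2)$-level estimate established in the proof of Theorem \ref{lemma8.3}, which already carries a factor $|y_\delta(\theta_s\omega)-z(\theta_s\omega)|^2$. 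This yields a tempered random variable $\bar{L}(\omega)$, independent of $\delta\in(0,1]$, with
\[
\big\|A^{3/2}\bar{\textbf{v}}(\bar{T}_\omega^*,\theta_{-\bar{T}_\omega^*}\omega,\bar{\textbf{v}}(0))\big\|^2
\leq C\bar{L}(\omega)\Big(\|A^{3/2}\bar{\textbf{v}}(0)\|^2+\sup_{s\in[-\bar{T}_\omega^*,0]}|y_\delta(\theta_s\omega)-z(\theta_s\omega)|^2\Big).
\]

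To conclude, I would pass back to the original solutions through $\textbf{u}_\delta-\textbf{u}=\bar{\textbf{v}}+h(y_\delta(\theta_t\omega)-z(\theta_t\omega))$, with $\textbf{v}_{\delta,0}=\textbf{u}_0-hy_\delta(\omega)$, $\textbf{v}_0=\textbf{u}_0-hz(\omega)$ and $\bar{\textbf{v}}(0)=-h(y_\delta(\omega)-z(\omega))$, so that $\|\textbf{u}_\delta-\textbf{u}\|_{H^3}^2\leq 2\|\bar{\textbf{v}}\|_{H^3}^2+2\|h\|_{H^3}^2|y_\delta(\omega)-z(\omega)|^2$, and hence
\[
\big\|\textbf{u}_\delta(\bar{T}_\omega^*,\cdot)-\textbf{u}(\bar{T}_\omega^*,\cdot)\big\|_{H^3}^2
\leq C\bar{L}(\omega)\Big(|y_\delta(\omega)-z(\omega)|^2+\sup_{s\in[-\bar{T}_\omega^*,0]}|y_\delta(\theta_s\omega)-z(\theta_s\omega)|^2\Big).
\]
By \eqref{2.4} and \eqref{2.5} of Proposition \ref{proposition3.5}, $y_\delta(\theta_s\omega)\to z(\theta_s\omega)$ uniformly on compact time intervals as $\delta\to0$, so the right-hand side tends to $0$, giving the claim. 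The main obstacle is the $\delta$-uniformity of the coefficient bounds: one must verify that $\zeta_{10}$ in \eqref{6.17}, $\zeta_{12}$ in \eqref{6.18}, and therefore the time integrals of $\Phi$ and $\Psi$, can be taken independent of $\delta\in(0,1]$, which rests on the uniform-in-$\delta$ ergodic limits recorded in \eqref{erg} and in Proposition \ref{proposition3.5}; a secondary, routine point is ensuring $\bar{T}_\omega^*$ simultaneously absorbs $\mathfrak{B}^2$ for both the colored-noise and white-noise systems, handled by taking the larger of the two absorbing times.
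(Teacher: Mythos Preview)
Your approach is essentially the same as the paper's: take the $A^3$ inner product of \eqref{8.6}, use the anisotropic identity \eqref{6.19} to extract $\tfrac{\nu}{2}\|A^2\bar{\textbf{v}}\|^2$, estimate the bilinear pieces and the $h(y_\delta-z)$ forcing as in \eqref{9.12}--\eqref{9.14}, apply Gronwall on $(s,t)$, integrate $s$ over a unit interval, feed in the $H^2$-level differential inequality \eqref{9.15} to control $\int_{t-1}^t\|A^{3/2}\bar{\textbf{v}}\|^2$, set $\bar T_\omega^*=T_\omega^*+1$, bound the coefficient integrals via $\zeta_{12}$ and the $\mathfrak B^2$-analogue of \eqref{7.4}, and finish by converting back to $\textbf u_\delta-\textbf u$ and invoking \eqref{2.4}--\eqref{2.5}. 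Two small remarks: the paper's differential inequality has no separate $\|A\bar{\textbf v}\|^2\Psi$ term (its interpolations push everything onto $\|A^{3/2}\bar{\textbf v}\|^2$), and its final bound carries $\|A\bar{\textbf v}(0)\|^2$ rather than $\|A^{3/2}\bar{\textbf v}(0)\|^2$; both discrepancies are harmless since $\bar{\textbf v}(0)=-h(y_\delta(\omega)-z(\omega))$ with $h\in H^4$.
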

\begin{proof} Taking the inner product of the system   \eqref{8.6} with $A^3\bar{\textbf{v}}$ in $H$,  by integration by parts, we can deduce that
\begin{align}\label{9.10}
&\frac{\d}{\d t}\|A^{\frac32}\bar{\textbf{v}}\|^2+2\nu \int_{\mathbb{T}^2}\partial_{yy}\Delta \bar{\textbf{v}}_1\Delta^2\bar{\textbf{v}}_1dx+2\nu \int_{\mathbb{T}^2}\partial_{xx}\Delta \bar{\textbf{v}}_2\Delta^2 \bar{\textbf{v}}_2dx\nonumber\\
&=-2(B\big( \textbf{v}_\delta(t)+hy_\delta(\theta_t\omega)\big)-B\big( \textbf{v}+hz(\theta_t\omega)\big),A^3(\textbf{v}_\delta-\textbf{v}))\nonumber\\
&-  2(\nu A_1h(y_\delta(\theta_t\omega)-z(\theta_t\omega)),A^3(\textbf{v}_\delta-\textbf{v}))
+2(h(y_\delta (\theta_t\omega)-z(\theta_t\omega)),A^3(\textbf{v}_\delta-\textbf{v}))\nonumber\\
&=-2(B\big(\textbf{v}+hz(\theta_t\omega),\textbf{v}_\delta(t)-\textbf{v}+h(y_\delta(\theta_t\omega)-z(\theta_t\omega)))
,A^3(\textbf{v}_\delta-\textbf{v})\big)\nonumber\\
&-2(B\big( \textbf{v}_\delta(t)-\textbf{v}+h(y_\delta(\theta_t\omega)-z(\theta_t\omega)),\textbf{v}_\delta(t)+hy_\delta(\theta_t\omega))
,A^3(\textbf{v}_\delta-\textbf{v})\big)\nonumber\\
&-  2(\nu A_1h(y_\delta(\theta_t\omega)-z(\theta_t\omega)),A^3(\textbf{v}_\delta-\textbf{v}))
+2(h(y_\delta (\theta_t\omega)-z(\theta_t\omega)),A^3(\textbf{v}_\delta-\textbf{v})).
\end{align}
By direct computations and $\nabla\cdot \bar{\textbf{v}}=0$ and \eqref{3.9}, we also have
\begin{align}\label{9.11}
&\nu \int_{\mathbb{T}^2}\partial_{yy}\Delta \bar{\textbf{v}}_1\Delta^2 \bar{\textbf{v}}_1dx-\nu \int_{\mathbb{T}^2}\partial_{xx}\Delta \bar{\textbf{v}}_2\Delta^2 \bar{\textbf{v}}_2dx\nonumber\\
&= \nu \int_{\mathbb{T}^2}(\partial_{yy}\Delta \bar{\textbf{v}}_1)^2dx+\nu \int_{\mathbb{T}^2}\partial_{xy}\Delta \bar{\textbf{v}}_1\partial_{xy}\Delta \bar{\textbf{v}}_1dx\nonumber\\
&+\nu \int_{\mathbb{T}^2}(\partial_{xx}\Delta \bar{\textbf{v}}_2)^2dx+\nu \int_{\mathbb{T}^2}\partial_{xy}\Delta \bar{\textbf{v}}_2\partial_{xy}\Delta \bar{\textbf{v}}_2dx
\nonumber\\
&=\nu \int_{\mathbb{T}^2}(\partial_{yy}\Delta \bar{\textbf{v}}_1)^2dx+\nu \int_{\mathbb{T}^2}(\partial_{yy}\Delta \bar{\textbf{v}}_2)^2dx\nonumber\\
&+\nu \int_{\mathbb{T}^2}(\partial_{xx}\Delta \bar{\textbf{v}}_2)^2dx+\nu \int_{\mathbb{T}^2}(\partial_{xx}\Delta \bar{\textbf{v}}_1)^2dx
\nonumber\\
&\geq\frac\nu2\|A^2 \bar{\textbf{v}}\|^2.
\end{align}
By using the H\"{o}lder inequality, the Gagliardo-Nirenberg inequality and the Young inequality, we can get that
\begin{align}\label{9.12}
 &\big| 2(B\big(\textbf{v}+hz(\theta_t\omega),\textbf{v}_\delta(t)-\textbf{v}+h(y_\delta(\theta_t\omega)-z(\theta_t\omega)))
,A^3\bar{\textbf{v}}\big)\big|\nonumber\\
 &\leq C\|A(\textbf{v}+hz(\theta_t\omega))\|\|A^\frac12\bar{\textbf{v}}\|_{L^\infty}\|A^2\bar{\textbf{v}}\|
 +C\|\textbf{v}+hz(\theta_t\omega)\|_{L^\infty}\|A^\frac32\bar{\textbf{v}}\|\|A^2\bar{\textbf{v}}\|\nonumber\\
 &+C\|A(\textbf{v}+hz(\theta_t\omega))\|\|A^\frac12h(y_\delta(\theta_t\omega)-z(\theta_t\omega))
 \|_{L^\infty}\|A^2\bar{\textbf{v}}\|\nonumber\\
 &+C\|\textbf{v}+hz(\theta_t\omega)\|_{L^\infty}\|A^\frac32h(y_\delta(\theta_t\omega)-z(\theta_t\omega))\|\|A^2\bar{\textbf{v}}\|
 \nonumber\\
 &\leq C(\|A\textbf{v}\|+\|Ah\||z(\theta_t\omega)|)\|A^\frac32\bar{\textbf{v}}\|\|A^2\bar{\textbf{v}}\|\nonumber\\
 &+C\|A(\textbf{v}+hz(\theta_t\omega))\|\|A^\frac32\bar{\textbf{v}}\|\|A^2\bar{\textbf{v}}\|\nonumber\\
 &+C(\|A\textbf{v}\|+\|Ah\||z(\theta_t\omega)|)\|A^\frac32h\||y_\delta(\theta_t\omega)-z(\theta_t\omega)|
 \|A^2\bar{\textbf{v}}\|\nonumber\\
 &+C\|A(\textbf{v}+hz(\theta_t\omega))\|\|A^\frac32h\||y_\delta(\theta_t\omega)-z(\theta_t\omega)|
 \|A^2\bar{\textbf{v}}\|\nonumber\\
 &\leq \frac{\nu}{8}\|A^2\bar{\textbf{v}}\|^2+C(\|A\textbf{v}\|^2+|z(\theta_t\omega)|^2)\|A^\frac32\bar{\textbf{v}}\|^2\nonumber\\
 &+C(\|A\textbf{v}\|^2+|z(\theta_t\omega)|^2)|y_\delta(\theta_t\omega)-z(\theta_t\omega)|^2,
\end{align}
and 
\begin{align}\label{9.13}
&\big| -2(B\big( \textbf{v}_\delta(t)-\textbf{v}+h(y_\delta(\theta_t\omega)-z(\theta_t\omega)),\textbf{v}_\delta(t)+hy_\delta(\theta_t\omega))
,A^3(\textbf{v}_\delta-\textbf{v})\big)\big|\nonumber\\
&\leq C\|A\bar{\textbf{v}}\|_{L^4}\|A^\frac12(\textbf{v}_\delta(t)+hy_\delta(\theta_t\omega))\|_{L^4}
\|A^2\bar{\textbf{v}}\|\nonumber\\
&+C\|\bar{\textbf{v}}\|_{L^\infty}\|A^\frac32(\textbf{v}_\delta(t)+hy_\delta(\theta_t\omega))\|
\|A^2\bar{\textbf{v}}\|\nonumber\\
&+C\|Ah(y_\delta(\theta_t\omega)-z(\theta_t\omega))\|_{L^4}\|A^\frac12(\textbf{v}_\delta(t)+hy_\delta(\theta_t\omega))\|_{L^4}
\|A^2\bar{\textbf{v}}\|\nonumber\\
&+C\|h(y_\delta(\theta_t\omega)-z(\theta_t\omega))\|_{L^\infty}\|A^\frac32(\textbf{v}_\delta(t)+hy_\delta(\theta_t\omega))\|
\|A^2\bar{\textbf{v}}\|\nonumber\\
&\leq C(\|A\textbf{v}_\delta(t)\|+\|Ah\||y_\delta(\theta_t\omega)|)\|A^\frac32\bar{\textbf{v}}\|
\|A^2\bar{\textbf{v}}\|\nonumber\\
&+C\|A^\frac32\bar{\textbf{v}}\|\|A(\textbf{v}_\delta(t)+hy_\delta(\theta_t\omega))\|
\|A^2\bar{\textbf{v}}\|\nonumber\\
&+C\|A^\frac32h\||y_\delta(\theta_t\omega)-z(\theta_t\omega)|\|A(\textbf{v}_\delta(t)+hy_\delta(\theta_t\omega))\|
\|A^2\bar{\textbf{v}}\|\nonumber\\
&+C(\|A^\frac32\textbf{v}_\delta(t)\|+\|A^\frac32h\||y_\delta(\theta_t\omega)|)\|Ah\||y_\delta(\theta_t\omega)-z(\theta_t\omega)|
\|A^2\bar{\textbf{v}}\|\nonumber\\
&\leq \frac{\nu}{8}\|A^2\bar{\textbf{v}}\|^2
+C(\|A\textbf{v}_\delta(t)\|^2+|y_\delta(\theta_t\omega)|^2)\|A^\frac32\bar{\textbf{v}}\|^2\nonumber\\
&+C(\|A^\frac32\textbf{v}_\delta(t)\|^2+|y_\delta(\theta_t\omega)|^2)|y_\delta(\theta_t\omega)-z(\theta_t\omega)|^2.
\end{align}
For the last two terms on the right hand side of \eqref{9.10}, by using the similar method, we also can deduce that for $h\in H^4$
\begin{align}\label{9.14}
&-  2(\nu A_1h(y_\delta(\theta_t\omega)-z(\theta_t\omega)),A^3(\textbf{v}_\delta-\textbf{v}))
+2(h(y_\delta (\theta_t\omega)-z(\theta_t\omega)),A^3(\textbf{v}_\delta-\textbf{v}))\nonumber\\
&\leq 2\nu\|A^2h\||y_\delta(\theta_t\omega)-z(\theta_t\omega)|\|A^2\bar{\textbf{v}}\|
+2\|Ah\||y_\delta(\theta_t\omega)-z(\theta_t\omega)|\|A^2\bar{\textbf{v}}\|
\nonumber\\
&\leq \frac{\nu}{4}\|A^2\bar{\textbf{v}}\|^2+C|y_\delta(\theta_t\omega)-z(\theta_t\omega)|^2.
\end{align}
Inserting \eqref{9.11}, \eqref{9.12}, \eqref{9.13} and \eqref{9.14} into \eqref{9.10} yields
\begin{align*}
\frac{\d}{\d t}\|A^\frac32\bar{\textbf{v}}\|^2
&\leq C\|A^\frac32\bar{\textbf{v}}\|^2 \left ( \|A\textbf{v}\|^2+\|A\textbf{v}_\delta\|^2+|y_\delta(\theta_t\omega)|^2+|z(\theta_t\omega)|^2 \right )
\nonumber\\
 &+C(1+\|A\textbf{v}\|^2+\|A^\frac32\textbf{v}_\delta(t)\|^2+|y_\delta(\theta_t\omega)|^2+
 |z(\theta_t\omega)|^2)|y_\delta(\theta_t\omega)-z(\theta_t\omega)|^2.
\end{align*}
For any $s\in(t-1,t)$ and $t\geq 1$, applying Gronwall's lemma, then  we also can get that
\begin{align*}
&\|A^\frac32\bar{\textbf{v}}(t)\|^2
-e^{\int_s^tC(\|A\textbf{v}\|^2+\|A\textbf{v}_\delta\|^2+|y_\delta(\theta_\tau\omega)|^2+|z(\theta_\tau\omega)|^2 )\, \d \tau}\|A^\frac32\bar{\textbf{v}}(s)\|^2\nonumber\\
&\quad \leq \int_s^tCe^{\int_\eta^tC( \|A\textbf{v}\|^2+\|A\textbf{v}_\delta\|^2+|y_\delta(\theta_\tau\omega)|^2+|z(\theta_\tau\omega)|^2)\, \d \tau}
 |y_\delta(\theta_\eta\omega)-z(\theta_\eta\omega)|^2\nonumber\\
  &\quad \left(1+\|A\textbf{v}\|^2
 +\|A^\frac32\textbf{v}_\delta\|^2+|y_\delta(\theta_\eta\omega)|^2+
 |z(\theta_\eta\omega)|^2\right) \d \eta\nonumber\\
&\quad \leq Ce^{\int_{t-1}^tC( \|A\textbf{v}\|^2+\|A\textbf{v}_\delta\|^2+|y_\delta(\theta_\tau\omega)|^2+|z(\theta_\tau\omega)|^2)\, \d \tau}
\int_{t-1}^t|y_\delta(\theta_\eta\omega)-z(\theta_\eta\omega)|^2\nonumber\\
  &\quad \left(1+\|A\textbf{v}\|^2
 +\|A^\frac32\textbf{v}_\delta\|^2+|y_\delta(\theta_\eta\omega)|^2+
 |z(\theta_\eta\omega)|^2\right) \d \eta.
\end{align*}
Integrating w.r.t. $s$ on $(t-1,t)$ yields
\begin{align}\label{9.16}
&\|A^\frac32\bar{\textbf{v}}(t)\|^2
-\int_{t-1}^te^{\int_s^tC( \|A\textbf{v}\|^2+\|A\textbf{v}_\delta\|^2+|y_\delta(\theta_\tau\omega)|^2+|z(\theta_\tau\omega)|^2)\, \d \tau}\|A^\frac32\bar{\textbf{v}}(s)\|^2\, \d s\nonumber\\
&\quad \leq Ce^{\int_{t-1}^tC( \|A\textbf{v}\|^2+\|A\textbf{v}_\delta\|^2+|y_\delta(\theta_\tau\omega)|^2+|z(\theta_\tau\omega)|^2)\, \d \tau}
\int_{t-1}^t|y_\delta(\theta_\eta\omega)-z(\theta_\eta\omega)|^2\nonumber\\
  &\quad \left(1+\|A\textbf{v}\|^2
 +\|A^\frac32\textbf{v}_\delta\|^2+|y_\delta(\theta_\eta\omega)|^2+
 |z(\theta_\eta\omega)|^2\right) \d \eta.
\end{align}

Since our initial values $\textbf{v}_{\delta,0}$ and $\textbf{v}_{0}$   belong to the $H^2$ random  absorbing set  $\mathfrak B^2$,  applying Gronwall's lemma to \eqref{9.15}, then we can deduce    that
\begin{align*}
&\|A\bar{\textbf{v}}(t)\|^2+\frac{\nu}{2} \int_0^te^{\int_s^tC( \|A\textbf{v}\|^2+\|A\textbf{v}_\delta\|^2+|y_\delta(\theta_\tau\omega)|^2+|z(\theta_\tau\omega)|^2)\, \d \tau}\|A^\frac32\bar{\textbf{v}}(s)\|^2\, \d s\nonumber\\
&\quad \leq Ce^{\int_{0}^tC( \|A\textbf{v}\|^2+\|A\textbf{v}_\delta\|^2+|y_\delta(\theta_\tau\omega)|^2+|z(\theta_\tau\omega)|^2)\, \d \tau}
[\|A\bar{\textbf{v}}(0)\|^2+\int_{0}^t C(1+\|A\textbf{v}\|^2+\|A\textbf{v}_\delta\|^2\nonumber\\
&\quad +|y_\delta(\theta_\eta\omega)|^2+|z(\theta_\eta\omega)|^2
 )|y_\delta(\theta_\eta\omega)-z(\theta_\eta\omega)|^2 \d \eta].
\end{align*}
Hence, we can get the  following inequalities
\begin{align*}
& \frac{\nu}{2} \int_{t-1}^te^{\int_s^tC( \|A\textbf{v}\|^2+\|A\textbf{v}_\delta\|^2+|y_\delta(\theta_\tau\omega)|^2+|z(\theta_\tau\omega)|^2)\, \d \tau}\|A^\frac32\bar{\textbf{v}}(s)\|^2\, \d s\nonumber\\
&\quad  \leq\frac{\nu}{2} \int_0^te^{\int_s^tC( \|A\textbf{v}\|^2+\|A\textbf{v}_\delta\|^2+|y_\delta(\theta_\tau\omega)|^2+|z(\theta_\tau\omega)|^2)\, \d \tau}\|A^\frac32\bar{\textbf{v}}(s)\|^2\, \d s\nonumber\\
&\quad \leq Ce^{\int_{0}^tC( \|A\textbf{v}\|^2+\|A\textbf{v}_\delta\|^2+|y_\delta(\theta_\tau\omega)|^2+|z(\theta_\tau\omega)|^2)\, \d \tau}
[\|A\bar{\textbf{v}}(0)\|^2+\int_{0}^t C(1+\|A\textbf{v}\|^2+\|A\textbf{v}_\delta\|^2\nonumber\\
&\quad +|y_\delta(\theta_\eta\omega)|^2+|z(\theta_\eta\omega)|^2
 )|y_\delta(\theta_\eta\omega)-z(\theta_\eta\omega)|^2 \d \eta].
\end{align*}
Inserting these inequalities  into \eqref{9.16}, we also can deduce that
\begin{align*}
 &\|A^\frac32\bar{\textbf{v}}(t)\|^2\nonumber\\
&\quad \leq Ce^{\int_{0}^tC( 1+ \|A\textbf{v}\|^2+\|A\textbf{v}_\delta\|^2+|y_\delta(\theta_\tau\omega)|^2+|z(\theta_\tau\omega)|^2)\, \d \tau}
[\|A\bar{\textbf{v}}(0)\|^2\nonumber\\
&\quad+\sup\limits_{s\in[t-1,t]}|y_\delta(\theta_s\omega)-z(\theta_s\omega)|^2+\int_{t-1}^t C\|A^\frac32\textbf{v}_\delta\|^2|y_\delta(\theta_\eta\omega)-z(\theta_\eta\omega)|^2 \d \eta],
\end{align*}
for any $t\geq 1$.
We replace $\omega$ by $\theta_{-t}\omega$ to  get  for any $ t\geq 1$
\begin{align} \label{9.20}
 &\|A^\frac32\bar{\textbf{v}}(t,\theta_{-t}\omega,\bar{\textbf{v}}(0))\|^2\nonumber\\
&\leq Ce^{\int_0^tC( 1+\|A\textbf{\textbf{v}}_\delta(\tau,\theta_{-t}\omega,\textbf{\textbf{v}}_\delta(0))\|^2
+\|A\textbf{\textbf{v}}(\tau,\theta_{-t}\omega,\textbf{\textbf{v}}(0))\|^2
+|y_\delta(\theta_{\tau-t}\omega)|^2+|z(\theta_{\tau-t}\omega)|^2)\, \d \tau}[\|A\bar{\textbf{v}}(0)\|^2\nonumber\\
&+\sup\limits_{s\in[-1,0]}|y_\delta(\theta_s\omega)-z(\theta_s\omega)|^2\left(1+C\int_{t-1}^t \|A^\frac32\textbf{v}_\delta(\eta,\theta_{-t}\omega,\textbf{v}_\delta(0))\|^2
\d \eta\right) ].
\end{align}

By \eqref{6.18},   let
\begin{align*}
\bar{T}_\omega^* :=T_\omega^* (\omega)+1, \quad \omega\in \Omega,
\end{align*}
we can get that
\begin{align}\label{9.17}
  \int_{\bar{T}_\omega^* -1}^{\bar{T}_\omega^* }  \|A^\frac32\textbf{v}_\delta(\eta,\theta_{\bar{T}_\omega^* }\omega,\textbf{v}_\delta(0))\|^2  \d \eta
 \leq  C\zeta_{12}(\omega)  =: \zeta_{16}(\omega) .
 \end{align}

By \eqref{7.4}, for   $\textbf{v}(0), \textbf{v}_\delta(0) \in \mathfrak B^2(\theta_{-t}\omega)$ we also can deduce that
 \begin{align*}
 &\int_0^t
 (\|A \textbf{v}(\eta, \theta_{-t} \omega, \textbf{v}(0))\|^2+\|A \textbf{v}_\delta(\eta, \theta_{-t} \omega, \textbf{v}_\delta(0))\|^2) \, \d \eta\nonumber\\
 &   \leq  C\left( \| \mathfrak B  (\theta_{-t}\omega)\|_{H^2}^2 +1 \right) \int_0^t e^{ \int^\eta_0 C(1+|y_\delta(\theta_{\tau-t} \omega)|^6+|z(\theta_{\tau-t} \omega)|^6) \, \d \tau}  \, \d \eta \nonumber \\
 &   \leq  C\big( \zeta_{12}(\theta_{-t}\omega)  +C \big)  e^{ Ct+C \int^0_{-t}  |y_\delta(\theta_{\tau} \omega)|^6+C \int^0_{-t}  |z(\theta_{\tau} \omega)|^6 \, \d \tau}   ,
   \quad t>0  , \nonumber
\end{align*}
and, particularly for $t=\bar{T}_\omega^*$,
 \begin{align}\label{9.18}
 & \int_0^{\bar{T}_\omega^*}
 (\|A \textbf{v}(\eta, \theta_{-{\bar{T}_\omega^*}} \omega, \textbf{v}(0))\|^2+ \|A \textbf{v}_\delta(\eta, \theta_{-{\bar{T}_\omega^*}} \omega, \textbf{v}_\delta(0))\|^2)\, \d \eta \nonumber \\
 &   \quad  \leq  \big( \zeta_{12}(\theta_{-{\bar{T}_\omega^*}}\omega)  +C \big)^3  e^{ C {\bar{T}_\omega^*}+C \int^0_{-{\bar{T}_\omega^*}} ( |y_\delta(\theta_{\tau} \omega)|^6+ |z(\theta_{\tau} \omega)|^6)\, \d \tau} =:\zeta_{17}(\omega).
\end{align}
Finally, inserting     \eqref{9.17}  and \eqref{9.18}  into \eqref{9.20}  we can get at $t=\bar{T}_\omega^*$ that
\begin{align}
 &\|A^\frac32\bar{\textbf{v}}(\bar{T}_\omega^* ,\theta_{-\bar{T}_\omega^* }\omega,\bar{\textbf{v}}(0))\|^2\nonumber\\
&\leq Ce^{  C (\zeta_{17}(\omega)+\zeta_{16}(\omega)) } \zeta_{16}(\omega)
\left(\|A\bar{\textbf{v}}(0)\|^2+\sup\limits_{s\in[-1,0]}|y_\delta(\theta_s\omega)-z(\theta_s\omega)|^2\right).
\end{align}
Hence, let
\begin{align*}
 \bar{L}_4 (\mathfrak B^2, \omega) :=   Ce^{ C (\zeta_{17}(\omega)+\zeta_{16}(\omega)) } \zeta_{16}(\omega) ,\quad \omega\in \Omega.
\end{align*}
Then it yields
\begin{align*}
&\left \| \textbf{u}_\delta \! \left ( \bar{T}_\omega^* (\omega) ,\theta_{-\bar{T}_\omega^*(\omega)}\omega,  \textbf{u}_{0}\right)
 - \textbf{u} \! \left ( \bar{T}_\omega^* (\omega) ,\theta_{-\bar{T}_\omega^*(\omega)}\omega, \textbf{u}_{0}\right) \right \|^2_{H^3}
 \nonumber\\
& =\left \| \textbf{v}_\delta \! \left (\bar{T}_\omega^* (\omega) ,\theta_{-\bar{T}_\omega^*}\omega,  \textbf{v}_{\delta,0}\right)
 - \textbf{v} \! \left ( \bar{T}_\omega^* (\omega) ,\theta_{-\bar{T}_\omega^*(\omega)}\omega, \textbf{v}_{0}\right)+h(y_\delta(\omega)-z(\omega)) \right \|^2_{H^3}\nonumber\\
&\leq 2\left \| \textbf{v}_\delta \! \left ( \bar{T}_\omega^* (\omega) ,\theta_{-\bar{T}_\omega^*(\omega)}\omega,  \textbf{v}_{\delta,0}\right)
 - \textbf{v} \! \left ( \bar{T}_\omega^* (\omega) ,\theta_{-\bar{T}_\omega^*(\omega)}\omega, \textbf{v}_{0}\right)\right\|^2_{H^3}+2\left\|h\left (y_\delta(\omega)-z(\omega)\right ) \right \|^2_{H^3},
\end{align*}
where $\textbf{v}_{\delta,0}=\textbf{u}_0-hy_\delta(\omega)$, $\textbf{v}_{0}=\textbf{u}_0-hz(\omega)$  and $\bar{\textbf{v}}(0)=-h(y_\delta(\omega)-z(\omega))$. Moreover, we also have
\begin{align*}
&\left \| \textbf{u}_\delta \! \left ( \bar{T}_\omega^* (\omega) ,\theta_{-\bar{T}_\omega^*(\omega)}\omega,  \textbf{u}_{0}\right)
 - \textbf{u} \! \left ( \bar{T}_\omega^*(\omega) ,\theta_{-\bar{T}_\omega^*(\omega)}\omega, \textbf{u}_{0}\right) \right \|^2_{H^3}\nonumber\\
 &\leq C\bar{L}_4({\mathfrak B^2}, \omega)\left (\|A\bar{\textbf{v}}(0)\|^2+\sup\limits_{s\in[-1,0]}|y_\delta(\theta_s\omega)-z(\theta_s\omega)|^2\right )\nonumber\\
 &\leq C\bar{L}_4({\mathfrak B^2}, \omega)\left (|y_\delta(\omega)-z(\omega)|^2+\sup\limits_{s\in[-1,0]}|y_\delta(\theta_s\omega)-z(\theta_s\omega)|^2\right ).
\end{align*}
This completes the proof of Theorem \ref{lemma8.4}.
\end{proof}

\section*{Acknowledgements}
  H. Liu was supported by the National Natural Science Foundation of China (No. 12271293) and the project of Youth Innovation Team of Universities of Shandong Province (No. 2023KJ204), the Natural Science Foundation of Shandong Province (No. ZR2024MA069).  C.F. Sun was supported by the National Natural Science Foundation of China (No. 11701269) and the Natural Science Foundation of Jiangsu Province (No. BK20231301). J. Xin was supported  by the Natural Science Foundation of Shandong Province (No. ZR2023MA002).

\section*{Conflict of interest}
The authors declare that there are no conflicts of interest in this work.


\end{document}